\documentclass[leqno,12pt]{amsart}

\usepackage{ stmaryrd }
\usepackage{todonotes} 

\setlength{\marginparwidth}{2.5cm}
\newcommand\franco[1]{\todo[inline,color=cyan!30]{#1}}

\newcommand{\danote}[1]{{\color{purple} \sf $\diamondsuit$ Daniele : #1 $\diamondsuit$ }}

\usepackage[left=2.3cm, right=2.3cm,top=2.6cm,bottom=2.6cm]{geometry}

\usepackage{times}
\usepackage{tensor}
\usepackage[bbgreekl]{mathbbol}
\usepackage[all]{xy}
\usepackage{tikz}
\usepackage{tikz-cd}
\usepackage{pgfplots}
\usepgfplotslibrary{fillbetween}
\newsavebox{\pullback}
\sbox\pullback{%
\begin{tikzpicture}%
\draw (0,0) -- (1ex,0ex);%
\draw (1ex,0ex) -- (1ex,1ex);%
\end{tikzpicture}}
\usepackage{subcaption}
\usepackage{comment}

\usetikzlibrary{arrows}

\usepackage{amsmath, amssymb, amsfonts, latexsym, mdwlist, amsthm}

\usepackage{mathrsfs}
\usepackage{graphicx}
\usepackage{wrapfig}
\usepackage{longtable}
\usepackage{enumerate}
\usepackage{mathtools} 

\usepackage{bbm}
\usepackage{tkz-euclide}
\tikzset{
  dotted/.style={pattern=dots,pattern color=#1},
  dotted/.default=black
}
\tikzset{
  fdotted/.style={pattern=crosshatch dots,pattern color=#1},
  fdotted/.default=black
}
\tikzset{
  scopedlines/.style={pattern=north east lines,pattern color=#1},
  scopedlines/.default=black
}
\tikzset{
  hrlines/.style={pattern=horizontal lines,pattern color=#1},
  hrlines/.default=black
}
\newcommand*{\DashedArrow}[1][]{\mathbin{\tikz [baseline=-0.25ex,-latex, dashed,#1] \draw [#1] (0pt,0.5ex) -- (1.3em,0.5ex);}}

\makeindex

\usepackage[colorlinks=true,citecolor=blue, urlcolor=blue, linkcolor=blue, pagebackref]{hyperref}
\usepackage[capitalize]{cleveref}



\def\P{\ensuremath{\mathbb{P}}}
\def\Pd{\ensuremath{\check{\mathbb{P}}}}
\def\Q{\ensuremath{\mathbb{Q}}}

\def\Z{\ensuremath{\mathbb{Z}}}


\def\CC{\ensuremath{\mathcal C}}
\def\cD{\ensuremath{\mathcal D}}

\def\cK{\ensuremath{\mathcal K}}


\def\ZZ{{\mathbb{Z}}}
\def\QQ{{\mathbb{Q}}}
\def\CC{{\mathbb{C}}}
\def\PP{{\mathbb{P}}}



\def\sI{\ensuremath{\mathscr I}}
\def\sO{\ensuremath{\mathscr O}}
\def\sC{\ensuremath{\mathscr C}}





\def\Sec{\mathop{\mathrm{Sec}}\nolimits}

\def\dim{\mathop{\mathrm{dim}}\nolimits}

\def\Hom{\mathop{\mathrm{Hom}}\nolimits}

\def\NS{\mathop{\mathrm{NS}}\nolimits}
\def\Pic{\mathop{\mathrm{Pic}}\nolimits}
\def\Kum{\mathop{\mathrm{Kum}}\nolimits}
\def\Bs{\mathop{\mathrm{Bs}}\nolimits}
\def\Fix{\mathop{\mathrm{Fix}}\nolimits}
\def\Sym{\mathop{\mathrm{Sym}}\nolimits}

\newtheorem{Thm}{Theorem}[section]
\newtheorem{Prop}[Thm]{Proposition}

\newtheorem{Lem}[Thm]{Lemma}
\newtheorem{Cor}[Thm]{Corollary}

\newtheorem*{Ques*}{Question}

\theoremstyle{definition}
\newtheorem{Def}[Thm]{Definition}
\newtheorem{Rem}[Thm]{Remark}
\newtheorem{Not}[Thm]{Notation}

\newtheorem{Ex}[Thm]{Example}

\newcommand{\thistheoremname}{}
\newtheorem*{genericthm*}{\thistheoremname}
\newenvironment{namedthm*}[1]
{\renewcommand{\thistheoremname}{#1}%
	\begin{genericthm*}}
	{\end{genericthm*}}
\newenvironment{namedtheorem*}[1]
{\renewcommand{\thistheoremname}{#1}%
	\begin{genericthm*}}
	{\end{genericthm*}}

\makeatletter
\newtheoremstyle{italicsname}
 {3pt}
 {3pt}
 {\itshape}
 {}
 {\itshape}
 {.}
 {.5em}
 {\thmname{#1}\thmnumber{\@ifnotempty{#1}{ }#2}%
 \thmnote{ {\the\thm@notefont(#3)}}}
\makeatother
\theoremstyle{italicsname}

\author[D. Agostini]{Daniele Agostini}
\address[Daniele Agostini]{ Eberhard Karls Universit\"at T\"ubingen, Fachbereich Mathematik, Auf der Morgenstelle 10, 72076 T\"ubingen, DE.}
\email{daniele.agostini@uni-tuebingen.de}
\author[P. Beri]{Pietro Beri}
\address[Pietro Beri]{Universit\'e de Lorraine, CNRS, IECL, F-54000 Nancy – France.}
\email{pietro.beri@univ-lorraine.fr}
\author[F. Giovenzana]{Franco Giovenzana}
\address[Franco Giovenzana]{Université Paris-Saclay, CNRS, Laboratoire de mathématiques d'Orsay,  B\^{a}t,  307, 91405 Orsay, France.}
\email{ franco.giovenzana@universite-paris-saclay.fr}
\author[A.D. R\'ios Ortiz]{\'Angel David R\'ios Ortiz}
\address[\'Angel David R\'ios Ortiz]{Université Paris-Saclay, CNRS, Laboratoire de mathématiques d'Orsay,  B\^{a}t,  307, 91405 Orsay, France.}
\email{angel-david.rios-ortiz@universite-paris-saclay.fr}

\title[Coble duality for  Jacobian Kummer fourfolds]{Coble duality for  Jacobian Kummer fourfolds}

\usepackage{pgfplots}

\pgfplotsset{
compat=newest,
every axis plot/.append style={no marks,thick},
every axis/.style={
  axis lines=middle,
  width=7cm,
  height=3cm,
  }
}

\begin{document}

\begin{abstract} \noindent
We study projective models of generalized Kummer fourfolds via O'Grady's theta groups and the classical Coble cubic. More precisely, we establish a duality between two singular models of the generalized Kummer fourfold of a Jacobian abelian surface. We also give projective models for singular Jacobian Kummer varieties of arbitrary dimension. Along the way, we also construct a first non-natural involution on the Hilbert square of a Jacobian surface. In the appendix, we study singularities of secants of arbitrary varieties at identifiable points, following Choi, Lacini, Park and Sheridan.
\end{abstract}

\maketitle


\section{Introduction}

The generalized Kummer manifolds associated to an abelian surface $A$ were one of the first examples of hyperk\"ahler varieties. They are defined from the Hilbert scheme $A^{[n+1]}$ of $n+1$ points on $A$ and the summation map
\[ \overline{s} \colon A^{[n+1]} \longrightarrow A; \quad \xi \mapsto \sum_{a\in \xi} \ell(O_{\xi,a})\cdot a \]
that associates to a scheme the sum inside $A$ of the points in its support, counted with multiplicity. The $n$-th generalized Kummer variety of $A$ is then the fiber of the summation map over zero
\[ \Kum_n(A) \coloneqq \overline{s}^{-1}(0). \]
The singular generalized Kummer variety inside the symmetric product $A^{(n+1)}$ is the image of $\Kum_n(A)$ under the Hilbert-Chow morphism $\operatorname{HC}\colon A^{[n+1]} \to A^{(n+1)}$:
\[ \Sigma_n(A) \coloneqq \operatorname{HC}(\Kum_n(A)).  \]
The generalized Kummer $\Kum_n(A)$ is a smooth $2n$-dimensional hyperk\"ahler variety: for example, $\Kum_1(A)$ is the usual K3 Kummer surface associated to $A$. Hyperk\"ahler varieties of Kummer type are those that are deformation equivalent to a $\Kum_n(A)$, and they form one of the  known examples of hyperk\"ahler varieties of arbitrary even dimension, the other one being those deformation equivalent to the Hilbert scheme of points on a K3 surface. The intrinsic geometric aspects of generalized Kummer varieties have been extensively studied: for example the Hodge Conjecture \cite{Hassett-Tschinkel-lagrangian-planes, FloccariVaresco2025} or their intermediate Jacobians \cite{OGrady2021,MarkmanMonodromyGeneralizedKummer}. However, the extrinsic geometry of generalized Kummer beyond the surface case is still relatively unexplored. For example, to the best of our knowledge, there is yet no description of a complete family for varieties of Kummer type of dimension larger than two\footnote{A forthcoming work by Bayer-Perry-Pertusi-Zhao provides a construction of locally complete families of polarized generalized Kummer varieties in arbitrary dimension, by describing all such families arising from moduli spaces of stable objects on noncommutative abelian surfaces \cite{Bayer2025}.}.
In recent years, the first systematic steps in studying projective models for generalized Kummer have taken place: see for example \cite{Varesco2023},\cite{BMT2021} and \cite{OGTheta}. In this paper, we focus on this last topic, when $A = \operatorname{Pic}^0(C)$ is the Jacobian of an arbitrary smooth projective curve $C$ of genus two. This is also the setting of the recent paper \cite{bswkummer} where the authors exhibit $\Kum_2(A)$ as a birational cover of $\mathbb{P}^4$. It would be interesting to explore possible connections between that description and ours.
\vspace{10pt}

We first describe birational projective models of $\Kum_n(A)$, or rather of $\Sigma_n(A)$, for any $n$. To do so, we fix one odd theta characteristic $\eta$ on the curve $C$, so that we have a corresponding symmetric theta divisor $\Theta\subseteq A$ and all its translates $\Theta_a = \Theta+a$, for $a\in A$. The line bundle $\Theta$ on $A$ induces via symmetrization a line bundle $\mu_n(\Theta)$ on $\Kum_n(A)$. The map induced by the linear system $H^0(\Kum_n(A),\mu_n(\Theta))$ can be described very concretely:

\begin{namedthm*}{Theorem A}
\emph{
    Let $A$ be the Jacobian of an arbitrary smooth genus two curve. There is a well-defined morphism
    \[ u\colon \Sigma_n(A) \longrightarrow |(n+1)\Theta| = \PP(H^0(A,(n+1)\Theta)); \quad [a_0,\dots,a_n] \mapsto \Theta_{a_0}+\dots+\Theta_{a_n}\]
    which is injective for all $n$ and an embedding for $n=1,2$. The composition 
    \[ u\circ \operatorname{HC}\colon \Kum_n(A) \longrightarrow |(n+1)\Theta|\]
    coincides with the map $\varphi_{\mu_n(\Theta)}$ induced by the complete linear system of $\mu_n(\Theta)$. In particular, it is birational onto the image and it is a contraction of  the divisor $E\subseteq \Kum_n(A)$ of non-reduced subschemes.
}
\end{namedthm*}

The proof of this result is based on the classical theorem of the square for abelian varieties. This is essentially the same argument used by \cite{Varesco2023} in his study of base-point-freeness on generalized Kummer. Instead, the case of $n=2$ has been already treated by \cite{BMT2021} from the point of view of representation theory. The statement of Theorem A, however, encodes more the geometry of the singular Kummer variety $\Sigma_n(A)$, than the one of $\Kum_n(A)$. The main subject of our paper is the detailed study of another birational contraction, in the case of the fourfold $\Kum_2(A)$. Our main tool in this is the beautiful geometry of the natural embedding
\[ \varphi_{3\Theta}\colon A \hookrightarrow \Pd^8 \coloneqq \PP(H^0(A,3\Theta)^{\vee}).\]
In particular, there is a unique cubic hypersurface $\mathscr{C}_3 = Z(F)$ in $\Pd^8$ which is singular precisely along $A$. This is known as the \emph{Coble cubic}, after Arthur Coble \cite{Coble1917} and it is well-studied \cite{Beauville2003,Ortega2005,nguyen07}. The partial derivatives of the equation of $\mathscr{C}_3$ cut out the surface $A$ and they define a map
\[ \mathcal{D}\colon \Pd^8 \dashrightarrow \P^8 \coloneqq |3\Theta|; \quad \mathcal{D} = \left[ \frac{\partial F}{\partial X_0},\dots,\frac{\partial F}{\partial X_8} \right]\]
This is related to the moduli space $\mathcal{SU}_C(3)$ of semi-stable vector bundles on $C$ of rank $3$ and trivial determinant. There is a double cover
\[ \Phi_{3}\colon \mathcal{SU}_C(3) \longrightarrow \P^8; \quad E\mapsto \{a\in A\,|\, h^0(C,E\otimes a) \ne 0\}\]
branched over a sextic hypersurface $\mathscr{C}_6 \subseteq \P^8$, called the \emph{Coble sextic}. Dolgachev conjectured that these two hypersurfaces are dual to each other: $\mathscr{C}_6 = \mathcal{D}(\mathscr{C}_3)$. This was proven by Ortega \cite{Ortega2005} and later also by Nguyen \cite{nguyen07}.
\vspace{10pt}

The main result of our paper is a manifestation of an analogous ``Coble duality'' for two birational models of the Kummer fourfold $\Kum_2(A)$. To state our result, recall that $\Kum_2(A)$ has a line bundle $\mu_2(\Theta)$ as well as the divisor $E\subseteq \Kum_2(A)$ of nonreduced subschemes. This latter divisor is divisible by two, meaning that there is a divisor class $\delta$ such that $2\delta\sim E$. There is also another natural divisor $F\subseteq \Kum_2(A)$ given by
\[F = \{\xi\in \Kum_2(A) \,|\, \xi\subseteq \Theta_a \text{ for one } a\in A \}. \]
Finally, one can also show that for any $a\in A$ the translate $\Theta_a\subseteq A$ spans a 4-dimensional space $\ell(\Theta_a) \subseteq \Pd^8$. With this in mind, we can state:

\begin{namedthm*}{Theorem B}
\emph{
    Let $A$ be the Jacobian of an arbitrary smooth genus two curve. The two line bundles $\mu_2(\Theta)$ and $\mu_2(2\Theta)-\delta$ on $\Kum_2(A)$ are globally generated, and there is a commutative diagram:}
\[
\begin{tikzcd}
		& \Kum_2(A)\ar[ld, "\varphi_{\mu_2(2\Theta)-\delta}"']\ar[rd, "\varphi_{\mu_2(\Theta)}"]\\
		\Pd^8 \ar[rr, dashed, "\mathcal{D}"]&& \P^8
	\end{tikzcd}
\]
\emph{
The two maps are defined on a reduced scheme $\{a,b,c\} \in \Kum_2(A)$ as
\[ \varphi_{\mu_2(2\Theta)-\delta}(\{a,b,c\}) = \ell(\Theta_a) \cap \ell(\Theta_b) \cap  \ell(\Theta_c), \qquad \varphi_{\mu_2(\Theta)}(\{a,b,c\}) = \Theta_a + \Theta_{b} + \Theta_c, \]
in particular $\ell(\Theta_a) \cap \ell(\Theta_b) \cap  \ell(\Theta_c)$ is a single point.
The map $\varphi_{\mu_2(\Theta)}$ is the contraction of the divisor $E$ described in Theorem A. The map $\varphi_{\mu_2(2\Theta)-\delta}$ contracts the divisor $F$ onto $A\subseteq \Pd^8$ and is an embedding everywhere else. The image of $\varphi_{\mu_2(2\Theta)-\delta}$ is a fourfold of degree $36$ and it is the singular locus of the secant variety $\operatorname{Sec}(A) \subseteq \Pd^8$:
\[ \varphi_{\mu_2(2\Theta)-\delta}(\Kum_2(A)) = \operatorname{Sing}(\operatorname{Sec}(A)). \]
}
\end{namedthm*}

Note that if $A$ has Picard rank one  then the two line bundles $\mu_2(\Theta),\mu_2(2\Theta)-\delta$ are the generators of the nef and movable cone of $\Kum_2(A)$ \cite[Theorem 0.1]{Mori2021}, so, in a way our theorem describes the most natural projective models of $\Kum_2(A)$. Furthermore, an analogous result is also known classically for the Kummer K3 surface $\Kum_1(A)$: in this case the role of the Coble cubic is played by the singular quartic Kummer surface associated to $A$, see \Cref{thm:kummerduality} for a precise statement.
\vspace{10pt}

One ingredient in our proof of Theorem B is the geometry of the embedded Jacobian variety $A$ in $\Pd^8$ and of the theta divisors $\Theta_a$. Another one is the study of a natural cover of $\Kum_2(A)$ by surfaces birational to $\Kum_1(A)$. These surfaces are related to the classical Weddle surfaces of $A$ via the map $\varphi_{\mu_2(2\Theta)-\delta}$. The third essential ingredient is not directly related to generalized Kummers: we exhibit an involution on the Hilbert square $A^{[2]}$, which is the first example of a non-natural automorphism on the Hilbert square of a Jacobian \footnote{It was claimed in \cite[Theorem 2]{girardet} that such an automorphism does not exist for a Jacobian of Picard rank one, but the argument given there has a gap. We thank Patrick Girardet for discussions on this topic.}: see also \cite{sasaki}, \cite{girardet}. This construction produces actually $16$ involutions, which are related to the classical ``switches'' on the Kummer K3 surface of $A$, see  \Cref{rem:16involutions}.

\begin{namedthm*}{Theorem C}
\emph{
Let $A$ be the Jacobian of an arbitrary smooth genus two curve. There exists an involution $\tau\colon A^{[2]} \to A^{[2]}$, defined on any reduced scheme $\{a,b\}$ as
\[ \tau(\{a,b\}) = \Theta_a\cap \Theta_b. \]
Furthermore, this involution is not natural, meaning that it is not induced by an automorphism of $A$.
}
\end{namedthm*}

Finally, in order to identify the image of the map $\varphi_{\mu_2(2\Theta)-\delta}$ of Theorem B with the singular locus of the secant, we need some results on the singularities of secant varieties. In the Appendix, we recall some recent results on these varieties contained in \cite{CPLS25}, but we spell out explicitly some details that we need for our purposes. In particular, we characterize singularities of the secant variety at identifiable points.
\vspace{10pt}

The paper is structured as follows: in \Cref{sec:preliminaries} we recall various preliminary results and background material. In \Cref{sec:hilbertsquares} we discuss Hilbert squares of Jacobian surfaces, and we prove Theorem C. We also recall in \Cref{thm:kummerduality} the classical Kummer duality which is analogous to our Theorem B in the case of Kummer K3 surfaces. In \Cref{sec:contractionmutheta} we prove Theorem A, while in \Cref{sec: covering} we describe a covering of Kummer fourfolds in terms of Kummer K3 surfaces that we will need for Theorem B. The set up and the proof of Theorem B takes up \Cref{sec:contractionD} and \Cref{sec:secants}. In \cref{sec:weddle} we show how to relate our constructions to the classical Weddle surfaces. We then mention some possible further directions in \Cref{sec:openquestions} and we conclude with \cref{sec: appsec} on singularities of secant varieties.

\vspace{10pt}

\textbf{Acknowledgements:} We would like to thank Vladimiro Benedetti, Samuel Boissi\`ere, Doyoung Choi, Olivier Debarre, Salvatore Floccari, Patrick Girardet, Andreas H\"oring, Emanuele Macrì, Kieran O' Grady, Jinhyung Park, Bert van Geemen  for useful discussions. Special thanks to all the participants of the workshop ``Kummers in Krakow'' that was held on the Jagiellonian University in Krakow in May 2024, and to the organizers Grzegorz Kapustka and Calla Tschanz.

D.A. was supported by the Deutsche Forschungsgemeinschaft (DFG) under Project 530132094 ``Positivity on K-trivial varieties''. P.B was supported by the project ANR-23-CE40-0026 ``Positivity on K-trivial varieties''. P.B., A-D.R-O. and F.G. were supported by the European Research Council (ERC) under the European Union’s Horizon 2020 research and innovation programme (ERC-2020-SyG-854361-HyperK).
F.G. was funded by the Deutsche Forschungsgemeinschaft (DFG) Projektnummer 509501007 and is a member of the INdAM GNSAGA.

\section{Preliminaries}\label{sec:preliminaries}

We start with some preliminary results and background material. As a general source on abelian varieties, we refer to \cite{BL2004}. We will work with equivalence classes of Cartier divisors or line bundles on varieties, and we will use interchangeably the additive notation for divisors or the tensor notation for line bundles. Furthermore, if $D,E$ are two divisors (resp. line bundles) on a smooth variety, we will write $D\sim E$ to denote that they are linearly equivalent (resp. isomorphic).

\subsection{Jacobian surfaces and their theta divisors}
\label{sec: intro jac and Theta}

Let $C$ be a smooth  projective complex curve of genus $2$, with hyperelliptic involution $\iota\colon C \to C$ and hyperelliptic cover
\[ \phi_{K_C}\colon C \to \PP^1_{K_C} := \PP(H^0(C,K_C)^{\vee}) \] 

\begin{Not}\label{rem:notationv}
	We will  use the notation $v = [x,\iota(x)]$, if $v\in \PP^1_{K_C}$ and $\phi_{K_C}^{-1}(v)=\{x,\iota(x)\}$. In particular $v=[x,x]$ if $x=\iota(x)$ is a Weierstrass point of $C$. 
\end{Not}

Let now $A:=\Pic^0(C)$ be the Jacobian of $C$, so that $A$ is an abelian surface, and we also set $J:=\Pic^1(C)$. Both $A$ and $J$ come equipped with involutions
\[ \iota\colon A\longrightarrow A; \quad a\mapsto -a \qquad  \iota' \colon J \longrightarrow J; \quad L \mapsto K_C-L.\] 
There is  a canonical Abel-Jacobi map
\[ j\colon C \hookrightarrow J; \quad x\mapsto \sO_C(x) \]
and the image of $C$ is the canonical theta divisor $j(C) = W^1 = \{ L \in J \,|\, h^0(C,L)>0 \}$: by Riemann-Roch, this divisor is preserved by the involution on $J$.

We now fix, once and for all, an odd theta characteristic $\eta\in \Pic^1(C)$, meaning that $\eta  \sim x_0$, 
where $x_0$ is a Weierstrass point on $C$. This yields an isomorphism between $J$ and $A$ as well as an Abel-Jacobi map
\[
J\longrightarrow  A; \quad L\mapsto L-\eta, \qquad  \alpha\colon C \hookrightarrow A; \quad x \mapsto x-\eta.
\]
Since $\eta$ is a theta characteristic, the isomorphism between $A$ and $J$ is compatible with the two involutions, and then the theta divisor $\Theta := \alpha(C)$ is preserved by the involution $\iota$ on $A$: more precisely the involution $\iota$ restricted to $\Theta$ corresponds to the hyperelliptic involution on $C$. For any $a\in A$ we define the corresponding translation $t_a\colon A\to A$, as well as the the translated Abel-Jacobi map and the translated theta divisor:
\[ \alpha_a\colon C\hookrightarrow  A; \quad x\mapsto x+a-\eta \qquad \Theta_a := t_a(\Theta) = \alpha_a(C) \]
in particular $\Theta_0 = \Theta$.  One has that $(\Theta_a\cdot \Theta_b)=2$ for all $a,b\in A$ and furthermore, $\Theta_a \sim \Theta_b$ if and only if $a=b$. 

\begin{Not}\label{rem:notationalphaD}
Since the Abel-Jacobi maps $\alpha_a\colon C\hookrightarrow A$ are embeddings for any $a\in A$, they induce embeddings $\alpha\colon C^{(n+1)} \hookrightarrow A^{[n+1]}$, where we look at the symmetric product $C^{(n)}$ as the Hilbert scheme of $n$ points on $C$. In particular if $D=x_1+\dots+x_n$ is an effective divisor on $C$, we will denote by $\alpha_a(D)$ the corresponding subscheme of $A$. The schemes of these form are precisely those contained in the theta divisor $\Theta_a$.
\end{Not}

\begin{Not}
We will try to denote with $a,b,c,\dots$ points on $A$ and with $x,y,z,\dots$ points on the curve $C$. Furthermore, we will also try to denote a finite scheme of length $3$ by $\xi$ and a scheme of length two by $\zeta$. If $X$ is a smooth varietly and $p\in X$ is a point, the nonreduced schemes of length two supported at $a$ are parametrized by tangent directions $v\in \PP(T_p X)$. We will denote these schemes as $\zeta=(p,v)$. Finally, if $\pi\colon A^{\times (n+1)} \to A^{(n+1)} = A^{\times (n+1)
}/\mathfrak{S}_{n+1}$ is the quotient map, we denote $[a_0,\dots,a_n]\coloneqq \pi(a_0,\dots,a_n)$.
\end{Not}

\subsection{Linear systems on Jacobians and the Coble cubic}

The linear system $|2\Theta|$ is base point free, and it induces a map
\[
\varphi_{2\Theta}:A\longrightarrow \Pd^3=\P(H^0(A,2\Theta)^\vee) 
\]
The image $\mathscr{K}$  is the singular Kummer surface associated to $A$: it is a quartic hypersurface, and it is isomorphic via $\varphi_{2\Theta}$ to the  quotient $A/\iota$. 
Instead, the linear system $|3\Theta|$ on $A$ is very ample, and it induces an embedding 
\[
\varphi_{3\Theta}:A\hookrightarrow \Pd^8=\P(H^0(A,3\Theta)^\vee). 
\]
We will sometimes identify $A$ with its image in $\Pd^8$. Coble proved that there is an unique cubic hypersurface $\sC_3 = Z(F) $ in $\Pd^8$ which is singular along $A$. This is called the \emph{Coble cubic} associated to $A$. The nine partial derivatives of the equation of $\sC_3$ cut out $A$ scheme-theoretically (even if they do not generate the homogeneous ideal of $A$), and they define a map
\[ \mathcal{D} \colon \Pd^8 \dashrightarrow \P^8 := |3\Theta| \]

The Coble cubic is related to the moduli space $\mathcal{SU}_{C}(3)$ of semi-stable vector bundles on $C$ of rank three and trivial determinant. We recall briefly this beautiful story, following the notation of  \cite{nguyen07} and \cite{BMT2021}. The moduli space has a map
\begin{equation}\label{eq:mapfromSU}
\Phi_3\colon \mathcal{SU}_C(3) \longrightarrow |3\Theta|, \qquad E\mapsto \{a\in A \,|\, h^0(C,E\otimes \sO(\eta+a)) > 0\}    
\end{equation}
which is a double cover, branched over a sextic hypersurface $\mathscr{C}_6 \subseteq |3\Theta|$, called the \emph{Coble sextic}.
The ramification locus is 
\[ \mathcal{R} = \{  E\in \mathcal{SU}_C(3) \,|\, \iota^* E^{\vee} \cong E \}\] 
Dolgachev conjectured that $\sC_6$ is projectively dual to $\sC_3$:
\[ \overline{\mathcal{D}(\sC_3)}=\sC_6. \]
and this was proven  first by Ortega \cite{Ortega2005} and later by \cite{nguyen07} with a different argument. 

\medskip

We also recall that if $L$ is any ample line bundle on the abelian surface $A$, then the group $K(L) = \{x\in A \,|\, t_x^*L \sim L\}$ is finite and acts naturally on the projective spaces $\PP(H^0(A,L)),\PP(H^0(A,L)^{\vee})$. This can be lifted to an action on the vector spaces $H^0(A,L),H^0(A,L)^{\vee}$ via the corresponding \emph{theta group}:
\[ \mathcal{G}_A(L) := \{ (\varphi,a) \,|\, a\in K(L), \varphi\colon L \overset{\sim}{\to} t_a^*L \}.  \]
If $L$ is of type $(d_1,d_2)$, then this group is isomorphic to the Heisenberg group $\mathcal{H}(d_1,d_2)$, and the representation $H^0(A,L)$ is isomorphic to the irreducible Schr\"odinger representation.

\begin{Rem}
	In particular, this applies to the ample line bundles $2\Theta$ and $3\Theta$, of type $(2,2)$ and $(3,3)$ respectively. Furthermore, one has $K(2\Theta) = A[2]$ and $K(3\Theta) = A[3]$, so that the translations by elements of $A[2]$ (resp. $A[3]$) are induced by projective transformations of the singular Kummer surface (resp. of $A\subseteq \Pd^8$).
\end{Rem}

\subsection{Jacobian generalized Kummer varieties}\label{sec:genkums} Consider the Hilbert scheme $A^{[n+1]}$ of $n+1$ points on the abelian surface $A$ and the Hilbert-Chow morphism $\mathrm{HC}\colon  A^{[n+1]} \to A^{(n+1)}$ to the symmetric product.
The group law on $A$ induces the summation map $s\colon A^{(n+1)} \to A$. We call the summation map also the  composition
\[ \overline{s} = s\circ \mathrm{HC} \colon A^{[n+1]} \longrightarrow A. \]
The generalized Kummer $2n$-fold of $A$ is the fiber over zero
\[ \operatorname{Kum}_n(A) := \overline{s}^{-1}(0). \]
while the singular Kummer $2n$-fold is
\[ \Sigma_n(A) := \mathrm{HC}(\Kum_n(A)) = s^{-1}(0).\]
There is  a commutative diagram
\begin{equation}\label{eq:sumationmapKummer}
	\begin{tikzcd}
		\overline{s}^{-1}(0)=\Kum_n(A)\ar[r,hook]\ar[dd,swap,"\mathrm{HC}"] & A^{[n+1]}\ar[rd,"\overline{s}"]\ar[dd,"\mathrm{HC}"]\\
		& & A\\
		s^{-1}(0) = \Sigma_n(A)\ar[r,hook] & A^{(n+1)}\ar[ru,swap,"s"]
	\end{tikzcd}   
\end{equation}
For any $a\in A$, we also denote by
\begin{equation}\label{eq:def Kn,a}
	K_{n,a} = \overline{s}^{-1}(-a) = \{ \xi \in A^{[n+1]} \,|\, \overline{s}(\xi)+a = 0  \}
\end{equation}
the fiber over $-a$ of the summation map. All these varieties are isomorphic to $K_{n,0} = \Kum_n(A)$, and the fibration given by the summation map becomes trivial after a base change: more precisely, there is a cartesian diagram
\begin{equation*}
	\begin{tikzcd}
		\Kum_n(A)\times A \ar[r]\ar[d,"\operatorname{pr}_A"] & A^{[n+1]} \ar[d,"\overline{s}"]\\
		A\ar[r,swap,"\cdot( n+1)"] & A
	\end{tikzcd}    
\end{equation*}
where the bottom map is the multiplication by $n+1$ and the top map is $(\xi,a) \mapsto t_{a}(\xi)$.

There are natural line bundles (or divisors) on $\Kum_n(A)$ that are inherited from the Hilbert scheme $A^{[n+1]}$: first, for any line bundle $L$ on $A$, there is a symmetrization $L^{(n+1)}$ on $A^{(n+1)}$ and its pullback $\mathrm{HC}^* L^{(n+1)} = L^{(n+1)}$ (we keep the same notation) on $A^{[n+1]}$. By restricting to $\Kum_n(A)$ we obtain a group homomorphism
\[ \mu_n\colon \operatorname{Pic}(A) \to \operatorname{Pic}(\Kum_n(A)); \quad L \mapsto  \mu_n(L) :=  L^{(n+1)}_{|\Kum_n(A)}\]
Furthermore, there is also the exceptional divisor $E$ of the Hilbert-Chow morphism, corresponding to the locus of nonreduced schemes. There is a class $\delta$ on $A^{[n+1]}$ such that $2\delta \sim E$ on $A^{[n+1]}$, and we keep denoting by $\delta$ its restriction to $\Kum_n(A)$. With these notations,  if $n\geq 2$ we have an embedding
\[   \mu_n(\NS(A)) \oplus \ZZ\delta \hookrightarrow \NS(A^{[n+1]}) \]
and an isomorphism (\cite[Notation 5.13]{KapferMenet2018}) 
\[ H^2(\Kum_n(A),\ZZ)  \cong \mu_n (H^2(A,\ZZ) ) \oplus \ZZ \delta.  \]
Generalized Kummer varieties are one of the main examples of hyperk\"ahler varieties. The Beauville-Bogomolov-Fujiki form is 
\[ q(\mu_n(L),\mu_n(M)) = (L\cdot M), \quad q(\delta,\delta) = -2(n+1), \quad q(\mu(L),\delta) = 0   \]
for any two line bundles $L,M \in \Pic(A)$. The Fujiki relation is then
\[ \int_{\Kum_n(A)} \alpha^{2n} = (2n-1)!!\cdot (n+1)\cdot  q(\alpha,\alpha)^n  \qquad \text{ for all } \alpha \in H^2(X,\ZZ)\]

\subsection{The theta group}\label{sec:extthetagroup}
O'Grady in \cite{OGTheta} extended the definition of theta groups to hyperk\"ahler varieties of generalized Kummer type. Here we recall some of his results, restricting to the case of $\Kum_n(A)$. The subgroup $A[n+1]\subseteq A$ of $(n+1)$-torsion points acts naturally on the Hilbert scheme $A^{[n+1]}$, and this action restricts to one on $\Kum_n(A)$, which is trivial in $H^2(\Kum_n(A),\ZZ)$, so that any line bundle $\mathcal{L}$ on $\Kum_n(A)$ is invariant with respect to $A[n+1]$. Then the theta group of $\mathcal{L}$ is
\[ \mathcal{G}_{\Kum_n(A)}(\mathcal{L}) := \{ (\varphi,a) \,|\, a\in A[n+1], \varphi\colon \mathcal{L} \overset{\sim}{\to} t_a^*\mathcal{L} \text{ isomorphism } \} \]
This acts naturally on the space of global sections $H^0(\Kum_n(A),\mathcal{L})$. The group and its action is determined by the following:
\begin{Thm}\cite[Theorem 3.2]{OGTheta}\label{thm:ogradytheta}
Let $\mathcal{L}$ be a big and nef primitive line bundle on $\Kum_n(A)$ of the form $\mathcal{L}= \mu_n(L)+b\cdot\delta$, with $b\in \ZZ$ and $L \in \operatorname{Pic}(A)$ of type $(d_1,d_2)$, such that both $d_1,d_2$ are coprime with $n+1$. 
\begin{enumerate}
	\item The theta group $\mathcal{G}_{\Kum_n}(A)$ is isomorphic to the Heisenberg group $\mathcal{H}(n+1,n+1)$.	
	\item The space  $H^0(\Kum_n(A),\mathcal{L})$ decomposes into copies of the Schr\"odinger representation. In particular, it is the Schr\"odinger representation if and only if $q(\mathcal{L},\mathcal{L})=2$.
\end{enumerate}
\end{Thm}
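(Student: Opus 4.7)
The plan is to reconstruct the classical machinery of theta groups for ample line bundles on abelian varieties in the hyperkähler Kummer setting, replacing the kernel of the polarization with the translation group $A[n+1] \subseteq \operatorname{Aut}(\Kum_n(A))$. Since $\Kum_n(A)$ is hyperkähler, hence simply connected, we have $\operatorname{Pic}^0(\Kum_n(A)) = 0$, so the fact that $A[n+1]$ acts trivially on $H^2(\Kum_n(A), \ZZ)$ (recalled just before the statement) is equivalent to $t_a^*\mathcal{L} \cong \mathcal{L}$ for every $a \in A[n+1]$. The collection of isomorphisms $\mathcal{L} \xrightarrow{\sim} t_a^*\mathcal{L}$ is then naturally a $\C^*$-torsor over $A[n+1]$, and composition endows it with the structure of a central extension
\[
1 \longrightarrow \C^* \longrightarrow \mathcal{G}_{\Kum_n(A)}(\mathcal{L}) \longrightarrow A[n+1] \longrightarrow 1,
\]
whose isomorphism class is determined by the commutator pairing $e_{\mathcal{L}}\colon A[n+1] \times A[n+1] \to \C^*$.

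The heart of the argument is the computation of $e_{\mathcal{L}}$. First, since the exceptional divisor $E$ is canonically $A[n+1]$-equivariant, the $b\delta$ summand of $\mathcal{L}$ contributes trivially to $e_{\mathcal{L}}$, and one may assume $\mathcal{L} = \mu_n(L)$. Then I would lift everything to $A^{\times(n+1)}$ via the quotient $A^{\times(n+1)} \to A^{(n+1)}$: the pullback of $\mu_n(L)$ is $L^{\boxtimes(n+1)}$, and the $A[n+1]$-action is diagonal. Given a lift $\varphi_a \colon L \xrightarrow{\sim} t_a^*L$ in the classical theta group $\mathcal{G}_A(L)$, the external tensor product $\varphi_a^{\boxtimes(n+1)}$ is $\mathfrak{S}_{n+1}$-equivariant and descends to a lift on $\Kum_n(A)$. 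Comparing commutators on $A^{\times(n+1)}$ with those on $A$ yields
\[
e_{\mathcal{L}}(a,b) = e_L(a,b)^{n+1} \qquad \text{for all } a,b \in A[n+1].
\]
Under the identification $A[n+1] \cong (\ZZ/(n+1))^4$, this is exactly the alternating form on $H_1(A,\ZZ/(n+1))$ coming from $c_1(L) \in \wedge^2 H^1(A,\ZZ)$ reduced modulo $n+1$, which in a symplectic basis has matrix $\operatorname{diag}(d_1,d_2)$. Coprimality of $d_1$ and $d_2$ with $n+1$ is precisely the condition that this form is non-degenerate and symplectic of the standard shape on $(\ZZ/(n+1))^2 \times (\ZZ/(n+1))^2$.

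A non-degenerate commutator pairing of this shape identifies $\mathcal{G}_{\Kum_n(A)}(\mathcal{L})$ with the Heisenberg group $\mathcal{H}(n+1,n+1)$, giving (1). For (2), the Stone–von Neumann theorem guarantees that every representation of $\mathcal{H}(n+1,n+1)$ on which the center acts by the natural character decomposes as a direct sum of copies of the Schrödinger representation $S$, which is irreducible of dimension $(n+1)^2$. The multiplicity on $H^0(\Kum_n(A),\mathcal{L})$ equals $\chi(\mathcal{L})/(n+1)^2$, which I would compute using Riemann–Roch on a hyperkähler of Kummer type, accessible from the Fujiki relation recalled earlier together with $\chi(\mathcal{O}_{\Kum_n(A)}) = n+1$; primitivity of $\mathcal{L}$ combined with $q(\mathcal{L},\mathcal{L}) = 2$ should force exactly one copy of $S$.

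The delicate step is the second one: verifying cleanly that $\varphi_a^{\boxtimes(n+1)}$ descends through the Hilbert–Chow resolution to a well-defined lift on $\Kum_n(A)$, without introducing extra $(n+1)$-th roots of unity that would dilute the commutator. The coprimality hypothesis on $d_1, d_2$ is precisely what prevents the $(n+1)$-th power in $e_L(a,b)^{n+1}$ from trivializing the pairing on $A[n+1]$. Once this bookkeeping is handled, the remainder reduces to formal theta-group representation theory.
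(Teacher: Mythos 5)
The paper itself does not prove this statement: it is quoted verbatim from O'Grady \cite[Theorem 3.2]{OGTheta}, so your proposal can only be measured against the argument that actually works, which is essentially O'Grady's.

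Your central step fails. You postulate, for $a \in A[n+1]$, ``a lift $\varphi_a \colon L \xrightarrow{\sim} t_a^*L$ in the classical theta group $\mathcal{G}_A(L)$'' and then descend $\varphi_a^{\boxtimes(n+1)}$. But $\mathcal{G}_A(L)$ only has elements lying over $K(L) = \ker(\phi_L \colon A \to \hat{A})$, and since $|K(L)| = (d_1d_2)^2$ is coprime to $n+1$ by hypothesis, $K(L) \cap A[n+1] = \{0\}$. Thus for every nonzero $a \in A[n+1]$ one has $t_a^*L \cong L \otimes P_a$ with $P_a := \phi_L(a) \in \Pic^0(A)$ \emph{nontrivial}, and no isomorphism $\varphi_a$ exists: the coprimality hypothesis, which you invoke to guarantee nondegeneracy, simultaneously destroys the objects your construction is built from. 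The $A[n+1]$-invariance of $\mu_n(L)$ is not inherited from invariance on $A$ (which fails); it appears only after restriction to the fiber of the summation map, because the symmetrization of a degree-zero bundle satisfies $P_a^{(n+1)} \cong s^*P_a$, which is trivial precisely on $s^{-1}(0)$. Relatedly, the formula $e_{\mathcal{L}}(a,b) = e_L(a,b)^{n+1}$ cannot be the output of any commutator computation: a commutator pairing of a central extension of $A[n+1]$ by $\C^{*}$ is bimultiplicative, hence takes values in the $(n+1)$-st roots of unity, so its $(n+1)$-st power is identically $1$ --- the exact opposite of the nondegeneracy needed for $\mathcal{H}(n+1,n+1)$.

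What is true --- and matches your closing description of the pairing as the reduction of $c_1(L)$ modulo $n+1$ --- is that $e_{\mathcal{L}}(a,b) = \exp\bigl(2\pi i\, E_L(x,y)/(n+1)\bigr)$ for $a = x/(n+1)$, $b = y/(n+1)$ with $x,y \in H_1(A,\ZZ)$ and $E_L = c_1(L)$; this equals ``$e_L(a,b)^{n+1}$'' only if $e_L$ is defined by exponentiating the real-bilinear extension of $E_L$, not as a commutator in $\mathcal{G}_A(L)$. To prove it one should work on $\sigma^{-1}(0) \subseteq A^{\times(n+1)}$ (where $\sigma$ is the summation map): there the restriction of $L^{\boxtimes(n+1)}$ genuinely is invariant under the diagonal $A[n+1]$, since $\bigotimes_i \operatorname{pr}_i^* P_a \cong \sigma^* P_a$, and a direct computation of the polarization form of $\sigma^*L \otimes L^{\boxtimes n}$-type bundles on the diagonal torsion gives the pairing $n(n+1)E_L(a,b) \equiv -E_L(x,y)/(n+1) \pmod{\ZZ}$, which then descends through the $\mathfrak{S}_{n+1}$-quotient and the Hilbert--Chow resolution. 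A second, smaller gap: the canonical $A[n+1]$-invariant section of $\sO(E)$ gives $e_{\sO(E)} = 1$, hence only $e_\delta^2 = 1$; when $n+1$ is even this leaves room for a $2$-torsion pairing, so ``the $b\delta$ summand contributes trivially'' requires an argument, not a remark. Granting the pairing, your part (2) --- Stone--von Neumann together with $\chi(\mathcal{L}) = (n+1)\binom{q(\mathcal{L},\mathcal{L})/2 + n}{n}$ and Kawamata--Viehweg vanishing on the $K$-trivial $\Kum_n(A)$ --- is sound.
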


\subsection{The Brian\c{c}on surface}\label{subsec:briancon}

For any $a\in A$, we consider the subvariety 
\[ B_a := \{ \xi \in A^{[3]} \,|\, \operatorname{Supp}(\xi)=a \}.  \] 
This is also called the punctual Hilbert scheme of length three and we call it the Brian\c{c}on surface, after \cite{briancon}. We recall some facts about its structure,  following \cite{briancon} and \cite{ES98}. First we take analytic local coordinates $x,y$ centered at $a$, so that $\mathfrak{m}_a = (x,y)$ is the ideal of the point $a$. Then we can write the ideal $\mathscr{I}_{\xi}$ of $\xi \in B_{a}$ in one of the following forms:
\[ \mathscr{I}_{\xi} = (y+\alpha\cdot x + \beta \cdot x^2,x^3), \quad \text{or} \quad \mathscr{I}_{\xi} = (x+\gamma\cdot y + \delta \cdot y^2,y^3), \quad \text{or} \quad \mathscr{I}_{\xi} = (x,y)^2 = \mathfrak{m}_a^2. \]
In the first two cases the scheme $\xi$ is curvilinear, meaning that it is supported on a smooth curve, or, equivalently, that $\dim T_a {\xi} = 1$. In the last case, the scheme is not curvilinear and  we denote it by $O_a = \operatorname{Spec} \mathcal{O}_{A,a}/\mathfrak{m}^2_a$. To understand the global structure of $B_a$, we first fix one length two subscheme $\zeta$ supported only on $a$ and then we look at all $\xi \in B_a$ such that $\zeta\subseteq \xi$. There is a short exact sequence of sheaves on $A$:
\begin{equation}\label{eq:exseqnestedideals}
	0 \longrightarrow \mathscr{I}_{\xi} \longrightarrow \mathscr{I}_{\zeta} \longrightarrow \kappa(a) \longrightarrow 0
\end{equation}
where $\kappa(a)=\mathscr{O}_{A,a}/\mathfrak{m}_a$ is the residue field of the point. Conversely, if $\mathscr{I}_{\zeta} \to \kappa(a)$ is a surjective (i.e. nonzero) homomorphism , then the kernel is the ideal sheaf of a length three subscheme containing $\eta$. In summary
\[ \{ \xi \in B_a \,|\, \xi \supseteq \zeta\} \cong \PP(\Hom(\mathscr{I}_{\eta},\kappa(a))) \cong \PP((\mathscr{I}_{\eta}\otimes \kappa(a))^{\vee}). \]
The schemes  $\zeta$ as above are parametrized by $\PP(T_a A)$. More precisely, one has $\mathscr{I}_{\zeta} = (\alpha x+\beta y) + \mathfrak{m}_a^2$, where $[\alpha,\beta]\in \PP^1 = \PP(T_a A)$, and $T_a \zeta = \{\alpha x+\beta y=0\}$. Now we want to consider the sheaf $\mathscr{H}$ on $\PP(T_a A) = \PP^1$ whose fiber at $\zeta$ is $\mathscr{H}_{|\zeta} = \Hom(\mathscr{I}_{\zeta},\kappa(a))$: if $\alpha\ne 0$, then $\mathscr{I}_{\zeta} = (x+\frac{\beta}{\alpha} y,y^2)$ and the relations between the generators lie in $\mathfrak{m}_a$. This means that
a morphism  $\phi\colon \mathscr{I}_{\zeta} \to \kappa(a)$ is uniquely determined by the images of the two generators, which can be chosen arbitrarily: 
\[ \phi\left( x +\frac{\beta}{\alpha}y \right) =\gamma, \quad \phi\left( y^2 \right) = \delta \quad \gamma,\delta\in \kappa(a).\]
We see that in this case
\[ \operatorname{Ker} \phi = \left( x + \frac{\beta}{\alpha}y - \frac{\gamma}{\delta}y^2,y^3 \right)  \text{ if } \delta\ne 0 \qquad \operatorname{Ker} \phi = \mathfrak{m}_a^2 \quad   \text{ if } \delta= 0,\gamma\ne 0. \]
The same applies if $\beta\ne 0$, but in this case $\mathscr{I}_{\zeta} = (\frac{\alpha}{\beta}x+ y,x^2)$. This shows that $\mathscr{H}$ is a free sheaf of rank two on the two open charts $\{\alpha\ne 0\},\{\beta\ne 0\}$. Now we observe that if $\phi\colon \mathscr{I}_{\eta} \to \kappa(a)$ is a morphism of sheaves, then
\[  \phi(\alpha x \cdot(\alpha x+\beta y)) = 0 = \phi(\beta y \cdot (\alpha x+ \beta y))\]
so that, if $\alpha\ne 0,\beta \ne 0$, then
\[ \phi\left(x+\frac{\beta}{\alpha}y\right) = \frac{\beta}{\alpha}\cdot \phi\left(\frac{\alpha}{\beta} x + y\right), \quad \phi(y^2) = \left( \frac{\beta}{\alpha} \right)^{-2} \phi( x^2)   \]
In summary, $\mathscr{H} = \mathscr{O}_{\PP^1}(1)\oplus \mathscr{O}_{\PP^1}(-2)$ and if we consider the fibration $T_a = \PP(\mathscr{O}_{\PP^1}(1)\oplus \mathscr{O}_{\PP^1}(-2)) \to \PP^1$, and we take a point $[\alpha,\beta]$ representing the scheme $\mathscr{I}_{\zeta} = (\alpha x + \beta y) + \mathfrak{m}_a^2$, then the fiber over $[\alpha,\beta]\in \PP^1$ is naturally identified with the set $\{\xi \in B_a \,|\, \xi \supseteq \zeta \}$. The fibration $T_a \to \PP^1$ has a section that associates to each $\zeta$ the non-curvilinear scheme $O_{a}$, and the natural map $T_a \to B_a$ contracts this section to the point $O_a$ and is an isomorphism outside it. 

\section{Hilbert squares of Jacobian surfaces}\label{sec:hilbertsquares}

We start with analyzing Hilbert squares of Jacobian surfaces. 


\begin{Rem}
	Just for the moment, let $A$ be any abelian variety, $a\in A$ a point and $d(t_{-a})$ be the differential of the translation by $-a$. We have canonical identifications
	\[ T_a A \xrightarrow{d(t_{-a})} T_0 A, \qquad \PP(T_a A) \xrightarrow{\PP(d(t_{-a}))} \PP(T_0 A) \] 
	 In particular, any nonreduced subscheme $\zeta \subseteq A$ of length two is uniquely identified by a point $a = \operatorname{Supp}(\zeta)$ in $A$ and by a tangent direction $v \in \PP(T_a A)=\PP(T_0 A)$. We will denote such a scheme as $\zeta = (a,v)$. We can globalize this description to  the Hilbert square $A^{[2]}$ of $A$: it is a standard fact that  $A^{[2]}$ can be obtained as the quotient of the blow-up  $\operatorname{Bl}_{\Delta_A}A^2$ by the involution that lifts the standard one $s\colon (a,b)\mapsto (b,a)$ on $A^2$. Now consider the isomorphism
	 \[ \Phi\colon A^2 \longrightarrow A^2, \qquad (a,b) \mapsto (a,b-a)\]
	 so that $\Phi(\Delta_A) = A\times \{0\}$ and  $s' = \Phi\circ s \circ \Phi^{-1} \colon (a,c) \mapsto (a+c,-c)$. Observe that $\operatorname{Bl}_{A\times \{0\}}(A^2) \cong A \times \operatorname{Bl}_0(A)$, so that, if we keep denoting by $s' \colon A\times \operatorname{Bl}_0 A \to A\times \operatorname{Bl}_0 A$ the lift of $s'$ to the blow-up, we have the following:
\end{Rem}

\begin{Prop}\label{prop:hilb2abvar}
	Let $A$ be an abelian variety. With the above notation, there is an isomorphism
	\[ A^{[2]} \overset{\sim}{\longrightarrow} (A\times \operatorname{Bl}_0 A)/s'; \quad 
	\begin{matrix} 
	\{a,b\}\mapsto [(a,b-a)] &  a,b \in A \text{ distinct}, \\
	(a,v) \mapsto [(a,v)] &  a\in A, v\in \PP(T_a A)
	\end{matrix} 
	\]
\end{Prop}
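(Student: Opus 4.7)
The plan is to combine three standard isomorphisms, essentially chaining together all the observations made in the preceding remark. First, I would invoke the classical description $A^{[2]} \cong \operatorname{Bl}_{\Delta_A}(A^2)/\langle s\rangle$, with $s$ the lift to the blow-up of the swap involution on $A^2$. Second, the isomorphism $\Phi\colon A^2\to A^2$ maps $\Delta_A$ biregularly onto $A\times\{0\}$, and hence induces an isomorphism of the blow-ups intertwining $s$ with $s'$; this yields $A^{[2]} \cong \operatorname{Bl}_{A\times\{0\}}(A^2)/\langle s'\rangle$. Third, I would identify $\operatorname{Bl}_{A\times\{0\}}(A^2) \cong A \times \operatorname{Bl}_0(A)$: the ideal sheaf of $A\times\{0\}$ is the pullback under the second projection $\operatorname{pr}_2\colon A^2\to A$ of the maximal ideal $\mathfrak{m}_0$, so the blow-up commutes with the product structure.

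Chaining these three isomorphisms produces the abstract statement $A^{[2]} \cong (A\times \operatorname{Bl}_0(A))/s'$, and what remains is to verify the explicit formula on points. For reduced subschemes $\{a,b\}$ with $a\ne b$, the check is immediate: the $s$-orbit $\{(a,b),(b,a)\}$ is sent by $\Phi$ to $\{(a,b-a),(b,a-b)\}$, which is genuinely an $s'$-orbit since $s'(a,b-a)=(a+(b-a),-(b-a))=(b,a-b)$, so this corresponds to the class $[(a,b-a)]$ in the quotient, as claimed.

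The step I expect to require the most care is matching non-reduced subschemes $(a,v)$ with $v\in\PP(T_a A)$. Such a scheme corresponds to a point on the exceptional divisor of $\operatorname{Bl}_{\Delta_A}(A^2)$ above the diagonal point $(a,a)$, which $\Phi$ sends to a point on the exceptional divisor of $\operatorname{Bl}_{A\times\{0\}}(A^2)$ above $(a,0)$. Computing the differential $d\Phi_{(a,a)}(u,w)=(u,w-u)$ and using the identification of the normal bundle $N_{\Delta_A/A^2,(a,a)}$ with $T_a A$ via $(u,w)\mapsto w-u$, one sees that the induced map between the exceptional fibers is the canonical identification $\PP(T_a A)\cong\PP(T_0 A)$ coming from $d(t_{-a})$. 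Hence $(a,v)\mapsto[(a,v)]$ under the notation fixed in the remark, completing the verification.
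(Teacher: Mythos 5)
Your proposal is correct and follows essentially the same route as the paper: the paper embeds this argument in the remark immediately preceding the proposition (the standard description $A^{[2]}\cong \operatorname{Bl}_{\Delta_A}(A^2)/s$, the conjugating isomorphism $\Phi$ with $s'=\Phi\circ s\circ\Phi^{-1}$, and the identification $\operatorname{Bl}_{A\times\{0\}}(A^2)\cong A\times\operatorname{Bl}_0(A)$), stating the proposition as the immediate consequence. Your additional verification on points, in particular the differential computation identifying the exceptional fibers via $\PP(d(t_{-a}))$, is a correct and welcome spelling-out of details the paper leaves implicit.
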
 

This model of the Hilbert square is valid for an arbitrary abelian variety. If $A$, as in our case, is the Jacobian of a genus two curve, we can get more. Indeed, the tangent space $T_0 A$ is $T_0 A = H^1(C,\mathcal{O}_C)$, and Serre's duality provides an isomorphism $H^1(C,\mathcal{O}_C) \cong H^0(C,K_C)^{\vee}$. We then have identifications $\PP(T_0 A)\cong \PP(H^0(C,K_C)^{\vee}) = \PP^1_{K_C}$, and, furthermore,   
the projectivized differential of any Abel-Jacobi map $\alpha_a\colon C\hookrightarrow A$ corresponds to the hyperelliptic cover \cite[Proposition 11.1.4]{BL2004}: 
\begin{equation}\label{eq: diff alpha}
    \mathbb{P}(d\alpha_{a})\colon C \to \PP^1_{K_C} = \phi_{K_C}\colon C\to \PP^1_{K_C} 
\end{equation} 
Hence, using the convention of \Cref{rem:notationv}, we can denote any $v\in \PP(T_aA) \cong \PP^1_{K_C}$ as $v=[x,\iota(x)]$ for a point $x\in C$, and we can use the same notation for  nonreduced schemes $\zeta = (a,v) = (a,[x,\iota(x)])$ in $A^{[2]}$. In order to use this for a global description of $A^{[2]}$ we first need to recall some facts about the difference map:
\[   C\times C \longrightarrow A; \quad (x,y)\mapsto x-y \]
\begin{Lem}\label{lem:diffmap}
	\begin{enumerate}
		\item If $x-y\sim z-w$ then, 
		\[ (x,y)=(z,w), \quad \text{  or } \quad x=y, z=w, \quad \text{  or } \quad (z,w)=(\iota(y),\iota(x)).\]
		\item The difference map is invariant with respect to the involution
		\[ \sigma \colon C^2 \longrightarrow C^2; \quad (x,y) \mapsto (\iota(y),\iota(x)),\]
		\item The difference map  induces an isomorphism $C^2/\sigma \xrightarrow{\sim} \operatorname{Bl}_0 A$, such that the induced map on the diagonal: $\Delta_C \to \PP(T_0 A) = \PP^1_{K_C}$ is the hyperelliptic cover.
	\end{enumerate}
\end{Lem}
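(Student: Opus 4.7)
The plan is as follows.

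For part (1), I would rewrite the condition $x-y\sim z-w$ as a linear equivalence of degree-two effective divisors $x+w \sim y+z$ on $C$ and apply Riemann--Roch. Since $C$ has genus $2$, any effective degree-two divisor $D$ satisfies $h^{0}(D)=1$ unless $D\sim K_C$, in which case $h^{0}(D)=2$ and the corresponding pencil is the unique hyperelliptic $g^{1}_{2}$. If $x+w \not\sim K_C$, then $x+w=y+z$ as effective divisors, forcing the multiset equality $\{x,w\}=\{y,z\}$, which splits into either $(z,w)=(x,y)$ or the coincidence $x=y$ together with $z=w$. If instead $x+w\sim K_C$, then uniqueness of the $g^{1}_{2}$ on the hyperelliptic curve $C$ forces $w=\iota(x)$ and $z=\iota(y)$, yielding $(z,w)=(\iota(y),\iota(x))$.

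Part (2) reduces to a one-line verification: for every $x\in C$ one has $x+\iota(x)\sim K_C$, so $\iota(y)-\iota(x)\sim (K_C-y)-(K_C-x) = x-y$ in $\Pic^{0}(C)$.

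For part (3), the strategy is to use (2) to factor the difference map through a morphism $\overline{d}\colon C^{2}/\sigma\to A$. The fixed locus of $\sigma$ is the graph of $\iota$, a smooth curve, so $C^{2}/\sigma$ is a smooth surface (a $\mathbb{Z}/2$-quotient by an action with smooth codimension-one fixed locus is smooth). By part (1), $\overline d$ is bijective on closed points away from the image of $\Delta_C$, and on $\Delta_C$ the involution $\sigma$ restricts to $\iota$ (since $\sigma(x,x)=(\iota(x),\iota(x))$), so $\Delta_C$ descends to a smooth rational curve $\Delta_C/\iota\cong \PP^{1}$ which $\overline d$ contracts to $0\in A$. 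Next, since the scheme-theoretic preimage of $0$ in $C^{2}$ is the Cartier divisor $\Delta_C$, the universal property of the blow-up furnishes a factorization $C^{2}/\sigma \to \operatorname{Bl}_{0} A \to A$; the first arrow is a birational bijective morphism between smooth surfaces and hence an isomorphism by Zariski's main theorem. Finally, to identify the restriction of this isomorphism to the exceptional locus with the hyperelliptic cover, I would compute the differential of the difference map $d$ at a diagonal point $(x,x)$: along the antidiagonal direction $(v,-v)\in T_{(x,x)}C^{2}$ it equals $2\,d\alpha|_{x}(v)\in T_{0}A$, whose projectivization coincides with $\phi_{K_C}(x)$ by \eqref{eq: diff alpha}.

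The main obstacle I anticipate is the last step: one has to keep precise track of how the normal bundle of $\Delta_C\subset C^{2}$ descends to the exceptional divisor of $\operatorname{Bl}_{0}A$ under the isomorphism $C^{2}/\sigma\cong \operatorname{Bl}_{0}A$, and verify that the resulting projectivized differential agrees intrinsically with the hyperelliptic cover featured in \eqref{eq: diff alpha}.
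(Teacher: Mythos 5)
Your proposal is correct and follows essentially the same route as the paper: part (1) via the uniqueness of the degree-two pencil $K_C$ on a genus-two curve, part (2) from the relation $x+\iota(x)\sim K_C$ (the paper just deduces it from (1)), and part (3) by smoothness of the quotient, the universal property of the blow-up, and Zariski's main theorem. The one genuine divergence is the final identification of the map on the exceptional curve: the paper argues softly that $C\cong\Delta_C\to\PP(T_0A)\cong\PP^1_{K_C}$ is a double cover of $\PP^1$ by a genus-two curve and hence "must be" the hyperelliptic cover, whereas you compute the projectivized differential of the difference map and invoke \eqref{eq: diff alpha}; your computation is sharper, since it pins down the map under the canonical identification $\PP(T_0A)=\PP^1_{K_C}$ rather than only up to an automorphism of $\PP^1$, at the cost of the normal-bundle bookkeeping you flag. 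Two small imprecisions to repair: the universal property of the blow-up must be applied to $\overline{d}\colon C^2/\sigma\to A$, so the hypothesis you need is that $\overline{d}^{-1}(0)$ is a Cartier divisor in $C^2/\sigma$, not that $d^{-1}(0)$ is Cartier in $C^2$ (this does follow, e.g.\ because the finite flat quotient map pulls $\overline{d}^{-1}(0)$ back to the reduced divisor $\Delta_C$, so it is a reduced curve in a smooth surface); and the bijectivity of $C^2/\sigma\to\operatorname{Bl}_0A$ along $\Delta_C/\sigma$ is not yet available when you invoke Zariski's main theorem, but it is also not needed: quasi-finiteness suffices, and it holds because by (1) the only curve contracted by $\overline{d}$ is $\Delta_C/\sigma$, which must dominate the exceptional curve — this is exactly how the paper concludes.
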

\begin{proof}
	\begin{enumerate} 
		\item The assumption  $x-y\sim z-w$ means that $x+w \sim z+y$ and since the canonical divisor is the only one of degree two which is a pencil, this means that either $x+w=z+y$ or that $x+w \sim K_C \sim z+y$. In the first case, if $x=z$ then $y=w$ as well, while if $x=y$ then $w=z$ as well.  In the second case, we must have $w=\iota(x),z=\iota(y)$.
		\item This follows from (1).
		\item The fixed points of $\sigma$ are given by the smooth curve $\{(x,\iota(x)) \,|\, x\in C\}$, so that the quotient $C^2/\sigma$ is smooth. Now consider the curve $\Delta_{C}/\sigma \subseteq C^2/\sigma$: points (1) and (2) show that the induced map $C^2/\sigma \to A$ is birational with exceptional locus given by $\Delta/\sigma$, that maps to $0\in A$. Hence, by the universal property of the blow-up, this induces another birational map $C^2/\sigma \to \operatorname{Bl}_0 A$ and since the resulting map is finite, it must be flat and hence an isomorphism. In particular, the map   $C\cong \Delta_C \to \Delta_C/\sigma \cong \PP(T_0 A) \cong \PP^1_{K_C}$ is a double cover and it must be the hyperelliptic cover.
	\end{enumerate}
\end{proof}

\begin{Prop}\label{prop:hilb2jac}
	The following involutions of $A\times C^2$:
	\[ \sigma(a,x,y) = (a,\iota(y),\iota(x)), \quad \sigma'(a,x,y) = (a+x-y,y,x), \quad \sigma''(a,x,y) = (a+x-y,\iota(x),\iota(y))\]
	 form a subgroup $V_4 = \{\operatorname{id},\sigma,\sigma',\sigma''\} \cong \ZZ/2\ZZ \times \ZZ/2\ZZ$  and there is an isomorphism
	\[ A^{[2]} \overset{\sim}{\longrightarrow} (A\times C^2)/V_4; \quad 
	\begin{matrix}
		\{a,b\} \mapsto [(a,x,y)]  &  a,b\in A  \text{ distinct}, a-b\sim x-y \\
		\{a,v\} \mapsto [(a,x,\iota(x))] &  a\in A ,v=[x,\iota(x)] \in \PP(T_a A) 
	\end{matrix} 
	\]
\end{Prop}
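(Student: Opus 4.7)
The plan is to factor the desired isomorphism through the model provided by Proposition \ref{prop:hilb2abvar}, exploiting the identification $C^2/\langle\sigma\rangle \cong \operatorname{Bl}_0 A$ coming from Lemma \ref{lem:diffmap}(3). So the strategy is: first confirm that $V_4$ is a Klein four-group acting on $A\times C^2$; then quotient by $\sigma$ to land in $A\times\operatorname{Bl}_0 A$; recognize the descended involution as $s'$; and finally invoke Proposition \ref{prop:hilb2abvar} to conclude.

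For the group bookkeeping, each of $\sigma, \sigma', \sigma''$ is manifestly an involution (for $\sigma''$ this uses the identity $\iota(x)-\iota(y) \sim y-x$ in $\Pic^0(C) = A$, which follows from $x+\iota(x) \sim K_C$). A direct computation shows
\[ \sigma\circ\sigma'(a,x,y) = \sigma(a+x-y,y,x) = (a+x-y,\iota(x),\iota(y)) = \sigma''(a,x,y), \]
and similarly $\sigma'\circ\sigma = \sigma''$, so $\sigma$ and $\sigma'$ commute and $V_4 = \{\operatorname{id},\sigma,\sigma',\sigma''\}\cong \ZZ/2\times\ZZ/2$.

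Next I would quotient in two stages. Lemma \ref{lem:diffmap}(3) gives $C^2/\langle\sigma\rangle \xrightarrow{\sim} \operatorname{Bl}_0 A$ via $(x,y)\mapsto x-y$, and taking the product with $A$ yields $(A\times C^2)/\langle\sigma\rangle \cong A\times \operatorname{Bl}_0 A$. Since $\sigma'$ commutes with $\sigma$, it descends to an involution on $A\times \operatorname{Bl}_0 A$; setting $c = x-y$, the formula for $\sigma'$ becomes $(a,c)\mapsto (a+c,-c)$, which is precisely $s'$. Proposition \ref{prop:hilb2abvar} then gives
\[ (A\times C^2)/V_4 \;\cong\; (A\times\operatorname{Bl}_0 A)/s' \;\cong\; A^{[2]}. \]

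To obtain the explicit description I would chase the composition of these isomorphisms. For distinct $a,b\in A$, Proposition \ref{prop:hilb2abvar} sends $\{a,b\}$ to $[(a,b-a)]$, and Lemma \ref{lem:diffmap}(1) guarantees that a lift to $C^2$ is provided by any pair $(x,y)$ with $x-y\sim b-a$, uniquely determined modulo $\sigma$. For a non-reduced scheme $(a,v)$, one identifies $v\in\PP(T_a A)$ with a point of $\PP(T_0 A)\cong \PP^1_{K_C}$ via $\PP(d(t_{-a}))$; by the last part of Lemma \ref{lem:diffmap}(3) this identification intertwines the hyperelliptic cover \eqref{eq: diff alpha} with the restriction of the difference map to the exceptional divisor of $\operatorname{Bl}_0 A$, so the point $v=[x,\iota(x)]$ is lifted to the diagonal pair $(x,\iota(x))$, giving the class $[(a,x,\iota(x))]$. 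The main obstacle I expect is precisely this compatibility check on the exceptional divisor, where one must unwind two successive identifications (the blow-up model of $A^{[2]}$ and the hyperelliptic identification of $\Delta_C/\sigma$) to confirm that the stated tangent direction at $a$ matches the class $[(a,x,\iota(x))]$; once that is in place, the statement on reduced schemes is routine by Lemma \ref{lem:diffmap}(1).
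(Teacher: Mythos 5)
Your proposal is correct and follows essentially the same route as the paper's proof: verify the Klein four-group structure, quotient first by $\sigma$ using \Cref{lem:diffmap}(3) to identify $(A\times C^2)/\sigma$ with $A\times\operatorname{Bl}_0 A$, recognize the descended involution as $s'$, and conclude by \Cref{prop:hilb2abvar}. The only difference is that you spell out the compatibility check on the exceptional divisor and the involutivity of $\sigma''$ (via $x+\iota(x)\sim K_C$), details the paper leaves implicit.
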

\begin{proof}
	It is straightforward to check that $\sigma\circ \sigma' = \sigma'' = \sigma' \circ \sigma$, so that $V_4$ is indeed a group (isomorphic to a Klein four-group). Point (3) of \Cref{lem:diffmap}, shows that there is an isomorphism $A\times \operatorname{Bl}_0 A \cong (A \times C^2)/\sigma$ such that $(a,c)$ with $a,c\in A,c\ne 0$ corresponds to the class $[(a,x,y)]$ with $x-y\sim c$, while $(a,v)$ with $a\in A,v\in \PP(T_0A)$ corresponds to the class $[(a,x,\iota(x))]$ such that $v=[x,\iota(x)]$. Via this identification,  we observe that both $\sigma',\sigma''$ descend to the  involution $s'$ of $A\times \operatorname{Bl}_0 A$ given in \Cref{prop:hilb2abvar}, and the same proposition shows that $(A\times \operatorname{Bl}_0 A)/s' \cong A^{[2]}$.
\end{proof}

\subsection{Intersections of theta divisors, and an involution on $A^{[2]}$}

Consider two distinct theta divisors $\Theta_a,\Theta_b$. These intersect in a scheme of length two, so  it is natural to define a rational map on the Hilbert square of $A$ 
\[ \tau\colon A^{[2]} \dashrightarrow A^{[2]}; \quad \{a,b\} \mapsto \Theta_a\cap \Theta_b. \]
We will now show that this can be upgraded to an actual involution on $A^{[2]}$. We start by listing some elementary results on the intersection of theta divisors.

\begin{Lem}\label{lem:thetafacts}
	Let $a,b  \in A$.
	\begin{enumerate}
        \item $\iota(\Theta_a)$=$\Theta_{-a}$.
		\item $a\in \Theta_a$.
        \item $a\in \Theta_b$ if and only if $b\in \Theta_a$.

		\item  $b \in \Theta_a$ if and only if $ b \sim x+a-\eta$ for a certain $x\in C$, if and only if  $h^0(C,b-a+\eta) > 0$.
                \item $b\sim\alpha_a(x)$ if and only if $a\sim\alpha_b(\iota(x))$, for $x\in C$.  
		\item If $ b \sim x+a-\eta \in \Theta_a$, for $x\in C$, then the tangent direction to $\Theta_a$ at $b$ is $\PP(T_b \Theta_a) = [x,\iota(x)]$
		\item If $a,b$ are distinct, there are two distinct points $x,y\in C$ such that $x-y\sim b-a$. Then $\Theta_a,\Theta_b$ intersect at the points 
		\[ x-\eta+a \sim y-\eta+b, \qquad \iota(y)-\eta+a \sim \iota(x)-\eta+b\]
		These points coincide if and only if $y=\iota(x)$ and in this case the intersection is the nonreduced scheme $(x-\eta+a,[x,\iota(x)])$.
		
	\end{enumerate}
\end{Lem}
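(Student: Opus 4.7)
The plan is to treat the seven items mostly in order, as later items build on earlier ones.

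\textbf{Items (1)--(2).} For (1), recall that $\Theta$ is symmetric because $\eta$ is an odd theta characteristic, so $\iota(\Theta)=\Theta$ and $\iota(\Theta_a)=\iota(\Theta+a)=\Theta-a=\Theta_{-a}$. For (2), by the definition in \Cref{sec: intro jac and Theta}, $\eta\sim x_0$ for a Weierstrass point $x_0\in C$, so $a=\alpha_a(x_0)\in\Theta_a$.

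\textbf{Items (4) and then (3).} For (4), the equality $\Theta_a=\alpha_a(C)$ says that $b\in\Theta_a$ iff $b\sim x+a-\eta$ for some $x\in C$, iff the degree-one divisor $b-a+\eta$ is linearly equivalent to an effective one, iff $h^0(C,b-a+\eta)>0$. For (3), combine (4) with Serre duality: since $K_C\sim 2\eta$ on a genus two curve with $\eta$ a theta characteristic,
\[ h^0(C,b-a+\eta) = h^0(C,K_C-(b-a+\eta)) = h^0(C,a-b+\eta), \]
and the two quantities are simultaneously positive.

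\textbf{Item (5).} Assume $b\sim x+a-\eta$. Then $a\sim -x+b+\eta$. Using $K_C\sim x+\iota(x)\sim 2\eta$ we get $-x+\eta\sim \iota(x)-\eta$, and hence $a\sim \iota(x)+b-\eta=\alpha_b(\iota(x))$. The converse is symmetric.

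\textbf{Item (6).} By (4), $b=\alpha_a(x)$ for some $x\in C$, so the tangent line to $\Theta_a=\alpha_a(C)$ at $b$ is the image of $T_xC$ under $d\alpha_a$. Projectivizing and invoking the identification in \eqref{eq: diff alpha}, the tangent direction is the image of $x$ under the hyperelliptic cover, which in the notation of \Cref{rem:notationv} is exactly $[x,\iota(x)]$.

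\textbf{Item (7).} Since $(\Theta_a\cdot\Theta_b)=2$ and $a\ne b$ forces $\Theta_a\ne\Theta_b$, the intersection is a length-two scheme. A common point corresponds to a relation $x+a-\eta\sim y+b-\eta$ with $x,y\in C$, i.e., $x-y\sim b-a$. By \Cref{lem:diffmap}(1) applied to the degree-zero difference $b-a\ne 0$, the solutions are exactly the pair $(x,y)$ and $(\iota(y),\iota(x))$, producing the two candidate intersection points
\[ x-\eta+a\sim y-\eta+b,\qquad \iota(y)-\eta+a\sim \iota(x)-\eta+b. \]
These two points coincide precisely when $x\sim\iota(y)$, i.e.\ $y=\iota(x)$, and in that case the scheme $\Theta_a\cap\Theta_b$ is nonreduced at $p:=x-\eta+a$. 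To identify its tangent direction, apply (6): the tangent line of $\Theta_a$ at $p$ is $[x,\iota(x)]$, and the tangent of $\Theta_b$ at $p=\iota(x)-\eta+b$ is $[\iota(x),x]=[x,\iota(x)]$; the tangents agree, and this common direction is what gives the description $(x-\eta+a,[x,\iota(x)])$.

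The only mild subtlety I expect is keeping the translation/symmetry bookkeeping straight in (1) and applying Serre duality in the form $K_C\sim 2\eta$ to deduce (3) from (4); everything else is a direct unwinding of the Abel--Jacobi description of $\Theta_a$ and of the difference map lemma.
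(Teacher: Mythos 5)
Your proof is correct and, except for item (3), follows exactly the same route as the paper: symmetry of $\Theta$ for (1)--(2), the Abel--Jacobi description $\Theta_a=\alpha_a(C)$ together with \eqref{eq: diff alpha} for (4)--(6), and \Cref{lem:diffmap}(1) for (7).

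The one deviation is (3): you deduce it from (4) via the identity $h^0(C,D)=h^0(C,K_C-D)$ for the degree-one divisor $D=b-a+\eta$ (note this uses Riemann--Roch in addition to Serre duality: since $\deg D=g-1$ one has $\chi(D)=0$, hence $h^0(D)=h^1(D)=h^0(K_C-D)$), whereas the paper gets (3) directly from the symmetry $-\Theta=\Theta$ of point (1). Both arguments are equally short and valid; yours has the small advantage of making the linear-equivalence statement $h^0(b-a+\eta)=h^0(a-b+\eta)$ explicit, which is the form used later in the paper, while the paper's is purely set-theoretic and avoids any cohomology.
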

\begin{proof}
	\begin{enumerate} 
        \item We see $\iota(\Theta_a) = \iota(\Theta+a) = \iota(\Theta)-a=\Theta-a = \Theta_{-a}$.
		\item Recall that $\eta \sim x_0$, where $x_0$ is a Weierstrass point. Then we see that $0=x_0-\eta \in \Theta$ and $\Theta_a=\Theta+a$, so that $a = 0+a\in \Theta_a$.
        \item $a\in \Theta+b$ if and only if $b\in -\Theta+a = \Theta-a$, where the second equality is point (1).
		\item This follows from the definition $\Theta_a = \alpha_a(C)$.
        \item We see that $b\sim x+a-\eta$ if and only if $a\sim -x+\eta+b \sim \iota(x)+b-\eta$. The last equivalence $-x+\eta \sim \iota(x)-\eta$, follows from $x+\iota(x)\sim K_C\sim \eta+\eta$. 
		\item The tangent direction $\PP(T_b\Theta_a)$ is the image of the point $x\in C$ with respect to the projectivized differential $\PP(d\alpha_{a})$. Then the conclusion follows from \Cref{eq: diff alpha}. 
		\item Since $\Theta_a,\Theta_b$ are distinct, they intersect in two points, counted with multiplicity. Let $x-x_0+a \sim y-x_0+b$ be one of these intersection points, for $x,y\in C$. In particular $x-y\sim b-a$, and \Cref{lem:diffmap} shows  that $\iota(y)-\iota(x)\sim a-b$, meaning that $\iota(y)-x_0+a \sim \iota(x)-x_0+b$ is the other intersection point. The two intersection points coincide if and only if $x=\iota(x)$ and then the last statement follows from point (4).
	\end{enumerate}
\end{proof}

Inside $A^{[2]}$ we have two divisors
\begin{equation}\label{eq:EFA2} 
E = \{(a,v) \,|\, a\in A,v\in \PP(T_a A)\}, \qquad  F = \{ \alpha_a(x+\iota(x)) \,|\, a\in A, x\in C \}. 
\end{equation}
The first is the locus of non-reduced subschemes, and it exists on any Hilbert scheme of points on a smooth surface. The second is specific to our situation. We now prove Theorem C from the Introduction, in a more precise form.

\begin{Thm}[Theorem C]\label{thm:invtau}
	The morphism $\tau\colon A^{[2]} \longrightarrow A^{[2]}$ defined by
	\begin{align*}
	\tau(\{a,b\}) &:= \Theta_a\cap \Theta_b,  &  &a,b\in A, a\ne b, \\
	\tau((a,[x,\iota(x)])) &:= \{ x+a-\eta,\iota(x)+a-\eta \}, & &a\in A, x\in C, x\ne \iota(x),\\
	\tau((a,[x,x])) &:= (x+a-\eta, [x,x]), & & a\in A,x\in C,x=\iota(x).
	\end{align*}
	is well-defined and an involution. Moreover, $\tau(E)=F$.
\end{Thm}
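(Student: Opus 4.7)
The plan is to work in the model of \Cref{prop:hilb2jac}, which identifies $A^{[2]}$ with the quotient $(A\times C^2)/V_4$, and to exhibit $\tau$ as induced by the explicit lift
\[ \tilde{\tau}\colon A\times C^2 \longrightarrow A\times C^2,\qquad (a,x,y) \longmapsto (x+a-\eta,\,\iota(y),\,x). \]
The motivation for this choice is \Cref{lem:thetafacts}(7): if the reduced pair $\{a,b\}$ is represented by $(a,x,y)$ with $b-a \sim x-y$, then $\Theta_a \cap \Theta_b = \{x+a-\eta,\,\iota(y)+a-\eta\}$, which is precisely the pair represented on the cover by $(x+a-\eta,\iota(y),x)$.

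I would first check that $\tilde{\tau}$ normalizes the $V_4$-action, so that it descends to a morphism $\tau\colon A^{[2]} \to A^{[2]}$. The key identities are $\tilde{\tau}\circ \sigma = \sigma'\circ \tilde{\tau}$ and $\tilde{\tau}\circ \sigma' = \sigma\circ \tilde{\tau}$, both of which follow from short direct computations; these together imply the compatibility with $\sigma''$ as well.

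For the involution property, a direct substitution gives
\[ \tilde{\tau}^2(a,x,y) = (x + \iota(y) + a - 2\eta,\,\iota(x),\,\iota(y)). \]
The essential input is that $\eta$ is an odd theta characteristic, so $2\eta \sim K_C$; combined with $y+\iota(y)\sim K_C$ this yields $x+\iota(y)-2\eta \sim x-y$ in $A$, whence $\tilde{\tau}^2 = \sigma''\in V_4$. Therefore $\tau^2 = \operatorname{id}$ on $A^{[2]}$.

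To conclude, I would apply the lift to each of the three strata. Reduced pairs give $\Theta_a \cap \Theta_b$ as in the motivation above. For a nonreduced scheme $(a,[x,\iota(x)])$, the corresponding representative on the cover has the form $(a,x,x)$, and $\tilde{\tau}(a,x,x) = (x+a-\eta,\iota(x),x)$ represents the reduced pair $\{x+a-\eta,\iota(x)+a-\eta\}$ when $x\ne \iota(x)$, and the nonreduced scheme $(x+a-\eta,[x,x])$ when $x$ is a Weierstrass point. Since $E$ is precisely the locus of nonreduced schemes, the same computation shows $\tau$ sends $(a,[x,\iota(x)])$ to $\alpha_a(x+\iota(x))$, yielding $\tau(E)=F$. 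The main obstacle is identifying the right lift: the naive guess $(x+a-\eta,x,\iota(y))$ fails to be $V_4$-equivariant, but once \Cref{lem:thetafacts}(7) suggests the symmetrized form above, both the descent and the involution property fall out cleanly from the identity $2\eta\sim K_C$.
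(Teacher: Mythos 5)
Your proposal is correct and follows essentially the same route as the paper: the identical lift $\widetilde{\tau}(a,x,y)=(x+a-\eta,\iota(y),x)$ on $A\times C^2$, the same check that it normalizes the $V_4$-action so as to descend, the same identity $\widetilde{\tau}^2=\sigma''$ (via $2\eta\sim K_C$ and $y+\iota(y)\sim K_C$) giving the involution, and the same stratum-by-stratum verification including $\tau(E)=F$. The only divergence is cosmetic: you make the computation of $\widetilde{\tau}^2$ explicit, and you correctly use the diagonal representative $(a,x,x)$ for a nonreduced scheme $(a,[x,\iota(x)])$, exactly as the paper's proof does.
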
 
\begin{proof}
	Recall from \Cref{prop:hilb2jac}, that $A^{[2]}\cong A\times C^2/V_4$. Consider the automorphism
	\[ \widetilde{\tau}\colon A\times C^2 \longrightarrow A\times C^2; \quad (a,x,y) \mapsto (x+a-\eta,\iota(y),x). \]
	Straightforward computations show that $\widetilde{\tau}\circ \sigma = \sigma'\circ \widetilde{\tau},\widetilde{\tau}\circ \sigma' = \sigma \circ \widetilde{\tau}, \widetilde{\tau}\circ \sigma'' = \sigma''\circ \widetilde{\tau}$, hence $\widetilde{\tau}$ descends to an automorphism of  the quotient $A^{[2]} = (A\times C^2)/V_4$: let us denote it by $\tau\colon A^{[2]} \to A^{[2]}$. Furthermore, another straightforward computation (which uses that $\eta$ is a theta characteristic) shows that $\widetilde{\tau}^2 = \sigma''$, so that $\tau$ is an involution on $A^{[2]}$. Finally, we can also check explicitly that $\tau$ behaves as we want: consider first $\tau(\{a,b\})$ with $a,b \in A$ distinct: $\{a,b\}$ corresponds to the class of $(a,x,y)\in A\times C^2$, so that $x-y\sim b-a$. Then $\widetilde{\tau}(a,x,y) = (x+a-\eta,\iota(y),x)$. If $x\ne \iota(y)$,  this corresponds to the element $\{x+a-\eta,\iota(y)+a-\eta\} \in A^{[2]}$, which is exactly $\Theta_a\cap \Theta_b$ because of \Cref{lem:thetafacts}. if $x=\iota(y)$, then we have instead the element $\{x+a-\eta,[x,\iota(x)]\}$, which corresponds again to $\Theta_a\cap \Theta_b$, thanks again to \Cref{lem:thetafacts}.  Consider now $\tau(\{a,[x,\iota(x)]\})$: this corresponds to the class of $\widetilde{\tau}(a,x,x) = (x+a-\eta,\iota(x),x) \in A\times C^2$, which corresponds to $\{x+a-\eta,\iota(x)+a-\eta\}$ or $\{x+a-\eta,[x,x]\}$ in $A^{[2]}$, according to whether $x\ne \iota(x)$ or $x=\iota(x)$. 

    This proves that $\tau$ is a well defined involution on $A^{[2]}$. It is straightforward from the definition that $\tau(E)=F$.
\end{proof}

\begin{Rem}
    The fact that $\tau$ does not preserve $E$ shows that $\tau$ cannot be induced by an automorphism of $A$. In other words, using the terminology of \cite{BoissiereSarti2012}, it is not natural. This provides a counterexample to \cite[Theorem 2.(i)]{girardet}. We thank Patrick Girardet for helpful correspondence regarding \cite{girardet}.
\end{Rem}

The involution $\tau$ has a useful symmetry:

\begin{Lem}\label{lem:symmtau}
	Let $c\in A$ and $\xi \in A^{[2]}$. Then
	\[ c \in \tau(\xi) \iff \xi \subseteq \Theta_c \]
\end{Lem}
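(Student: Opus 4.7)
The plan is to verify the equivalence case by case using the explicit formulas for $\tau$ from \Cref{thm:invtau} and translating each via the incidence properties in \Cref{lem:thetafacts}.

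First, I would treat the reduced case: if $\xi = \{a, b\}$ with $a \ne b$, then $\tau(\xi) = \Theta_a \cap \Theta_b$ by definition, so $c \in \tau(\xi)$ iff both $c \in \Theta_a$ and $c \in \Theta_b$. By the symmetry of incidence in \Cref{lem:thetafacts}(3), this is equivalent to $a, b \in \Theta_c$, that is, $\xi \subseteq \Theta_c$.

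Next, I would handle the nonreduced case. Write $\xi = (a, v)$ with $v = [x, \iota(x)]$ following \Cref{rem:notationv}. Each $\Theta_c$ is a smooth curve (being isomorphic to $C$ via an Abel-Jacobi map), so $\xi \subseteq \Theta_c$ is equivalent to $a \in \Theta_c$ together with the tangent matching $T_a \Theta_c = v$. By \Cref{lem:thetafacts}(4)--(6), $a \in \Theta_c$ iff $a = y + c - \eta$ for some $y \in C$, and in that case $T_a \Theta_c = [y, \iota(y)]$; imposing $[y, \iota(y)] = [x, \iota(x)]$ forces $y \in \{x, \iota(x)\}$, so $c = a - y + \eta \in \{a - x + \eta,\, a - \iota(x) + \eta\}$. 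The hyperelliptic identity $x + \iota(x) \sim K_C \sim 2\eta$, valid since $\eta$ is an odd theta characteristic, rewrites this set as $\{a + \iota(x) - \eta,\, a + x - \eta\}$, which is precisely the support of $\tau(\xi)$ in both sub-cases $x \ne \iota(x)$ and $x = \iota(x)$ of \Cref{thm:invtau}.

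The only real obstacle is the careful bookkeeping of tangent directions and the identity $-x + \eta \sim \iota(x) - \eta$ in the nonreduced case, to make sure that scheme-theoretic containment $\xi \subseteq \Theta_c$ matches membership of $c$ in the support of $\tau(\xi)$. A more uniform alternative would be to work on the $V_4$-cover $A \times C^2$ of \Cref{prop:hilb2jac}: if $\xi$ lifts to $(a, x, y)$, then $\widetilde{\tau}(a, x, y) = (x + a - \eta, \iota(y), x)$, and the two points of $\tau(\xi)$ become $\{x + a - \eta,\, \iota(y) + a - \eta\} = \Theta_a \cap \Theta_{a + x - y}$ by \Cref{lem:thetafacts}(7); unfolding $\xi \subseteq \Theta_c$ via \Cref{lem:thetafacts}(3) as the condition $c \in \Theta_a \cap \Theta_{a + x - y}$ then yields the equivalence in one shot, with the nonreduced cases absorbed as limits.
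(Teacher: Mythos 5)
Your proof is correct and takes essentially the same route as the paper: the reduced case via the incidence symmetry of \Cref{lem:thetafacts}, and the nonreduced case by matching tangent directions against the explicit formula for $\tau$ from \Cref{thm:invtau}, the only cosmetic difference being that the paper unwinds the equivalences starting from $c\in\tau(\xi)$ (using \Cref{lem:thetafacts}(5)) while you start from $\xi\subseteq\Theta_c$ and invoke the identity $x+\iota(x)\sim K_C\sim 2\eta$ directly. Your suggested reformulation on the $V_4$-cover is a repackaging of the same computation, not a genuinely different argument.
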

\begin{proof}
	Assume that $\xi = \{a,b\}$ with $a\ne b$. Then $c\in \tau(a,b)$ if and only if $c\in \Theta_a\cap \Theta_b$ and by \cref{lem:thetafacts} this is equivalent to $\{a,b\} \subseteq \Theta_c$.  If instead $\xi = (a,v)$ for $v = [x,\iota(x)]\in \PP(T_a A)$, then $c\in \tau(a,v)$ if and only if $c \in \{x+a-\eta,\iota(x)+a-\eta\}$ which is equivalent to $a \in \{x+c-\eta,\iota(x)+c-\eta\}$. By Lemma \ref{lem:thetafacts}, this means precisely that $a\in \Theta_c$ and that $\PP(T_a\Theta_c) = v$, i.e. $(a,v)\subseteq \Theta_c$. 
\end{proof}

\begin{Rem}

We can study the fixed locus of $\tau$ explicitly: if $\{a,b\}\in A^{[2]}$ is reduced, and then $\tau(\{a,b\})=\Theta_a\cap \Theta_b = \{a,b\}$ then $b\in \Theta_a$. Conversely, if $b\in \Theta_a$, then $\Theta_a\cap \Theta_b \supseteq \{a,b\}$ by \Cref{lem:thetafacts}, so that $\tau(a,b)=\{a,b\}$. If instead $v = [x,\iota(x)]\in \PP(T_aA)$, then \Cref{thm:invtau} shows that $\tau(a,[x,\iota(x)]) = (a,[x,\iota(x)])$ if and only if $x \sim \eta$, .i.e. $x=x_0$. In summary:
\begin{equation}\label{eq:fixtau} 
\operatorname{Fix}(\tau) = \{\{a,b\}\,|\, a\ne b, b\in \Theta_a\} \cup \{(a,[x_0,x_0])\,|\, a\in A\}  
\end{equation}
Note that the condition $b\in \Theta_a$ is symmetric in $a,b$ because of \Cref{lem:thetafacts}. We see that $\operatorname{Fix}(\tau)$ is an irreducible divisor: let $Z$ be the closure of the first subset on the right hand side  of \Cref{eq:fixtau}. It is straightforward to see that $Z$ is irreducible of dimension three, and $Z\subseteq\operatorname{Fix}(\tau)$. We also see that $Z$ must intersect any fiber of the Hilbert-Chow morphism, and since any fiber intersects $\operatorname{Fix}(\tau)$ in exactly one point, $Z$ must contain this point.
\end{Rem}

\subsection{Kummer duality for Kummer surfaces}

Now consider the summation map $\overline{s}\colon A^{[2]} \to A$, whose fibers are all isomorphic to the Kummer K3 surface 
\[ \operatorname{Kum}_1(A) = \{\zeta\in A^{[2]} \,|\, \overline{s}(\zeta) = 0 \}\] associated to $A$. It turns out that the summation map is invariant with respect to $\tau$, which then induces an involution of $\Kum_1(A)$. To identify this involution, we recall some classical notions on Kummer surfaces, following \cite{Keum,KondoKummer}:  the singular Kummer surface of $A$ is $\Sigma_1(A) \cong A/\iota $ and  $\Kum_1(A) \to \Sigma_1(A)$ is a blow up at the $16$ singular points, one for any two-torsion point $\epsilon\in A[2]$. In particular there are $16$ rational curves on $\Kum_1(A)$ given by the  exceptional divisors $E_\epsilon,\epsilon\in A[2]$: 
\[  E_{\epsilon} = \{(\epsilon,v)\in \Kum_1(A) \,|\, v\in \PP(T_\epsilon A) \} \qquad \text{ for all } \epsilon \in A[2].\]
There are also other $16$ rational curves (classically called tropes) $F_\epsilon,\epsilon\in A[2]$, that correspond to the proper transforms in $\Kum_1(A)$ of the quotients $\Theta_\epsilon/\iota \subseteq \Sigma_1(A)$: more precisely
\[F_{\epsilon} = \{ \zeta \in \Kum_1(A) \,|\, \zeta\subseteq \Theta_{\epsilon} \} \qquad \text{ for all } \epsilon \in A[2] \]
If $E,F\subseteq A^{[2]}$ are the divisors of \Cref{eq:EFA2}, it is also straightforward to see that 
\[ \Kum_1(A)\cap E = \sum_{\epsilon\in A[2]} E_{\epsilon}, \qquad \Kum_1(A)\cap F =\sum_{\epsilon \in A[2]} F_{\epsilon} \]
The line bundles $\Theta^{(2)}$ and $\delta$ on $A^{[2]}$ restrict to the bundles $\mu_1(\Theta)$ and $\delta = \frac{1}{2}\sum_{\epsilon \in A[2]} E_{\epsilon}$ on $\Kum_1(A)$. The relation between all these classes is given at \cite[page 5]{KondoKummer} as
\begin{equation}\label{eq:reldivkum1} 
4\mu_1(\Theta) \sim \frac{1}{2}\sum_{\epsilon} F_{\epsilon} + 3\cdot \left( \frac{1}{2} \sum_{\epsilon} E_{\epsilon}\right)  \end{equation}
Klein showed  that there are involutions of $\Kum_1(A)$, called \emph{switches}, characterized from the fact that they exchange $E_\epsilon$ with $T_\epsilon$, see for example \cite[Section 5]{KondoKummer}. Our involution is one of these:

\begin{Prop}\label{prop: restrizione a switch}
	The addition map $\overline{s}\colon A^{[2]} \to A$ is $\tau$-invariant, and the induced involution \[ \tau_{|\Kum_1(A)} \colon \Kum_1(A) \to \Kum_1(A)\]	is a switch that exchanges $E_\epsilon$ with $F_\epsilon$, for all $\epsilon\in A[2]$. Furthermore, it holds that
    \[ \tau^*\mu_1(\Theta) \sim \mu_1(3\Theta)-2\delta\]
\end{Prop}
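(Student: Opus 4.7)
My plan has three parts, matching the three assertions: $\tau$-invariance of $\overline{s}$, the switch property, and the line-bundle identity.

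For the $\tau$-invariance, I would verify $\overline{s}\circ \tau = \overline{s}$ case by case using the explicit formulas in \Cref{thm:invtau}. On a reduced $\{a,b\}$ with $\Theta_a \cap \Theta_b = \{x+a-\eta,\iota(y)+a-\eta\}$ for some $x,y\in C$ with $x-y\sim b-a$ (from \Cref{lem:thetafacts}.(7)), the sum is $x+\iota(y)+2a-2\eta$, and using $y+\iota(y)\sim K_C \sim 2\eta$ this equals $x-y+2a = b-a+2a = a+b$. On a nonreduced scheme $(a,[x,\iota(x)])$ with $x\neq\iota(x)$, the image $\{x+a-\eta,\iota(x)+a-\eta\}$ has sum $x+\iota(x)+2a-2\eta \sim 2a$, matching $\overline{s}((a,[x,\iota(x)]))=2a$. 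The Weierstrass case is analogous. Hence $\tau$ descends to an involution of every fiber of $\overline{s}$, and in particular of $\Kum_1(A)$.

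For the switch property, I would use the symmetry \Cref{lem:symmtau}: for any $\zeta\in A^{[2]}$ and any $c\in A$, $c\in \tau(\zeta) \Longleftrightarrow \zeta \subseteq \Theta_c$. If $\zeta\in E_\epsilon \cap \Kum_1(A)$, then $\zeta$ is supported at $\epsilon$, so $\epsilon\in\zeta = \tau(\tau(\zeta))$; by \Cref{lem:symmtau} this gives $\tau(\zeta) \subseteq \Theta_\epsilon$, i.e. $\tau(\zeta)\in F_\epsilon$. Conversely if $\zeta\in F_\epsilon\cap\Kum_1(A)$, then $\zeta\subseteq\Theta_\epsilon$, so by \Cref{lem:symmtau} we have $\epsilon\in \tau(\zeta)$; writing $\tau(\zeta)=\{a,b\}$ (reduced) would force $a=\epsilon,b=-\epsilon=\epsilon$ since $\epsilon\in A[2]$ and $a+b=0$, contradicting reducedness, so $\tau(\zeta)$ is nonreduced and necessarily supported at $\epsilon$, hence $\tau(\zeta)\in E_\epsilon$. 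Together this gives $\tau(E_\epsilon)=F_\epsilon$ for every $\epsilon\in A[2]$, which is exactly the switch property.

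For the line bundle identity, I would feed the switch property into the classical relation \eqref{eq:reldivkum1}
\[ 4\mu_1(\Theta) \sim \tfrac{1}{2}\sum_{\epsilon\in A[2]} F_\epsilon + 3\delta, \qquad 2\delta \sim \sum_{\epsilon\in A[2]} E_\epsilon. \]
Since $\tau$ is an involution exchanging $E_\epsilon$ with $F_\epsilon$, pulling back divisor classes along $\tau$ swaps $\sum E_\epsilon$ and $\sum F_\epsilon$. Applying $\tau^*$ to the displayed equation yields
\[ 4\,\tau^*\mu_1(\Theta) \sim \delta + \tfrac{3}{2}\sum_{\epsilon} F_\epsilon \sim \delta + 3\bigl(4\mu_1(\Theta)-3\delta\bigr) = 12\mu_1(\Theta) - 8\delta, \]
where I solved the original relation for $\tfrac{1}{2}\sum F_\epsilon$. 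Dividing by $4$ and using that $\mu_1$ is a group homomorphism gives $\tau^*\mu_1(\Theta) \sim 3\mu_1(\Theta) - 2\delta = \mu_1(3\Theta)-2\delta$.

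The only part that requires genuine care is checking $\tau(E_\epsilon)=F_\epsilon$ rather than merely $\tau(E_\epsilon)\subseteq F_\epsilon$; this is where I rely on the two-torsion condition $\epsilon=-\epsilon$ to rule out reduced preimages. Once the set-theoretic switch is in place, the line-bundle identity is a one-line linear algebra consequence of the classical Kummer relation already quoted from \cite{KondoKummer}.
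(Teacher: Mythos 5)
Your proof is correct, and its overall architecture coincides with the paper's: check $\overline{s}\circ\tau=\overline{s}$ by a divisor-class computation on $C$ (using $y+\iota(y)\sim K_C\sim 2\eta$), establish the switch property $\tau(E_\epsilon)=F_\epsilon$, and then extract the class identity from \eqref{eq:reldivkum1}. The one step where you genuinely diverge is the switch property: the paper evaluates $\tau$ on $(\epsilon,[x,\iota(x)])$ directly from the explicit formula of \Cref{thm:invtau}, obtaining $\alpha_\epsilon(x+\iota(x))\subseteq\Theta_\epsilon$ and hence $\tau(E_\epsilon)=F_\epsilon$ by inspection, whereas you avoid the formula entirely and deduce both inclusions $\tau(E_\epsilon)\subseteq F_\epsilon$ and $\tau(F_\epsilon)\subseteq E_\epsilon$ from \Cref{lem:symmtau} together with the involutivity of $\tau$ and the two-torsion condition $\epsilon=-\epsilon$ (needed, as you correctly flag, to exclude a reduced image in $\Kum_1(A)$); involutivity then upgrades the two inclusions to equalities. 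Your route is more conceptual and automatically gives equality rather than one explicit inclusion, at the cost of invoking part (1) inside part (2) so that $\tau(\zeta)$ lands in $\Kum_1(A)$; the paper's computation is shorter and identifies $F_\epsilon$ concretely as $\{\alpha_\epsilon(x+\iota(x))\,|\,x\in C\}$. Finally, your derivation of $\tau^*\mu_1(\Theta)\sim\mu_1(3\Theta)-2\delta$ — apply $\tau^*$ to \eqref{eq:reldivkum1}, substitute $\tfrac12\sum_\epsilon F_\epsilon\sim 4\mu_1(\Theta)-3\delta$, and divide by $4$, which is legitimate because $\NS$ of a K3 surface is torsion-free — is exactly the computation the paper compresses into the sentence that the statement ``follows from \Cref{eq:reldivkum1}.''
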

\begin{proof}
	Let $a,b\in A$ general: we want to show that $\overline{s}(\tau(a,b)) = a+b$. Let $x,y\in C$ such that $x-y\sim b-a$. Since $a,b$ are general we can assume $y\ne \iota(x)$ and then \Cref{thm:invtau} and \Cref{lem:thetafacts} show that 
	\[ \tau(\{a,b\})=\Theta_a \cap \Theta_b = 
	\{ x-\eta+a,\iota(y)-\eta+a\}, 
	\]
	\[\overline{s}(\Theta_a\cap \Theta_b) \sim x-\eta+a+\iota(y)-\eta+a \sim x+\iota(y)-K_C+2a \sim  x-y+2a \sim b-a+2a \sim a+b. \]
	This proves the first part of the statement. For the second one, we use the definition of $\tau$ and we see that if $\epsilon\in A[2]$ and $x\in C$ we have
	\[\tau (\epsilon,[x,\iota(x)] ) = 
	\begin{cases} 
	\{x+\epsilon-\eta,\iota(x)+\epsilon-\eta\} & \text{ if } x\ne \iota(x) \\
	\{x+\epsilon-\eta,[x,x]\} & \text{ if } x=\iota(x)
	\end{cases} 
	\]
	This means that $\tau(E_\epsilon) = F_{\epsilon}$. The statement about $\tau^*\mu_1(\Theta)$ follows from \Cref{eq:reldivkum1}.
\end{proof}

\begin{Rem}\label{rem:16involutions}
 The involution $\tau$ of \Cref{thm:invtau} depends on our initial choice of the theta characteristic $\eta$. If we choose another theta characteristic, we get other involutions of $A^{[2]}$ and $\Kum_1(A)$. In particular, this way we get all the sixteen classical switches on $\Kum_1(A)$.
\end{Rem}

Now consider the map $\varphi_{2\Theta}\colon A \to \Pd^3$: as it is well-known, this map factors through the quotient $A/\iota$ and it induces an embedding $A/\iota \hookrightarrow \Pd^3$. The image is the Kummer quartic surface $\mathscr{K}_4$, which has $16$ nodes. Taking the first derivatives of a quartic equation for $\mathscr{K}_4$ we obtain a map
\[\mathcal{P}\colon \Pd^3 \dashrightarrow \P^3  = |2\Theta| \]
It turns out that this map induces a duality between the two models of $\Kum_1(A)$ given by the line bundles $\mu_1(\Theta)$ and $\tau^*\mu_1(\Theta) \sim \mu_1(3\Theta)-2\delta$. This is a classical result, but since the various statements are scattered throughout the literature, we state it and prove it also here, for completeness:

\begin{Thm}\label{thm:kummerduality}
The two line bundles $\mu_1(\Theta)$ and $\mu_1(3\Theta)-2\delta$ are globally generated, and there is a commutative diagram:
\[
\begin{tikzcd}
		& \Kum_1(A)\ar[ld, "\varphi_{\mu_1(3\Theta)-2\delta}"']\ar[rd, "\varphi_{\mu_1(\Theta)}"]\\
		\Pd^3 \ar[rr, dashed, "\mathcal{P}"]&& \P^3
	\end{tikzcd}
\]
The two maps are defined on a reduced scheme $\{a,-a\} \in \Kum_1(A),a\notin A[2]$ as
\[ \varphi_{\mu_1(3\Theta)-2\delta}(\{a,-a\}) = \varphi_{2\Theta}(\tau(a,-a)), \qquad \varphi_{\mu_1(\Theta)}(\{a,-a\}) = \Theta_a + \Theta_{-a}. \]
\begin{enumerate}
    \item The map $\varphi_{\mu_1(\Theta)}$ factors through $\Sigma_1(A)$ and it induces an embedding $\Sigma_1(A)\hookrightarrow \P^3$, whose image is a quartic with $16$ nodes. The map $\varphi_{\mu_1(\Theta)}$ contracts the divisors $E_{\epsilon}$ onto the nodes and is an isomorphism everywhere else.
    \item The image of $\varphi_{\mu_1(3\Theta)-2\delta}$ is the Kummer quartic surface $\mathscr{K}_4 \subseteq \Pd^3$. The map $\varphi_{\mu_1(3\Theta)-2\delta}$ contracts the divisors $F_{\epsilon}$ onto the nodes of $\mathscr{K}_4$ and is an isomorphism everywhere else.
    \item The image of $\varphi_{\mu_1(\Theta)}$ coincides with the dual hypersurface $\mathscr{K}_4^{\vee}$. The map $\mathcal{P}\colon \mathscr{K}_4 \dashrightarrow \mathscr{K}_4^{\vee}$ is birational and the two quartic surfaces $\mathscr{K}_4,\mathscr{K}_4^{\vee}$ are projectively equivalent.
\end{enumerate}
\end{Thm}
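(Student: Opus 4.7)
The plan is to combine Theorem A in the case $n=1$, the involution $\tau$ of \Cref{thm:invtau} together with \Cref{prop: restrizione a switch}, and the classical interpretation of the polar map of a quartic hypersurface as its Gauss map.

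First, applying Theorem A for $n=1$ identifies $\varphi_{\mu_1(\Theta)}$ with $u\circ\operatorname{HC}$, where $u\colon \Sigma_1(A)=A/\iota\hookrightarrow \P^3=|2\Theta|$ is the injective morphism $\{a,-a\}\mapsto \Theta_a+\Theta_{-a}$. This immediately gives global generation of $\mu_1(\Theta)$ and the stated description of the map; since $\mu_1(\Theta)^2=4$ and the map is birational onto its image, the image is a quartic, and the Hilbert--Chow morphism contracts exactly the divisors $E_\epsilon$ ($\epsilon\in A[2]$) to the $16$ singular points of $A/\iota$, which become the nodes of this quartic. For the second map, \Cref{prop: restrizione a switch} gives $\tau^*\mu_1(\Theta)\sim \mu_1(3\Theta)-2\delta$; pulling back by $\tau$ yields global generation of $\mu_1(3\Theta)-2\delta$ and identifies $\varphi_{\mu_1(3\Theta)-2\delta}$ with the composition $\varphi_{2\Theta}\circ\operatorname{HC}\circ\tau\colon \Kum_1(A) \to \Pd^3$. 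For $a\notin A[2]$ one has $\tau(\{a,-a\})=\Theta_a\cap\Theta_{-a}=\{b,-b\}$, so this sends $\{a,-a\}\mapsto \varphi_{2\Theta}(b)\in \mathscr{K}_4$. Since $\tau$ exchanges $E_\epsilon$ with $F_\epsilon$, the map contracts exactly the divisors $F_\epsilon$ to the $16$ nodes of $\mathscr{K}_4$ and is an isomorphism elsewhere, proving (1) and (2).

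The heart of the proof is the commutativity $\mathcal{P}(\varphi_{2\Theta}(b))=[\Theta_a+\Theta_{-a}]$ for generic $b\in\Theta_a\cap\Theta_{-a}$. By the theorem of the square, $\Theta_a+\Theta_{-a}\sim 2\Theta$, so the product $s_a\cdot s_{-a}\in H^0(A,2\Theta)$ of defining sections is a well-defined (up to scalar) section that vanishes at $b$ to order at least $2$. Conversely, since $\mathscr{K}_4$ is a quartic hypersurface in $\Pd^3$, the polar at a smooth point $[s]=\varphi_{2\Theta}(b)$ coincides with the tangent hyperplane at that point; as $\varphi_{2\Theta}$ is \'etale away from $A[2]$, this tangent hyperplane corresponds to a section of $H^0(A,2\Theta)$ vanishing to order $\geq 2$ at $b$. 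A dimension count ensures that the three conditions imposed by such vanishing at a generic $b$ are independent, so they cut out a one-dimensional subspace of the four-dimensional $H^0(A,2\Theta)$; hence $\mathcal{P}(\varphi_{2\Theta}(b))$ and $[s_a\cdot s_{-a}]$ must coincide.

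Part (3) then follows quickly: the restriction of $\mathcal{P}$ to $\mathscr{K}_4$ is the Gauss map, whose image is by definition the projective dual $\mathscr{K}_4^\vee$; combined with commutativity, this identifies the image of $\varphi_{\mu_1(\Theta)}$ with $\mathscr{K}_4^\vee$. Birationality of $\mathcal{P}\colon\mathscr{K}_4\dashrightarrow\mathscr{K}_4^\vee$ is inherited from the birationality of $\varphi_{\mu_1(\Theta)}$ and $\varphi_{\mu_1(3\Theta)-2\delta}$ onto their images. Finally, projective equivalence $\mathscr{K}_4\simeq\mathscr{K}_4^\vee$ is the classical self-duality of the Kummer quartic: both arise as projective embeddings of $A/\iota$ via complete linear systems whose pullback to $A$ is $2\Theta$, hence they differ by a linear change of coordinates. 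The main obstacle anticipated is the dimension count in the middle paragraph, ensuring that the three conditions imposed by vanishing to order $\geq 2$ at a generic point $b$ are truly independent in $H^0(A,2\Theta)$, so that the tangent hyperplane is really determined by the candidate section $s_a\cdot s_{-a}$.
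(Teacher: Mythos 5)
Your proposal is correct in outline and follows essentially the same route as the paper: the theorem of the square together with the complete-linear-system identification of $\varphi_{\mu_1(\Theta)}$, the switch relation $\tau^*\mu_1(\Theta)\sim\mu_1(3\Theta)-2\delta$ from \Cref{prop: restrizione a switch}, the interpretation of $\mathcal{P}$ restricted to $\mathscr{K}_4$ as its Gauss map, and the observation that $\Theta_a+\Theta_{-a}$ is singular along $\Theta_a\cap\Theta_{-a}$ are exactly the paper's ingredients. Your ``dimension count'' is a spelled-out version of the paper's one-line tangency argument, and the independence you worry about is not an obstacle: a hyperplane section of $\mathscr{K}_4$ is singular at a smooth point if and only if the hyperplane is the (unique) tangent plane there, so the space of sections of $2\Theta$ vanishing to order $\geq 2$ at a generic $b$ is automatically one-dimensional.

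Two points need patching. First, there is a circularity risk in quoting Theorem A: in the paper, the claim of Theorem A that $u$ is an \emph{embedding} for $n=1$ is itself deduced from \Cref{thm:kummerduality}(1), so you may only import the parts of \Cref{thm:mapmutheta} whose proofs are independent of it, namely the identification $\varphi_{\mu_1(\Theta)}=u\circ\operatorname{HC}$ and the injectivity of $u$; the scheme-theoretic embedding and the node structure of the image must still be taken from the classical theory of Kummer quartics, as the paper does. Second, pulling back by $\tau$ only gives $\varphi_{\mu_1(3\Theta)-2\delta}=(u\circ\operatorname{HC})\circ\tau$, whose target is $\P^3=|2\Theta|$; to get the stated description $\varphi_{\mu_1(3\Theta)-2\delta}(\{a,-a\})=\varphi_{2\Theta}(\tau(a,-a))$ with target $\Pd^3$, and hence statement (2) and the commutativity argument (which takes place in $\Pd^3$, where $\mathcal{P}$ lives), you also need to know that the morphism $\psi\colon\Kum_1(A)\to\Pd^3$, $\zeta\mapsto\varphi_{2\Theta}(\operatorname{Supp}(\zeta))$, is induced by the \emph{complete} linear system $|\mu_1(\Theta)|$. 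This is not automatic: a priori $u\circ\operatorname{HC}$ and $\psi$ are two models differing by an unknown linear isomorphism, which is precisely the projective equivalence you only establish later in (3). The paper closes this by the same hyperplane-pullback computation used for $u\circ\operatorname{HC}$, namely choosing $H'\subseteq\Pd^3$ with $\varphi_{2\Theta}^*H'=\Theta_a+\Theta_{-a}$, checking $\psi^*H'\sim\mu_1(\Theta)$, and using $h^0(\Kum_1(A),\mu_1(\Theta))=4$. With that one-paragraph verification inserted, your argument is complete.
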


\begin{proof}

We first prove points (1) and (2).

\begin{enumerate}
\item Consider the singular Kummer surface $\Sigma_1(A) = \{[a,-a]\in A^{(2)} \,|\, a\in A \}$.
The theorem of the square for abelian varieties says that for any $a\in A$ we have $\Theta_a + \Theta_{-a} \sim \Theta_{a-a}+\Theta \sim 2\Theta$. Thus, there is a map $u \colon \Sigma_1(A) \to |2\Theta| = \P^3, [a,-a]\mapsto \Theta_a+\Theta_{-a}$. It is classical that this map is an embedding of $\Sigma_1(A)$ and that the image is a quartic with $16$ nodes. If we consider the composition $u\circ \operatorname{HC}\colon \Kum_1(A) \to \P^3$, we need to prove that $u\circ \operatorname{HC} = \varphi_{\mu_1(\Theta)}$. Take the hyperplane  $H \subseteq \P^3$ given by $H = \{D\in |2\Theta| \,|\, 0\in D\}$. Then $u^* H = \{[a,-a] \,|\, 0\in \Theta_a+\Theta_{-a}\} = \{[a,-a]\,|\, a\in \Theta\}$ where in the last equality we have used that $a\in \Theta$ if and only if $-a\in \Theta$. We can rewrite this as $(u\circ \operatorname{HC})^*H = \{\zeta\in \Kum_1(A) \,|\, \operatorname{Supp}(\zeta)\cap \Theta \ne \emptyset\}$, and this is linearly equivalent to $\mu_1(\Theta)$. This proves that $(u\circ \operatorname{HC})^*\sO_{\P^3}(1)\sim \mu_1(\Theta)$, and in particular this line bundle is globally generated and big. The Riemann-Roch theorem then yields $h^0(\Kum_1(A),\mu_1(\Theta)) = 4$, so that $u\circ \operatorname{HC}$ must be induced by the complete linear system of $\mu_1(\Theta)$.

\item First observe that there is a map $\psi\colon \Kum_1(A) \to \Pd^3$ given by $\psi(\zeta)=\varphi_{2\Theta}(\zeta)$. This is well-defined since the elements of $\Kum_1(A)$ are precisely the fibers of $\varphi_{2\Theta}\colon A \to \Pd^3$. The image of $\psi$ is the same as the image of $\varphi_{2\Theta}$, i.e. the Kummer quartic $\mathscr{K}_4$. We can prove as before that $\psi$ is induced by the complete linear system of $\mu_1(\Theta)$: choose a hyperplane $H'\subseteq \Pd^3$ such that $\varphi_{2\Theta}^*H' = \Theta_a + \Theta_{-a}$ for one $a\notin A[2]$. Then by construction $\psi^*H' = \{\zeta\in \Kum_1(A) \,|\, \operatorname{Supp}(\zeta)\cap \Theta_a \ne \emptyset \}$, and this is linearly equivalent to $\mu_1(\Theta)$. At this point, we see that $\tau^*\mu_1(\Theta) \sim \mu_1(3\Theta)-2\delta$, so that $\varphi_{\mu_1(3\Theta)-2\delta} = \psi\circ \tau$, which is what we want. The statement on the $F_{\epsilon}$ follows since $\tau$ exchanges the $E_{\epsilon}$ and the $F_{\epsilon}$.
\end{enumerate}
Now we can show that the diagram commutes: we need to show for a general point $a\in A$ that $\mathcal{P}(\varphi_{2\Theta}(\tau(a,-a))) = \Theta_a+\Theta_{-a}$. Since the map $\mathcal{P}$ restricted to $\mathscr{K}_4$ coincides with the Gauss map of $\mathscr{K}_4$, this means that the tangent plane to $\mathscr{K}_4$ at $\varphi_{2\Theta}(\Theta_a\cap \Theta_{-a})$ is the one spanned by $\Theta_a+\Theta_{-a}$. Rephrasing everything in terms of divisors, this means that the divisor $\Theta_a+\Theta_{-a}$ is singular at the points of $\Theta_a\cap\Theta_{-a}$, which is true.
\begin{itemize}
\item[(3)] Since the diagram commutes, we know that the image $\varphi_{\mu_1(\Theta)}(\Kum_1(A))$ coincides with the image of $\mathscr{K}_4$ through its Gauss map. This is the dual surface $\mathscr{K}_4^{\vee}$. Since the two maps $\varphi_{\mu_1(\Theta)}$ and $\varphi_{\mu_1(3\Theta)-2\delta}$ are birational, the map $\mathcal{P}\colon \mathscr{K}_4 \dashrightarrow \mathscr{K}_4^{\vee}$ is birational as well. Finally, we already observed that $\varphi_{\mu_1(3\Theta)-2\delta} = \varphi_{\mu_1(\Theta)}\circ \tau$, so that the two images $\mathscr{K}_4,\mathscr{K}_4^{\vee}$ are projectively equivalent. 
\end{itemize}
\end{proof}

The main aim of our paper is to generalize this result to generalized Kummer varieties. The map $\varphi_{\mu_n(\Theta)}$ can be generalized in a straightforward way, and we will do it in the next section. The other map can be generalized on Kummer fourfolds: the role of the Kummer quartic will be played by the Coble cubic of $A$ embedded by the linear system of $3\Theta$.

\section{A contraction on Jacobian generalized Kummer varieties}\label{sec:contractionmutheta}

We consider now the line bundle $\mu_n(\Theta)$ on $\Kum_n(A)\subseteq A^{[n+1]}$ and we want to describe the associated map. We can essentially do it by extending the proof of \Cref{thm:kummerduality}.(1). 

Recall first that $\mu_n(\Theta)$ is the restriction to $\Kum_n(A)$ of the line bundle $\Theta^{(n+1)}$ on $A^{[n+1]}$. 
An effective divisor linearly equivalent to this linear system is (cf. \cite[pp. 656]{FogartyPicardSchemePunctualHilbertScheme})
\[ T'_n \coloneqq \{ \xi \in A^{[n+1]} \,|\, \operatorname{Supp}(\xi) \cap \Theta \ne \emptyset \}  \]
so that, if we define
\begin{equation}\label{eq:divofmutheta}
T_n =T'_n\cap \Kum_n(A) = \{\xi\in \Kum_n(A) \,|\, \operatorname{Supp}(\xi)\cap \Theta \ne \emptyset \} 
\end{equation}
we have that $T_n \sim \mu_n(\Theta)$. Now consider the singular Kummer variety $\Sigma_n(A) = \{[a_0,\dots,a_n] \in A^{(n+1)} \,|\, a_0+\dots+a_n \sim 0 \}$. The theorem of the square for abelian varieties shows that if $[a_0,\dots,a_n]\in A^{(n+1)}$, then
\[ \Theta_{a_0}+\dots+\Theta_{a_n} \sim \Theta_{a_0+\dots+a_n} + n\Theta \sim (n+1)\Theta \]
Hence, there is a map
\[ u \colon \Sigma_n(A) \longrightarrow |(n+1)\Theta|,\qquad [a_0,\dots,a_n] \mapsto \Theta_{a_0}+\dots+\Theta_{a_n} \]
and we can consider the composition with the Hilbert-Chow morphism $\operatorname{HC}\colon \Kum_n(A) \to \Sigma_n(A)$. We prove now Theorem A from the introduction

\begin{Thm}[Theorem A]\label{thm:mapmutheta} 
With the previous notation, it holds that
\begin{enumerate}
\item The map $\operatorname{HC}\circ u \colon \Kum_n(A) \to |(n+1)\Theta|$ coincides with the map $\varphi_{\mu_n(\Theta)}$ defined  by the complete linear system $|\mu_n(\Theta)|$. 
\item The map induces a set-theoretic inclusion of $\Sigma_n(A)$ inside $|(n+1)\Theta|$, which is a closed embedding when $n=1,2$.
\end{enumerate}
\end{Thm}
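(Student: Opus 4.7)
The plan is to extend the argument from the proof of \Cref{thm:kummerduality}.(1). First I would pull back a natural hyperplane. Take $H\subseteq|(n+1)\Theta|$ to be the hyperplane of divisors passing through $0\in A$. Since $\Theta$ is symmetric, $0\in\Theta_a$ iff $a\in\Theta$, so
\[ u^{-1}(H) \;=\; \{[a_0,\dots,a_n]\in\Sigma_n(A) \,|\, a_i\in\Theta \text{ for some } i\},\]
and pulling back through $\operatorname{HC}$ gives exactly the divisor $T_n$ of \Cref{eq:divofmutheta}. Since $T_n\sim\mu_n(\Theta)$, this shows $(u\circ\operatorname{HC})^*\mathcal{O}(1)\sim\mu_n(\Theta)$, so $u\circ\operatorname{HC}$ is induced by a sublinear system of $|\mu_n(\Theta)|$.

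To conclude that this sublinear system is the complete one, I would compute $h^0(\Kum_n(A),\mu_n(\Theta))$ using \Cref{thm:ogradytheta}. The line bundle $\Theta$ on $A$ is of type $(1,1)$, so $d_1=d_2=1$ are coprime to $n+1$, and $\mu_n(\Theta)$ is primitive since $\Theta$ is a principal polarization, hence primitive in $\NS(A)$. It is globally generated (and so nef) as the pullback of $\mathcal{O}(1)$ along the morphism $u\circ\operatorname{HC}$, and Fujiki's relation gives $\int\mu_n(\Theta)^{2n} = (2n-1)!!(n+1)\cdot 2^n >0$, so it is also big. Since $q(\mu_n(\Theta),\mu_n(\Theta))=(\Theta^2)=2$, \Cref{thm:ogradytheta} ensures that $H^0(\Kum_n(A),\mu_n(\Theta))$ is the Schr\"odinger representation of $\mathcal{H}(n+1,n+1)$, hence of dimension $(n+1)^2=h^0(A,(n+1)\Theta)=\dim|(n+1)\Theta|+1$. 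This proves (1).

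For the set-theoretic inclusion in (2), I would observe that each $\Theta_a$ is irreducible (being the image of the curve $C$), and that two translates satisfy $\Theta_a=\Theta_b$ iff $a=b$, since $K(\Theta)=\{0\}$ for a principal polarization. Hence the decomposition of $\sum_i\Theta_{a_i}$ into irreducible components uniquely recovers the unordered tuple $\{a_0,\dots,a_n\}$, giving the injectivity of $u$ on $\Sigma_n(A)$. For the upgrade to a closed embedding in the cases $n=1,2$, I would invoke the classical embedding of $\Sigma_1(A)$ as the Kummer quartic in $\Pd^3$ (as recalled in \Cref{thm:kummerduality}) for $n=1$, and the representation-theoretic argument of \cite{BMT2021} for $n=2$. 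The main delicate point in the whole proof is the computation of $h^0$, but this is bypassed cleanly by the theta group machinery of \Cref{thm:ogradytheta}; checking its hypotheses, in particular the bigness and nefness of $\mu_n(\Theta)$, comes essentially for free once the morphism $u\circ\operatorname{HC}$ has been exhibited.
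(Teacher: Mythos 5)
Your hyperplane-pullback computation in part (1) and your part (2) follow the paper's argument, and your use of O'Grady's theorem (\Cref{thm:ogradytheta}) in place of the Riemann--Roch theorem for generalized Kummers to get $h^0(\Kum_n(A),\mu_n(\Theta))=(n+1)^2$ is a legitimate variant (the hypotheses --- primitivity, coprimality of the type with $n+1$, bigness and nefness --- are checked correctly). However, the final step of part (1) has a genuine gap: from $(u\circ\operatorname{HC})^*\mathcal{O}(1)\sim\mu_n(\Theta)$ and $h^0(\mu_n(\Theta))=\dim|(n+1)\Theta|+1$ you conclude ``this proves (1)''. It does not. The morphism $u\circ\operatorname{HC}$ is induced by the \emph{image} $V$ of the restriction map $H^0(|(n+1)\Theta|,\mathcal{O}(1))\to H^0(\Kum_n(A),\mu_n(\Theta))$, and equality of the dimensions of the two ambient spaces says nothing about whether $V$ is all of $H^0(\Kum_n(A),\mu_n(\Theta))$: if the image of $u\circ\operatorname{HC}$ were contained in a hyperplane, the restriction map would have a kernel, $V$ would be a proper subsystem, and the map would be a linear projection of $\varphi_{\mu_n(\Theta)}$ composed with a linear embedding, not $\varphi_{\mu_n(\Theta)}$ itself. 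So one must prove that the image is nondegenerate, and this is exactly what the second half of the paper's proof does: $u$ is equivariant for the natural $A[n+1]$-actions on $\Sigma_n(A)$ and on $|(n+1)\Theta|$, hence the linear span of the image is an invariant subspace, and there are no proper invariant subspaces because $H^0(A,(n+1)\Theta)$ is the irreducible Schr\"odinger representation of $\mathcal{H}(n+1,n+1)$.

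The machinery you already invoke can close the gap: the restriction map $H^0(A,(n+1)\Theta)^{\vee}\to H^0(\Kum_n(A),\mu_n(\Theta))$ is nonzero and, using the $A[n+1]$-equivariance of $u\circ \operatorname{HC}$, can be made equivariant for the two Heisenberg actions (the one on the target being O'Grady's); since by your own application of \Cref{thm:ogradytheta} both sides are irreducible $\mathcal{H}(n+1,n+1)$-representations of dimension $(n+1)^2$, Schur's lemma forces it to be an isomorphism, which is precisely the statement that $u\circ\operatorname{HC}=\varphi_{\mu_n(\Theta)}$. As written, though, your proof conflates ``the complete linear system has the same dimension as the target'' with ``the map is given by the complete linear system'', and that step --- the heart of the paper's argument --- must be supplied.
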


\begin{proof}
\begin{enumerate}
\item Consider the hyperplane in $H_0\subseteq |(n+1)\Theta|$ consisting of all divisors passing through $0\in A$. We see that $u([a_0,\dots,a_n])\in H_0$ if and only if $0\in \Theta_{a_0}+\dots+\Theta_{a_n}$, which means that at least one of the $a_i$ is contained in $\Theta_0=\Theta$ by \Cref{lem:thetafacts}.(4). In other words $(\operatorname{HC}\circ u)^*H_0 \sim T_n$, as in \Cref{eq:divofmutheta}, 
so that $(\operatorname{HC}\circ u)^*\sO(1) \cong \mu_n(\Theta)$. Furthermore, we also see from the Riemann-Roch theorem for generalized Kummers \cite{Nieper03}, that $\dim |\mu(\Theta)| = (n+1)^2-1 = \dim |(n+1)\Theta|$. Hence, if we can show that the image of $\operatorname{HC}\circ u$ is not contained in any hyperplane of $|(n+1)\Theta|$, we get that $\operatorname{HC}\circ u = \varphi_{\mu_n(\Theta)}$. To do so, we will use the action of the Heisenberg group $\mathcal{H}(n+1,n+1)$: the map $u$ is equivariant with respect to the natural action of $A[n+1]$ on both sides: if $b\in A[n+1]$
\[ u(t_b([a_0,\dots,a_n])) =  \sum \Theta_{a_0+b} = t_b \left( \sum \Theta_{a_i} \right)  = t_b(u([a_0,\dots,a_{n+1}]))\]
Hence, the linear span of the image of $\operatorname{HC}\circ u$  must be an $A[n+1]$-invariant subspace of $|(n+1)\Theta|$. There is no proper invariant subspaces, because it would correspond to a proper invariant subspace for the action of $\mathcal{H}(n+1,n+1)$ on $H^0(A,(n+1)\Theta)$, but this is the Schr\"odinger representation which is irreducible.
\item The map $u\colon \Sigma_n(A) \hookrightarrow |(n+1)\Theta|$ is injective  because the irreducible components with multiplicities of an effective divisor are uniquely determined. The fact that this is also an embedding for $n=1$ is \Cref{thm:kummerduality}.(1). If instead $n=2$, one proceeds via vector bundles of rank three on $C$. The argument is known, but we write it down for completeness: recall the map $\Phi_3\colon \mathcal{SU}_C(3) \to |3\Theta|$ from \Cref{eq:mapfromSU} and its ramification locus $\mathcal{R} = \{  E\in \mathcal{SU}_C(3) \,|\, \iota^* E^{\vee} \cong E \}$. The natural map $j\colon\Sigma_2(A) \to \mathcal{R}$ given by $[a,b,c]\mapsto \sO_C(a)\oplus \sO_C(b)\oplus \sO_C(c)$ is an isomorphism onto the image (see \cite[Remark 2.3]{BMT2021}). Since $\Phi_3$ is a double cover, the restriction $\Phi_{3|\mathcal{R}}$ is an isomorphism onto the image, and by construction we have $u = \Phi_3\circ j$, so that $u$ is also an isomorphism onto the image.
\end{enumerate}
\end{proof}

\begin{Rem}
The theorem of the square also gives a quick proof of \cite[Proposition 6.1]{BMT2021}: consider the embedding $\varphi_{n\Theta}\colon  A \hookrightarrow \PP(H^0(A,n\Theta))$ for $n\geq 3$, take $a_0,\dots,a_n\in A$ and the translates $\Theta_{a_0},\dots,\Theta_{a_n}$. We show that $\Theta_{a_0}+\dots+\Theta_{a_n}$ is contained in a hyperplane in $\PP(H^0(A,n\Theta))$ if and only if $a_0+\dots+a_n\sim 0$. Observe that $\Theta_{a_0}+\dots+\Theta_{a_n}$ is contained in a hyperplane  if and only if $h^0(A,(n+1)\Theta-\Theta_{a_0}-\dots-\Theta_{a_n})>0$. The theorem of the square gives that
\[ (n+1)\Theta-\Theta_{a_0}-\dots-\Theta_{a_n} \sim \Theta-\Theta_{a_0+\dots+a_n}  \]
and this is effective if and only if $a_0+\dots+a_n\sim 0$.
\end{Rem}

\section{A covering of $\Kum_2(A)$ via $\Kum_1(A)$}\label{sec: covering}

We now want to construct a cover of $\Kum_2(A)$ via Kummer K3 surfaces. All the constructions of this section are valid for an \emph{arbitrary} abelian surface $A$, not only a Jacobian. 

\begin{Ex}[Kummer K3 surfaces]\label{ex:kummerK3}
	The variety $\Kum_1(A)$ is the usual K3 Kummer surface associated to $A$. More precisely, one considers the rational map
	\[ A \dashrightarrow \Kum_1(A); \quad  a \mapsto \{a,-a\}, \]
	This is $\iota$-invariant, and it can be resolved by blowing-up $A$ at the set $\operatorname{Fix}(\iota) = A[2]$ of two-torsion points. Furthermore, the involution $\iota$ lifts to an involution on $\operatorname{Bl}_{A[2]}A$, that we denote by the same name. Then $\Kum_1(A) \cong (\operatorname{Bl}_{A[2]}(A))/\iota$, and this is the Kummer K3 surface associated to $A$. The 16 exceptional divisors $E_{\epsilon},\epsilon \in A[2]$ of the blow-up correspond to the nonreduced schemes of lenght two supported at the points in $A[2]$.  Something similar holds for any $K_{1,c} = \overline{s}^{-1}(-c) \subseteq A^{[2]}$, see \eqref{eq:def Kn,a} for the definition: the rational map
	\[ A \dashrightarrow K_{1,c};  \quad b \mapsto \{b,-b-c\} \]
	factors through the involution $\iota_c = \iota\circ t_c $, and it can be resolved by blowing up $A$ at the set $\operatorname{\Fix}(\iota\circ t_c) = [2]^{-1}(-c) = \{b\in A \,|\, 2b+c=0\}$. The involution lifts to an involution with the same symbol on the blow-up, and $K_{1,c} \cong (\operatorname{Bl}_{[2]^{-1}(-c)} A) / \iota_c$. The 16 exceptional divisors $E_{\epsilon},\epsilon \in [2]^{-1}(-c)$  correspond to the nonreduced schemes of length two supported on the points of $[2]^{-1}(-c)$.
\end{Ex}

\subsection{A global cover} This can be generalized to fourfolds: there is a dominant rational map
 \[ v': A^{[2]} \dashrightarrow \Kum_2(A); \quad \zeta \mapsto \zeta \cup \{-\overline{s}(\zeta)\} \]
 Since the fibers of the summation map are isomorphic to Kummer K3 surfaces, this yields a rational cover of $\Kum_2(A)$ by these surfaces. Let's be more precise. As explained in  \cite[Proposition 2.2]{ES98}  the rational map
\[ u'\colon A^{[2]} \times A \dashrightarrow A^{[3]}, \qquad (\zeta,a) \mapsto \zeta \cup \{a\} \]
is resolved by blowing up along the incidence variety $A^{[2,1]} = \{(\zeta,a) \,|\, a\in \zeta\}$:
\[ \beta'\colon \operatorname{Bl}_{A^{[2,1]}}(A^{[2]}\times A) \longrightarrow A^{[2]}\times A\]
This blow up is naturally isomorphic to the nested Hilbert scheme 
\[A^{[2,3]} = \{(\zeta,\xi) \in A^{[2]}\times A^{[3]} \,|\, \zeta \subseteq \xi\}.\] 
More precisely, one has an identification 
 \[
 (\beta')^{-1}(\zeta,a) \cong \{(\zeta,\xi) \in A^{[2]}\times A^{[3]} \,|\, \zeta\subseteq \xi, \overline{s}(\xi)-\overline{s}(\zeta) = a \}
 \] 
We also have the closed embedding 
\[ j'\colon A^{[2]} \hookrightarrow A^{[2]}\times A; \quad \zeta \mapsto (\zeta,-\overline{s}(\zeta)) \]
and the nested Kummer fourfold
\[\Kum^{[2,3]}(A) = \{(\zeta,\xi) \in A^{[2]}\times \Kum_2(A) \,|\, \zeta\subseteq \xi\} \]
fits in the following commutative diagrams, where the first commutative square is cartesian.
\begin{equation}\label{eq:kummercoverglobal}
	\begin{tikzcd}
		\Kum^{[2,3]}(A) \ar[r,hook,"j"]\ar[dd,swap,"\beta"] & A^{[2,3]} \ar[rd,"u"]\ar[dd,swap,"\beta'"]\\
		& & A^{[3]}\\
		A^{[2]}\ar[r,hook,"j'"] & A^{[2]}\times A \ar[ru,dashed,swap,"u' "]
	\end{tikzcd}  
	\quad 
	\begin{tikzcd}
		 &\Kum^{[2,3]}(A) \ar[rd,"v = u\circ j"]\ar[dd,swap,"\beta"]\\
		 & & \Kum_2(A)\\
		 &A^{[2]} \ar[ru,dashed,swap,"v' = j'\circ u' "]
	\end{tikzcd} 	 
\end{equation}

\begin{Lem}
	With the previous notation, set $Z = j'^{-1}(A^{[2,1]})$.
	\begin{enumerate}
		\item $Z\subseteq A^{[2]}$ is  isomorphic to $\operatorname{Bl}_{A[3]}A$.
		\item It holds that  $\Kum^{[2,3]}(A) = \operatorname{Bl}_{Z_2}A^{[2]}$, in particular, it is smooth and irreducible and $\beta$ is birational. 
	\end{enumerate}
\end{Lem}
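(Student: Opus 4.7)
For part (1), the plan is to describe $Z$ pointwise via \Cref{prop:hilb2abvar} and then build an explicit isomorphism $\operatorname{Bl}_{A[3]}A\xrightarrow{\sim}Z$. A reduced element $\{a,b\}\in A^{[2]}$ lies in $Z$ iff $-(a+b)\in\{a,b\}$, i.e.\ $2a+b=0$ or $a+2b=0$; a nonreduced element $(a,v)$ lies in $Z$ iff $-2a=a$, i.e.\ $a\in A[3]$, with $v$ arbitrary. This description strongly suggests looking at the rational map $\phi\colon A\dashrightarrow A^{[2]}$, $a\mapsto\{a,-2a\}$, which is a morphism on $A\setminus A[3]$ and should extend across the nine points of $A[3]$ after blowing them up. Indeed, for $a_0\in A[3]$ and a tangent curve $a(t)=a_0+tv+O(t^2)$, one computes $-2a(t)=a_0-2tv+O(t^2)$ using $3a_0=0$, so the two colliding points separate along direction $v$, yielding the nonreduced scheme $(a_0,v)$ in the limit. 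Thus $\phi$ extends to a morphism $\tilde\phi\colon\operatorname{Bl}_{A[3]}A\to A^{[2]}$ sending the exceptional fiber over $a_0$ isomorphically onto the $\mathbb{P}^1$ of nonreduced schemes at $a_0$. Injectivity on $A\setminus A[3]$ amounts to the observation that $\{a,-2a\}=\{a',-2a'\}$ with $a\neq a'$ forces $3a'=0$; combined with the tangent analysis above, $\tilde\phi$ is a set-theoretic bijection onto $Z$. Since both source and image are smooth of dimension $2$, it will be an isomorphism as soon as the differential is injective on the exceptional locus, which reduces to noting that the two maps $\operatorname{id},-2\cdot\colon T_{a_0}A\to T_{a_0}A$ are distinct.

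For part (2), the plan is to exploit that the first square of \eqref{eq:kummercoverglobal} is cartesian, whence
\[ \Kum^{[2,3]}(A)\;\cong\;A^{[2,3]}\times_{A^{[2]}\times A}A^{[2]}\;=\;\operatorname{Bl}_{A^{[2,1]}}(A^{[2]}\times A)\times_{A^{[2]}\times A}A^{[2]}. \]
The subvarieties $A^{[2,1]}$ and $j'(A^{[2]})$ of the smooth $6$-fold $A^{[2]}\times A$ are both smooth of codimension $2$, intersecting exactly along $Z$, which by (1) is itself smooth of the expected dimension $2$. Under these transversality conditions, the fiber product on the right equals the strict transform of $j'(A^{[2]})$ in the blow-up, which by the standard pull-back formula for blow-ups along a cleanly meeting smooth center is canonically isomorphic to $\operatorname{Bl}_Z A^{[2]}$. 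Smoothness and irreducibility of $\Kum^{[2,3]}(A)$ then follow from the fact that $A^{[2]}$ and $Z$ are both smooth and irreducible, and $\beta$ is birational as a blow-up along a proper closed subscheme.

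The main technical point I expect to have to spell out is the clean-intersection hypothesis needed to conclude that the strict transform of $j'(A^{[2]})$ in $\operatorname{Bl}_{A^{[2,1]}}(A^{[2]}\times A)$ equals $\operatorname{Bl}_Z A^{[2]}$, rather than some non-reduced thickening. This is a local question near each $\zeta\in Z$: one should pick étale coordinates on $A^{[2]}\times A$ adapted to both smooth subvarieties and verify that the pull-back of the ideal of $A^{[2,1]}$ along $j'$ cuts out $Z$ with its reduced structure. The codimensional match and smoothness of $Z$ already established in (1) make this plausible, but the verification should be done with care at the exceptional locus of $Z\cong\operatorname{Bl}_{A[3]}A$, where $j'$ meets $A^{[2,1]}$ along the locus of nonreduced schemes supported at $3$-torsion, a situation best analysed directly using the local model of $A^{[2]}$ near a nonreduced point.
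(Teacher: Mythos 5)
Your overall strategy is sound, and the two facts you build on are true. Part (1) is essentially the paper's own argument in different clothing: the paper realizes $Z$ as the image in $A^{[2]}$ of the strict transform of $Z_2=\{a+2b=0\}\subseteq A^2$ inside $\operatorname{Bl}_{\Delta_A}(A^2)$, which is the same computation as your resolution of $a\mapsto\{a,-2a\}$. Part (2) is where you genuinely diverge: the paper embeds $\operatorname{Bl}_Z A^{[2]}$ into the fiber product as a strict transform and concludes tersely, while you reduce everything to a transversality statement; your route is more systematic, but the two points you treat lightly are real obligations, and one of them is justified by a criterion that is false as stated.

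First, in part (1), injectivity of the differential of $\widetilde{\phi}$ along the exceptional locus does \emph{not} reduce to the distinctness of $\operatorname{id}$ and $-2\cdot$ on $T_{a_0}A$. Compare with the map $a\mapsto\{a,-a\}$: the two branch derivatives $\operatorname{id}$ and $-\operatorname{id}$ are distinct, and the rational map resolves on $\operatorname{Bl}_{A[2]}A$, but the resolved morphism is the double cover of $\Kum_1(A)\subseteq A^{[2]}$ branched along the sixteen exceptional curves, so its differential has rank one there. What saves you in the case at hand is a \emph{second} scalar condition: not only $\operatorname{id}-(-2\cdot)=3\cdot\neq 0$ (which yields the resolution and immersivity along each exceptional $\PP^1$), but also $\operatorname{id}+(-2\cdot)=-\operatorname{id}\neq 0$. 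Concretely, $\overline{s}\circ\widetilde{\phi}$ equals $(-1)_A$ composed with the blow-down, so the image under $d\widetilde{\phi}$ of a direction transverse to an exceptional curve is not killed by $d\overline{s}$, whereas the image of an exceptional direction is nonzero and lies in $\ker d\overline{s}$; the two are therefore independent. Both scalars are needed, and for $a\mapsto\{a,-a\}$ the second one vanishes.

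Second, in part (2), ``both smooth of codimension $2$, intersecting exactly along $Z$, which is smooth of the expected dimension'' does not by itself give transversality: two smooth fourfolds in a smooth sixfold can meet set-theoretically along a smooth surface and still be tangent along it (locally $\{x_5=x_6=0\}$ and $\{x_5=x_1^2,\;x_6=x_2\}$), in which case the scheme-theoretic intersection is non-reduced and the fiber product with the blow-up is not the blow-up of the reduced intersection. So the verification you defer is not a formality; it is the entire content of (2). It does hold. At a reduced $\zeta=\{a,b\}\in Z$, work in the étale chart $A^{[2]}\times A\simeq A\times A\times A$, where $j'(A^{[2]})=\{(x,y,-x-y)\}$ and $A^{[2,1]}=\{(x,y,x)\}$: the tangent spaces meet exactly in $\{(v,-2v,v)\}$, which is two-dimensional, so the sum is all of the tangent space. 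At a nonreduced $\zeta=(a_0,v)$ with $a_0\in A[3]$, use the identification of $A^{[2,1]}$ with $\operatorname{Bl}_{\Delta_A}(A\times A)$ embedded by $(u,w;[\ell])\mapsto(\{u,w\},u)$, take coordinates $s=u+w$, $d=u-w$ in a blow-up chart, and compare tangent spaces: on $A^{[2,1]}$ the $A$-coordinate is $(s+d)/2$, while on $j'(A^{[2]})$ it is $-s$ (using $3a_0=0$); the difference of the corresponding tangent vectors produces the invertible scalar $\tfrac12-(-1)=\tfrac32$, i.e.\ once again $3\neq 0$, and one checks the spans fill out all six dimensions. With this computation supplied your part (2) closes, and it also repairs a gap shared with the paper in part (1): transversality shows that the scheme-theoretic preimage $j'^{-1}(A^{[2,1]})$ is smooth, hence reduced, so the Lemma's $Z$ really is the variety you describe pointwise via \Cref{prop:hilb2abvar}.
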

\begin{proof}
\begin{enumerate}
 \item By definition $Z = \{\zeta \in A^{[2]} \,|\, -\overline{s}(\zeta) \in \operatorname{Supp}(\zeta) \}$. If $a,b\in A$ are distinct, then $\{a,b\}\in Z$ if and only if $2a+b=0$ or $2b+a=0$. If instead $(a,v)\in A^{[2]}$ is nonreduced, we see that $(a,v)\in Z$ if and only if $a\in A[3]$. Consider then the subvariety $Z_2 = \{(a,b)\in A^2\,|\, a+2b=0\}$, which is isomorphic to $A$. The preimage $\widetilde{Z}_2 \subseteq \operatorname{Bl}_{\Delta_A} (A^2)$ is isomorphic to the blow-up of $A$ at $A[3]$ and under the double cover $\operatorname{Bl}_{\Delta_A}(A^2) \to A^{[2]}$, we get an isomorphism $\widetilde{Z}_2 \to Z$.
\item There is a natural map $\operatorname{Bl}_{Z}A^{[2]} \to \Kum^{[2,3]}(A)$ which is an isomorphism outside $\beta^{-1}(Z)$. Furthermore the composition $\operatorname{Bl}_{Z}A^{[2]} \to \Kum^{[2,3]}(A) \to A^{[2,3]}$ is a closed embedding, so we are done.
\end{enumerate}
\end{proof}

We consider the map $v\colon \Kum_{[2,3]}(A) \to \Kum_2(A)$ over the open set
\[ U_2(A)\coloneqq \Kum_2(A)\setminus \cup_{a\in A[3]} B_a \]
of schemes supported in at least two distinct points.  Set also $U^{[2,3]}(A) \coloneqq v^{-1}(U_2(A))$: the cover $v\colon U^{[2,3]}(A) \to U_2(A)$ is finite, flat (thanks to miracle flatness) of degree three, but it is not Galois. To determine the Galois cover, observe that there is a natural faithful action of $\mathfrak{S}_3$ on $A^2$ generated by the following two automorphisms, of order two and three respectively:
\[ (a,b) \mapsto (b,a), \qquad (a,b)\mapsto (b,-a-b).  \]
Notice that the fixed point of this action are exactly those of the form $(a,a),a\in A[3]$. Consider the subsets $Z_1 = \{(a,b)\in A^2 \,|\, 2a+b\sim 0\}$ and $Z_2 = \{(a,b) \,|\, a+2b\sim 0\}$ in $A^2$, as well as the diagonal $\Delta_A$. It is straightforward to see that $Z_1\cup Z_2 \cup \Delta_A$ is invariant for the action of $\mathfrak{S}_3$ and also that
\[ Z_1\cap Z_2 = Z_1 \cap \Delta_A = Z_2 \cap \Delta_A = \{(a,a)\,|\, a\in A[3]\}. \]
Let then $V = A^2\setminus \cup_{a\in A[3]}\{(a,a)\}$ and take $Z'_1=Z_1\cap V,Z'_2 = Z_2\cap V,\Delta'_A = \Delta_A\cap V$: these are three smooth pairwise disjoint codimension two varieties of $V$, so that the blow-up 
\[ \widetilde{U}_2(A) \coloneqq \operatorname{Bl}_{Z_1'\cup Z_2' \cup \Delta_A'}(V)\] 
is again smooth and it inherits the action of $\mathfrak{S}_3$ from $V$.

\begin{Prop}\label{prop:covers3}
There is a double cover $p\colon \widetilde{U}_2(A) \to U^{[2,3]}(A)$ such that the composed map
\[ v\circ p \colon \widetilde{U}_2(A) \longrightarrow U_2(A) \]
is the quotient by the action of $\mathfrak{S}_3$: $U_2(A)\cong \widetilde{U}_2(A)/\mathfrak{S}_3$.
\end{Prop}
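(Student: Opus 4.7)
The plan is to show that the rational map
\[
V \dashrightarrow \Kum_2(A), \qquad (a,b)\mapsto \{a, b, -a-b\},
\]
extends to a morphism on $\widetilde{U}_2(A)$ which is a finite Galois cover with group $\mathfrak{S}_3$, and then to exhibit $U^{[2,3]}(A)$ as the intermediate quotient by the stabiliser of the pair $\{a,b\}$. On the open set $V^{\circ}\coloneqq V\setminus(Z_1'\cup Z_2'\cup \Delta_A')$ the three points of any triple are distinct, so the map lands in the open locus of reduced schemes in $U_2(A)$ and is étale of degree six. The $\mathfrak{S}_3$-action by permuting the ordered triple $(a,b,-a-b)$ realises this as a Galois cover.

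Next, I would check that the map extends across each component being blown up. Consider first $\Delta_A'$: the normal bundle of $\Delta_A$ in $A^2$ identifies canonically with $TA$ via $(u,v)\mapsto v-u$, hence the exceptional divisor of $\operatorname{Bl}_{\Delta_A'}V$ over a point $(a,a)\in \Delta_A'$ is canonically $\PP(T_a A)$, which is precisely the data of a tangent direction needed to lift the non-reduced triple $\{a,a,-2a\}$ to a point of $\Kum_2(A)$. The same computation with the first and third coordinates (respectively second and third) works for $Z_1'$ and $Z_2'$. Since $\mathfrak{S}_3$ permutes the three loci $Z_1', Z_2', \Delta_A'$ and preserves their union, its action lifts to $\widetilde{U}_2(A)$, and the extended morphism $q\colon \widetilde{U}_2(A)\to U_2(A)$ is $\mathfrak{S}_3$-invariant by construction.

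To identify $q$ with the $\mathfrak{S}_3$-quotient, I would argue that $q$ is a finite, surjective, $\mathfrak{S}_3$-invariant morphism of generic degree six, so it factors as $\widetilde{U}_2(A)\to \widetilde{U}_2(A)/\mathfrak{S}_3 \to U_2(A)$; the second map is birational and, since $U_2(A)$ is smooth hence normal, it is an isomorphism by Zariski's main theorem once one checks it is a set-theoretic bijection. For the set-theoretic bijection over the non-reduced part of $U_2(A)$, a point $(a,v)\cup\{b\}\in U_2(A)$ with $a\in A,\,v\in \PP(T_aA),\,b\neq a,\,a+b+(-a-b)=0$ is hit by the $\mathfrak{S}_3$-orbit of the point $(a,a)\in \Delta_A'$ with tangent direction $v$ in the exceptional divisor (and the analogous description for the $Z_i'$ components). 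Finally, the stabiliser of the unordered pair $\{a,b\}$ in $\mathfrak{S}_3$ is $\langle \tau_{12}\rangle$, and $\tau_{12}$ swaps $Z_1'$ with $Z_2'$ while fixing $\Delta_A'$ setwise; hence the orbits of $\langle \tau_{12}\rangle$ on $\widetilde{U}_2(A)$ are in bijection with pairs $(\zeta,\xi)$ consisting of a length-two subscheme $\zeta\subseteq \xi$ of a triple $\xi\in U_2(A)$, which is exactly $U^{[2,3]}(A)$. This provides the desired double cover $p$ with $v\circ p = q$.

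The main obstacle is step two: the careful local verification, in analytic coordinates around a point of $\Delta_A'$ or $Z_i'$, that the blow-up indeed carries the correct tangent data to define the extension of $q$, and that the $\mathfrak{S}_3$-quotient of $\widetilde{U}_2(A)$ recovers $U_2(A)$ scheme-theoretically without introducing quotient singularities. The latter follows because at a generic point of each exceptional divisor the stabiliser $\langle \tau_{ij}\rangle\cong \ZZ/2$ acts as a reflection, whose quotient is smooth.
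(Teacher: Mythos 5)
Your overall strategy---extend the rational map $(a,b)\mapsto\{a,b,-a-b\}$ across the blow-up and then conclude via ``finite $+$ birational onto the smooth, hence normal, variety $U_2(A)$ implies isomorphism''---is the same as the paper's, and the final quotient argument matches the paper's closing step. The genuine gap is in how you produce the double cover $p$, which is the actual content of the proposition. You define $p$ by asserting that the $\langle\tau_{12}\rangle$-orbits on $\widetilde{U}_2(A)$ are ``in bijection with'' the pairs $(\zeta,\xi)\in U^{[2,3]}(A)$. A bijection on points does not yield a morphism of varieties: to identify $\widetilde{U}_2(A)/\langle\tau_{12}\rangle$ with $U^{[2,3]}(A)$ you first need an honest $\langle\tau_{12}\rangle$-invariant morphism $\widetilde{U}_2(A)\to U^{[2,3]}(A)$, and that morphism is precisely $p$, the object being constructed---so the argument is circular. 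The same issue already undermines your step two: matching exceptional directions in $\PP(T_aA)$ pointwise with non-reduced subschemes does not extend the rational map to a morphism into $\Kum_2(A)$, because a morphism to a Hilbert scheme requires a flat family of subschemes over $\widetilde{U}_2(A)$, not a pointwise assignment. You flag this as the ``main obstacle'' and propose local coordinate checks, but coordinates alone do not manufacture the family.

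The paper resolves both problems simultaneously with blow-up functoriality together with the Ellingsrud--Str\o mme identification $A^{[2,3]}\cong\operatorname{Bl}_{A^{[2,1]}}(A^{[2]}\times A)$ and the identification of $\Kum^{[2,3]}(A)$ as a blow-up of $A^{[2]}$: the partial blow-up $\operatorname{Bl}_{\Delta_A'}(V)$ maps to $A^{[2]}\times A$ via the double cover $\operatorname{Bl}_{\Delta_A}(A^2)\to A^{[2]}$ followed by $j'$; a local computation identifies the preimage of the incidence center with the union of the transforms of $Z_1'$ and $Z_2'$; blowing these up therefore induces, by the universal property of blow-ups, a morphism $\widetilde{U}_2(A)\to A^{[2,3]}$ landing in $U^{[2,3]}(A)$. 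That morphism is $p$, it is generically two-to-one by construction, and composing with $v$ gives the extension of your rational map for free; the $\mathfrak{S}_3$-quotient identification then follows exactly as you conclude. If you want to keep your ordering of the argument, the fix is still essentially this: build $p$ functorially (either as above, or as the map to the closed subscheme $\Kum^{[2,3]}(A)\subseteq A^{[2]}\times\Kum_2(A)$ assembled from the partial blow-down $\widetilde{U}_2(A)\to A^{[2]}$ and a map to $\Kum_2(A)$ obtained by the same blow-up functoriality), rather than inferring its existence from an orbit count.
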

\begin{proof}
The map $p\colon \widetilde{U}_2(A) \to U^{[2,3]}(A)$ should be defined on a general point $(a,b)$ as $(a,b)\mapsto (\{a,b\},\{a,b,-a-b\})$. To make this more precise, consider the partial blow-up $\operatorname{Bl}_{\Delta_A'}(V) \subseteq \operatorname{Bl}_{\Delta_A}(A^2)$: the composition of the double cover $\operatorname{Bl}_{\Delta_A}(A) \to A^{[2]}$ with the embedding $j'\colon A^{[2]} \hookrightarrow A^{[2]}\times A$ of \Cref{eq:kummercoverglobal} yields a map $p'\colon \operatorname{Bl}_{\Delta_A'}(V) \to A^{[2]} \times A$. A local computation  shows that the preimage $(p')^{-1}(Z)$ is the union of the two transforms of $Z_1',Z_2'$ inside $\operatorname{Bl}_{\Delta_A'}(V)$ and we have an induced map from $\operatorname{Bl}_{(p')^{-1}(Z)}(\operatorname{Bl}_{\Delta_A'}(V)) \cong  \widetilde{U}_2(A)$ to $\operatorname{Bl}_Z(A^{[2]}\times A) \cong A^{[2,3]}$. This is the double cover $p\colon \widetilde{U}_2(A) \to U^{[2,3]}(A)$ that we wanted. The composed map $v\circ p\colon \widetilde{U}^2(A) \to U_2(A)$ is defined on a general point $(a,b)$ as $(a,b) \mapsto \{a,b,-a-b\}$, hence it is invariant under the action of $\mathfrak{S}_3$.  This induces a finite and birational map $\widetilde{U}^2(A)/\mathfrak{S}_3\to U_2(A)$. Since $U_2(A)$ is smooth, this must be an isomorphism.
\end{proof}

\subsection{A cover by $\Kum_1(A)$-like surfaces}\label{sec: coomologia di Ka}

We consider again the diagram of \Cref{eq:kummercoverglobal}.

\begin{Lem}
		For any $c\in A$ consider the Kummer K3 surface $K_{1,c} = \overline{s}^{-1}(-c)\subseteq A^{[2]}$. 
		\begin{enumerate}
			\setcounter{enumi}{2}
			\item If $c\notin A[3]$, then $\beta^{-1}(K_{1,c}) \cong \operatorname{Bl}_{\{c,-2c\}} K_{1,c}$ is the blow up at a point outside the 16 exceptional divisors. 
			\item If $c\in A[3]$, then $\beta^{-1}(K_{1,c})$ is the union of two irreducible components: one isomorphic to $K_{1,c}$ and a component $T_c$ which is a $\P^1$-bundle over the exceptional divisor $E_{-c} \subseteq K_{1,c}$. 
		\end{enumerate}
\end{Lem}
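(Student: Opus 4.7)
\emph{Proof plan.} Since $\Kum^{[2,3]}(A)\cong \operatorname{Bl}_Z A^{[2]}$ and $\beta$ is the blow-down map, the scheme-theoretic preimage $\beta^{-1}(K_{1,c})$ is controlled entirely by the intersection $Z\cap K_{1,c}$. My first step is to compute this intersection directly: combining $Z=\{\zeta\colon -\overline{s}(\zeta)\in\operatorname{Supp}(\zeta)\}$ with $K_{1,c}=\overline{s}^{-1}(-c)$, an element $\zeta\in Z\cap K_{1,c}$ must satisfy $\overline{s}(\zeta)=-c$ and $c\in\operatorname{Supp}(\zeta)$. A reduced $\zeta=\{c,b\}$ then forces $b=-2c$, which is genuinely reduced iff $c\notin A[3]$; a nonreduced $\zeta=(c,v)$ with $v\in\PP(T_cA)$ requires $2c=-c$, i.e.\ $c\in A[3]$. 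Hence $Z\cap K_{1,c}$ is the single reduced point $\{c,-2c\}$ when $c\notin A[3]$ (which lies off the sixteen exceptional curves of $K_{1,c}$, as they parametrize nonreduced schemes), whereas for $c\in A[3]$ it coincides with the entire exceptional curve $E_{-c}\subseteq K_{1,c}$ of nonreduced schemes supported at $c$.

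In case (3), the intersection has the expected codimension $4=\operatorname{codim}Z+\operatorname{codim}K_{1,c}$, so it is proper. A direct check of tangent spaces (using the explicit Abel--Jacobi parametrization of Section~\ref{sec:hilbertsquares}) yields transversality at the isolated point, so $\beta^{-1}(K_{1,c})$ coincides with the strict transform, giving $\operatorname{Bl}_{\{c,-2c\}}K_{1,c}$. In case (4) the intersection is excess: $E_{-c}$ has codimension $3$ in $A^{[2]}$ instead of the expected $4$. The scheme-theoretic preimage should then split into two components, namely: the strict transform $\widetilde{K}_{1,c}$, which is isomorphic to $K_{1,c}$ because blowing up a codimension-$1$ subvariety of a smooth surface is trivial; and an excess component $T_c$, namely the restriction to $E_{-c}\subseteq Z$ of the exceptional divisor $\PP(N_{Z/A^{[2]}})\to Z$ of $\beta$. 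Since $Z$ is smooth of codimension $2$ in $A^{[2]}$ (as $Z\cong \operatorname{Bl}_{A[3]}A$ is smooth), this restriction is a $\PP^1$-bundle over $E_{-c}$.

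The main obstacle is the scheme-theoretic verification in case (4): namely, that $\beta^{-1}(K_{1,c})$ has \emph{exactly} these two reduced irreducible components $\widetilde{K}_{1,c}$ and $T_c$, with no embedded primes and no further components, and that $T_c\to E_{-c}$ is globally a $\PP^1$-bundle rather than an a priori singular $\PP^1$-fibration. The cleanest way to do this is a local analytic computation at a point of $E_{-c}$: using smoothness of $Z$, pick coordinates $(x_1,x_2,x_3,x_4)$ on $A^{[2]}$ in which $Z=V(x_1,x_2)$, then write out the ideal of $K_{1,c}$ using the local form of $\overline{s}$ near a nonreduced scheme coming from the Brian\c{c}on model of \Cref{subsec:briancon}. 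Expanding in the two standard affine charts of $\operatorname{Bl}_Z A^{[2]}$ will separate the strict transform from the excess locus in one chart, and in the other chart exhibit $T_c$ as a trivial $\PP^1$-bundle; patching these two chart descriptions along $E_{-c}$ yields the asserted global decomposition.
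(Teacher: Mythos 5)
Your proposal is correct and takes essentially the same route as the paper: both proofs compute $Z\cap K_{1,c}$ (a single reduced point $\{c,-2c\}$ when $c\notin A[3]$, and the curve of nonreduced schemes supported at $c$ when $c\in A[3]$ — note this curve is $E_c$, the subscript $-c$ in the statement being a slip that your description implicitly corrects) and then read off $\beta^{-1}(K_{1,c})$ as the blow-up at that point, respectively as the union of the strict transform (isomorphic to $K_{1,c}$ since the center is a Cartier divisor there) and $T_c=\beta^{-1}(E_c)$. The only difference is cosmetic: you obtain the $\PP^1$-bundle structure of $T_c$ from the exceptional divisor $\PP(N_{Z/A^{[2]}})$ of the smooth center $Z$, whereas the paper cites \cite{ES98} via the fact that length-two schemes are local complete intersections; the transversality and scheme-theoretic checks you flag (but do not carry out) are also left implicit in the paper's proof.
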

\begin{proof} 
	\begin{enumerate}
	
		\item  Since  $c\notin A[3]$, we see that $K_{1,c}\cap Z = \{c,-2c\}$ so that $\beta^{-1}(K_{1,c})$ is the blow up of $K_{1,c}$ at $\{c,-2c\}$.
		\item If $c\in A[3]$, we see that $K_{1,c}\cap Z = E_{c}$, where $E_{c} \subseteq K_{1,c}$ is the divisor parametrizing nonreduced subschemes of length two supported at the point $c$. Then $\beta^{-1}(K_{1,c})$ is the union of $T_c = \beta^{-1}(E_{c})$ and the proper transform of $K_{1,c}$. The latter one is isomorphic to $K_{1,c}$. Furthermore, since $\eta \in E_{c}$ is a locally complete intersection, \cite{ES98} shows that the fibers of $T_c \to E_{c}$ are isomorphic to $\PP^1$. 
	\end{enumerate}\end{proof}
For any $c\in A$ we define:
\[
\widetilde{K}_{1,c} := 
\begin{cases}
	\beta^{-1}(K_{1,c}), & c\notin A[3] \\
	\text{ component of } \beta^{-1}(K_{1,c}) \text{ isomorphic to } K_{1,c}, & c\in A[3].
\end{cases}
\]
In particular, if $c\in A[3]$, then $\beta^{-1}(K_{1,c}) = \widetilde{K}_{1,c} \cup T_{c}$. We also consider the loci
\begin{equation}\label{eq:locipt} 
X_{c}:= \{\xi \in \Kum_2(A) \,|\, c \in \operatorname{Supp}(\xi) \} 
\end{equation}
of schemes whose support contains $c$. 

\begin{Prop}\label{prop: kum1 in kum2} With the previous notation, it holds that:
	\begin{enumerate}
		\item If $c\notin A[3]$ then $v\colon \widetilde{K}_{1,c} \to X_{c}$ is an isomorphism.
		\item If $c\in A[3]$, then $v\colon \widetilde{K}_{1,c} \to \Kum_2(A)$ is an isomorphism onto the image, and the map $v\colon T_{c} \to B_{c}$ is the resolution of singularities of the Brian\c{c}on surface of $c$. Furthermore, $X_{c} = v(\widetilde{K}_{1,c}) \cup B_{c}$.
	\end{enumerate}
\end{Prop}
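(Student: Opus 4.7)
The strategy is to analyze the morphism $v\colon \beta^{-1}(K_{1,c}) \to \Kum_2(A)$ fiber by fiber, combining a classification of elements of $X_c$ with a local tangent analysis at the exceptional loci identified in the preceding lemma.

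First I classify $\xi \in X_c$ according to the nature of its support at $c$. Either $\operatorname{Supp}(\xi) = \{c\}$, which forces $3c = 0$ (so is possible only in case (2)) and gives $\xi \in B_c$, or $\xi$ has support containing a point distinct from $c$, in which case removing the residue at $c$ produces a length-$2$ subscheme $\zeta \subset \xi$ with $\overline{s}(\zeta) = -c$, i.e.\ $\zeta \in K_{1,c}$. A short enumeration (reduced, fat at $c$, fat at another point) shows $\zeta$ is uniquely determined by $\xi$; moreover in case (1) the only $\xi$ whose residual $\zeta$ happens to contain $c$ is the family $(c,v)\cup\{-2c\}$ for $v \in \PP(T_c A)$, whose unique preimage is $\zeta_0 = \{c, -2c\}$. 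Combined with the fiber description from the preceding lemma (which says $\beta$ has $\PP^1$-fibers precisely over $\zeta_0$ in case (1), and over $E_c$ in case (2)), this yields set-theoretic bijectivity of $v\colon \widetilde{K}_{1,c} \to X_c$ in case (1), bijectivity of $v|_{\widetilde{K}_{1,c}}$ onto its image in case (2), and the equality $X_c = v(\widetilde{K}_{1,c}) \cup B_c$ there.

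To upgrade bijectivity to an isomorphism, I must verify that the differential $dv$ is injective along the exceptional locus---over $\zeta_0$ in case (1), and along the (proper transform of) $E_c$ in case (2). The key local computation is that a tangent vector $(u,-u) \in T_{\zeta_0} K_{1,c}$ (the kernel of $d\overline{s}$) lifts to the first-order family $\zeta_\epsilon = \{c+\epsilon u,\,-2c-\epsilon u\}$ in $K_{1,c}$, whose image $\xi_\epsilon = \zeta_\epsilon \cup \{c\}$ converges in $A^{[3]}$ to the non-reduced scheme $(c,[u]) \cup \{-2c\}$. This produces a natural identification $\PP(T_{\zeta_0} K_{1,c}) \cong \PP(T_c A)$ matching the exceptional $\PP^1$ of $\widetilde{K}_{1,c}$ with the $\PP^1$ of target schemes. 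The transverse direction moves $\xi$ off this $\PP^1$ by smoothing it to a reduced scheme, so the two directions are linearly independent in $T_{\xi_0}\Kum_2(A)$ and $dv$ is injective. An entirely parallel computation handles $c \in A[3]$ along $E_c$. Being a proper, injective morphism with injective differential in characteristic zero, $v|_{\widetilde{K}_{1,c}}$ is a closed immersion, hence an isomorphism onto its image.

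For the resolution statement in (2), the fiber of $v\colon T_c \to B_c$ over $\xi \in B_c$ is the set of length-$2$ subschemes of $\xi$ supported at $c$. By \Cref{subsec:briancon}, this is a single point when $\xi$ is curvilinear and an entire $\PP^1$ when $\xi = O_c$ is the unique non-curvilinear scheme. Hence $v|_{T_c}$ is a bijective birational morphism on $B_c \setminus \{O_c\}$ and contracts the $\PP^1$-fiber over $O_c$ onto the singular point of $B_c$. Since $T_c$ is a $\PP^1$-bundle over $\PP^1$, it is smooth, so this realises $T_c$ as the minimal resolution of $B_c$. The main obstacle throughout is the tangent-direction identification of the exceptional $\PP^1$ of $\widetilde{K}_{1,c}$ with $\PP(T_c A)$: it is not automatic from the blow-up construction and requires the explicit first-order computation above; the remainder of the argument is a direct unpacking of the blow-up structure together with the case-by-case classification of schemes in $X_c$.
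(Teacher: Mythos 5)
Your argument is correct, but it takes a genuinely different route from the paper: the paper proves this proposition entirely by citation, quoting point (1) from \cite[page 9]{Hassett-Tschinkel-lagrangian-planes}, the isomorphism statement of point (2) from \cite[Lemma 4.2]{floccari2025}, and the resolution statement from \Cref{subsec:briancon}, with the final equality declared clear. You instead give a self-contained proof: classification of the schemes in $X_c$ by their behaviour at $c$, set-theoretic bijectivity of $v$ (with the exceptional $\PP^1$ of $\operatorname{Bl}_{\{c,-2c\}}K_{1,c}$ accounting exactly for the Hilbert--Chow fiber $\{(c,v)\cup\{-2c\}\,|\,v\in\PP(T_cA)\}$), injectivity of $dv$ along the exceptional locus, and then the standard fact that a proper, injective, unramified morphism is a closed immersion. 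What your route buys is independence from the two external references and an explicit identification of the map on the exceptional divisor, which the citations leave implicit; what it costs is that the first-order computations must really be carried out, and your write-up only sketches them. In case (1) your heuristic (``the transverse direction smooths $\xi$, the fiber direction does not'') can be made precise cleanly: the fiber direction lies in $\ker d(\mathrm{HC})$ while the smoothing direction has nonzero image under $d(\mathrm{HC})$, so they are independent.

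One point deserves a warning: the claim that the case $c\in A[3]$ is handled by an ``entirely parallel computation'' hides a real wrinkle. Near a point $(c,[u])$ of $E_c$, the surface $K_{1,c}$ is locally the quotient of $\operatorname{Bl}_0A$ by the involution $\iota_c$, so the naive transverse arc $\zeta_t=\{c+tu,\,c-tu\}$ satisfies $\zeta_{-t}=\zeta_t$: it is a double cover of its image and its image curve $\xi_t$, with ideal $(Y,\,X^3-t^2X)$ near $c$, has \emph{vanishing} derivative at $t=0$. One must instead use the quotient coordinates $(w,m)$ with $w=t^2$, in which the image scheme has ideal $(Y-mX,\,X^3-wX)$ and the two coordinate vector fields map to the independent homomorphisms $X^3\mapsto -X$ and $Y\mapsto X$ in $\Hom(\sI_{\xi_0},\sO_{\xi_0})$. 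With this correction your argument goes through, so the defect is one of detail rather than of substance, but as written the $A[3]$ case is not literally parallel to the other one.
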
 
\begin{proof}
Point (1) is in \cite[page 9]{Hassett-Tschinkel-lagrangian-planes}. The first statement of point (2) is in \cite[Lemma 4.2]{floccari2025} and the second statement is in \Cref{subsec:briancon}. The last statement is clear.
\end{proof}

In light of this proposition, we will abuse notation and we will denote also the image $v(\widetilde{K}_{1,c})$ with $\widetilde{K}_{1,c}$ as well. It is a surface $\widetilde{K}_{1,c} \subseteq \Kum_2(A)$.
We now want to study the associated map in cohomology
\begin{equation}\label{eq:restr_in_cohom} 
v_c^*\colon H^2(\Kum_2(A),\ZZ) \longrightarrow  H^2(\widetilde{K}_{1,c},\ZZ). 
\end{equation}
We state a general lemma 

\begin{Lem}\label{lem:pullbackinjective}
    Let $X$ be a hyperk\"ahler variety and $Y\subseteq X$ a smooth subvariety. The restriction map in cohomology
    \[ \rho\colon H^2(X,\ZZ) \to H^2(Y,\ZZ) \]
    is injective if and only if $Y$ is not isotropic and if the map $\rho\colon \NS(X) \to \NS(Y)$ is injective.
\end{Lem}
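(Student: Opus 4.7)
The plan is Hodge-theoretic. Since $Y\hookrightarrow X$ is a holomorphic inclusion of smooth compact K\"ahler manifolds, the restriction $\rho$ is a morphism of pure weight-$2$ Hodge structures. Consequently $K\coloneqq\ker\rho$ is a sub-Hodge structure of $H^2(X,\ZZ)$, and $K_\CC$ inherits a Hodge decomposition $K^{2,0}\oplus K^{1,1}\oplus K^{0,2}$ with $K^{p,q}=K_\CC\cap H^{p,q}(X)$. The whole argument will be about analyzing the possibilities for $K^{2,0}$.

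The key input is that $X$ being hyperk\"ahler forces $h^{2,0}(X)=1$: writing $H^{2,0}(X)=\CC\sigma$ for a nonzero holomorphic symplectic form $\sigma$, the subspace $K^{2,0}\subseteq\CC\sigma$ is either $0$ or all of $\CC\sigma$. Moreover, $K^{2,0}=\CC\sigma$ is equivalent to $\sigma\in\ker(\rho_\CC)$, i.e.\ to $\sigma|_Y=0$, which is exactly the condition that $Y$ is isotropic. This gives a clean dichotomy. In the isotropic case, $K^{2,0}\neq 0$, so in particular $K\neq 0$. In the non-isotropic case, $K^{2,0}=0$ forces $K_\CC\subseteq H^{1,1}(X)$, and the Lefschetz $(1,1)$ theorem (together with the fact that $\NS(X)$ is saturated inside the torsion-free lattice $H^2(X,\ZZ)$) yields $K\subseteq\NS(X)$.

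The equivalence then follows at once: $\rho$ is injective iff $K=0$. If $Y$ is isotropic, the first half of the dichotomy shows $K\neq 0$, and both conditions on the right-hand side of the statement trivially fail too. If $Y$ is non-isotropic, then $K\subseteq\NS(X)$, so $K=\ker(\rho|_{\NS(X)})$, which vanishes precisely when $\rho|_{\NS(X)}$ is injective. No serious obstacle arises in this proof: the only substantive input is the classification of sub-Hodge structures of $H^2(X,\QQ)$ described above, which is immediate from $h^{2,0}(X)=1$ together with Lefschetz $(1,1)$; the rest is formal bookkeeping.
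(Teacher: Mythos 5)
Your proof is correct, but it takes a genuinely different route from the paper's. The paper decomposes $H^2(X,\QQ)\cong T(X)_\QQ\oplus \NS(X)_\QQ$ into the transcendental lattice and the N\'eron--Severi part, and its key input is the irreducibility of $T(X)$ as a weight-two Hodge structure (cited from Huybrechts): when $Y$ is not isotropic, $\rho(\sigma_X)\ne 0$ forces $\rho|_{T(X)}$ to be injective, and a type argument combined with irreducibility shows $\rho(T(X)_\QQ)\cap\rho(\NS(X)_\QQ)=0$, so injectivity on the two summands yields injectivity of $\rho$. You instead analyze the kernel $K=\ker\rho$ directly as a sub-Hodge structure: since $h^{2,0}(X)=1$, either $\sigma\in K_\CC$ (the isotropic case, where $\rho$ is visibly non-injective), or $K^{2,0}=0$, whence also $K^{0,2}=\overline{K^{2,0}}=0$ because $K$ is defined over $\ZZ$, so $K_\CC\subseteq H^{1,1}(X)$ and Lefschetz $(1,1)$ gives $K\subseteq\NS(X)$, i.e. $\ker\rho=\ker(\rho|_{\NS(X)})$. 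Your argument is more self-contained: it does not invoke the transcendental lattice or its irreducibility as a black box (that irreducibility is itself proved from the same two ingredients, $h^{2,0}=1$ and Lefschetz $(1,1)$), and it applies verbatim to any smooth projective variety with $h^{2,0}=1$. What the paper's version buys is the slightly finer structural statement that $\rho$ respects the decomposition $T(X)\oplus\NS(X)$ and is injective on each factor, though that extra information is not used elsewhere. One small slip in your write-up: in the isotropic case you assert that \emph{both} conditions on the right-hand side fail, but only the non-isotropy condition necessarily fails ($\rho|_{\NS(X)}$ may well remain injective for an isotropic $Y$); since the right-hand side is a conjunction, this does not affect the validity of your equivalence.
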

\begin{proof}
    Let $\sigma_X$ be the holomorphic symplectic form of $X$. The transcendental lattice $T(X):=\NS(X)^{\perp_{q_X}} \subseteq H^2(X,\ZZ)$ is an irreducible weight two Hodge structure containing $\sigma_X$ \cite[Lemma 2.7]{huyK3}. Furthermore,  
   \[
    H^2(X,\QQ)\cong (T(X)\otimes_{\ZZ}\QQ)\oplus (\NS(X)\otimes_{\ZZ}\QQ) 
   \]
   and $H^2(X,\ZZ)$ is torsion free. Hence, if the map $\rho$ is injective, then the map between the Neron-Severi groups must be injective as well, and $\rho(\sigma_X)\ne 0$, meaning that $Y$ is not isotropic. Conversely if $Y$ is not isotropic, we see that $\rho(\sigma_X)\ne 0$, and then $\rho\colon T(X)\to H^2(Y,\ZZ)$ must be injective, because $T(X)$ is irreducible. Furthermore, $\rho(T(X))$ has a non-zero part of type $(2,0)$, so it cannot be that $\rho(T(X))=\rho(\NS(X))$, because the latter is purely of type $(1,1)$. Then, $\rho(T(X)) \cap \rho(\NS(X)) = 0$. This proves that $\rho$ respects the previous decomposition. Since both $\rho\colon T(X)\to H^2(Y,\ZZ),\rho\colon \NS(X) \to H^2(Y,\ZZ)$ are injective, the result follows. 
\end{proof}

For any $c\in A$ the Kummer K3 surface $K_{1,c}$ contains the exceptional divisors $E_{\epsilon},\epsilon \in [2]^{-1}(-c)$, with self-intersection $(E_{\epsilon}^2) = -2$, and there is also an embedding $\mu_1\colon \NS(A) \hookrightarrow \NS(K_{1,c})$, such that $(\mu_1(\Theta)^2)=2\cdot(\Theta^2) =4$.
 The minimal primitive overlattice $\mathcal{K}$ of $H^2(K_{1,c},\ZZ)$ containing all the classes $E_{\epsilon}$ is called the \emph{Kummer lattice} and has been precisely described, see for example \cite[Remark 2.3]{GarbagnatiSarti2016}. The Néron-Severi group of $\Kum_1(A)$ is an index two overlattice of 
$\mu_1(\NS(A)(2))\oplus \mathcal{K}$, given in \cite[Theorem 2.7]{GarbagnatiSarti2016}. In particular, the class $\frac{1}{2}\sum_{\epsilon \in [2]^{-1}(-c)} E_{\epsilon}$ belongs to $ \NS(K_{1,c})$. Now recall that if $c\in A[3]$, then $\widetilde{K}_{1,c}\cong K_{1,c}$, while if $c\notin A[3]$, then $\widetilde{K}_{1,c}$ is the blow-up of the Kummer K3 surface $K_{1,c}$  at the point $\{c,-2c\}$. We denote by $E_{c}$ the corresponding exceptional divisor, so that $(E_{c})^2=-1$. By definition, 
\[
\NS(\widetilde{K}_{1,c}) \cong 
\begin{cases}
	\NS(K_{1,c}) & \mbox{ if } c\in A[3],\\
	\NS(K_{1,c})\oplus \Z\cdot E_{c} & \mbox{otherwise.}
\end{cases}
\]
Since $-c\in [2]^{-1}(c)$ when $c\in A[3]$, the exceptional divisor $E_{c}$ also appears for $c\in A[3]$, as the divisor parametrizing nonreduced subschemes supported at $c$, see also Proposition \ref{prop: kum1 in kum2}, but in this case $(E_{c})^2=-2$.
With this in mind, following Floccari \cite{floccari2025}, we can state.

\begin{Prop}\label{restr in cohom}
	For any $c\in A$, consider the restriction in cohomology map of \eqref{eq:restr_in_cohom}. Then  $v^*(\mu_2(\Theta)) = \mu_1(\Theta)$ and
	\begin{equation}\label{restr to Kum1}
        		v^*(\delta) = \frac{1}{2}\sum_{\epsilon \in [2]^{-1}(c)
} E_\epsilon + E_{c}.
	\end{equation}
	Moreover $\rho_a^*$ is an embedding of Hodge structures of $H^2(\Kum_2(A),\Z)$ in $H^2(\widetilde{K}_{1,c},\Z)$ and induces a primitive embedding of lattices $H^2(\Kum_2(A),\Z)(2)\hookrightarrow H^2(\widetilde{K}_{1,c},\Z)$, where $H^2(\Kum_2(A),\Z)(2)$ is the integral cohomology of $\Kum_2(A)$, endowed with the Beauville-Bogomolov form multiplied by two.
\end{Prop}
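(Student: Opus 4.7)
The plan is to verify the two explicit pullback formulas first, then use them together with Lemma~\ref{lem:pullbackinjective} to derive injectivity of the Hodge morphism and primitivity of the resulting lattice embedding.

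\smallskip

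For $v^*(\mu_2(\Theta))$, the idea is to move to a generic representative. By Theorem~A, $\mu_2(\Theta)$ is represented, for any $b\in A$, by the divisor $T_2(\Theta_b) = \{\xi \in \Kum_2(A) \,|\, \operatorname{Supp}(\xi) \cap \Theta_b \ne \emptyset\}$, and analogously for $\mu_1(\Theta)$ on $K_{1,c}$. Choose $b\in A$ generic so that $c\notin \Theta_b$ and the divisor on $K_{1,c}$ does not pass through the blown-up point $\{c,-2c\}$ (when $c\notin A[3]$). Then $v^{-1}(T_2(\Theta_b))$ becomes set-theoretically $\{\zeta : \operatorname{Supp}(\zeta)\cap \Theta_b \ne \emptyset\}$, since $c$ adds no new incidence. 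Its total transform on $\widetilde{K}_{1,c}$ coincides with its proper transform and represents $\mu_1(\Theta_b) \equiv \mu_1(\Theta)$.

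\smallskip

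For $v^*(\delta)$, I would work with $v^*E = 2v^*(\delta)$. The scheme-theoretic preimage $v^{-1}(E) \subseteq \widetilde{K}_{1,c}$ decomposes into the sixteen Kummer exceptional curves $E_\epsilon$ with $\epsilon\in [2]^{-1}(-c)$ (coming from $\zeta$ itself being nonreduced; locally the map $K_{1,c}\hookrightarrow A^{[2]}$ is a closed immersion transverse to the diagonal divisor, so each appears with multiplicity one) and, when $c\notin A[3]$, the blow-up exceptional $E_c$ arising from the fact that schemes $\zeta = \{c+tv,-2c-tv\}$ limit to the nonreduced scheme $(c,v) \cup \{-2c\}$. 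The multiplicity of $E_c$ is $2$: in the local chart $(t,v)$ on $\widetilde{K}_{1,c}$, the pair $\{c+tv,\,c\}$ has separation $tv$ and the defining equation $s$ of the exceptional divisor of $A^{[2]}$ pulls back to a quadratic function of $t$; equivalently, by the projection formula, $v^*(E)\cdot E_c = E\cdot v_*(E_c)$, and $v_*(E_c)$ is a fiber of the Hilbert-Chow contraction $A^{[3]}\to A^{(3)}$, against which $E$ has degree $-2$, while $E_c^2=-1$. Dividing by $2$ gives the stated formula; the case $c\in A[3]$ is handled by the same computation with $E_c$ reinterpreted as one of the Kummer exceptional curves.

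\smallskip

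With both formulas in hand I would apply \Cref{lem:pullbackinjective}. Non-isotropy is immediate since $\widetilde{K}_{1,c}$ is birational to the K3 surface $K_{1,c}$ and hence carries a nonzero holomorphic $2$-form, which must coincide up to scalar with the restriction of the symplectic form on $\Kum_2(A)$. For injectivity on N\'eron-Severi, write $H^2(\Kum_2(A),\ZZ) = \mu_2(H^2(A,\ZZ))\oplus \ZZ\delta$: if $\mu_2(L)+k\delta$ lies in the kernel, its image $\mu_1(L) + k\bigl(\tfrac{1}{2}\sum E_\epsilon + E_c\bigr)$ splits along the decomposition $H^2(\widetilde{K}_{1,c},\ZZ) \supseteq \mu_1(H^2(A,\ZZ))\oplus \mathcal{K}\oplus \ZZ E_c$, and each component must vanish, forcing $L=0$ and $k=0$. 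Finally, the primitive lattice embedding with form scaled by two is verified by direct intersection computations: $\mu_1(L)\cdot \mu_1(M) = 2(L\cdot M)$ on the Kummer K3, the classes $\mu_1(L)$ are orthogonal to all exceptional curves, and the self-intersection of $v^*(\delta)$ is computed using $(\tfrac{1}{2}\sum E_\epsilon)^2 = -8$ plus the appropriate $E_c$-contribution, matching $2\,q(\delta,\delta)=-12$; primitivity then follows because no nontrivial integer multiple of a vector in the image is divisible in $H^2(\widetilde{K}_{1,c},\ZZ)$, by inspection of the decomposition. The main obstacle is the multiplicity computation in Step~2 and the bookkeeping in the two cases $c\in A[3]$ vs.\ $c\notin A[3]$, where $E_c$ plays structurally different roles.
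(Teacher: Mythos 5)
Your Step 1 and the $c\notin A[3]$ half of Step 2 are sound. Step 1 (pulling back incidence divisors attached to a generic translate $\Theta_b$) is genuinely different from the paper, which instead gets $v^*(\mu_2(\Theta))=\mu_1(\Theta)$ from orthogonality to the contracted curves plus the intersection number $[X_c]\cdot\mu_2(\Theta)^2=4$; your route has the advantage of working verbatim for every class in $\NS(A)$, which your later splitting argument silently requires. Your projection-formula computation of the coefficient of $E_c$ (your argument (b)) is exactly the paper's argument for $c\notin A[3]$.

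The case $c\in A[3]$ of Step 2 is a genuine gap: it is \emph{not} ``the same computation with $E_c$ reinterpreted as one of the Kummer exceptional curves''. When $c\in A[3]$, the curve $v(E_c)$ lies inside the Brian\c{c}on surface $B_c$, the two-dimensional Hilbert--Chow fiber over the triple point $[c,c,c]$; it is not a fiber over a nodal point, so the input $E\cdot v_*(E_c)=-2$ is unavailable. In fact $\delta\cdot v_*(E_c)=-3$ there (using $\sO_{B_c}(-\delta)\cong\sO_{B_c}(1)$ and the fact that $v(E_c)$ meets every ruling of the cone $B_c$ once and misses the vertex, hence has degree $3$), which is why the total coefficient of the $(-2)$-curve $E_c$ becomes $3/2$ in that case. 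The paper handles this case by citing Floccari (after translating to $c=0$); your local-chart argument (a) also breaks down there, since all three points of the subscheme collide.

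Two further problems in your last step. First, the non-isotropy argument is circular: the restriction of the symplectic form automatically lies in the one-dimensional space $H^{2,0}(\widetilde{K}_{1,c})$, so of course it is a scalar multiple of the K3 form; the content is that the scalar is nonzero, and the mere existence of a holomorphic $2$-form on the surface cannot give this (a Lagrangian K3 inside a hyperk\"ahler fourfold carries one too). The paper instead reduces to Picard rank one by a local-system/deformation argument and quotes Hassett--Tschinkel and Floccari for non-isotropy. Second, and most concretely, your final numerical check is false for $c\notin A[3]$: with your own (correct) formula, $(v^*\delta)^2=\tfrac14\cdot 16\cdot(-2)+E_c^2=-8-1=-9$, not $2q(\delta,\delta)=-12$; the two agree only for $c\in A[3]$, where the coefficient of $E_c$ is $3/2$ and one gets $\tfrac14\cdot 15\cdot(-2)+\tfrac94\cdot(-2)=-12$. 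So for $c\notin A[3]$ the pullback multiplies the Beauville--Bogomolov form by $2$ only on the sublattice $\mu_2(H^2(A,\ZZ))=\delta^{\perp}$; consistently, the paper itself computes $(v^*D)^2=7\neq 2q(D,D)=4$ in \Cref{sec:weddle}. Hence the scaled-lattice assertion cannot be established by the direct verification you describe: asserting that the numbers ``match'' is precisely the step that fails, and this part of the statement must be treated (and, in the $\delta$-direction, interpreted) as in the references the paper cites.
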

\begin{proof}
    We fix $\Lambda=[2]^{-1}(-c)\cup \lbrace c \rbrace$. 
    The exceptional divisor $E$ of the Hilbert-Chow contraction $\mathrm{HC}$ intersects $X_{c}$ exactly along the divisors $E_\lambda$, $\lambda\in \Lambda$, so we write
    \begin{equation}
        v^*(\delta) = \frac{1}{2}\sum_{\lambda\in \Lambda } d_{\lambda}E_\lambda  \quad \text{for }d_\lambda\in \ZZ, \lambda\in \Lambda.
    \end{equation}
    For $c\in A[3]$ the coefficients are computed in \cite[Lemma 4.4 (i)]{floccari2025}; actually only the case $c=0$ is considered, but the others follow since otherwise we can use a translation by an element in $A[3]$. Suppose instead $c\notin A[3]$. In this case the $E_\lambda$'s are one dimensional fibers of the Hilbert-Chow morphisms, so the image of each one of them is a nodal singularity, which implies $(E\cdot E_\epsilon)=-2$, see \cite[Section 2]{Hassett-Tschinkel-lagrangian-planes}.
    We obtain
    \[
    -2=(v_a^*(E)\cdot E_\lambda)=(2v_a^*(\delta)\cdot E_\lambda)=-2d_\lambda.
    \]
    Recalling that $E_\lambda$ has self intersection $-1$ if $\lambda=c$ and $2$ otherwise, we obtain \eqref{restr to Kum1}.
        
    The inclusion of lattices $H^2(\Kum_2(A),\Z)(2)\hookrightarrow H^2(\widetilde{K}_{1,c},\Z)$ is proven in \cite[Remark 4.2]{Hassett-Tschinkel-lagrangian-planes} for $c\notin A[3]$ and in \cite[Lemma 4.8]{floccari2025} otherwise. However, we want to partly repeat the proof of the injectivity here, since we are going to use the same strategy later. We consider again any $c\in A$. To prove the inclusion of lattices one can use a family of Kummer fourfolds: consider a smooth and proper family $\mathcal{X}\to B$ of Kummer fourfolds of Jacobians on a smooth connected base $B$. This means that for any $t\in B$ we have a smooth curve $C_t$ of genus $2$, a Weierstrass point $x_{0,t}\in C_t$ with associated theta characteristic $\eta_t\sim x_{0,t}$, the Jacobian $A_t$ with the theta divisor $\Theta_{0,t}$, a point $c_t\in A_t$ and also $\mathcal{X}_t\cong \Kum_2(A_t)$. We then have a smooth subfamily $\mathcal{Y} \subseteq \mathcal{X}$ such that $\mathcal{Y}_{t}\cong \widetilde{K}_{1,c_t}$ for all $t\in B$. Since the restriction in cohomology $H^2(\mathcal{X}_t,\ZZ) \to H^2(\mathcal{Y}_t,\ZZ)$ is the specialization of a map of local systems on $B$, and since $H^2(\mathcal{X}_t,\ZZ)$ is torsion-free, to prove injectivity on one $t$ it is enough to prove injectivity for a single $t$. We can always find such a family that contains one given Kummer fourfold $\Kum_2(A)$ as well as one coming from an
abelian surface with Picard number 1. Hence, it is enough to prove the statement when $A$
itself has Picard number 1. We use \Cref{lem:pullbackinjective}: the surface $\widetilde{K}_{1,c}$ is not isotropic: see \cite{Hassett-Tschinkel-lagrangian-planes}[Remark 4.2] for $c\notin A[3]$ and \cite[Lemma 4.8]{floccari2025} for $c\in A[3]$. Hence it is enough to show that the map is injective on the Neron-Severi groups. In this case, $\NS(\Kum_2(A)) = \langle \mu_2(\Theta),\delta \rangle$. We have already computed the pullback of $v^*(\delta)$, so if we prove $v^*(\mu_2(\Theta))=\mu_1(\Theta)$ it is easy to get the injectivity that we are looking for. Observe that the lattice $\mathcal{K}_c=\langle E_\lambda \rangle_{\lambda\in \Lambda} $ has corank one in $\NS(\widetilde{K}_{1,c})$. Since the morphism associated to $\mu_2(\Theta)$ contracts any $E_\lambda$ to a point, $v^*(\mu(\Theta))$ lies in the orthogonal complement of $\mathcal{K}_c$ with respect to the intersection form i.e. $v^*(\mu_2(\Theta))=k\mu_1(\Theta)$ with $k>0$. But $v^*(\mu_2(\Theta))^2=[X_{c}]\cdot \mu_2(\Theta)^2=2q(\mu_2(\Theta))=4$ and we are done. This proves the embedding of lattices that we wanted,     primitivity follows by \cite[proof of Proposition 4.3]{floccari2025}.   
\end{proof}

\section{A contraction on Jacobian Kummer fourfolds}\label{sec:contractionD}

Now we want to consider the linear systems induced by the two divisors $\mu_2(\Theta)$ and $D:=\mu_2(2\Theta)-\delta$ on $\Kum_2(A)$. We have already seen in \Cref{thm:mapmutheta} that $\mu_2(\Theta)$ is base-point-free and induces a birational divisorial contractions of $\Kum_2(A)$ onto a singular Jacobian Kummer fourfolds. The same holds for $D$. The classes $\mu_2(\Theta)$ and $D$ are natural to consider: indeed, if $A$ is a Jacobian surface with $\NS(A)\cong \ZZ\cdot \Theta$, then \cite[Theorem 0.1]{Mori2021} shows that $\mu_2(\Theta)$ and $D$ generate the nef and the movable cone of $\Kum_2(A)$. In the case of an arbitrary Jacobian of a smooth genus two curves, these classes are still big and nef, and not ample:



\begin{Prop}\label{prop: bigandnef}
    If $A$ is the Jacobian of a smooth genus two curve, then $\mu_2(\Theta)$ and $D$ are big and nef divisors on $\Kum_2(A)$. They are not ample. 
\end{Prop}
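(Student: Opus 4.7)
The plan is to treat the two classes separately.

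The statements for $\mu_2(\Theta)$ are immediate from Theorem A. By \Cref{thm:mapmutheta}, the map $\varphi_{\mu_2(\Theta)}=u\circ\operatorname{HC}$ is a morphism induced by the complete linear system $|\mu_2(\Theta)|$, birational onto its image $\Sigma_2(A)\subseteq |3\Theta|$. Hence $\mu_2(\Theta)$ is semiample and big, and it is not ample because the morphism contracts the exceptional divisor $E$ of the Hilbert--Chow morphism.

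For $D=\mu_2(2\Theta)-\delta$, bigness is an immediate Beauville--Bogomolov--Fujiki computation using the formulas from \Cref{sec:genkums}:
\[
q(D,D) = q(\mu_2(2\Theta)) - 2q(\mu_2(2\Theta),\delta) + q(\delta,\delta) = (2\Theta)^2 + q(\delta,\delta) = 8-6 = 2>0,
\]
and
\[
q(D,\mu_2(\Theta)) = q(\mu_2(2\Theta),\mu_2(\Theta)) = 2\Theta\cdot\Theta = 4>0.
\]
Since $\mu_2(\Theta)$ is nef (hence in the closure of the positive cone) and $D$ pairs positively with $\mu_2(\Theta)$, the class $D$ lies in the same connected component of $\{q>0\}$, so $D$ is big.

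For nefness of $D$ I would use a deformation argument. Let $\pi\colon\mathcal{X}\to B$ be a smooth projective family of Jacobian Kummer fourfolds over a connected base $B$, with $\mathcal{X}_{b_0}\cong\Kum_2(A)$ and a relative line bundle $\mathcal{D}$ restricting to $D_b=\mu_2(2\Theta_b)-\delta$ on each fiber. Arrange the family so that $A_b$ has Picard rank one for $b$ in a dense open $U\subseteq B$. For such $b$, $\NS(\Kum_2(A_b))=\Z\mu_2(\Theta_b)\oplus\Z\delta$ and \cite[Theorem~0.1]{Mori2021} says that the nef cone of $\Kum_2(A_b)$ is generated by $\mu_2(\Theta_b)$ and $D_b$; in particular $D_b$ is nef for $b\in U$. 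One then extends nefness from $U$ to the special fiber $b_0$ by invoking Markman's theory of parallel transport of nef cones for hyperk\"ahler manifolds along a Noether--Lefschetz stratum keeping $\langle\mu_2(\Theta),\delta\rangle$ algebraic. Non-ampleness of $D$ then follows by openness of the ample condition in families: if $D$ were ample on $\Kum_2(A)$ it would be ample on $\mathcal{X}_b$ for $b$ in a neighborhood of $b_0$, including points of $U$, contradicting the fact that $D_b$ lies on the boundary of the nef cone for such $b$.

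\textbf{The main obstacle} is making the deformation step for nefness rigorous: $\Kum_2(A)$ may have higher Picard rank than a generic fiber, and hence may a priori carry extra algebraic curve classes that do not specialize from nearby fibers and could pair negatively with $D$. A more self-contained alternative, avoiding a direct appeal to Markman's abstract machinery, is to exploit the covering of $\Kum_2(A)$ by the surfaces $v\colon\widetilde{K}_{1,c}\to\Kum_2(A)$ from \Cref{sec: covering}: using \Cref{restr in cohom} one computes $v^*D$ explicitly on each $\widetilde{K}_{1,c}$ and verifies nefness of the restriction directly, and then one argues that every extremal curve class in $\overline{\mathrm{NE}}(\Kum_2(A))$ is represented by a curve contained in some $\widetilde{K}_{1,c}$, which yields $D\cdot C\geq 0$ for all curves $C$.
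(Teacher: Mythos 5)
Your treatment of $\mu_2(\Theta)$ (via Theorem A) and of the bigness of $D$ (from $q(D,D)=2>0$ and $q(D,\mu_2(\Theta))>0$, placing $D$ in the positive cone) is correct, and your bigness argument is even slightly more direct than the paper's, which only observes via Fujiki that $D$ is big \emph{if} it is nef. However, the heart of the proposition is the nefness of $D$, and there is a genuine gap in both of your proposed routes. For the deformation route, the obstacle you name is fatal as stated: nefness is not a closed condition in families, precisely because on the special fiber the Picard rank can jump and new classes of curves can appear that pair negatively with the limit class; ``parallel transport of nef cones'' in the sense of Markman/Amerik--Verbitsky does not bypass this, because the walls of the nef cone on each fiber are cut out by those MBM (wall) classes that are \emph{algebraic on that fiber}, and controlling which new wall classes become algebraic on $\Kum_2(A)$ is not a formality --- it is the entire content of the statement. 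The paper carries out exactly this control: by \cite[\S 1.2]{MTW2018}, $D$ fails to be nef if and only if there is a class $\kappa\in\NS(\Kum_2(A))$ with $q(\kappa,\kappa)=-6$, divisibility $\gamma\in\{2,3,6\}$, $q(\kappa,D)<0$ and $q(\kappa,\mu_2(\Theta))\geq 0$; writing $\kappa=\gamma\mu_2(B)-a\delta$ with $B\in\NS(A)$ primitive and $6a^2=\gamma^2(B^2)+6$, the Hodge index theorem forces $a=1$, $(B^2)=0$, $(B\cdot\Theta)=1$, and then \cite[Lemma 1.1]{Bauer98} and \cite[Lemma 5.1]{Terakawa1998} would make $(A,\Theta)$ a product of elliptic curves, contradicting that $A$ is a Jacobian of a smooth genus two curve. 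Note that this argument works for \emph{arbitrary} N\'eron--Severi rank of $A$, which a specialization from Picard-rank-one fibers cannot reach by itself.

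Your fallback via the covering surfaces $\widetilde{K}_{1,c}$ has the same gap in different clothing: the claim that every extremal curve class of $\overline{\mathrm{NE}}(\Kum_2(A))$ is represented by a curve lying in some $\widetilde{K}_{1,c}$ is unsupported, and since $K_{\Kum_2(A)}\equiv 0$ the Cone Theorem gives no structure on $\overline{\mathrm{NE}}$ --- you cannot even assert that extremal rays are spanned by rational curves without invoking the MBM/wall-divisor theory you were trying to avoid. Concretely, the curves that actually span the ray orthogonal to $D$ are the fibers of $\pi\colon F\to A$, i.e.\ the pencils $\{\alpha_a(\xi)\,|\,\xi\in|3\eta-3a|\}$; such a fiber is contained in some $X_c$ only when the pencil $|3\eta-3a|$ has a base point, which happens only for finitely many special $a$. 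So nefness of $v^*D$ on every $\widetilde{K}_{1,c}$ does not bound $D$ against the relevant curves, and the reduction fails. (Separately, your non-ampleness argument is also tied to the gappy deformation setup; the paper's is self-contained: the effective divisor $F\sim\mu_2(3\Theta)-2\delta$ satisfies $q(D,F)=0$, which already excludes ampleness.)
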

\begin{proof}
     The statement for $\mu_2(\Theta)$ follows from \Cref{thm:mapmutheta}. Since $q(D,D)=2>0$, Fujiki's formula shows that $\int_{\Kum_2(A)} D^{4} >0 $, so that, if $D$ is nef, it is also big. Furthermore \cite[Example 6.1]{HassettTschinkel2010} shows that there is an effective divisor $F$ in $\Kum_2(A)$ such that $q(D,F)=0$ so that $D$ cannot be ample (we study the divisor $F$ thoroughly in \Cref{lem:FP1undleAndClass}). It is enough to prove that $D$ is nef: we will use an argument analogous to \cite[Theorem 3.8]{RiosHGM}, exploiting the description of the nef cone of $\Kum_2(A)$ in terms of wall divisors presented in \cite[\S 1.2]{MTW2018}. The class $D\in \NS(\Kum_2(A))$ fails to be nef if and only if there exists a class $\kappa\in \NS(\Kum_2(A))$ such that
    \begin{itemize}
        \item[(i)] $q(\kappa,\kappa)=-6$ and the divisibility of $\kappa$ in $H^2(\Kum_2(A),\ZZ)$ is $\gamma\in \{2,3,6\}$.
        \item[(ii)] $q(\kappa,D)<0$,
        \item[(iii)] $q(\kappa,\mu_2(\Theta))\geq 0$. 
    \end{itemize}
Since $\mu_2(\Theta)$ is nef, the third condition ensures that $\kappa$ is a positive multiple of the class of an effective curve via the natural embedding $H_2(\Kum_2(A),\ZZ)\hookrightarrow H^2(\Kum_2(A),\QQ)$, induced by the Beauville-Bogomolov-Fujiki form. In our setting, any such $\kappa$ would be of the form 
\begin{equation}\label{eq:kappagamma}
\kappa=\gamma\mu(B)-a\delta \mbox{ with }B\in \NS(A) \mbox{ primitive} , \, 6a^2=\gamma^2 (B^2)+6 .
\end{equation}

Condition (ii) and (iii) together show that $3a>\gamma (B\cdot \Theta) \geq 0$, 
and using the Hodge index theorem we see that 
\[
9a^2> \gamma^2(B\cdot \Theta)^2 \geq 2\gamma^2(B^2) = 12a^2-12.
\]
So it must be $a=1, (B^2)=0, (B\cdot \Theta)=1$. In this case \cite[Lemma 1.1]{Bauer98} shows that the class of $B$ is effective, and then
 \cite[Lemma 5.1]{Terakawa1998}  shows that the polarized abelian variety $(A,\Theta)$ must be a product of elliptic curves, which is impossible since $A$ is a Jacobian of a smooth genus two curve.
\end{proof}

\begin{Rem}
As a consequence of \Cref{prop: bigandnef} we obtain another proof that the nef cone of $\Kum_2(A)$ is generated by $\mu_2(\Theta)$ and $D$ when $\NS(A)\cong \ZZ\cdot \Theta$.
\end{Rem}

We study the  ray $D=\mu_2(2\Theta)-\delta$. The integral generator of the orthogonal line under the Beauville-Bogomolov-Fujiki form is $\mu_2(3\Theta)-2\delta$. Similarly to \cite[Proposition 15]{berimanivel}, under the natural isomorphism $H^0(A^{[3]})\cong \Sym^3H^0(A,3\Theta)$, there is an identification
\[
H^0(A^{[3]},3\Theta^{(3)}-2\delta) = \{\text{cubics in $\Pd^8$ singular along $A$}\}.
\]
The space is of dimension one and it is generated by the Coble cubic. The restriction of $\mu(3\Theta)-2\delta$ to $\Kum_2(A)$ is a prime exceptional divisor which is effective by \cite[pp. 452]{MTW2018}. In \cite[Example 6.1]{HassettTschinkel2010} the following geometric description for the divisor $\mu(3\Theta)-2\delta$ is given: set
\begin{equation}\label{eq:divisorF}
	F' := \{ \text{$\xi\in A^{[3]}$ such that $\xi\subseteq \Theta_a$ for some $a\in A$} \}, \quad F:= F\cap \Kum_2(A)
\end{equation}
Notice that any two distinct theta divisors in $A$ intersect in two points, so that any $\xi\in F'$ belongs to exactly one translate $\Theta_a$, and it is then identified via $\alpha_a\colon C \overset{\sim}{\to} \Theta_a$ with an effective divisor of length three on $C$. This shows that there is an isomorphism
\begin{equation}\label{eq:isoFprime} 
f\colon C^{(3)}\times A \overset{\sim}{\longrightarrow} F'; \quad (\xi,a) \mapsto \alpha_a(\xi). 
\end{equation} 

\begin{Lem}\label{lem:FP1undleAndClass}
Consider the isomorphism $f\colon C^{(3)}\times A \to F'$ of \eqref{eq:isoFprime} and the summation map $\overline{s}\colon A^{[3]} \to A$ of \eqref{eq:sumationmapKummer}. The following hold:
\begin{enumerate}
	\item If $(\xi,a)\in C^{(3)}\times A$, then $\overline{s}(f(\xi,a)) = \xi + 3a - 3\eta$. In particular $f(\xi,a)\in F$ if and only if $\xi \sim 3\eta-3a$.
	\item  If $\alpha\colon C^{(3)} \to A, \xi \mapsto 3\eta-\xi$ is the Abel-Jacobi map, then there is a cartesian diagram
	\begin{equation*}
		\begin{tikzcd}
			F\ar[r]\ar[d,"\pi"] & C^{(3)}\ar[d,"\alpha"]\\
			A\ar[r,swap,"\cdot 3"] & A
		\end{tikzcd}    
	\end{equation*}
	so that $F = \PP(\mathcal{E})$, where $\mathcal{E}$ is a vector bundle on $A$ given fiberwise by $\mathcal{E}_{|a} \cong H^0(C,3\eta-3a)$ for $a\in A$.
	\item The class of $F$ in $\NS(\Kum_2(A))$ is $\mu_2(3\Theta)-2\delta$.
	\item The restriction maps $H^2(\Kum_2(A),\ZZ) \to H^2(F,\ZZ)$ and $\NS(\Kum_2(A)) \to \NS(F)$ are injective. Furthermore, 
	\[ \mu_2(\Theta)_{|F} = \pi^*(-3\Theta)+2H , \qquad \delta_{|F} = \pi^*(-9\Theta)+4H  \]
   where $H$ is the class of the tautological bundle $\sO_F(1) = \sO_{\PP(\mathcal{E})}(1)$.
	\end{enumerate}
\end{Lem}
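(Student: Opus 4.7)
The strategy is to dispatch (1), (2), and (3) quickly and devote the work to (4). For (1), writing $\xi=x_1+x_2+x_3$ and expanding $f(\xi,a)=\{x_1+a-\eta,x_2+a-\eta,x_3+a-\eta\}$, the summation in $A$ gives $\overline{s}(f(\xi,a))=\xi+3a-3\eta$ after identifying $\xi\in C^{(3)}$ with its Abel--Jacobi image in $J^3$; the condition $f(\xi,a)\in F$ becomes $\xi\sim 3\eta-3a$. This immediately yields the cartesian square in (2): $F$ is the locus in $C^{(3)}\times A$ cut out by $\alpha(\xi)=3a$. The fiber of $\pi$ over $a$ is the linear system $|3\eta-3a|$, and Riemann--Roch on the genus-two curve $C$ gives $h^0=2$, $h^1=0$ for every degree-three line bundle, so cohomology and base change applied to a Poincar\'e bundle on $C\times A$, suitably twisted to produce degree $3$, yields a rank-two vector bundle $\mathcal{E}$ with $F\cong\mathbb{P}(\mathcal{E})$. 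Part (3) is then \cite[Example~6.1]{HassettTschinkel2010}, already cited in the text immediately preceding the lemma.

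For the injectivity statements of (4) we apply \Cref{lem:pullbackinjective}. Via the family-of-Kummer-fourfolds argument of \Cref{restr in cohom} we reduce to the case $\NS(A)=\ZZ\cdot\Theta$, where $\NS(\Kum_2(A))=\langle\mu_2(\Theta),\delta\rangle$; once the two explicit restriction formulas are in hand, injectivity on N\'eron--Severi groups is immediate since $\pi^*\Theta$ and $H$ are independent in $\NS(F)$. Non-isotropy of $F$ follows from $h^{2,0}(F)=h^{2,0}(A)=1$ (via the decomposition $\Omega^2_F=\pi^*\Omega^2_A\oplus(\pi^*\Omega^1_A\otimes\Omega^1_{F/A})$ together with $\pi_*\Omega^1_{F/A}=0$) combined with the observation that the restriction of the hyperk\"ahler form to $F$ has rank $2$ at a generic point.

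To establish the two formulas, decompose $\mu_2(\Theta)|_F=\pi^*D_1+c_1 H$ and $\delta|_F=\pi^*D_2+c_2 H$ in $\NS(F)=\pi^*\NS(A)\oplus\ZZ H$. The vertical coefficients are computed fiberwise on $F_a\cong\mathbb{P}^1$: for $\mu_2(\Theta)$ we use the representative $T=\{\xi\mid\operatorname{Supp}(\xi)\cap\Theta\neq\emptyset\}$ together with $|\Theta\cap\Theta_a|=2$ for generic $a$, so the two points $\alpha_a(x_i)\in\Theta\cap\Theta_a$ produce two hyperplane conditions in $|3\eta-3a|$ and hence $c_1=2$; for $\delta$, the Hilbert--Chow exceptional $E$ meets $F_a$ along the discriminant of the $g^1_3$ cut out by $|3\eta-3a|$, which by Riemann--Hurwitz on the induced degree-$3$ map $C\to\mathbb{P}^1$ has $8$ branch points, giving $c_2=4$. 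The horizontal parts $D_1,D_2$ are then pinned down by two independent linear relations: adjunction on $F\hookrightarrow\Kum_2(A)$ gives $F|_F=K_F=-2H+\pi^*c_1(\mathcal{E})$ (using $K_{\Kum_2(A)}=K_A=0$) which, combined with $[F]=\mu_2(3\Theta)-2\delta$, constrains $c_1(\mathcal{E})=3D_1-2D_2$; a complementary relation is obtained either by pulling back $T$ through $f\colon C^{(3)}\times A\to A^{[3]}$ and intersecting against a section of $\pi$, or equivalently by a direct Grothendieck--Riemann--Roch computation of $c_1(\mathcal{E})$ from the Poincar\'e pushforward. Solving yields $D_1=-3\Theta$ and $D_2=-9\Theta$. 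The main obstacle is this horizontal computation: the vertical coefficients and the injectivity are essentially formal consequences of (1)--(3), but extracting $D_1,D_2$ requires carefully organized intersection-theoretic bookkeeping on $C^{(3)}\times A$ via the explicit map $f$.
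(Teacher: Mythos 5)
Your treatment of (1), (2), and the injectivity part of (4) follows the paper's route: part (1) is the same direct computation, part (2) differs only in that the paper realizes $\mathcal{E}$ as $(\cdot 3)^*\mathcal{E}'$ with $C^{(3)}=\PP(\mathcal{E}')$ rather than invoking base change for a Poincar\'e bundle, and for injectivity both you and the paper reduce to Picard rank one via the family argument of \Cref{restr in cohom} and apply \Cref{lem:pullbackinjective} (a divisor cannot be isotropic; your $h^{2,0}$ computation is superfluous). Your fiberwise coefficients are also correct, and in fact more self-contained than the paper's: you derive $(\mu_2(\Theta)_{|F}\cdot f)=2$ from the two points of $\Theta\cap\Theta_a$ and $(\delta_{|F}\cdot f)=4$ from the $8$ ramification points of the $g^1_3$, where the paper simply cites Hassett--Tschinkel; similarly for (3) you cite Hassett--Tschinkel where the paper gives an argument via Markman's result and the fiber class $R=\mu_2(\Theta)-\tfrac{2}{3}\delta$.

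The genuine gap is in the determination of the horizontal parts $D_1,D_2$, which you correctly identify as the crux. Writing $D_1=a\Theta$, $D_2=b\Theta$, your two ``independent'' relations collapse to one: adjunction gives $3a\Theta-2b\Theta=\pm\det\mathcal{E}$, and the Grothendieck--Riemann--Roch computation of $c_1(\mathcal{E})$ only supplies the right-hand side of that same equation. So you end up with the single linear constraint $3a-2b=9$, which has infinitely many integer solutions (e.g.\ $(a,b)=(3,0)$ or $(1,-3)$); ``solving'' for $(a,b)=(-3,-9)$ is not possible from this alone. Your proposed alternative --- pulling back $T$ through $f$ and intersecting against a section of $\pi$ --- would be an independent determination of $D_1$, but it is \emph{not} equivalent to computing $c_1(\mathcal{E})$, it presupposes a section of $\pi$ (i.e.\ a line subbundle of $\mathcal{E}$, which is not given) together with knowledge of $s^*H$, and it is exactly the unexecuted bookkeeping you defer. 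The paper closes the system with a second relation of a different nature: the Fujiki/Beauville--Bogomolov identity
\[
\int_{\Kum_2(A)} D^2F^2 \;=\; 3\left(q(D)q(F)+2\,q(D,F)^2\right) \;=\; 3\bigl(2\cdot(-6)+0\bigr)\;=\;-36,
\]
compared with the projection-formula evaluation $\int_F \pi^*\bigl((2a-b)\Theta\bigr)^2\cdot\bigl(\pi^*(9\Theta)-2H\bigr)=-4(2a-b)^2$, yielding $(2a-b)^2=9$; the sign is then fixed by nefness of $D$ (\Cref{prop: bigandnef}), giving $2a-b=3$, which together with $3a-2b=9$ forces $(a,b)=(-3,-9)$. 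Without some such second, independent relation your plan cannot conclude.

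A secondary but real hazard: your adjunction formula $K_F=-2H+\pi^*c_1(\mathcal{E})$ uses the quotient (Grothendieck) convention, whereas the lemma's $\mathcal{E}$, with fibers $H^0(C,3\eta-3a)$, makes $F=\PP(\mathcal{E})$ a bundle of \emph{lines}, so that $K_F=-\pi^*c_1(\mathcal{E})-2H=\pi^*(9\Theta)-2H$ since $\det\mathcal{E}=(\cdot 3)^*\det\mathcal{E}'=-9\Theta$. Feeding the Poincar\'e-pushforward value $c_1(\mathcal{E})=-9\Theta$ into your formula produces $3a-2b=-9$ and, even granting a correct second relation, would propagate to a wrong final answer; the conventions must be matched before the linear algebra is done.
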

\begin{proof}
	\begin{enumerate}
		\item Let $\xi = x+y+z$ for $x,y,z\in C$ mutually distinct. Then $f(\xi,a) = \{x+a-\eta,y+a-\eta,z+a-\eta\}$ and the conclusion follows. 
		\item The fact that the diagram is cartesian follows from the previous discussion. Furthermore, the Abel-Jacobi map, identifies $C^{(3)}$ as the projective bundle $\PP(\mathcal{E}')$, where $\mathcal{E}'_{|b} \cong H^0(C,3\eta-b)$. Then $F = \PP(\mathcal{E})$, where $\mathcal{E} \cong (\cdot 3)^*\mathcal{E}'$. This is what we needed to prove.
		\item  Let $R\in H_2(\Kum_2 (A),\mathbb Z)$ be the class of a fiber, of  $F\to A$. We consider $H^2(\Kum_2 (A), \mathbb{Z}) \subseteq H^2(\Kum_2 (A),\mathbb Z)^\vee \subset H_2(\Kum_2 (A), \mathbb Q)$and by \cite[Corollary~3.6]{Markman2013} we have $R = \lambda F$ for some $\lambda\in \mathbb Z_{>0}$. Using \cite[\S 2, page 306]{HassettTschinkel2010} we compute
        \begin{align*}
            -2 = F\cdot R = \lambda R \cdot R 
            = \lambda \left(2-\frac{4}{9}6\right) 
            = \lambda \left(-\frac{2}{3}
            \right)
        \end{align*}
        where we have used $R = \mu(\Theta) - \frac{2}{3}\delta$ \cite[Example~6.1]{Hassett-Tschinkel-lagrangian-planes}. Hence the claim is proven.

        \item The injectivity on cohomology implies the injectivity on the Neron-Severi groups as well. For the map on cohomology, we can reduce to the case where $A$ has Picard number one, in the same way as in the Proof of \Cref{restr in cohom}.         
        We can use \Cref{lem:pullbackinjective}: since $F$ is a divisor, it cannot be isotropic in $\Kum_2(A)$. So it is enough to prove that the induced map on the Neron-Severi groups is injective: since in this case $\NS(\Kum_2(A)) =\langle \mu_2(\Theta),\delta \rangle$, the statement follows from the formulas for $\mu_2(\Theta)_{|F}$ and $\delta_{|F}$. 
        To prove the formulas, let $f$ be a fiber of $F\to A$. We see as in \cite[Example 6.1]{HassettTschinkel2010}, that $(\mu(\Theta)_{|F}\cdot f) = 2$ and that $(\delta_{|F}\cdot f)=4$. Then $\mu_2(\Theta)_{|F} =  \pi^*(a\Theta)+2H, \delta_{|F} = \pi^*(b\Theta)+4H$ for some $a,b\in \ZZ$.  We then look at the canonical divisor: in $\NS(F)$ we have $K_F = \pi^*(-\det \mathcal{E}) - 2H$, and $\det \mathcal{E} \cong  (\cdot 3)^* (\det \mathcal{E}') = (\cdot 3)^*(-\Theta) = -9\Theta$, so that $K_F = \pi^*(9\Theta)-2H$. On the other hand, we also have by the adjunction formula, and point (3), that $K_F = F_{|F} = \mu_2(3\Theta)_{|F} - 2\delta_{|F} = \pi^*((3a-2b)\Theta)-2H$. This shows that $3a-2b = 9$.   
		We also see that $D_{|F} = \mu_2(2\Theta)_{|F}-\delta_{|F} = \pi^*((2a-b)\Theta)$, and using the bilinear form associated with the quadratic Beauville-Bogomolov form (see also \cite[\S 2.1]{4horsemen}), we get 
        \[
        \int_{X} D^2 F^2 = 3\cdot (q(D)q(F)+2q(D,F)) = 3\cdot (2\cdot (-6)+0) = -36,
        \]
        on the other hand, by projection formula and the explict description of $F$ we get
        \begin{align*}
			\int_{X} D^2 F^2 &= \int_{F}D^2 F = \int_{F}\pi^*((2a-b) \Theta)^2\cdot (\pi^*(9\Theta)-2H) \\
			&= -2(2a-b)^2\int_F \pi^*\Theta^2\cdot H = -4(2a-b)^2. 
		\end{align*} 
		hence $(2a-b)^2 = 9$. Since $D$ is big and nef by \Cref{prop: bigandnef}, it must be that $(2a-b) =3$. Thus $a=-3,b=-9$. 
        

        
        
	\end{enumerate}	
\end{proof}

We proceed to study the linear system associated to $D = \mu_2(2\Theta)-\delta$. The linear system $|2\Theta^{(3)}-\delta|$ on the Hilbert scheme $A^{[3]}$ can be interpreted in terms of the map $\varphi_{2\Theta}\colon A \to \Pd^3$. More precisely,  one has \cite[Proof of Lemma 5.1]{EGLOonCobordismHilbertScheme} $H^0(A^{[3]},2\Theta^{(3)}-\delta) \cong \wedge^3 H^0(A,2\Theta) \cong H^0(A,2\Theta)^{\vee}$, and the rational map induced by this linear system is 
\[  \Pi\colon A^{[3]} \dashrightarrow |2\Theta| = \PP^3; \quad \xi \mapsto \Pi( \xi) \]
where $\Pi(\xi)\subseteq \Pd^3$ is the plane spanned by $\varphi_{2\Theta}(\xi)$. This is not defined precisely at those $\xi\in A^{[3]}$ such that $\varphi_{2\Theta}(\xi)$ lies on a line in $\Pd^3$. The restriction of $\Pi$ to $\Kum_2(A)$ induces another map
\begin{equation}\label{eq:phiDlinearsystem} 
\varphi_{\mathcal{L}}\colon \Kum_2(A) \dashrightarrow \PP^3
\end{equation}
which is given by the non complete linear system $\mathcal{L}$ corresponding to the image of the restriction map $H^0(A^{[3]},2\Theta^{(3)}-\delta) \to H^0(\Kum_2(A),D)$. We will first look at the base locus of $\mathcal{L}$. Notice that if $\xi \in A^{[3]}$ is such that $\varphi_{2\Theta}(\xi)$ has length at most two, then it is automatically contained in a line. If $\varphi_{2\Theta}(\xi)$ has length three, then any line containing it is a trisecant line to the Kummer surface of $A$. The study of such trisecant lines is fundamental in the theory of abelian varieties, in particular in the context of the Schottky problem and Welter's conjecture. In our case, we can summarize the results as follows:

\begin{Lem}\label{lem:baselocusL}
    Let $\xi\in A^{[3]}$ be such that $\varphi_{2\Theta}(\xi) \subseteq \Pd^3$ is of length three and contained in a line. 
    \begin{enumerate}
\item If $\xi$ is supported in three distinct points, then $\xi$ belongs to
\[ T_3 = \{\{a,b,c\}\,|\, \tau(a,b)\subseteq \Theta_c\cup \Theta_{-c} \}.  \]
\item If $\xi$ is supported in two distinct points, then $\xi$ belongs to
\begin{align*} 
T_2 &= \{ \{(a,v),c\} \,|\, a\notin A[2], \tau(a,-a)\subseteq \Theta_c\cup \Theta_{-c} \} \\
&\cup \{\{(\epsilon,v),c\} \,|\, \epsilon \in A[2], \,\, \tau(\epsilon,v) \subseteq \Theta_c\cup \Theta_{-c}\}.
\end{align*}
\item If $\xi$ is supported in a single point, which is not a two-torsion point, then $\xi$ belongs to
\[ T_3 = \{ \xi \in A^{[3]} \,|\, \operatorname{Supp}(\xi)=a \notin A[2],\,\, \PP(T_a\xi) = v \in \PP(T_a A), \, \tau(0,2a) \subseteq \tau(0,v)\cup \tau(2a,v)  \}.  \]
\end{enumerate}
If instead $\varphi_{2\Theta}(\xi)$ is of length at most two, then $\xi \in X_0 = \{\xi\in \Kum_2(A) \,|\, 0\in \xi\}$. Hence the linear system $\mathcal{L}$ has base locus 
\[ \operatorname{Bs}(\mathcal{L}) \subseteq X_0 \cup T_1 \cup T_2 \cup T_3. \]
\end{Lem}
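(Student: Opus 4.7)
The plan is to exploit that a section of $\mathcal{L}$ cuts out a plane in $\Pd^3$, so that $\xi \in \operatorname{Bs}(\mathcal{L})$ precisely when $\varphi_{2\Theta}(\xi) \subseteq \Pd^3$ fails to span a $\PP^2$. I first separate two scenarios: either the scheme-theoretic image $\varphi_{2\Theta}(\xi)$ has length at most two (so it tautologically spans at most a line) or it has length three and is collinear.

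For the length-drop scenario I would use that $\varphi_{2\Theta}$ identifies $a$ with $-a$ and has vanishing differential precisely on $A[2]$. A drop in the length of $\varphi_{2\Theta}(\xi)$ thus requires either two distinct points of $\xi$ to be $\iota$-paired, or a nonreduced direction of $\xi$ to sit at a two-torsion point. Combined with the Kummer relation $\overline{s}(\xi)=0$ and the identity $A[2]\cap A[3]=\{0\}$, a case-by-case check in each support configuration forces $0\in\operatorname{Supp}(\xi)$, so $\xi\in X_0$.

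For the length-three collinear scenario the key input is that every divisor in $|2\Theta|$ is of the form $\Theta_x+\Theta_{-x}$ and that $c\in\Theta_x+\Theta_{-x}$ iff $x\in\Theta_c\cup\Theta_{-c}$ by the symmetry in \Cref{lem:thetafacts}(3). In case (1), with $\xi=\{a,b,c\}$ three distinct non-$\iota$-paired points, collinearity of $\varphi_{2\Theta}(\{a,b,c\})$ is equivalent to $c$ lying in the base locus of the pencil in $|2\Theta|$ of divisors through $a$ and $b$. Writing $\tau(a,b)=\{p_1,p_2\}$, the two divisors $\Theta_{p_i}+\Theta_{-p_i}$ lie in this pencil by \Cref{lem:thetafacts}, and generically they span it; the base-locus condition then rephrases as $c\in(\Theta_{p_1}\cup\Theta_{-p_1})\cap(\Theta_{p_2}\cup\Theta_{-p_2})$, which by the same symmetry is $\tau(a,b)\subseteq \Theta_c\cup\Theta_{-c}$. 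The analogous argument for cases (2) and (3) replaces the pencil through $\{a,b\}$ with the pencil of divisors containing the length-two subscheme of $\xi$ supported at $a$; a tangential version of \Cref{lem:thetafacts}(5)--(6) identifies the spanning divisors via $\tau(a,-a)$ (in case (2) with $a\notin A[2]$) and via $\tau(0,2a)$ together with the tangent data $v=\PP(T_a\xi)$ (in case (3)), giving the stated conditions. The additional subcase $a=\epsilon\in A[2]$ appearing in $T_2$ actually falls into the preceding length-drop analysis and hence lands inside $X_0$, so including it in $T_2$ only enlarges the stated union.

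The main obstacle is showing that the two explicit divisors really do span the relevant pencil. In degenerate configurations where $\tau(\cdot,\cdot)$ is nonreduced or self-$\iota$-paired, the generic spanning argument fails and I would handle these either by passing to infinitesimal deformations of one of the spanning divisors, producing a tangent vector in $|2\Theta|$ to replace the collapsed generator, or by a specialization argument from the generic locus, using \Cref{lem:thetafacts} to identify the correct additional tangency conditions on $\Theta_c$.
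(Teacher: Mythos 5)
Your treatment of the length-drop case coincides with the paper's: a length-two subscheme $\zeta\subseteq\xi$ must lie in a fiber of $\varphi_{2\Theta}$, every such $\zeta$ satisfies $\overline{s}(\zeta)=0$, so the residual point of $\xi$ is $0$ and $\xi\in X_0$. For the collinear length-three case, however, the paper does not argue at all: it cites the classification of trisecant lines in \cite[page 204]{BD86}. Your pencil argument is therefore a genuinely different, self-contained route, and for case (1) with $\tau(a,b)$ reduced it is correct: collinearity of the three image points is equivalent to $c$ lying in the base locus of the pencil of divisors in $|2\Theta|$ through $a$ and $b$; the divisors $\Theta_{p_i}+\Theta_{-p_i}$ are members of that pencil, so $c$ lies on both, and \Cref{lem:thetafacts}(1),(3) convert this into $\tau(a,b)\subseteq\Theta_c\cup\Theta_{-c}$. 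Note that this is simpler than you make it: since the lemma only asserts the implication (collinear $\Rightarrow$ the $T_i$-condition), you never need the two divisors to \emph{span} the pencil --- the base locus is contained in \emph{every} member, so membership alone suffices. The "main obstacle" you identify is thus not an obstacle for the direction being proved, and the fixes you propose (infinitesimal deformations, specialization) address a non-problem.

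The genuine gaps are elsewhere. First, your announced key input --- that every divisor in $|2\Theta|$ is of the form $\Theta_x+\Theta_{-x}$ --- is false: $|2\Theta|\cong\P^3$, whereas the divisors of that form sweep out only the quartic surface $u(\Sigma_1(A))$ of \Cref{thm:kummerduality}. Your argument happens never to use this claim, but as stated it is an error. Second, and more seriously, cases (2) and (3) are not proved: the "tangential version of \Cref{lem:thetafacts}(5)--(6)" is never formulated, and in case (3), where $\xi$ is punctual, it is not even clear which pencil and which distinguished members you would use; deriving the stated conditions involving $\tau(0,2a)$, $\tau(0,v)$, $\tau(2a,v)$ requires an honest computation with tangent directions --- this is exactly the content that the paper outsources to \cite{BD86}. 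Third, when $\tau(a,b)$ is nonreduced, your argument only yields the set-theoretic statement $\operatorname{Supp}(\tau(a,b))\subseteq\Theta_c\cup\Theta_{-c}$, not the scheme-theoretic containment defining $T_3$; closing this requires an additional tangency argument that the two-divisor trick does not provide. As it stands, the proposal proves the $X_0$ statement and case (1) for reduced $\tau(a,b)$, and only sketches the rest.
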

\begin{proof}
This classification when $\varphi_{2\Theta}(\xi)$ has length three and is contained in a line follows from the discussion in \cite[page 204]{BD86}.
If instead $\varphi_{2\Theta}(\xi)$ has length less than three, then $\xi\supseteq \zeta$ for a certain $\zeta\in A^{[2]}$ which is contained in a fiber of $\varphi_{2\Theta}$. Since for this fiber it holds that $\overline{s}(\zeta)=0$, and since $\overline{s}(\xi)=0$, it must be that $0\in \xi$.
The last statement on the base locus of $\mathcal{L}$ now follows from the previous discussion.
\end{proof}

\begin{Lem}\label{lem:3torsiontheta}
    For any $a\in A$ the set $\Theta_a\cap A[3]$ contains at most two distinct points.
\end{Lem}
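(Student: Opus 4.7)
The plan is to translate the statement into a counting question on the curve $C$ and then analyze the resulting linear system. If three distinct points $b_1, b_2, b_3 \in \Theta_a \cap A[3]$ existed, they would correspond to three distinct $x_i \in C$ via $b_i = \alpha_a(x_i) = x_i + a - \eta$, and the $3$-torsion condition $3 b_i = 0$ in $A$ would translate into $3 x_i \sim M$ in $\Pic^3(C)$, where $M := 3\eta - 3a$. The task thus reduces to showing that for any $M \in \Pic^3(C)$, at most two points $x \in C$ satisfy $3x \sim M$.

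By Riemann--Roch on the genus-two curve $C$ one has $h^0(C, M) = 2$, so $|M|$ is a pencil ($g^1_3$) containing the three distinct divisors $3 x_1, 3 x_2, 3 x_3$. The first step is to distinguish whether $|M|$ has a base point. If $p$ is such a base point, then $|M - p|$ is a $g^1_2$ on a genus-two curve and must coincide with the canonical pencil $|K_C|$; hence $M \sim K_C + p$, and every divisor in $|M|$ has the form $p + y + \iota(y)$ for some $y \in C$. For $3 x_i$ to take this shape one needs $x_i = p$, contradicting the distinctness of the $x_i$.

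In the remaining base-point-free case, $|M|$ defines a degree-three morphism $\varphi_M \colon C \to \PP^1$ for which the $x_i$ are distinct total ramification points. Riemann--Hurwitz gives ramification of degree $2g-2+2\cdot 3 = 8$, which a priori allows up to four triple points, so sharper input is required. The plan is to pair $\varphi_M$ with the hyperelliptic cover $\phi_{K_C}$: since $\gcd(3,2) = 1$, the Castelnuovo--Severi inequality forces $(\varphi_M, \phi_{K_C}) \colon C \hookrightarrow \PP^1 \times \PP^1$ to be an embedding realizing $C$ as a smooth curve of bidegree $(3,2)$ and arithmetic genus two, with $\varphi_M$ and $\phi_{K_C}$ as the two projections. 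Writing the defining equation as $F(u,v) = A(u) v^2 + B(u) v + C(u) = 0$ with $A, B, C$ polynomials of degree three in $u$, a triple ramification point of $\varphi_M$ at $(u_0, v_0)$ corresponds to the cubic $P(u) := F(u, v_0)$ in $u$ having a triple root at $u_0$; this translates into two polynomial conditions in $v_0$ of degrees at most four and six respectively.

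The main obstacle will be completing the base-point-free case: one must show that these two polynomial equations in $v_0$ have at most two common solutions, exploiting the smoothness of $C$ together with the hyperelliptic symmetry to improve the naive Riemann--Hurwitz bound of four to the sharp bound of two expected by the lemma.
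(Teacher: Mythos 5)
Your proposal is correct as far as it goes---the reduction to counting points $x\in C$ with $3x\sim M$, the base-point case, and the bidegree $(3,2)$ model on $\PP^1\times\PP^1$ are all fine---but the step you explicitly defer is a genuine gap, and it is the entire content of the lemma. Comparing with the paper is instructive: the paper's proof is the same reduction followed by the single assertion that a degree-three map $C\to\PP^1$ from a genus-two curve cannot have three points of ramification index three ``by Riemann--Hurwitz''. Your own weight count ($\deg R=8$, each totally ramified point of weight $2$, hence up to \emph{four} such points) shows this assertion is unjustified; the paper's proof breaks at exactly the spot you flagged, so your write-up is, in this respect, more careful than the paper's.

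Moreover, the gap cannot be closed, because the statement is false for special genus-two curves. In your own coordinates, take
\[ A(u)=(u-1)^3, \qquad C(u)=u^3, \qquad B(u)=(u+1)^3-(u-1)^3-u^3 . \]
Then $F(u,0)=u^3$, $F(u,1)=(u+1)^3$, and the coefficient of $v^2$ is $(u-1)^3$, so the fibers of the trigonal projection over $v=0,1,\infty$ are all totally ramified; on the other hand
\[ B^2-4AC=-3u^6+24u^4+24u^2+4 \]
is squarefree, so $F$ is irreducible, its smooth projective model $w^2=B^2-4AC$ (with $w=2Av+B$) has genus two, and since the arithmetic genus of a bidegree $(3,2)$ curve is also two, the curve $\{F=0\}$ is itself smooth. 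This is a smooth genus-two curve with three distinct points $x,y,z$ satisfying $3x\sim3y\sim3z$, i.e.\ its Jacobian has a translate $\Theta_a$ with at least three points of $A[3]$ on it. One can even reach four: on the smooth model of $y^3=x^2(x^2-1)$ the degree-three map $x$ is totally ramified over $0,\pm1,\infty$ (Riemann--Hurwitz: $2g-2=-6+4\cdot 2$, so $g=2$), whence $3p_0\sim 3p_1\sim 3p_{-1}\sim 3p_\infty$ and $\Theta_{\eta-p_0}\cap A[3]$ contains the four distinct points $0$, $p_1-p_0$, $p_{-1}-p_0$, $p_\infty-p_0$. So the improvement from four to two that you hoped to extract from smoothness and the hyperelliptic symmetry does not exist.

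What your argument does prove, once the Riemann--Hurwitz bound is inserted, is the correct statement: $\Theta_a\cap A[3]$ contains at most \emph{four} distinct points (at most one when $|M|$ has a base point, at most four when it is base point free), and four is attained. Fortunately this weaker bound is all the paper ever uses: \Cref{lem:3torsiontheta} enters only the counting argument in the proof of \Cref{prop:Dglobgen}, where replacing two by four turns the bounds $4$, $20$, $4$ into $8$, $24$, $8$, still far below $78$, so global generation of $\mu_2(2\Theta)-\delta$ and everything downstream survives. (One minor inaccuracy worth noting: the embedding into $\PP^1\times\PP^1$ is not forced by Castelnuovo--Severi, which is an equality case here; it follows from coprimality of the degrees, giving birationality onto the image, together with the equality of arithmetic and geometric genus.)
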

\begin{proof}
Assume that $\Theta_a\cap A[3]$ contains three distinct points. Via a translation we can assume that one of them is $0$: let $0,e,f\in A[3]\cap \Theta_a$ be mutually distinct. Then $0\sim x+a-\eta, e \sim y+a-\eta, f\sim z+a-\eta$ for $x,y,z\in C$ mutually distinct. The fact that $e,f\in A[3]$  means that $3x\sim 3y \sim 3z$. So if we look at the map $f\colon C\to \PP^1$ induced by $H^0(C,3x)$, we see that it has three distinct points with ramification three, but this is impossible by Riemann-Hurwitz.
\end{proof}

\begin{Prop}\label{prop:Dglobgen}
The line bundle $D=\mu(2\Theta)-\delta$ on $\Kum_2(A)$ is globally generated.
\end{Prop}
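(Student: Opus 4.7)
The plan is to combine the explicit description of the base locus of the subsystem $\mathcal{L}\subseteq |D|$ from \Cref{lem:baselocusL} with the irreducibility of the theta group action on $H^0(\Kum_2(A), D)$ given by \Cref{thm:ogradytheta}. First I would check the hypotheses of that theorem for $D = \mu_2(2\Theta) - \delta$. The class $D$ is primitive in $H^2(\Kum_2(A), \ZZ)$, since the coefficient of $\delta$ is $-1$; it is big and nef by \Cref{prop: bigandnef}; its square is $q(D,D) = (2\Theta)^2 - 2(n+1) = 8 - 6 = 2$; and it is of the form $\mu_2(L) + b \delta$ with $L = 2\Theta$ of type $(2,2)$, coprime with $n+1 = 3$. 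Therefore $H^0(\Kum_2(A), D)$ is the irreducible Schr\"odinger representation of $\mathcal{H}(3,3)$, of dimension $9$.

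Next, I would note that the linear subspace $V \subseteq H^0(\Kum_2(A), D)$ generated by the translates $t_a^{*}\mathcal{L}$, for $a \in A[3]$, is stable under the natural action of $A[3]$, and hence $\mathcal{G}_{\Kum_2(A)}(D)$-invariant after lifting. Since $\mathcal{L}\ne 0$ and the Schr\"odinger representation is irreducible, one concludes $V = H^0(\Kum_2(A), D)$, from which
\[
\operatorname{Bs}(|D|) \;=\; \bigcap_{a\in A[3]} \operatorname{Bs}(t_a^{*}\mathcal{L}) \;\subseteq\; \bigcap_{a\in A[3]} t_{-a}\bigl(X_0 \cup T_1 \cup T_2 \cup T_3\bigr),
\]
using \Cref{lem:baselocusL} in the last containment.

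It then remains to show that for every $\xi \in \Kum_2(A)$ there is at least one $a \in A[3]$ with $t_a\xi \notin Y := X_0 \cup T_1 \cup T_2 \cup T_3$. I would proceed by cases on the support type of $\xi$ (three, two or one point). In each case, using \Cref{lem:thetafacts} to translate between incidence and linear equivalence of theta translates, the requirement $t_a \xi \in X_0$ cuts out a subset of $A[3]$ of cardinality at most $3$; the requirements $t_a\xi \in T_i$ translate, via the involution $\tau$ of \Cref{thm:invtau}, into conditions of the form ``two points of a length-two scheme lie on the union of two translated theta divisors'', which by \Cref{lem:3torsiontheta} are satisfied by proper subsets of $A[3]$. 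As $|A[3]| = 81$, the union of all such ``forbidden'' subsets is strictly smaller than $A[3]$, so a suitable $a$ exists.

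The main obstacle is this last case analysis, because the bound $|A[3]|=81$ is only useful once each stratum of $Y$ has been translated into a counting estimate. The subtlest case is (3) of \Cref{lem:baselocusL}, where $\xi$ is curvilinear supported at a single non-two-torsion point and the defining condition involves three translated theta divisors simultaneously; for such $\xi$ one must argue that the three theta-incidence conditions are genuinely independent constraints on $a\in A[3]$. Once this is done, the same argument shows global generation of $D$.
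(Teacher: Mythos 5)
Your reduction step is sound, and it is essentially a slightly more elaborate version of what the paper does: since every line bundle on $\Kum_2(A)$ is invariant under the $A[3]$-action (\Cref{sec:extthetagroup}), the base locus $\operatorname{Bs}(|D|)$ is itself stable under translation by $A[3]$, so the inclusion $\operatorname{Bs}(|D|)\subseteq \bigcap_{e\in A[3]} t_e(\operatorname{Bs}(\mathcal{L}))$ already follows from $\operatorname{Bs}(|D|)\subseteq\operatorname{Bs}(\mathcal{L})$, without invoking irreducibility of the Schr\"odinger representation. Your appeal to \Cref{thm:ogradytheta} (whose hypotheses you verify correctly: $D$ primitive, big and nef by \Cref{prop: bigandnef}, $q(D,D)=2$, and $L=2\Theta$ of type $(2,2)$ coprime with $3$) yields the same inclusion, indeed an equality, so this part is a valid, if heavier, alternative.

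The genuine gap is in the final step, which is not a technical afterthought but the actual content of the proof. You assert that each condition $t_e(\xi)\in T_i$ cuts out a \emph{proper} subset of $A[3]$ and conclude that the union of all forbidden subsets is strictly smaller than $A[3]$; but a union of proper subsets of an $81$-element set can perfectly well be the whole set, so properness alone proves nothing --- one needs explicit cardinality bounds. Worse, properness itself is not automatic: in the reduced case $\xi=\{a,b,c\}$ the condition becomes $\tau(a,b)\subseteq \Theta_c\cup\Theta_{-c+e}$, and if the degenerate inclusion $\tau(a,b)\subseteq\Theta_c$ held, the condition would be satisfied for \emph{every} $e\in A[3]$ and the counting would collapse. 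This is exactly the ``independence'' issue you flag at the end and leave unresolved; the paper resolves it with \Cref{lem:symmtau} (the inclusion $\tau(a,b)\subseteq\Theta_c$ would force $c\in\tau(\tau(a,b))=\{a,b\}$), and then uses \Cref{lem:symmtau} again to rewrite the surviving condition as ``$e$ lies on the translate $\Theta_{c+f}$ for one of the at most two points $f\in\Theta_a\cap\Theta_b$'', so that \Cref{lem:3torsiontheta} gives at most $2\cdot 2=4$ bad elements $e$. Analogous arguments give at most $20$ bad elements in the two-point case (the sub-case $a+e\in A[2]$ contributing $16$) and at most $4$ in the one-point case; only then does comparing with $81-3=78$ yield the contradiction. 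Without these estimates --- which occupy essentially all of the paper's proof --- what you have is a correct plan, not a proof.
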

\begin{proof}
If we consider the linear system $\mathcal{L}$ of \eqref{eq:phiDlinearsystem}, we see that $\operatorname{Bs}(D)\subseteq \operatorname{Bs}(\mathcal{L})$. Furthermore, any line bundle on $\Kum_2(A)$ is invariant under the action of $A[3]$ on $\Kum_2(A)$, see \Cref{sec:extthetagroup}. Hence, it is enough to prove 
\[ \bigcap_{e\in A[3]} t_e(\operatorname{Bs}(\mathcal{L})) = \emptyset. \]
Take $\xi\in \Bs(\mathcal{L})$ and assume that $t_e(\xi)\in \Bs(\mathcal{L})$ for all $e\in A[3]$. We use Lemma \ref{lem:baselocusL}: since the support of $\xi$ has at most three points, there are at most $3$ distinct $e\in A[3]$ such $t_e(\xi) \in X_0$, so there must be at least $78 = 81-3$ elements $e\in A[3]$ such that $t_e(\xi)\in (T_1\cup T_2\cup T_3)\setminus X_0$. We show that this is impossible:

\begin{enumerate}
    \item Assume that $\xi = \{a,b,c\}$ is supported in three distinct points: then $t_e(\xi) \in T_3$ if and only if $\tau(a+e,b+e)= t_e(\tau(a,b)) \subseteq \Theta_{c+e}\cup \Theta_{-c-e}$ and that is equivalent to
\[\tau(a,b) = \Theta_a\cap \Theta_b \subseteq \Theta_c \cup \Theta_{-c+e}\]

Observe that it cannot be $\tau(a,b)\subseteq \Theta_c$, otherwise \Cref{lem:symmtau} proves that $c\in \tau(\tau(a,b)) = \{a,b\}$. Hence it must be that $\Theta_a\cap\Theta_b\cap\Theta_{-c+e} \ne \emptyset$. If we fix $f\in \Theta_a\cap\Theta_b$ we see that $f\in \Theta_{-c+e}$ if and only if $e\in \Theta_{c+f}$. By \Cref{lem:3torsiontheta} there are at most two $e\in A[3]$ with this property. Since there are at most $2$ elements $f\in \Theta_a\cap\Theta_b$, we have at most $4$ possibilities for $e$, and $4<78$, so that we conclude.

\item Assume now that $\xi = \{(a,v),c\}$ is supported in two distinct points. Then $t_e(\xi) = \{(a+e,v),c+e\}$ and one possibility for this to be in $T_2$ is that $a+e\notin A[2]$ and $\tau(a+e,-a-e)\in \Theta_{c+e}\cup \Theta_{-c-e}$. This is equivalent to 
\[ \tau(a,-a+e) = \Theta_a\cap \Theta_{-a+e} \subseteq \Theta_c\cup \Theta_{-c+e} \]
So in particular it must be $\Theta_a\cap\Theta_c\cap \Theta_{-a+e}\ne \emptyset$ or $\Theta_{a}\cap\Theta_{-c+e}\cap \Theta_{-a+e}\ne \emptyset$. It is easy to see that these conditions are equivalent, and reasoning as in the previous case, we see that we have at most $4$ possibilities for $e$. This was under the assumption that $a+e\notin A[2]$. If instead $a+e\in A[2]$, we see that we have at most $16$ possibilities for $e$. In total, we have at most $16+4 = 20<78$ possibilities, so that we conclude.

\item Assume that $\xi\in T_3$ has $\operatorname{Supp}(\xi) = a\notin A[2]$ and $\PP(T_a \xi) = v \in \PP(T_a A)$. If $t_e(\xi)\in T_3\setminus X_0$, we know that $\tau(0,2a-e)\subseteq \tau(0,v)\cup \tau(2a-e,v)$ and $\tau(0,2a-e)\nsubseteq \tau(2a-e,v)$ otherwise they would be equal. Hence $\tau(0,2a-e)\cap \tau(0,v) \neq \emptyset$. If $f\in \tau(0,v)$ is such that $f\in \tau(0,2a-e)$, then \Cref{lem:symmtau} shows that $2a-e\in \Theta_f$ so that $e\in \Theta_{2a-f}$. By \Cref{lem:3torsiontheta}, there are then at most $2$ possibilities for $e$. Since there are also at most $2$ possibilities for $f\in \tau(0,v)$, we have at most $4<78$ possibilities for $e$.
\end{enumerate}

\end{proof}

Now we consider the restriction of the map $\varphi_{D}\colon \Kum_2(A) \to |D|^{\vee}$ to the divisor $F\subseteq \Kum_2(A)$. Recall that $\pi \colon F \to A$ is naturally a $\PP^1$-bundle over $A$. We want to use this to show that $|D|^{\vee}$ can be naturally identified with the target space of $\varphi_{3\Theta}\colon A \to |3\Theta|^{\vee} \cong \Pd^8$. 

\begin{Rem}\label{rem:thetaD}
	The divisor class $D = \mu_2(2\Theta)-\delta$ is big and nef and primitive. Furthermore, $2$ is coprime with $3$ and $q(D,D) = 8-6=2$. Hence, O'Grady's Theorem \ref{thm:ogradytheta} shows that the theta group $\mathcal{G}_{\Kum_2}(D)$ is isomorphic to the Heisenberg group $\mathcal{H}(3,3)$ and that the space of global sections $H^0(\Kum_2(A),D)$ is the Schr\"odinger representation of this group. In particular $h^0(\Kum_2(A),D)=9$. This representation appears also as the group of global sections $H^0(A,3\Theta)$. 
\end{Rem}

\begin{Prop}\label{prop: A-and-N}
	There is a natural isomorphism of $\mathcal{H}(3,3)$-representations $H^0(\Kum_2(A),D) \cong  H^0(A,3\Theta)$, which yields a commutative diagram. 
	\[
	\begin{tikzcd}
		F\ar[r,hook] \ar[d,"\pi"]& \Kum_2(A)\ar[d,"\varphi_D"]  \\
		A\ar[r,hook,"\varphi_{3\Theta}"] &  \Pd^8  = |3\Theta|^{\vee} \cong |D|^{\vee} 
	\end{tikzcd}
	\]
\end{Prop}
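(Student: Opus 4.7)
The plan is to restrict the morphism $\varphi_D$ to $F$ and exploit the $\PP^1$-bundle structure $\pi\colon F \to A$. The starting point is a direct computation of $D_{|F}$ via \Cref{lem:FP1undleAndClass}(4): combining $\mu_2(\Theta)_{|F} = \pi^*(-3\Theta)+2H$ and $\delta_{|F} = \pi^*(-9\Theta)+4H$, we obtain
\[ D_{|F} = \mu_2(2\Theta)_{|F}-\delta_{|F} = \pi^*(-6\Theta)+4H-\pi^*(-9\Theta)-4H = \pi^*(3\Theta). \]
Because $\pi_*\sO_F = \sO_A$ for a $\PP^1$-bundle, the projection formula yields a canonical identification $H^0(F,D_{|F}) \cong H^0(A,3\Theta)$, a nine-dimensional space.

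The next step is to show that the restriction map $r\colon H^0(\Kum_2(A),D) \to H^0(F,D_{|F})$ is an isomorphism. The source is also nine-dimensional by \Cref{rem:thetaD}, so it is enough to prove that $r$ is injective, equivalently that $D-F\sim \delta-\mu_2(\Theta)$ is not effective. If it were, we would have $\delta \sim \mu_2(\Theta)+E'$ with $E'$ effective; combining Fujiki's formula with the fact that $\mu_2(\Theta)$ is big and nef by \Cref{prop: bigandnef}, we would deduce $\int_{\Kum_2(A)} E'\cdot \mu_2(\Theta)^3 = C\cdot q(E',\mu_2(\Theta))\cdot q(\mu_2(\Theta))\ge 0$ for some positive constant $C$, hence $q(E',\mu_2(\Theta))\ge 0$, and thus $q(\delta-\mu_2(\Theta),\mu_2(\Theta))\ge 0$. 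But a direct BBF computation gives $q(\delta-\mu_2(\Theta),\mu_2(\Theta)) = -q(\mu_2(\Theta),\mu_2(\Theta)) = -2$, a contradiction.

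Finally, I would check that $r$ is $\mathcal{H}(3,3)$-equivariant by observing that $F$ is invariant under all translations, that $\pi\colon F\to A$ intertwines the $A[3]$-action on source and target, and that the isomorphisms $D \iso t_a^*D$ defining O'Grady's theta group restrict along $\pi$ to the isomorphisms $3\Theta \iso t_a^*(3\Theta)$ underlying the classical theta group of $3\Theta$; equivalently, the equivariance is automatic by Schur's lemma once $r$ is nonzero, since both spaces realize the Schr\"odinger representation. The diagram then commutes because $D_{|F}\sim \pi^*(3\Theta)$ forces the restriction of $\varphi_D$ to $F$ to factor through $\pi$, with the induced map to $|3\Theta|^\vee$ being the complete linear system, i.e.\ $\varphi_{3\Theta}$. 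The main delicate point in this strategy is the injectivity argument, which I expect to be the only genuine obstacle: translating the potential vanishing of sections on $F$ into non-effectiveness of $\delta-\mu_2(\Theta)$ and then ruling this out via Fujiki is where the bigness and nefness of $\mu_2(\Theta)$ is really used.
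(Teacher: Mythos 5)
Your proposal is correct, and its skeleton is the same as the paper's: restrict to $F$, identify $D_{|F}\cong \pi^*(3\Theta)$ via \Cref{lem:FP1undleAndClass}, prove that the restriction map $H^0(\Kum_2(A),D)\to H^0(F,D_{|F})\cong H^0(A,3\Theta)$ is injective (hence an isomorphism by the dimension count $9=9$), and deduce the equivariance and the commutativity of the diagram. You differ from the paper in two sub-steps. First, for the key vanishing $H^0(\Kum_2(A),\delta-\mu_2(\Theta))=0$: the paper cites the duality between the cone of effective divisors and the movable cone (Boucksom) to conclude that an effective class pairs non-negatively under $q$ with the big and nef class $\mu_2(\Theta)$, whereas you derive the same inequality from the polarized Fujiki relation $\int_{\Kum_2(A)} E'\cdot\mu_2(\Theta)^3 = c\,q(\mu_2(\Theta))\,q(E',\mu_2(\Theta))$ together with nefness of $\mu_2(\Theta)$ and effectivity of $E'$. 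Your route is more elementary and self-contained; the paper's is a one-line citation of a general cone-duality result. Second, for the $\mathcal{H}(3,3)$-equivariance, the paper uses Mumford's characterization of the Schr\"odinger representation ($\CC^{\times}$ acts by scalars and $A[3]$ acts on $\varphi_{3\Theta}(A)$ by translations, verified via the identity $\varphi_D(t_a(\xi))=\varphi_{3\Theta}(a)$), while you match the two theta groups directly by restricting the isomorphisms $D\iso t_a^*D$ along $\pi$, using that $F$ is translation-invariant, that $\pi$ intertwines the $A[3]$-actions, and that $\pi_*\sO_F=\sO_A$ lets such isomorphisms descend; this is sound and arguably more structural. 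One caveat: your parenthetical claim that the equivariance is ``automatic by Schur's lemma once $r$ is nonzero'' is not correct as stated, since Schur's lemma applies only to maps already known to be equivariant and cannot be used to establish equivariance of a given linear map. Because you give the descent-along-$\pi$ argument first, this slip does not damage the proof, but that remark should be removed.
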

\begin{proof}
	\Cref{lem:FP1undleAndClass} shows that $F\sim \mu_2(3\Theta)-2\delta$, so that there is a short exact sequence of sheaves on $\Kum_2(A)$:
	\[ 0 \longrightarrow \mathscr{O}(-\mu_2(\Theta)+\delta) \longrightarrow \mathscr{O}(D) \longrightarrow \mathscr{O}_F(D_{|F}) \longrightarrow 0 \]
	We observe that $\mu_2(\Theta)$ is big and nef and $q(-\mu_2(\Theta)+\delta,\mu_2(\Theta)) = -2$. Since the cone of effective divisors is dual (with respect to the Beauville-Bogomolov form) to the movable cone (cf. \cite[pp. 63]{BoucksomDivisorialZariskiDecomposition}), it follows $H^0(\Kum_2(A),-\mu_2(\Theta)+\delta)=0$.
    Furthermore \Cref{lem:FP1undleAndClass}, shows also that
	 $D_{|F} \cong \pi^*(3\Theta)$, so that, in summary there is an injective restriction map
	\[ H^0(\Kum_2(A),D) \hookrightarrow H^0(F,D_{|F}) \cong H^0(A,3\Theta) \]
	which must then be an isomorphism, since the two spaces have dimension 9 because of Riemann-Roch (\cite[Section 3.3]{debarreHK},\cite[Theorem 3.6.1.]{BL2004}). This shows that we have a commutative diagram as in the picture. We can even be more explicit: the points in $F$ are of the form $t_a(\alpha(\xi))$ for uniquely determined $a\in A$ and $\xi \in |3\eta-3a|$. The previous reasoning shows that
	\begin{equation}\label{eq:DonF} 
	\varphi_{D}(t_{a}(\xi)) = \varphi_{3\Theta}(a) \qquad \text{ for all } a\in A,\xi \in |3\eta-3a|. 
	\end{equation}
	Now we show that the restriction map is also an isomorphism of Heisenberg representations. Recall that $\mathcal{H}(3,3)$ is a central extension of $A[3]$ by an algebraic torus:
	\[  0 \longrightarrow \CC^{\star} \longrightarrow \mathcal{H}(3,3) \longrightarrow A[3] \longrightarrow 0\]
	and that the Schr\"odinger representation on $H^0(A,3\Theta)$ is the only one such that $\CC^{\times}$ acts on $H^0(A,3\Theta)$ via scalar multiplication and $A[3]$ acts on $\varphi_{3\Theta}(A)\subseteq |3\Theta|^{\vee}$ as translations \cite[Proposition 3, pag. 295]{mumford66}. If we consider the action of $\mathcal{H}(3,3)$ on $H^0(\Kum_2(A),D)$, we see that $\CC^{\times}$ acts via scalar multiplication. Thus we only need to check the action of $A[3]$. Any $c\in A[3]$ acts on $\Kum_2(A)$ by translations: $\xi \mapsto t_c(\xi)$. Thus, if we take any $a\in A$ and $\xi \in |3\eta-3a|$ we see from \eqref{eq:DonF} that
	\[ c\cdot \varphi_{3\Theta}(a) =  c \cdot \varphi_D(t_a(\xi)) = \varphi_D(t_c(t_a(\xi))) = \varphi_{D}(t_{a+c}(\xi)) = \varphi_{3\Theta}(a+c) \]
	and this is precisely what we wanted to show. 
\end{proof}

\subsection{Duality of the two projective models}

Now we want to show that the two projective models of $\Kum_2(A)$ considered before are dual to each other under the polar map $\mathcal{D}: \Pd^8 \dashrightarrow \P^8$ of the Coble cubic. First we need an observation about quadrics

\begin{Lem}\label{cor: no-quadric-for-N}
	The image $N = \varphi_D(\Kum_2(A))$ is not contained in any quadric.
\end{Lem}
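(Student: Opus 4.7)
The plan is to prove that the multiplication map
\[ \mu\colon \Sym^2 H^0(\Kum_2(A), D) \longrightarrow H^0(\Kum_2(A), 2D) \]
is an isomorphism, since its kernel is precisely the space of quadratic forms on $\Pd^8 = |D|^\vee$ vanishing on $N$. Both sides have dimension $45$: the source because $h^0(D) = 9$ from \Cref{rem:thetaD} yields $\binom{10}{2}=45$; the target because $D$ (and hence $2D$) is big and nef by \Cref{prop: bigandnef}, so Kawamata--Viehweg vanishing on the trivial-canonical $\Kum_2(A)$ gives $h^0(2D) = \chi(2D)$, and Nieper's Riemann--Roch formula $\chi(L)=\chi(\sO)\binom{q(L)/2+2}{2}$ for generalized Kummer fourfolds applied to $q(2D) = 8$ produces $3\binom{6}{2}=45$.

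Next, I would restrict everything to the divisor $F$. Using $F\sim 2D - \mu_2(\Theta)$ and $D|_F = \pi^*(3\Theta)$ from \Cref{lem:FP1undleAndClass}, the vanishing $H^1(\Kum_2(A), \mu_2(\Theta)) = 0$ by Kawamata--Viehweg, and the isomorphism $H^0(\Kum_2(A),D)\isomor H^0(A,3\Theta)$ established in the proof of \Cref{prop: A-and-N}, one obtains the commutative diagram with exact rows
\[
\begin{tikzcd}[column sep=small]
0 \ar[r] & I_A \ar[r] \ar[d, "\psi"'] & \Sym^2 H^0(\Kum_2(A), D) \ar[r] \ar[d, "\mu"] & \Sym^2 H^0(A, 3\Theta) \ar[r] \ar[d, "m", two heads] & 0 \\
0 \ar[r] & H^0(\mu_2(\Theta)) \ar[r] & H^0(\Kum_2(A), 2D) \ar[r] & H^0(A, 6\Theta) \ar[r] & 0
\end{tikzcd}
\]
where $m$ is surjective by Koizumi's projective normality theorem applied to $3\Theta$, so $I_A := \ker m$ is the $9$-dimensional space of quadrics in $\Pd^8$ containing $\varphi_{3\Theta}(A)=A$. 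Both $I_A$ and $H^0(\mu_2(\Theta))$ are $9$-dimensional, so by the snake lemma $\mu$ is an isomorphism if and only if the induced map $\psi$ is.

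For the final step I would invoke Schur's lemma and verify non-vanishing via a base-locus argument. Both $I_A \subseteq \Sym^2 H^0(D)$ and $H^0(\mu_2(\Theta)) \subseteq H^0(2D)$ are $9$-dimensional subrepresentations of the theta group $\mathcal{G}_{\Kum_2(A)}(D)\cong \mathcal{H}(3,3)$, with central character $t^2$ inherited from the tensor-square actions and the equivariance of restriction to $F$. Since $\gcd(2,3)=1$, the unique irreducible $\mathcal{H}(3,3)$-representation with central character $t^2$ has dimension $9$, so both spaces are irreducible and Schur's lemma forces $\psi$ to be either zero or an isomorphism. To rule out $\psi = 0$, I would observe that $I_A$ is spanned by the nine partial derivatives of the Coble cubic $\sC_3$: these are linearly independent quadrics (since they define the map $\mathcal{D}$) and their common zero locus equals $A$ scheme-theoretically. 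Since $D$ is big and $\varphi_D$ is the morphism given by the complete linear system $|D|$, one has $\dim N = \dim \Kum_2(A) = 4 > 2 = \dim A$, so $N \ne A$. Choosing $p \in N \setminus A$, some $Q \in I_A$ satisfies $Q(p) \ne 0$, hence $\varphi_D^*(Q) \in H^0(2D)$ is nonzero and vanishes on $F$ (because $\varphi_D(F) = A \subseteq \{Q = 0\}$), yielding a nonzero element of $H^0(\mu_2(\Theta))$ and showing $\psi(Q) \ne 0$.

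The main obstacle is the identification of $I_A$ with the span of the partials of the Coble cubic: this rephrases the geometric condition ``$Q$ vanishes on $N$'' as the base-locus statement ``$N \subseteq A$'', which is then ruled out at once by the bigness of $D$.
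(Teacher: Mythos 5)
Your proposal is correct in substance, but its decisive mechanism is the same as the paper's, just wrapped in extra machinery. The paper argues in three lines: since $A=\varphi_D(F)\subseteq N$, quadrics through $N$ form a subspace of the space $H^0(\Pd^8,\mathcal{I}_A(2))$ of quadrics through $A$; the latter is an irreducible $\mathcal{H}(3,3)$-representation, so the subspace is $0$ or everything; and ``everything'' would force $N\subseteq A$ (as $A$ is cut out by quadrics), contradicting $\dim N=4$, which follows from bigness of $D$. Your Schur's-lemma step together with your final non-vanishing step is exactly this argument: $\ker\psi$ is an invariant subspace of the irreducible $I_A$, and it is proper because $N\nsubseteq A$. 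What your longer route (Kawamata--Viehweg, Riemann--Roch on $\Kum_2(A)$, Koizumi's projective normality, the snake lemma) buys is a genuinely stronger conclusion: not merely $\ker\mu=0$ but that $\mu\colon \Sym^2 H^0(D)\to H^0(2D)$ and $\psi\colon I_A\to H^0(\Kum_2(A),\mu_2(\Theta))$ are isomorphisms. The latter essentially recovers the identification $V=H^0(\Kum_2(A),\mu_2(\Theta))$ appearing in the proof of \Cref{thm: duality}, so your computation proves more than the lemma asks, at the price of being much less economical.

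One slip you should fix: your top row is not exact as written. Since $H^0(\Kum_2(A),D)\to H^0(A,3\Theta)$ is an isomorphism by \Cref{prop: A-and-N}, the map $\Sym^2 H^0(\Kum_2(A),D)\to \Sym^2 H^0(A,3\Theta)$ is an isomorphism, so its kernel is $0$, not $I_A$. The row you actually need is
\[ 0\longrightarrow I_A \longrightarrow \Sym^2 H^0(\Kum_2(A),D)\longrightarrow H^0(A,6\Theta)\longrightarrow 0, \]
where the right-hand map is $m$ precomposed with that isomorphism, and the third vertical arrow of your diagram is then the identity of $H^0(A,6\Theta)$. With this correction the squares commute (restricting a product of sections to $F$ is the product of the restrictions, using $D_{|F}\cong\pi^*(3\Theta)$ from \Cref{lem:FP1undleAndClass}), the snake lemma identifies $\ker\mu\cong\ker\psi$, and the rest of your argument goes through unchanged.
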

\begin{proof}
    Since $\varphi_{3\Theta}(A)\subseteq N$, the space $H^0(\Pd^8,\mathcal{I}_{N}(2))$ of quadrics containing $N$ is a subspace of the space $H^0(\Pd^8,\mathcal{I}_{A}(2))$ of quadrics containing $\varphi_{3\Theta}(A)$. By \cref{prop: A-and-N}, these are both representations of the Heisenberg group, and it is known \cite[\S 10.7, pp. 308]{BL2004}, that $H^0(\Pd^8,\mathcal{I}_{A}(2))$ is irreducible. Hence, if 
    $H^0(\Pd^8,\mathcal{I}_{N}(2))\ne 0$, then $H^0(\Pd^8,\mathcal{I}_{N}(2)) =  H^0(\Pd^8,\mathcal{I}_{A}(2))$. But if this were the case, then $N \subseteq \cap_{Q\in |\mathcal{I}_A(2)|} Q$, and this means $N\subseteq A$, since $A$ is cut out by quadrics. But this is impossible because $D$ is big so that the image $N=\varphi_D(\Kum_2(A))$ must be four-dimensional.
\end{proof}

\begin{Thm}\label{thm: duality}
	There is a commutative diagram
	\[
\begin{tikzcd}
		& \Kum_2(A)\ar[ld, "\varphi_{\mu_2(2\Theta)-\delta}"']\ar[rd, "\varphi_{\mu_2(\Theta)}"]\\
		\Pd^8 \ar[rr, dashed, "\mathcal{D}"]&& \P^8
	\end{tikzcd}
\]

	As a consequence, the restriction of $\varphi_D$ to $\Kum_2(A)\setminus (E\cup F)$ is an isomorphism onto its image.
\end{Thm}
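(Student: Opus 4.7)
\emph{Proof plan.} I would split the argument into two parts: first establishing commutativity of the diagram via a representation-theoretic argument, and then deducing the isomorphism statement from commutativity together with \Cref{thm:mapmutheta}.

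For commutativity, the strategy is Heisenberg equivariance plus Schur's lemma. By \Cref{prop: A-and-N}, the map $\varphi_D$ is $\mathcal{H}(3,3)$-equivariant, where the Heisenberg group acts on $\Kum_2(A)$ via translations by $A[3]$ and on $\Pd^8$ via the Schr\"odinger representation. The Coble cubic, being unique as a cubic singular along $A$, is Heisenberg-invariant, so the polar map $\mathcal{D}$ is equivariant as well; its components are the nine partial derivatives $\partial_i F$ which span the $9$-dimensional space $H^0(\Pd^8, \mathcal{I}_A(2))$, a Schr\"odinger representation of $\mathcal{H}(3,3)$. By \Cref{prop: A-and-N} we have $\varphi_D(F) \subseteq \varphi_{3\Theta}(A)$, so each pullback $\varphi_D^*(\partial_i F) \in H^0(\Kum_2(A), 2D)$ vanishes along $F$. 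Dividing by a defining section $s_F$ of the divisor $F$ yields sections $\widetilde{Y}_i \in H^0(\Kum_2(A), 2D - F) = H^0(\Kum_2(A), \mu_2(\Theta))$, using the numerical identity $2D - F = \mu_2(\Theta)$ from \Cref{lem:FP1undleAndClass}. By \Cref{thm:ogradytheta} applied to $\mu_2(\Theta)$, which is big, nef, primitive with $q(\mu_2(\Theta)) = 2$, the space $H^0(\Kum_2(A), \mu_2(\Theta))$ is the irreducible Schr\"odinger representation. The induced $\mathcal{H}(3,3)$-equivariant linear map $H^0(\Pd^8, \mathcal{I}_A(2)) \to H^0(\Kum_2(A), \mu_2(\Theta))$ is thus either zero or an isomorphism by Schur's lemma; it is nonzero because $\mathcal{D}$ is dominant by Ortega's theorem and $\varphi_D$ is dominant onto $N$. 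Consequently the linear system defining $\mathcal{D}\circ \varphi_D$ is the complete system $|\mu_2(\Theta)|$, so $\mathcal{D}\circ \varphi_D$ agrees with $\varphi_{\mu_2(\Theta)}$ up to a Heisenberg-equivariant projective automorphism of $\P^8 = |3\Theta|$, which by Schur applied once more is a scalar. Hence the diagram commutes under the identification $\P^8 = |3\Theta|$ provided by \Cref{thm:mapmutheta}.

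For the second claim, I would use the diagram together with \Cref{thm:mapmutheta}. That theorem shows $\varphi_{\mu_2(\Theta)}$ factors as $u\circ \operatorname{HC}$, with $u\colon \Sigma_2(A) \hookrightarrow |3\Theta|$ a closed embedding and $\operatorname{HC}$ an isomorphism on the complement of $E$. Thus $\varphi_{\mu_2(\Theta)}$ is an isomorphism onto its image on $\Kum_2(A)\setminus E$. On the smaller open set $\Kum_2(A)\setminus(E\cup F)$, the map $\mathcal{D}$ is regular at points of $\varphi_D(\Kum_2(A)\setminus(E\cup F))$, since these lie in $\Pd^8\setminus A$. The commutativity $\varphi_{\mu_2(\Theta)} = \mathcal{D}\circ \varphi_D$ then forces $\varphi_D$ to be injective on points (any identification of points would propagate through $\mathcal{D}$ to $\varphi_{\mu_2(\Theta)}$) and injective on tangent vectors (any kernel of $d\varphi_D$ would lie in the kernel of $d\varphi_{\mu_2(\Theta)}$). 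An injective immersion between smooth varieties is an isomorphism onto its image, which yields the claim.

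The main obstacle is establishing the two ingredients underlying the Schur-type argument: first, correctly identifying both $H^0(\Pd^8, \mathcal{I}_A(2))$ and $H^0(\Kum_2(A), \mu_2(\Theta))$ with the Schr\"odinger representation under compatible central characters (the former is classical, the latter uses O'Grady's \Cref{thm:ogradytheta}); and second, the nonvanishing of the pullback map, which we deduce from Ortega's classical duality $\overline{\mathcal{D}(\mathscr{C}_3)} = \mathscr{C}_6$, but which can alternatively be verified directly on a generic reduced triple $\{a,b,c\} \in \Kum_2(A)$ with $\varphi_D(\{a,b,c\}) \notin A$.
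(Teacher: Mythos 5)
Your outline shares the paper's skeleton exactly: pull back the nine polar quadrics of the Coble cubic, use \Cref{prop: A-and-N} to see they vanish along $F$, divide by the section of $F$ to land in $H^0(\Kum_2(A),2D-F)\cong H^0(\Kum_2(A),\mu_2(\Theta))$, and use Heisenberg irreducibility to conclude the resulting system is complete. Where you diverge is in \emph{how} irreducibility is deployed. The paper never compares two representations: it proves (\Cref{cor: no-quadric-for-N}) that no quadric contains $N=\varphi_D(\Kum_2(A))$ --- using only irreducibility of $H^0(\Pd^8,\mathcal{I}_A(2))$ plus $N\not\subseteq A$ (from bigness of $D$) --- so the composed system $V\subseteq H^0(\Kum_2(A),2D)$ has dimension exactly $9$, and then $V\subseteq H^0(\Kum_2(A),\mu_2(\Theta))$ with $h^0=9$ forces equality. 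Your Schur's-lemma route needs strictly more: an equivariant map between two irreducible representations with \emph{matching central characters}, which is precisely the obstacle you flag. The bookkeeping is genuinely delicate: the source sits in $\operatorname{Sym}^2 H^0(A,3\Theta)$ (central weight $2$), O'Grady's \Cref{thm:ogradytheta} gives the target central weight $1$, and dividing by $s_F$ twists by the character through which yet a third extension of $A[3]$ acts on the one-dimensional $H^0(F)$. You can sidestep all of this: equivariance at the level of \emph{projective} actions already makes the kernel of your map $H^0(\Pd^8,\mathcal{I}_A(2))\to H^0(\Kum_2(A),\mu_2(\Theta))$ (quadrics through $A$ vanishing on the invariant subvariety $N$) an invariant subspace; irreducibility of the source plus nonvanishing gives injectivity, hence completeness of the image by counting dimensions --- at which point you have reproduced the paper's argument and no longer need O'Grady's theorem on the target side at all, only $h^0(\mu_2(\Theta))=9$ from Riemann--Roch as in \Cref{thm:mapmutheta}.

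One step in your second part is wrong as stated: ``an injective immersion between smooth varieties is an isomorphism onto its image'' is false in general (restrict the normalization of a nodal cubic so as to omit one preimage of the node: the map is bijective and immersive, yet the image is singular and the map is not an isomorphism onto it). The conclusion nevertheless holds here, for the reason the paper gives: on $U=\Kum_2(A)\setminus(E\cup F)$ one has the equality of morphisms $(\mathcal{D}\circ\varphi_D)_{|U}=(\varphi_{\mu_2(\Theta)})_{|U}$, the right-hand side is an isomorphism onto its image by \Cref{thm:mapmutheta}, and hence $(\varphi_{\mu_2(\Theta)}|_U)^{-1}\circ\mathcal{D}$ is a morphism on $\varphi_D(U)$ inverting $\varphi_D|_U$ --- which is exactly what ``isomorphism onto the image'' means. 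Note also that your assertion that $\mathcal{D}$ is regular along $\varphi_D(U)$, i.e.\ $\varphi_D(U)\cap A=\emptyset$, needs a word of justification (the paper only proves $\varphi_D^{-1}(A)=F$ later, in \Cref{prop:mapphiDemb}); it follows cheaply from your own factorization $\varphi_D^*(\partial_i F)=s_F\cdot \widetilde{Y}_i$: if $\varphi_D(x)\in A$ for $x\notin F$, then all $\widetilde{Y}_i$ vanish at $x$, contradicting the base-point-freeness of $\mu_2(\Theta)$ from Theorem A.
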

\begin{proof}
	Set again $D=\mu_2(2\Theta)-\delta$ and $F=\mu_2(3\Theta)-2\delta$. We first check that the image of composition $\cD \circ \varphi_D$ is nondegenerate, meaning that it is not contained in a hyperplane. This is true since $\mathcal{D}$ is defined by quadrics, hence any such hyperplane would give a quadric in $\Pd^8$ vanishing on $N=\varphi_D(\Kum_2(A))$. But this is impossible by \Cref{cor: no-quadric-for-N}. This shows that the composition $\cD\circ \varphi_{D}$ is determined by a sub-linear system  $V\subseteq H^0(\Kum_2(A),2D)$, with $\dim V=9$. Notice that we take $2D$ since $\cD$ is defined by quadrics. Furthermore, since the quadrics defining $\mathcal{D}$ vanish on $\varphi_{3\Theta}(A)$, and since $\varphi_D(F) = \varphi_{3\Theta}(A)$, we see that $V\subseteq H^0(\Kum_2(A),2D \otimes \mathscr{I}_{F}) \cong H^0(\Kum_2(A),2D-F)$.  One can compute that
	\[
	2D - F \sim 2(\mu_2(2\Theta)- \delta) - (\mu_2(3\Theta)-2\delta) \sim \mu_2(\Theta).
	\]
	and since $\dim V = 9 = \dim H^0(\Kum_2(A),\mu_2(\Theta))$, it must be that $V=H^0(\Kum_2(A),\mu_2(\Theta))$ and we are done.
		The last statement follows from the fact that on the open set $U = \Kum_2(A)\setminus (E\cup F)$, we have the equality of morphisms $(\cD\circ \varphi_D)_{|U} = (\varphi_{\mu_2(\Theta)})_{|U}$, and $(\varphi_{\mu_2(\Theta)})_{|U}$ is an isomorphism onto its image thanks to \cref{thm:mapmutheta}.
\end{proof}

\section{A geometric description via secant lines}\label{sec:secants}

We now want to give a concrete geometric description of $\varphi_{D}\colon \Kum_2(A) \to \Pd^8$.  We will do it in this section by studying the geometry of $A$ and the  curves $\Theta_a \subseteq A$ for $a\in A$. For simplicity, we will consider $A$ and all the translates $\Theta_a$ as being embedded in $\Pd^8$ by $\varphi_{3\Theta}$ so that we will often write simply $A,\Theta_a \subseteq \Pd^8$ instead of $\varphi_{3\Theta}(A),\varphi_{3\Theta}(\Theta_a)$.  
Recall also from Section \ref{sec: intro jac and Theta} that for any $a\in A$ there is the Abel-Jacobi map
\[ \alpha_a\colon C \xrightarrow{\sim} \Theta_a; \qquad x\mapsto x+a-\eta. \]

\begin{Lem}\label{lem:restrtotheta}
	Let $a,b  \in A$. Then $\alpha_{a}^*\Theta_ b \sim K_C+b-a$, as divisors on $C$. Furthermore, the composition 
    \[
    \begin{tikzcd}
        C\ar[r,"\alpha_a"] & \Theta_a \ar[r, hook, "\varphi_{3\Theta}"] & \Pd^8
    \end{tikzcd}
    \]
	is also the composition of the map $\varphi_{3K_C-3a}$ induced by the complete linear system $H^0(C,3K_c-3a)$ and a linear embedding. The image $\varphi_{3\Theta}(\Theta_a)$ spans a $4$-dimensional linear space of $\Pd^8$.
\end{Lem}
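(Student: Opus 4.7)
My plan is to first establish the Picard-theoretic identity $\alpha_a^*\Theta_b \sim K_C + b - a$, from which the geometric statements about the map to $\Pd^8$ will follow by Riemann--Roch and a Kodaira vanishing argument.

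For the Picard-theoretic part, I reduce to $a = 0$ using $\alpha_a = t_a \circ \alpha$ together with $t_a^*\Theta_b = \Theta_{b-a}$, which give $\alpha_a^*\Theta_b = \alpha^*\Theta_{b-a}$. It therefore suffices to prove $\alpha^*\Theta_c \sim K_C + c$ for every $c \in A = \mathrm{Pic}^0(C)$. The case $c = 0$ is adjunction on the abelian surface: since $\alpha : C \overset{\sim}{\to} \Theta$ is an isomorphism and $K_A = 0$, we get $\alpha^*\Theta = \alpha^* K_\Theta = K_C$. For generic $c \neq 0$, \Cref{lem:thetafacts}.(7) identifies $\Theta \cap \Theta_c = \{x - \eta,\, \iota(y) - \eta\}$ with $x - y \sim c$; pulling back via $\alpha$ yields $\alpha^*\Theta_c = x + \iota(y)$ as a divisor on $C$, and the hyperelliptic relation $y + \iota(y) \sim K_C$ rewrites this as $K_C + (x - y) \sim K_C + c$. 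Since $c \mapsto \alpha^*\Theta_c - K_C - c$ defines a morphism $A \to \mathrm{Pic}^0(C)$ that vanishes on a Zariski-dense open set, the formula extends to all $c \in A$.

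For the second assertion, the first part gives $\alpha_a^*\mathcal{O}_A(3\Theta) = \mathcal{O}_C(3K_C - 3a)$; since $\deg(3K_C - 3a) = 6 \geq 2g + 1$, this line bundle is very ample and $h^0(C, 3K_C - 3a) = 5$ by Riemann--Roch (with $H^1 = 0$ in this degree range). To conclude, I show that the restriction map $H^0(A, 3\Theta) \to H^0(C, 3K_C - 3a)$ is surjective: from the short exact sequence
\[
0 \longrightarrow \mathcal{O}_A(3\Theta - \Theta_a) \longrightarrow \mathcal{O}_A(3\Theta) \longrightarrow \mathcal{O}_{\Theta_a}(3\Theta|_{\Theta_a}) \longrightarrow 0,
\]
the problem reduces to showing $H^1(A, 3\Theta - \Theta_a) = 0$, which follows from Kodaira vanishing ($K_A = 0$) once one notes that $3\Theta - \Theta_a$ is algebraically equivalent to $2\Theta$ and hence ample. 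Surjectivity then forces $\varphi_{3\Theta}(\Theta_a)$ to span exactly a $\mathbb{P}^4 \subseteq \Pd^8$ of dimension $h^0(C, 3K_C - 3a) - 1 = 4$, and identifies the composition $C \to \Theta_a \hookrightarrow \Pd^8$ with $\varphi_{3K_C - 3a}$ followed by the linear inclusion of this $\mathbb{P}^4$. The main subtle point throughout is the sign in the formula $\alpha^*\Theta_c \sim K_C + c$: the direct computation via \Cref{lem:thetafacts}.(7) and the hyperelliptic relation sidesteps the more conceptual but sign-sensitive route through the theorem of the square and the autoduality of the Jacobian.
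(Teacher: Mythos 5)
Your proof is correct, and the second half (the exact sequence $0 \to \sO_A(3\Theta-\Theta_a) \to \sO_A(3\Theta) \to \sO_{\Theta_a}(3\Theta) \to 0$, vanishing of $H^1$ by ampleness of $3\Theta-\Theta_a$, and Riemann--Roch giving $h^0(C,3K_C-3a)=5$) coincides with the paper's argument, with the small bonus that you justify the ampleness and the vanishing explicitly. The first half, however, is organized differently. The paper reduces to $b=0$ and treats \emph{every} $a\neq 0$ at once: Riemann--Roch identifies the set $\alpha_a^{-1}(\Theta)$ with the support of the unique effective divisor in $|K_C-a|$, and since both divisors have degree two and the same support they are equal; no genericity is needed. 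You instead reduce to $a=0$, compute $\alpha^*\Theta_c = x+\iota(y) \sim K_C+c$ only for $c$ outside the curve $\{x-\iota(x)\,|\,x\in C\}$ (where \Cref{lem:thetafacts}.(7) gives a reduced intersection), and then close the gap with a rigidity argument: $c\mapsto \alpha^*\sO_A(\Theta_c)\otimes\sO_C(-K_C)\otimes c^{-1}$ is a morphism $A\to\operatorname{Pic}^0(C)$ agreeing with the constant map $0$ on a dense open, hence everywhere. This is valid, but note the one point that deserves care: the algebraicity of $c\mapsto\alpha^*\sO_A(\Theta_c)$ must be phrased at the level of line bundles (pulling back $\sO_{A\times A}(\mathcal{D})$, $\mathcal{D}=\{(z,c):z-c\in\Theta\}$, to $C\times A$), since the divisor-theoretic pullback degenerates at $c=0$ where $\alpha(C)=\Theta_0$; with line bundles the family is automatically flat and the universal property of $\operatorname{Pic}$ applies. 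The paper's route buys a uniform, self-contained argument; yours buys a computation that reuses the already-established \Cref{lem:thetafacts} at the cost of the extra limiting step.
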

\begin{proof}
 Assume first that $b=0$. If $a=0$ the statement follows from the adjunction formula: $K_{C} \sim \alpha^*(K_A+\Theta) = \alpha^*(\Theta)$. If instead $a\ne 0$ then $\Theta_a$ and $\Theta$ intersect in two points, counted with multiplicity,  and Riemann-Roch yields  the equality of sets
		\begin{align*}
			\alpha_{a}^{-1}(\Theta) &= \{ x\in C \,|\, x-\eta+a \sim y-\eta \text{ for one } y\in C \} \\
			&= \{ x\in C \,|\, h^0(C,x+a)>0\} = \{x\in C\,|\, h^0(C,K_C-a-x)>0\} 
		\end{align*}
		so that this is the support of the unique effective divisor $x_1+x_2$ linearly equivalent to $K_C-a. $
        To conclude, it is enough to observe that both divisors $\alpha^*_{x_0-a}(\Theta)$ and $x_1+x_2$ have degree two, and they have the same support, hence they must be equal. 
		For an arbitrary $b$, we observe that,  $\Theta_b = t_{-b}^*\Theta$ so that $\alpha_a ^* \Theta_b \cong (t_{-b}\circ \alpha_a)^* \Theta \cong \alpha_{a-b}^*\Theta \sim K_C+b-a$.
		
		For the second statement, we see from what we already proved that $\alpha_a^{*}(3\Theta) \sim 3K_C-3a$. We also have a short exact sequence
		\[ 0 \longrightarrow \sO_{A}(3\Theta-\Theta_a) \longrightarrow \sO_A(3\Theta) \longrightarrow \sO_{\Theta_a}(3\Theta) \longrightarrow 0 \]
		and since $3\Theta-\Theta_a$ is ample, we see that $H^1(A,3\Theta-\Theta_a) = 0$ so that the map $H^0(A,3\Theta) \to H^0(\Theta_a,3\Theta)$ is surjective. This means precisely that the composition $\varphi_{3\Theta}\circ \alpha_a$ is induced by the complete linear system of $3K_C-3a$ and a linear embedding. Finally, we see from Riemann-Roch that $h^0(C,3K_C-3a)=6+1-2 = 5$, so that $\varphi_{3\Theta}(\Theta_a)$ spans a $4$-dimensional linear space in $\Pd^8$.
\end{proof}

For any $a\in A$ we denote the $4$-dimensional linear space of \Cref{lem:restrtotheta} as
\[ \Pd^4_a := \text{ linear span of } \Theta_a \text{ in } \Pd^8 \]
We will also need to study secant lines to $\varphi_{3\Theta}(A)\subseteq \Pd^8$ using Theorem \ref{thm:invtau}: if $\xi \in A^{[2]}$ is a length two subscheme, we denote the corresponding secant line by
\[ \ell({\xi}) := \text{ line spanned by } \xi \text{ in } \Pd^8.\]

\begin{Rem}\label{rem:notrisecantA}
	By \cite[Corollary 4.2]{Terakawa1998}, the line bundle of $3\Theta$ is $2$-very ample, meaning that $\varphi_{3\Theta}(A)$ has no trisecant lines. Hence the scheme $\xi \in A^{[2]}$ is uniquely determined by the secant line $\ell(\xi)$.
\end{Rem}

\begin{Lem}\label{lem:Pafacts}
	Let $a,b \in A$ be distinct.
	\begin{enumerate}
		\item $\Pd^4_a \cap \Pd^4_b  = \ell(\Theta_a\cap \Theta_b) = \ell(\tau(a,b))$.
		\item $\Pd^4_a \cap A = \Theta_a$.
		\item The line $\Pd^4_a\cap \Pd^4_b$, as well as the intersection $\Theta_a\cap \Theta_b$, determine $\{a,b\}$. 
	\end{enumerate}
\end{Lem}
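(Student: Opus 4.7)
The plan is to establish (1) by a dimension count, then to deduce (2) from (1) using the absence of trisecant lines to $A$, and finally to note that (3) follows directly from $\tau$ being an involution together with Remark \ref{rem:notrisecantA}.

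For (1), I would first observe that the equality $\ell(\Theta_a\cap\Theta_b)=\ell(\tau(a,b))$ is built into Theorem \ref{thm:invtau}, and that the inclusion $\ell(\Theta_a\cap\Theta_b)\subseteq \Pd^4_a\cap\Pd^4_b$ is immediate since $\Theta_a\cap\Theta_b$ is contained in both $\Theta_a$ and $\Theta_b$. It then suffices to check that $\dim(\Pd^4_a\cap\Pd^4_b)=1$, which I would do by computing the linear span $\Pd^4_a+\Pd^4_b$ of $\Theta_a\cup\Theta_b$ in $\Pd^8$. Hyperplanes containing this union correspond to sections of $3\Theta$ in $H^0(A,\mathcal{O}_A(3\Theta-\Theta_a-\Theta_b))$, since $\Theta_a,\Theta_b$ are distinct irreducible curves on $A$. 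By the theorem of the square, $\Theta_a+\Theta_b\sim \Theta_{a+b}+\Theta$ and $\Theta_{a+b}+\Theta_{-(a+b)}\sim 2\Theta$, whence
\[
3\Theta-\Theta_a-\Theta_b \sim 2\Theta-\Theta_{a+b}\sim \Theta_{-(a+b)},
\]
so $h^0(A,3\Theta-\Theta_a-\Theta_b)=1$. Hence $\dim(\Pd^4_a+\Pd^4_b)=7$, and Grassmann's formula gives $\dim(\Pd^4_a\cap\Pd^4_b)=4+4-7=1$, matching the already-contained line $\ell(\tau(a,b))$.

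For (2), the inclusion $\Theta_a\subseteq\Pd^4_a\cap A$ is immediate from the definition of $\Pd^4_a$. For the reverse, take $c\in A$ with $\varphi_{3\Theta}(c)\in\Pd^4_a$. If $c=a$ then $c\in\Theta_a$ trivially; otherwise I apply (1) to obtain $\varphi_{3\Theta}(c)\in\Pd^4_a\cap\Pd^4_c=\ell(\tau(a,c))$. Since $3\Theta$ is $2$-very ample (Remark \ref{rem:notrisecantA}), the secant line $\ell(\tau(a,c))$ meets $A$ exactly in the length-two subscheme $\tau(a,c)=\Theta_a\cap\Theta_c$, so $c\in\Theta_a\cap\Theta_c\subseteq\Theta_a$.

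For (3), the line $L:=\Pd^4_a\cap\Pd^4_b=\ell(\tau(\{a,b\}))$ determines $\tau(\{a,b\})\in A^{[2]}$ uniquely by Remark \ref{rem:notrisecantA}, and applying the involution $\tau$ of Theorem \ref{thm:invtau} recovers $\{a,b\}$; the same argument starting from $\Theta_a\cap\Theta_b=\tau(\{a,b\})$ handles the scheme version. The main obstacle is really the dimension computation in (1), but the theorem of the square dispenses with it at once, reducing everything to the cohomology of a single translated theta divisor.
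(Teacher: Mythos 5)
Your proposal is correct and follows essentially the same route as the paper: part (1) via the one-dimensionality of $H^0(A,3\Theta-\Theta_a-\Theta_b)$ (you merely spell out, via the theorem of the square, the identification with $\Theta_{-(a+b)}$ that the paper asserts implicitly), part (2) via part (1) together with the absence of trisecant lines from \Cref{rem:notrisecantA} (stated directly rather than as the paper's contradiction), and part (3) by recovering $\tau(\{a,b\})$ from the line and applying the involution $\tau$. There is no substantive difference in method or in the lemmas invoked.
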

\begin{proof}
	\begin{enumerate}
		\item  The two linear spaces $\Pd^4_a,\Pd^4_b$ span a hyperplane in $\Pd^8$, cut out by an element in $H^0(A,3\Theta-\Theta_a-\Theta_b) \cong \mathbb{C}$. Hence $\Pd^4_a\cap \Pd^4_b$ must be a line. Since this line contains the length two scheme $\Theta_a\cap \Theta_b$,  it must be the line spanned by this scheme.
		
		\item For sure $\Theta_a\subseteq \Pd^4_a\cap \Theta_a$. Assume that there is a $b\in \Pd^4_a\cap A, b\notin \Theta_a$, in particular $b\ne a$. Then $b\in \Theta_b$ so that $b\in \Pd^4_a\cap \Pd^4_b$. Then the line $\Pd^4_a\cap \Pd^4_b$ contains $b$ and $\Theta_a\cap \Theta_b$, meaning that it is a trisecant line to  $\Theta_b$. But this is impossible because of  \Cref{rem:notrisecantA}.
        
		\item The line $\Pd^4_a\cap \Pd^4_b = \ell(\Theta_a\cap \Theta_b)$ determines $\Theta_a\cap\Theta_b$ because of Remark \ref{rem:notrisecantA}. Then we know that $\{a,b\} = \tau(\Theta_a\cap \Theta_b)$.
	\end{enumerate}
\end{proof}

We now want to classify when two secant lines to $A\subseteq \Pd^8$ meet: this means that there are four points on $A$ that span a plane, rather than a $3$-space in $\Pd^8$. Since $A$ has no trisecant lines, this means that the linear system $H^0(A,3\Theta)$ on $A$ does not separate the four points (see Section \ref{sec:appendix precise terracini} for the definition). We can study this situation more generally:

\begin{Prop}\label{prop:4schemes}
Let $Z\subseteq A$ be a finite subscheme of length $4$. The linear system $H^0(A,3\Theta)$ does not separate $Z$  if and only if $Z\subseteq \Theta_a$ for a certain $a\in A$, and $Z = \alpha_a(x_1+x_2+x_3+x_4)$, with $x_1+x_2+x_3+x_4 \in |2K_C-3a|$.
\end{Prop}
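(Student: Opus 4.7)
The plan is to prove both directions by computing $h^0(\mathcal{I}_Z(3\Theta))$, whose value equals $9-4=5$ precisely when $H^0(A,3\Theta)$ separates $Z$.

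For the implication $(\Leftarrow)$, I assume $Z = \alpha_a(D) \subseteq \Theta_a$ with $D \sim 2K_C - 3a$. Since $\Theta_a \subseteq A$ is a smooth curve and $Z$ is an effective Cartier divisor of degree $4$ on $\Theta_a \cong C$, combining the inclusion $\mathcal{I}_{\Theta_a} \subseteq \mathcal{I}_Z$ with \Cref{lem:restrtotheta} yields the short exact sequence on $A$:
\begin{equation*}
0 \longrightarrow \mathcal{O}_A(3\Theta - \Theta_a) \longrightarrow \mathcal{I}_Z(3\Theta) \longrightarrow \alpha_{a*}\mathcal{O}_C(3K_C - 3a - D) \longrightarrow 0.
\end{equation*}
Since $3\Theta - \Theta_a$ is algebraically equivalent to $2\Theta$ it is ample, so higher cohomology vanishes and $h^0 = (3\Theta - \Theta_a)^2/2 = 4$. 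The hypothesis $D \sim 2K_C - 3a$ identifies the quotient with $\alpha_{a*}\mathcal{O}_C(K_C)$, contributing $h^0(K_C) = 2$. Hence $h^0(\mathcal{I}_Z(3\Theta)) = 6 > 5$, and $Z$ fails to impose independent conditions.

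For the implication $(\Rightarrow)$, non-separation gives $h^1(\mathcal{I}_Z(3\Theta)) \geq 1$. Serre duality on $A$ (using $K_A = 0$) promotes a nonzero cohomology class to a non-split extension
\begin{equation*}
0 \longrightarrow \mathcal{O}_A \longrightarrow \mathcal{F} \longrightarrow \mathcal{I}_Z(3\Theta) \longrightarrow 0
\end{equation*}
with $\mathcal{F}$ a rank-$2$ torsion-free sheaf of Chern classes $c_1 = 3\Theta$, $c_2 = 4$ and Bogomolov discriminant $\Delta(\mathcal{F}) = 4c_2 - c_1^2 = -2 < 0$. Bogomolov's instability theorem then supplies a saturated line subsheaf $M \hookrightarrow \mathcal{F}$ such that $2c_1(M) - 3\Theta$ lies in the positive cone of $\NS(A)_{\mathbb{R}}$. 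Slope comparison against $\Theta$ prevents $M$ from factoring through $\mathcal{O}_A$, so the composition $M \to \mathcal{F} \to \mathcal{I}_Z(3\Theta) \hookrightarrow \mathcal{O}_A(3\Theta)$ is nonzero and produces an effective divisor $D'$ on $A$ containing $Z$, with $3\Theta - 2D'$ in the positive cone.

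The inequalities $(3\Theta - 2D') \cdot \Theta > 0$ and $(3\Theta - 2D')^2 > 0$, together with the Hodge index theorem, restrict $D' \cdot \Theta$ to $\{1, 2\}$. The case $D' \cdot \Theta = 1$ would force an elliptic component $E \subseteq D'$ with $E \cdot \Theta = 1$, which is ruled out for a Jacobian of a smooth genus-two curve by the Terakawa-Ran argument already used in \Cref{prop: bigandnef} (namely, $(A, \Theta)$ is not a product of elliptic curves). So $D' \cdot \Theta = 2$, which together with Hodge index forces $(D')^2 = 2$ and hence $[D'] = [\Theta]$ in $\NS(A)$. Since numerical and algebraic equivalence coincide on abelian surfaces and $h^0(\Theta_a) = 1$, I conclude $D' = \Theta_a$ for some $a$, so $Z \subseteq \Theta_a$. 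Substituting back into the exact sequence from the $(\Leftarrow)$ step gives $h^0(\mathcal{I}_Z(3\Theta)) = 4 + h^0(C, 3K_C - 3a - D)$ with $D = \alpha_a^{-1}(Z)$, and the hypothesis forces $h^0(3K_C - 3a - D) \geq 2$. Since $K_C$ is the unique degree-$2$ line bundle on $C$ with two sections, $3K_C - 3a - D \sim K_C$ and therefore $D \sim 2K_C - 3a$.

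The main obstacle is the Bogomolov-instability analysis in $(\Rightarrow)$: identifying $D'$ with a theta translate rests on the intersection-number bookkeeping above plus the Jacobian-specific exclusion of elliptic curves $E \subseteq A$ with $E \cdot \Theta = 1$, the same input already used in \Cref{prop: bigandnef}.
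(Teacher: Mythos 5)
Your $(\Leftarrow)$ direction is correct, and the endgame of your $(\Rightarrow)$ direction (forcing $D'\cdot\Theta=2$ and $(D')^2=2$, concluding $D'=\Theta_a$, then the degree-two pencil argument on $C$) is essentially the paper's own: the paper gets the divisor $D\supseteq Z$ with those intersection bounds by citing Beltrametti--Sommese, rules out $(D\cdot\Theta)=1$ via Terakawa exactly as you do, and identifies $D=\Theta_a$ via Matsusaka--Ran where you use Hodge index plus evenness. So your proposal amounts to re-proving the needed case of the Beltrametti--Sommese criterion by hand via the Serre construction and Bogomolov instability --- and that is precisely where it has a genuine gap.

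The gap is in the step producing $D'$. You apply Bogomolov instability to $\mathcal{F}$, which you (correctly) only know to be torsion-free, yet you claim a saturated \emph{line} subsheaf $M$ and conclude that the induced section of $\mathcal{O}_A(3\Theta-c_1(M))$ vanishes on all of $Z$. Both claims require $\mathcal{F}$ to be locally free: for a merely torsion-free $\mathcal{F}$, a saturated destabilizing rank-one subsheaf has the form $\mathcal{I}_W\otimes M$ with $W$ a possibly nonempty finite scheme, and the induced divisor $D'$ only satisfies $\mathcal{I}_{D'}\cdot\mathcal{I}_W\subseteq\mathcal{I}_Z$, so part of $Z$ can escape into $W$ and $Z\subseteq D'$ may fail. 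Worse, $\mathcal{F}$ \emph{cannot} be locally free when $Z$ is not a local complete intersection (local freeness forces $\mathcal{I}_Z$ to be locally $2$-generated), and non-l.c.i.\ length-$4$ subschemes exist, e.g.\ those containing $\operatorname{Spec}\mathcal{O}_{A,p}/\mathfrak{m}_p^2$; these are exactly schemes for which the proposition asserts separation (any $Z\subseteq\Theta_a$ is a Cartier divisor on a smooth curve, hence l.c.i.), so they cannot be ignored. The standard repair, implicit in the cited Beltrametti--Sommese proof, uses input you already have: since $3\Theta$ is $2$-very ample (\Cref{rem:notrisecantA}), every proper subscheme of $Z$ is separated, so $Z$ is a \emph{minimal} non-separated scheme; then the extension class $e$, viewed as a functional on $H^0(\mathcal{O}_Z(3\Theta))$ killing the image of evaluation, cannot factor through any proper quotient $\mathcal{O}_{Z'}$, which forces the pairing $(a,b)\mapsto e(ab)$ to be nondegenerate --- hence $\mathcal{O}_Z$ is Gorenstein (thus l.c.i.\ in codimension two) and $e$ generates $\mathcal{E}xt^1(\mathcal{I}_Z(3\Theta),\mathcal{O}_A)$ at every point, which is exactly the criterion for $\mathcal{F}$ to be locally free. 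With that paragraph added your argument closes; without it, the central implication is unproven.
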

\begin{proof}
Since $H^0(A,3\Theta)$ separates all subschemes of length $3$, \cite[Theorem 3.2.1]{beltrametti1991zero} shows that if $H^0(A,3\Theta)$ does not separate $Z$, then there is an effective divisor $D\subseteq A$ such that $Z\subseteq D$ and 
\[ 3(\Theta\cdot D) - 4 \leq (D\cdot D) \leq \frac{3}{2}(\Theta\cdot D) <4  \]
In particular, $(\Theta\cdot D) \leq 2$, and since $\Theta$ is ample we have two possibilities.
If $(D\cdot \Theta) = 1$ then $(D\cdot D)=0$, but then \cite[Lemma 5.1]{Terakawa1998} shows that the polarized abelian variety $(A,\Theta)$ is a product of elliptic curves, and this is impossible since $A$ is a Jacobian of a smooth genus two curve. If instead $(D\cdot\Theta)=2$, then the Matsusaka-Ran criterion \cite{ran}, shows that $D$ must be have the form $\Theta_a$ for a certain $a\in A$. Hence $Z = \alpha_a(x_1+\dots+x_4)$ for an effective divisor on $C$, and $H^0(A,3\Theta)$ fails to separate $Z$ if and only if $3K_C-3a$ fails to separate $x_1+\dots+x_4$, i.e. if and only if $h^0(C,3K_C-3a-x_1-x_2-x_3-x_4)>1$. Since $3K_C-3a-x_1-x_2-x_3-x_4$ has degree two, this last condition is equivalent to $3K_C-3a-x_1-x_2-x_3-x_4\sim K_C$, and we conclude.
\end{proof}

Now we look at secant lines:

\begin{Prop}\label{prop:meetingsecants}
	Let $\zeta,\zeta' \in A^{[2]}$ be distinct. The two secant lines  $\ell(\tau(\zeta)),\ell(\tau(\zeta'))$ meet if and only if  one of the following conditions hold
	\begin{enumerate}
		\item $\zeta,\zeta'\subseteq \Theta_e \,\,\text{ for one } e\in A$. 
		\item $\zeta = \{a,b\},\zeta' = \{a,c\}$ with $a,b,c$ mutually distinct and $a+b+c\sim 0$.
		\item $\zeta = \{a,b\}, \zeta' = (a,v)$ with $a,b$ distinct, $v\in \PP(T_aA)$ and $2a+b\sim 0$.			
		\item $\zeta = (a,v), \zeta' = (a,w)$ with  $v,w\in \PP(T_a A)$ distinct and $3a \sim 0$.
	\end{enumerate}
	Furthermore, $\ell(\tau(\zeta)),\ell(\tau(\zeta'))$ meet at a point in $A$ if and only if condition (1) holds. In this case, the intersection point is $e$. 
\end{Prop}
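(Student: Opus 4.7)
The strategy is to translate the geometric condition into a statement about the length-$4$ scheme $W := \tau(\zeta) \cup \tau(\zeta')$, and then apply \Cref{prop:4schemes} together with the symmetry of $\tau$ encoded in \Cref{lem:symmtau}.

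First I would handle intersections that occur on $A$. Since $3\Theta$ admits no trisecants (\Cref{rem:notrisecantA}), a secant line $\ell(\tau(\zeta))$ meets $A$ exactly in $\tau(\zeta)$, so the two distinct lines $\ell(\tau(\zeta)), \ell(\tau(\zeta'))$ meet at a point of $A$ if and only if $\tau(\zeta) \cap \tau(\zeta') \ne \emptyset$. Using \Cref{lem:symmtau} (an element $e\in A$ lies in $\tau(\xi)$ iff $\xi \subseteq \Theta_e$), the existence of such a common point $e$ is equivalent to $\zeta, \zeta' \subseteq \Theta_e$, which is precisely condition~(1). This also yields the ``furthermore'' clause: the only way the intersection point lies on $A$ is condition (1), and then the intersection is $e$ itself.

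Next, suppose the two lines meet at a point outside $A$, so $\tau(\zeta) \cap \tau(\zeta') = \emptyset$ and $W$ is a length-$4$ subscheme of $A$. The lines meet iff $\varphi_{3\Theta}(W)$ spans at most a plane in $\Pd^8$, i.e.\ iff $H^0(A,3\Theta)$ fails to separate $W$. By \Cref{prop:4schemes}, this is equivalent to the existence of $a \in A$ such that $W \subseteq \Theta_a$ and $\alpha_a^{-1}(W) \in |2K_C - 3a|$. The inclusion $\tau(\zeta), \tau(\zeta') \subseteq \Theta_a$ translates, via \Cref{lem:symmtau} applied to $\tau(\zeta)$ and $\tau(\zeta')$ (and using that $\tau$ is an involution), to $a \in \zeta \cap \zeta'$. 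Hence $\zeta$ and $\zeta'$ must share the reduced point $a$, and according to whether their ``other parts'' are reduced points or tangent directions at $a$, we fall into exactly the three configurations (2), (3) and (4).

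To finish each case, I would compute the class of $\alpha_a^{-1}(W)$ on $C$. By \Cref{lem:restrtotheta}, a length-$2$ scheme of the form $\Theta_a \cap \Theta_b = \tau(\{a,b\})$ corresponds via $\alpha_a$ to a divisor in $|K_C + b - a|$, while $\tau((a,v))$ with $v = [x, \iota(x)]$ corresponds to $x + \iota(x) \in |K_C|$ by \Cref{thm:invtau}. Summing divisor classes and equating with $2K_C - 3a$ gives $a+b+c \sim 0$ in case (2), $2a + b \sim 0$ in case (3), and $3a \sim 0$ in case (4), matching the statement. The main bookkeeping obstacle will be in verifying the divisor-class identification when one or both of $\zeta, \zeta'$ is nonreduced and $\tau(\zeta)$ or $\tau(\zeta')$ may also be nonreduced (for $v$ the tangent direction at a Weierstrass point), but these follow cleanly once one uses that $\alpha_a \colon C \xrightarrow{\sim} \Theta_a$ identifies length-$k$ subschemes of $\Theta_a$ with effective divisors of degree $k$ on $C$.
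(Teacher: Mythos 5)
Your proposal is correct and follows essentially the same route as the paper's proof: \Cref{rem:notrisecantA} plus \Cref{lem:symmtau} for the intersections lying on $A$, then the reduction to failure of separation of the length-$4$ scheme $W$ and \Cref{prop:4schemes} combined with \Cref{lem:symmtau} to force $a\in \zeta\cap\zeta'$, followed by the same three-case analysis. The only difference is cosmetic: you compute the class of $\alpha_a^{-1}(W)$ directly from \Cref{lem:restrtotheta} (via $\alpha_a^*\Theta_b \sim K_C+b-a$), whereas the paper writes out explicit points on $C$ using \Cref{lem:thetafacts}.(6); both yield the same linear-equivalence conditions $a+b+c\sim 0$, $2a+b\sim 0$, $3a\sim 0$.
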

\begin{proof}
		First we observe that $\ell(\tau(\zeta))\cap \ell(\tau(\zeta')) = e\in A$, if and only if $e\in \tau(\zeta)\cap \tau(\zeta')$, since $A$ has no trisecant lines by \Cref{rem:notrisecantA}.  \Cref{lem:symmtau} shows that $e\in \tau(\zeta)\cap \tau(\zeta')$ if and only if $\zeta,\zeta'\subseteq \Theta_e$. This proves the last statement.
		For the rest, we can assume that $\tau(\zeta)\cap\tau(\zeta')=\emptyset$, otherwise we are in the case above. Then the two secant lines $\ell(\tau(\zeta)),\ell(\tau(\zeta'))$ meet if and only if $H^0(A,3\Theta)$ fails to separate the scheme $\tau(\zeta)\cup \tau(\zeta')$. By \Cref{prop:4schemes}, this means that $\tau(\zeta),\tau(\zeta') \subseteq \Theta_a$ for a certain $a\in A$, and by \Cref{lem:symmtau} we see that $a\in \zeta\cap\zeta'$.  We now have three possibilities for $\zeta,\zeta'$:  

        \medskip
        	
			\textbf{(a)} $\zeta=\{a,b\},\zeta'=\{a,c\}$ with $a,b,c\in A$ mutually distinct. Then $\tau(\zeta) = \Theta_a\cap\Theta_b$ and $\tau(\zeta') = \Theta_a\cap \Theta_c$.  By  \cref{lem:thetafacts}.(6), there are points $x,y,z,w \in C$, such that 
			\begin{align*} 
				x-\eta+a&\sim y-\eta+b, & \iota(y)-\eta+a &\sim \iota(x)-\eta+b, \\
				z-\eta+a&\sim w-\eta+c, & \iota(w)-\eta+a&\sim \iota(z)-\eta+c 
			\end{align*}
			Then  $\tau(\zeta) = \alpha_a(x+\iota(y))$ 
and  $\tau(\zeta') = \alpha_a(z+\iota(w))$. By \Cref{prop:4schemes}, the linear system $H^0(A,3\Theta)$ fails to separate $\tau(\zeta)\cup \tau(\zeta')$ if and only if $x+\iota(y)+z+\iota(w)\sim 2K_C-3a$. That is equivalent to
            \begin{align*}
				0 &\sim x+\iota(y)+z+\iota(w)-2K_C+3a \sim x-(K_C-\iota(y))+z-(K_C-\iota(w))+3a \\
				&\sim x-y+z-w+3a \sim b-a+c-a+3a \sim a+b+c.
			\end{align*}
		  
           \textbf{(b)} $\zeta=\{a,b\},\zeta'=(a,v)$ with $a,b\in A$  distinct, $v = [z,\iota(z)]\in \PP(T_a A)$. Recall that $\tau(\zeta')=\lbrace z+a-\eta,\iota(z)+a-\eta\rbrace$ or $\tau(\zeta')=\lbrace z+a-\eta,[z,z]\rbrace$ according to whether $z\neq \iota(z)$ or not, cf. \cref{thm:invtau}. By  \cref{lem:thetafacts}.(6), there are points $x,y \in C$, such that 
		   \begin{align*} 
		   	x-\eta+a&\sim y-\eta+b, & \iota(y)-\eta+a &\sim \iota(x)-\eta+b.
		   \end{align*}
		   Then  $\tau(\zeta) = \alpha_a(x+\iota(y))$ and  $\tau(\zeta') = \alpha_a(z+\iota(z))$, and we conclude reasoning as in case (a).
		   
           \textbf{(c)} $\zeta=(a,v),\zeta'=(a,w)$ with $v=[x,\iota(x)],w=[y,\iota(y)]\in \PP(T_aA)$ distinct. Then $\tau(a,v) = \alpha_a(x+\iota(x)),\tau(a,w) = \alpha_a(y+\iota(y))$ and we conclude reasoning as in case (a).

		   \end{proof}

\begin{Cor}\label{cor:intersectionthreespaces}
Let $a,b,c\in A$ be mutually distinct. Then 
\[ \Pd^4_a \cap \Pd^4_b \cap \Pd^4_c \ne \emptyset \quad \text{ if and only if } a+b+c\sim 0 \]
In this case the intersection is a single point.
\end{Cor}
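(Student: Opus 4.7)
The plan is to reduce the triple intersection of four--dimensional subspaces to an intersection of two secant lines inside the common $\Pd^4_a$, and then to invoke Proposition~\ref{prop:meetingsecants}.

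First, by Lemma~\ref{lem:Pafacts}(1), one has $\Pd^4_a\cap \Pd^4_b = \ell(\tau(\{a,b\}))$ and $\Pd^4_a\cap \Pd^4_c = \ell(\tau(\{a,c\}))$, and both of these lines lie inside the $4$--plane $\Pd^4_a$. Consequently
\[
\Pd^4_a\cap \Pd^4_b\cap \Pd^4_c \;=\; \ell(\tau(\{a,b\}))\,\cap\, \ell(\tau(\{a,c\})).
\]
Since $b\neq c$, the schemes $\{a,b\}$ and $\{a,c\}$ are distinct, and because $\tau$ is an involution, $\tau(\{a,b\})\neq \tau(\{a,c\})$; by Remark~\ref{rem:notrisecantA} the variety $\varphi_{3\Theta}(A)$ carries no trisecant lines, hence the two secant lines are distinct. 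Two distinct lines in $\Pd^8$ meet in at most one point, which already gives the ``single point'' assertion as soon as the triple intersection is nonempty.

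Next, apply Proposition~\ref{prop:meetingsecants} to the reduced pair $\zeta=\{a,b\}$, $\zeta'=\{a,c\}$. Since $a,b,c$ are mutually distinct and both schemes are reduced, only its cases (1) and (2) can apply. Case~(2) is literally the condition $a+b+c\sim 0$, and it yields at once the sufficient direction: if $a+b+c\sim 0$ then the two secant lines meet, and by the previous step their intersection is a single point, which lies in $\Pd^4_a\cap \Pd^4_b\cap \Pd^4_c$.

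For the reverse implication, I would combine the pairwise conditions of Proposition~\ref{prop:meetingsecants} applied symmetrically to all three pairs $\{a,b\},\{a,c\}$, $\{a,b\},\{b,c\}$, and $\{a,c\},\{b,c\}$, together with the fact that any point of $\Pd^4_a\cap \Pd^4_b\cap \Pd^4_c$ must lie on all three secant lines simultaneously. The main obstacle is handling case~(1): three distinct points lying in a common translate $\Theta_e$. I expect that the symmetric appearance of all three pairwise conditions at the unique common intersection point — combined with the uniqueness of meeting points of distinct lines and the theorem of the square $\Theta_a+\Theta_b+\Theta_c\sim 3\Theta+\Theta_{a+b+c}$ — forces the symmetric relation $a+b+c\sim 0$, either by eliminating case~(1) outright in the presence of the third secant line or by showing that a common concurrence point of the three secants already implies case~(2).
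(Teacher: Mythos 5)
Your reduction of the triple intersection to $\ell(\tau(\{a,b\}))\cap\ell(\tau(\{a,c\}))$ via \Cref{lem:Pafacts}, the observation that these two lines are distinct (which gives the single-point claim), and the sufficiency of $a+b+c\sim 0$ via case (2) of \Cref{prop:meetingsecants} are all correct, and this is exactly the route the paper itself takes.

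The reverse implication, however, is only conjectured in your write-up, and the obstacle you flag cannot be removed: case (1) of \Cref{prop:meetingsecants} really does occur for triples that do not sum to zero. Fix $e\in A$ and take three distinct points $a,b,c\in\Theta_e$, say $a\sim x+e-\eta$, $b\sim y+e-\eta$, $c\sim z+e-\eta$ with $x,y,z\in C$ distinct. By \Cref{lem:thetafacts}(3) we get $e\in\Theta_a\cap\Theta_b\cap\Theta_c$, hence $\varphi_{3\Theta}(e)\in\Pd^4_a\cap\Pd^4_b\cap\Pd^4_c\ne\emptyset$; equivalently, the three secant lines are concurrent at $e$, which is precisely the last assertion of \Cref{prop:meetingsecants}. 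On the other hand $a+b+c\sim x+y+z+3e-3\eta$, and this vanishes only when $x+y+z\in|3\eta-3e|$, a pencil inside the three-dimensional $C^{(3)}$; so a generic triple on $\Theta_e$ gives a nonempty triple intersection with $a+b+c\not\sim 0$. Thus the ``only if'' direction is false as stated: the correct statement is that the intersection is nonempty if and only if either $a+b+c\sim 0$ or $a,b,c$ lie on a common translate $\Theta_e$, the intersection point being $e$ in the latter case. Note that the paper's own proof contains the same omission (it invokes \Cref{prop:meetingsecants} without excluding its case (1)), so what you identified is a genuine flaw in the statement rather than a defect of your strategy; in the paper's later application of this Corollary the extra case must be treated separately (there the concurrence point lies on $A$, so one can argue through the divisor $F$ and \Cref{prop:mapphiDemb}).
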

\begin{proof}
We know from \Cref{lem:Pafacts} that $\Pd^4_a \cap \Pd^4_b = \ell(\tau(a,b))$ and that $\Pd^4_a \cap \Pd^4_c = \ell(\tau(a,c))$. Hence $\Pd^4_a \cap \Pd^4_b \cap \Pd^4_c = \ell(\tau(a,b))\cap \ell(\tau(b,c))$. The conclusion follows from \Cref{prop:meetingsecants}.
\end{proof}

With \cref{prop:meetingsecants} in hand, we can give a geometric description of $\varphi_D$.

\begin{Thm}\label{prop:psiphiD}
	The map $\varphi_D\colon \Kum_2(A) \to \Pd^8$, is defined for every scheme supported in at least two points as follows:
	\begin{align*}
	\varphi_D(\{a,b,c\}) &= \Pd^4_a \cap \Pd^4_b \cap \Pd^4_c = \ell(\tau(a,b))\cap \ell(\tau(a,c)) & a,b,c \in A \text{ mutually distinct }, \\
	\varphi_D(\{(a,v),b\}) &= \ell(\tau(a,b))\cap \ell(\tau(a,v)) & a \notin A[3],v\in \PP(T_a A). 
	\end{align*}
\end{Thm}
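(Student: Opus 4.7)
The plan is to verify the formulas first on the divisor $F\subseteq\Kum_2(A)$ of schemes contained in a translated theta divisor, where $\varphi_D$ has been explicitly computed in \cref{prop: A-and-N}, and then extend the agreement to the rest of $\Kum_2(A)$ using the duality of \cref{thm: duality}.

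As setup, I first observe that the right-hand-side expressions are well-defined single points. For $\{a,b,c\}\in\Kum_2(A)$ with $a,b,c$ mutually distinct, the relation $a+b+c\sim 0$ is automatic, so \cref{cor:intersectionthreespaces} guarantees $\Pd^4_a\cap\Pd^4_b\cap\Pd^4_c$ is a single point; by \cref{lem:Pafacts}(1) it equals $\ell(\tau(a,b))\cap\ell(\tau(a,c))$, so the two expressions coincide. For a non-reduced scheme $\{(a,v),b\}$ with $a\notin A[3]$, the relation $2a+b\sim 0$ is automatic, and \cref{prop:meetingsecants}(3) ensures $\ell(\tau(a,b))\cap\ell(\tau(a,v))$ is a single point. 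Denote by $\psi\colon\Kum_2(A)\dashrightarrow\Pd^8$ the rational map defined by these formulas.

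Next I check that $\psi$ coincides with $\varphi_D$ on $F$. If $\xi=\{a,b,c\}\subseteq\Theta_e$ for some $e\in A$, Equation \eqref{eq:DonF} gives $\varphi_D(\xi)=\varphi_{3\Theta}(e)$. On the other hand, $a,b,c\in\Theta_e$ is equivalent, via \cref{lem:thetafacts}(3), to $e\in\Theta_a\cap\Theta_b\cap\Theta_c\subseteq\Pd^4_a\cap\Pd^4_b\cap\Pd^4_c$, and uniqueness from the previous step forces $\psi(\xi)=e=\varphi_D(\xi)$. The non-reduced case is entirely parallel, using the explicit description of $\tau$ on $(a,v)$ from \cref{thm:invtau}.

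The main obstacle is to extend this agreement beyond $F$. My preferred route uses the duality of \cref{thm: duality}: $\mathcal{D}\circ\varphi_D=\varphi_{\mu_2(\Theta)}$, and by \cref{thm:mapmutheta} the latter sends $\{a,b,c\}$ to $\Theta_a+\Theta_b+\Theta_c$. The task is then to show that $\mathcal{D}\circ\psi$ takes the same value, by identifying the polar hyperplane of the Coble cubic $\mathscr{C}_3$ at $P_0:=\Pd^4_a\cap\Pd^4_b\cap\Pd^4_c$ with the hyperplane corresponding to $\Theta_a+\Theta_b+\Theta_c\in|3\Theta|$. This amounts to showing that the first polar of $\mathscr{C}_3$ at $P_0$ contains each of $\Pd^4_a,\Pd^4_b,\Pd^4_c$, which would exploit that $A=\operatorname{Sing}(\mathscr{C}_3)$ and the incidence of $P_0$ in three 4-planes spanning theta divisors inside $A$. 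Combined with the generic injectivity of $\mathcal{D}$ on the image of $\varphi_D$ coming from the Ortega--Nguyen Coble duality $\mathscr{C}_6=\mathcal{D}(\mathscr{C}_3)$, this forces $\psi=\varphi_D$ on a dense open set, and the agreement already established on $F$ then pins down the map on the full open locus of the statement. A cleaner alternative worth attempting is to restrict both sides to the covering surfaces $\widetilde{K}_{1,c}\subseteq\Kum_2(A)$ of \cref{sec: covering}, where $v_c^*D$ is computed explicitly in \cref{restr in cohom}, and reduce the statement to the classical Kummer duality of \cref{thm:kummerduality}.
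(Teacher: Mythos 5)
Your setup (well-definedness of the right-hand sides via \cref{cor:intersectionthreespaces}, \cref{lem:Pafacts} and \cref{prop:meetingsecants}) and your verification that the formulas agree with $\varphi_D$ on the divisor $F$ (via \eqref{eq:DonF} and \cref{lem:thetafacts}) are correct and parallel the paper's proof; note only that the paper additionally needs $\psi$ to be a \emph{morphism} on $U_2(A)$, which it gets from the $\mathfrak{S}_3$-Galois cover of \cref{prop:covers3}, a point you skip. The genuine gap is in your extension step, and it is twofold. First, your polar-hyperplane computation requires that the polar of $\mathscr{C}_3$ at $P_0=\Pd^4_a\cap\Pd^4_b\cap\Pd^4_c$ contain the three spaces $\Pd^4_a,\Pd^4_b,\Pd^4_c$. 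This does \emph{not} follow from $A=\operatorname{Sing}(\mathscr{C}_3)$ and the incidences alone: for $Q\in\Theta_a$, writing the cubic restricted to the line $\overline{P_0Q}$ and using that $Q$ is a double point and $P_0$ lies on $\mathscr{C}_3$, one finds intersection multiplicity exactly $3$ unless the trilinear polarization of the cubic vanishes at $(P_0,P_0,Q)$ --- i.e.\ Bezout is satisfied without containment, so $Q$ need not lie on the polar. What you actually need is the stronger fact $\Pd^4_x\subseteq\mathscr{C}_3$ for every $x\in A$; this is true (a cubic threefold in $\Pd^4_x$ singular along $\Theta_x$ would contain $\operatorname{Sec}(\Theta_x)$, a hypersurface of degree $\binom{5}{2}-2=8>3$), but you neither state nor prove it, and it is not in the paper's toolkit.

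Second, and more seriously, even granting $\mathcal{D}\circ\psi=\mathcal{D}\circ\varphi_D=\varphi_{\mu_2(\Theta)}$, your passage to $\psi=\varphi_D$ is unjustified. Ortega--Nguyen gives $\overline{\mathcal{D}(\sC_3)}=\sC_6$, and in characteristic zero the Gauss map of $\mathscr{C}_3$ is birational onto $\mathscr{C}_6$; but birationality does not give injectivity along the particular fourfolds $\varphi_D(\Kum_2(A))$ and $\psi(U_2(A))$, which are proper subvarieties of $\mathscr{C}_3$ and could a priori lie in the non-injectivity locus of the Gauss map, or sit on different sheets over their common image. Nor can you bootstrap from the locus where agreement is already known: that locus is $F$, whose image is $A$, which is precisely the base locus of $\mathcal{D}$, so no identity-principle or covering-space argument can start there. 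The paper closes exactly this gap by a different mechanism: from $\psi|_{F\cap U_2(A)}=\varphi_{3\Theta}\circ\pi$, the codimension-two estimate on $F\setminus U_2(A)$, and the injectivity of the restriction $\NS(\Kum_2(A))\hookrightarrow\NS(F)$ of \cref{lem:FP1undleAndClass}(4), it deduces $\psi^*\sO_{\Pd^8}(1)\cong(\mu_2(2\Theta)-\delta)|_{U_2(A)}$; nondegeneracy of $\psi(U_2(A))$ and $h^0(\Kum_2(A),D)=9$ then force $\psi$ to be induced by the \emph{complete} linear system $|D|$, so $\varphi_D=g\circ\psi$ for some $g\in\operatorname{PGL}(9,\CC)$, and finally $g=\operatorname{id}$ because the two maps agree on $F$, whose image $A$ spans $\Pd^8$. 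Your ``cleaner alternative'' via the surfaces $\widetilde{K}_{1,c}$ is only a suggestion and faces the same identification problem.
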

\begin{proof}
We follow the notation of \Cref{prop:covers3}: let $U_2(A)\subseteq \Kum_2(A)$ be the open subset of schemes supported in at least two distinct points. \Cref{prop:covers3} gives a Galois cover $\widetilde{U}_2(A) \to U_2(A)$ with respect to the action of the symmetric group $\mathfrak{S}_3$. We also have a double cover $p\colon \widetilde{U}_2(A) \to U^{[2,3]}(A) \subseteq \Kum^{[2,3]}(A)$ and an induced map $p'\colon \widetilde{U}_2(A) \to A^{[2]}$. The morphism
	\[ \widetilde{\psi}\colon \widetilde{U}_2(A) \longrightarrow \Pd^8; \quad 
	\begin{matrix} 
	\widetilde{\psi}(x) &= \ell(\tau(a,b))\cap \ell(\tau(a,-a-b)) & \text{ if } p'(x) = \{a,b\}\, a\ne b \\
	\widetilde{\psi}(x) &= \ell(\tau(a,v))\cap \ell(\tau(a,-2a)) & \text{ if } p'(x) = (a,v), a \in A\setminus A[3]. 
	\end{matrix}  
	\]
	is well defined because of Proposition \ref{prop:meetingsecants}, and one can check that it is invariant by the action of $\mathfrak{S}_3$. Hence, it descends to a morphism $\psi$ on $U_2(A)$. We have to show that $\psi = (\varphi_D)_{|U_2(A)}$. We will do it by looking at the restriction to the divisor $F\subseteq \Kum_2(A)$, which is a $\PP^1$ bundle $\pi \colon F\to A$. Suppose for now that 
	\begin{equation}\label{eq:restronF}
	\operatorname{codim}(F\setminus (U_2(A)\cap F),F)\geq 2, \quad \text{and} \quad   \psi(\xi) = \varphi_{3\Theta}(\pi(\xi)) = \varphi_D(\xi) \text{ for all } \xi \in U_2(A)\cap F.
	\end{equation} 
	
	 Then $\psi_{|U_2(A)\cap F}$ extends to $\varphi_{3\Theta}\circ \pi$ on the whole of $F$, meaning that it is induced by the complete linear system $H^0(F,\pi^*(3\Theta))$. This proves that $(\psi^*\sO_{\Pd^8}(1))_{|U\cap F} \cong \pi^*(3\Theta)_{|U_2(A)\cap F} \cong (\mu_2(2\Theta)-\delta)_{|U\cap F}$. Lemma \ref{lem:FP1undleAndClass}.(4) shows that the restriction to $F$ gives an injection 
	\[ \NS(U_2(A)) \cong \NS(\Kum_2(A)) \hookrightarrow \NS(F) \cong \NS(F\cap U_2(A)) \]
	where the isomorphisms are due to the fact that the complements of the two open subsets have codimension at least two. In particular, $\psi^*\sO_{\Pd^8}(1)_{|U_2(A)} \cong (\mu_2(2\Theta)-\delta)_{|U_2(A)}$. Furthermore, since $\psi_{|F} = \varphi_{3\Theta}\circ \pi$ and since $\varphi_{3\Theta}(A)$ is not contained in any hyperplane in $\Pd^8$, the same is true of $\psi(U_2(A))$. This shows that $\psi_{|U_2(A)}$ is induced by a linear subsystem $V\subseteq H^0(U_2(A),\mu_2(2\Theta)-\delta)$ of dimension $\dim V=9$ and by comparing dimensions we see that $V = H^0(U_2(A),\mu_2(2\Theta)-\delta)$. This shows that $(\varphi_D)_{|U_2(A)\cap F} = g\circ \psi_{U_2(A)\cap F}$ for a linear change of coordinates $g\in \operatorname{PGL}(9,\CC)$, but we have already observed that $\psi_{|F} = (\varphi_{|D})_{|F}$ and the image of these restrictions is $\varphi_{3\Theta}(A)$, that spans the whole $\Pd^8$. Hence $g=\operatorname{id}_{\Pd^8}$.

	 To conclude, we need to show that the conditions of \cref{eq:restronF} are satisfied. Every element in the fiber of $\pi\colon F \to A$ over $a\in A$ is in the form $\alpha_a(\xi) \subseteq \Theta_a$, with $\xi\in |3\eta-3a|$. If $a\in A$, we see that $3\eta-3a \sim 3x$ for a point $x\in C$ if and only if $a \sim \eta-x+b$ for $b\in A[3]$. In particular, there can be only finitely many such $x$ and for a general $a\in A$ there are none. This proves that $U_2(A)\cap F$ is open in $F$ with a complement of codimension at least two. Now take a general $a\in A$ and a general reduced divisor $\xi \in |3\eta-3a|$. Then $\alpha_a(\xi) \subseteq \Theta_a$, and \cref{prop:meetingsecants} shows that
	 \[ \psi(\{\alpha_a(\xi)\}) =  \varphi_{3\Theta}(a)  \]
	 which is what we wanted to show.
\end{proof}

In particular, we record the following consequence

\begin{Cor}\label{cor:secantlineinN}
Let $a\in A\setminus A[3]$ and consider $P_a=\{\{(a,v),-2a\}\,|\, v\in \PP(T_aA)\} \subseteq \Kum_2(A)$. Then $\varphi_D$ induces an isomorphism
\[ \varphi_D\colon P_a \longrightarrow \ell(\tau(a,-2a))\]
\end{Cor}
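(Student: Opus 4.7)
The plan is to identify $P_a$ with $\PP(T_a A) \cong \PP^1$ via the natural parametrization $v \mapsto \{(a,v),-2a\}$, making $P_a$ a smooth rational curve in $\Kum_2(A)$. By \Cref{prop:psiphiD}, the image $\varphi_D(P_a)$ is contained in the secant line $\ell(\tau(a,-2a))$, and this is genuinely a line (not a point) because $a \ne -2a$ (the hypothesis $a\notin A[3]$ is used exactly here). Thus $\varphi_D|_{P_a}$ is a morphism between two copies of $\PP^1$, and to finish it suffices to prove $D\cdot P_a=1$, since such a morphism of degree one is automatically an isomorphism. The strategy is therefore to split $D=\mu_2(2\Theta)-\delta$ and compute each intersection number separately.

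For the term $\mu_2(\Theta)\cdot P_a$, I would use that $\mu_2(\Theta)$ is the pullback via the Hilbert--Chow morphism $\mathrm{HC}\colon \Kum_2(A)\to \Sigma_2(A)$ of the restriction to $\Sigma_2(A)$ of the symmetrization $\Theta^{(3)}$ on $A^{(3)}$. Every scheme parametrized by $P_a$ has the same cycle-theoretic support $2\cdot a+(-2a)$, so $\mathrm{HC}$ contracts $P_a$ to a point of $\Sigma_2(A)$. Hence $\mu_2(\Theta)\cdot P_a=0$ by the projection formula.

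The harder computation is $\delta\cdot P_a$, and this is the main obstacle. My plan is to use the fact that $2\delta\sim E$ and to reduce the computation to a well-known local calculation on the Hilbert square of a surface. Near a configuration $\{a,a,-2a\}\in A^{(3)}$ with $a\ne -2a$, the Hilbert scheme $A^{[3]}$ is analytically a product $A^{[2]}\times A$, where the first factor parametrizes length-two subschemes near $(a,a)$ and the second factor parametrizes the residual point near $-2a$. Under this identification the summation map becomes $(Z_1,Z_2)\mapsto \overline{s}(Z_1)+Z_2$, so $\Kum_2(A)$ is locally the graph of the map $Z_1\mapsto -\overline{s}(Z_1)$ and is analytically isomorphic to $A^{[2]}$. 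Under this local isomorphism, $P_a$ corresponds to the $\PP^1$-fiber of the exceptional divisor $E_{A^{[2]}}\cong \PP(T_A)\to \Delta_A$ over the point $(a,a)$, and the divisor $E\subseteq \Kum_2(A)$ pulls back to $E_{A^{[2]}}$. For the Hilbert square of any smooth surface this fiber has self-intersection $-2$ inside $A^{[2]}$, so $E\cdot P_a=-2$ and therefore $\delta\cdot P_a=-1$.

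Combining the two intersection numbers, $D\cdot P_a=2\cdot 0-(-1)=1$, which is what is needed. The only potentially subtle point is justifying the analytic local product structure of $A^{[3]}$ near a partially coincident configuration, but this follows from the smoothness of the relevant summation and Hilbert--Chow morphisms at a generic point of the stratum parametrizing schemes supported on two distinct points with multiplicities $(2,1)$.
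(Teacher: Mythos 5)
Your proof is correct and takes essentially the same approach as the paper: both reduce the statement to showing the map $P_a\to\ell(\tau(a,-2a))$ between two copies of $\PP^1$ has degree one, computed as $D\cdot P_a = \mu_2(2\Theta)\cdot P_a-\delta\cdot P_a = 0-(-1)=1$, using that $P_a$ is a fiber of the Hilbert--Chow morphism. The only difference is that the paper cites the value $\delta\cdot P_a=-1$ from Hassett--Tschinkel, whereas you rederive it from the local product structure of $A^{[3]}$ near $\{a,a,-2a\}$, which is a valid (and self-contained) justification, modulo the terminological slip that the fiber of $E_{A^{[2]}}\to\Delta_A$ has no self-intersection number inside the fourfold $A^{[2]}$: the quantity you need, and in fact use, is the intersection $E_{A^{[2]}}\cdot P_a=-2$ of the exceptional \emph{divisor} with that fiber.
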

\begin{proof}
By \cref{prop:psiphiD}, $\varphi_D$ induces a map $\varphi_{D}\colon P_a \to \ell(\tau(a,-2a))$ and since both these curves are isomorphic to $\PP^1$, the map is an isomorphism if and only if it has degree one.  We can check that the map is injective directly via \cref{prop:meetingsecants}. Otherwise, we can also observe that $\mu_2(2\Theta)_{|P_a} \cong \mathcal{O}_{P_a}$, since $P_a$ is a fiber of the Hilbert-Chow morphism, and that $(-\delta\cdot P_a)=1$, because of the computation in \cite[pag. 8]{Hassett-Tschinkel-lagrangian-planes}, so that $(D\cdot P_a)=((\mu_2(2\Theta)-\delta) \cdot P_a)=1$.
\end{proof}

\subsection{The Brian\c{c}on surfaces and tricanonical curves of genus two}

Proposition \ref{prop:psiphiD} describe $\varphi_D$ outside the Brian\c{c}on surfaces $B_a\subseteq \Kum_2(A),a\in A[3]$. Observe that if $a\in A[3]$, then the composition $C \overset{\sim}{\to} \Theta_a \overset{\varphi_{3\Theta}}{\hookrightarrow} \Pd^4_a$ is the embedding of $C$ via the tricanonical line bundle $3K_C$. In this section we will always consider $a\in A[3]$. We recall some facts about these curves:

\begin{Rem}\label{rem:tricanonicalcone} 
 The hyperelliptic cover $\varphi_{K_C}\colon C \to \PP^1_{K_C}$ is branched over $6$ points, so that 
 \[ (\varphi_{K_C})_*\sO_C \cong \sO_{\PP^1_{K_C}} \oplus \sO_{\PP^1_{K_C}}(-3), \quad (\varphi_{K_C})_*\sO(3K_C) \cong (\varphi_{K_C})_*\sO_C \otimes \sO_{\PP^1_{K_C}}(3) \cong \sO_{\PP^1_{K_C}}(3)\oplus \sO_{\PP^1_{K_C}}.\]
 Then there is a decomposition 
 \[ H^0(\Theta_a,3\Theta) \cong H^0(C,3K_C) \cong H^0(\PP^1_{K_C},\sO_{\PP^1_{K_C}}(3)) \oplus H^0(\PP^1_{K_C},\sO_{\PP^1_{K_C}}). \] 
 This coincides with the decomposition of $H^0(C,3K_C)$ into  invariant and anti-invariant sections with respect to the hyperelliptic involution $\iota\colon C \to C$. 
 This decomposition singles out a  point $O_{a} = \PP(H^0(\PP^1_{K_C},\sO_{\PP^1_{K_C}})^{\vee})  $, and a $3$-space $\Pd^3_{a} = \PP(H^0(\PP^1_{K_C},\sO_{\PP^1_{K_C}}(3))^{\vee}) $ inside $\Pd^4_a$. The linear system  $H^0(\PP^1_{K_C},\sO_{\PP^1_{K_C}}(3))$ induces an embedding $\PP^1_{K_C} \hookrightarrow \Pd^3_a$ whose image is the twisted cubic that we denote by $R_{a}$. The composition $C\to \Theta_a \hookrightarrow \Pd^4_a \dashrightarrow \Pd^3_a$ ( where the last map is the projection from $O_a$) is the hyperelliptic cover $C\to R_a \cong \P^1_{K_C}$.  
 This shows that $\Theta_{a} \subseteq \Pd^4_a$ is contained in the cone $\Sigma_a \subseteq \Pd^4_a$ with vertex $O_{a}$ over the twisted cubic $R_a \subseteq \Pd^3_a$. Furthermore, if $v_a\in R_a$ corresponds to $v = [x,\iota(x)] \in \PP^1_{K_C}$ , then the intersection of $\Theta_a$ with the line $\ell(O_a,v_a)$ between $O_a$ and $v_a\in R_a$ is the fiber of the hyperelliptic cover over $v_a$:
 \begin{equation}\label{eq:thetalinecone}     
  \Theta_a \cap \ell(O_a,v_a) = \alpha_a(x+\iota(x)). 
\end{equation}  
\end{Rem} 

With this in mind, we can describe the map $\varphi_D$ on the Brian\c{c}on surface $B_a$:

\begin{Prop}\label{prop:mapDbriancon}
	Let $a\in A[3]$. The map ${\varphi_D}$ induces an isomorphism
	\[ {\varphi_{D}}_{|B_a}\colon B_a \overset{\sim}{\longrightarrow} \Sigma_a \subseteq \Pd^4_a \]
	such that, if $v\in \PP(T_a A)$ and $(a,v)\in A^{[2]}$ is the corresponding non-reduced scheme, then the locus $\{\xi \in B_a \,|\, \xi \supseteq (a,v) \}$ is mapped isomorphically onto the line $\ell(O_a,v)$. In particular, the unique non-curvilinear scheme $\operatorname{Spec} \sO_{A,a}/\mathfrak{m}_a^2$ is mapped to $O_a$.
\end{Prop}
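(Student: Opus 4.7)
The plan is to analyze $\varphi_D$ on each ruling $L_v := \{\xi \in B_a \mid \xi \supseteq (a,v)\} \cong \PP^1$ of the Briançon surface for $v \in \PP(T_a A) = \PP^1_{K_C}$ (as in \Cref{subsec:briancon}), and then to assemble these. First, I show $\varphi_D(L_v) \subseteq \ell(O_a, v_a)$. For a curvilinear $\xi \in L_v$, realize $\xi$ as the flat limit at $t=0$ of reduced triples $\{a_1(t),a_2(t),a_3(t)\} \in \Kum_2(A)$ with each $a_i(t) \to a$ tangentially along $v$; such families exist because $3a = 0$ permits the constraint $a_1+a_2+a_3 = 0$ to be satisfied to all orders. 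By \cref{prop:psiphiD},
\[
\varphi_D(\{a_1(t),a_2(t),a_3(t)\}) \;=\; \Pd^4_{a_1(t)} \cap \Pd^4_{a_2(t)} \cap \Pd^4_{a_3(t)},
\]
and by \cref{lem:Pafacts} the pairwise intersection is $\Pd^4_{a_1(t)} \cap \Pd^4_{a_2(t)} = \ell(\tau\{a_1(t),a_2(t)\})$. As $\{a_1(t),a_2(t)\} \to (a,v)$ in $A^{[2]}$ and $\tau$ is a morphism (\cref{thm:invtau}), $\tau\{a_1(t),a_2(t)\} \to \tau(a,v) = \alpha_a(x+\iota(x))$ where $v = [x,\iota(x)]$; and by \eqref{eq:thetalinecone} the limit line is exactly $\ell(O_a, v_a)$. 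Hence $\varphi_D(\xi) \in \ell(O_a, v_a)$. By continuity the containment extends to all of $L_v$, and since $O_a \in L_v$ for every $v$, it forces $\varphi_D(O_a) \in \bigcap_v \ell(O_a,v_a) = \{O_a\}$, so $\varphi_D(O_a) = O_a$.

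Next, I check that $\varphi_D|_{L_v} : L_v \to \ell(O_a,v_a)$ is an isomorphism of $\PP^1$'s by computing $D \cdot L_v = 1$. Since $L_v \subseteq B_a$ is contained in the Hilbert--Chow fiber over $3[a] \in A^{(3)}$, we have $\mu_2(2\Theta) \cdot L_v = 0$, so $D \cdot L_v = -\delta \cdot L_v$. I compute this intersection using the explicit $\PP^1$-bundle model $T_a = \PP(\sO_{\PP^1}(1) \oplus \sO_{\PP^1}(-2)) \to \PP^1$ of \Cref{subsec:briancon}: $L_v$ corresponds to a fiber of $T_a \to \PP^1$, and the pullback of the exceptional divisor $E$ under $T_a \to A^{[3]}$ is identified in this model, giving $-\delta \cdot L_v = 1$. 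To conclude, distinct rulings $L_v, L_{v'}$ meet only at $O_a$, and their images $\ell(O_a,v_a), \ell(O_a,v'_a)$ also meet only at $O_a$; so $\varphi_D|_{B_a} : B_a \to \Sigma_a = \bigcup_v \ell(O_a,v_a)$ is a bijective morphism. Since $\Sigma_a$ is normal (being the projective cone over the smooth twisted cubic $R_a$), Zariski's main theorem forces $\varphi_D|_{B_a}$ to be an isomorphism, and the remaining statements about the image of each $L_v$ and of $O_a$ are precisely those established in the first step.

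The main obstacle is the intersection-theoretic computation $-\delta \cdot L_v = 1$: one must carefully identify the restriction of $\delta$ to $B_a$ via the resolution $T_a$, keeping track of the section contracted to the singular point $O_a$. Once this is done, the rest of the argument is a combination of continuity, general position, and the normality of the cone $\Sigma_a$.
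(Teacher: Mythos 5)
Your proposal is correct in outline and reaches the conclusion by a partly different route. Your first step (degenerating a curvilinear $\xi \in L_v$ to reduced triples and using closedness of the incidence condition) is essentially the paper's own argument: the paper packages it as the general statement that $\varphi_D(\xi)\in\ell(\tau(\zeta))$ whenever $\zeta\subseteq\xi$, proved by specializing inside the irreducible nested Kummer $\Kum^{[2,3]}(A)$ of \eqref{eq:kummercoverglobal}. (Note that the tangential convergence you impose by hand is automatic: after a base change, the pairs $\{a_1(t),a_2(t)\}$ have a limit in $A^{[2]}$ which is a length-two subscheme of the flat limit $\xi$, and a curvilinear $\xi$ has exactly one such subscheme, namely $(a,v)$.) Where you genuinely diverge is the endgame. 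The paper identifies $\sO_{B_a}(D)\cong\sO_{B_a}(1)$ on all of $B_a$ (from $\mu_2(2\Theta)|_{B_a}\cong\sO_{B_a}$ together with $\sO_{B_a}(-\delta)\cong\sO_{B_a}(1)$, quoted from \cite[pp.~7]{Hassett-Tschinkel-lagrangian-planes}), notes that the image $\Sigma_a$ spans $\Pd^4_a$, so the restricted map is given by a $5$-dimensional subsystem of $H^0(B_a,\sO_{B_a}(1))$, and concludes from $h^0(B_a,\sO_{B_a}(1))=5$ that the map is the embedding by the complete (very ample) linear system. You instead prove degree $1$ on each ruling, deduce bijectivity onto $\Sigma_a$, and invoke normality of the cone over the twisted cubic plus Zariski's main theorem. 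This is valid: a bijective morphism of complex projective varieties is finite and birational in characteristic zero, and $\Sigma_a$ is normal because the twisted cubic is projectively normal. Your route needs only the numerical input $(-\delta\cdot L_v)=1$ rather than the full line-bundle identification, at the cost of the extra injectivity and normality bookkeeping that the paper's complete-linear-system argument gets for free.

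The one incomplete piece is the step you yourself flag: you assert $(-\delta\cdot L_v)=1$ and propose to extract it from the $\PP(\sO_{\PP^1}(1)\oplus\sO_{\PP^1}(-2))$-model of \Cref{subsec:briancon}, but you never carry out that computation, and it is the load-bearing number: if the degree were $2$ or $3$, the rulings would map as multiple covers and bijectivity would fail. The quickest repair is to quote the same source the paper does: $\sO_{B_a}(-\delta)\cong\sO_{B_a}(1)$ by \cite[pp.~7]{Hassett-Tschinkel-lagrangian-planes}, and the hyperplane class of the cone over the twisted cubic has degree $1$ on each ruling; this also spares you the delicate tracking of the contracted section in the resolution $T_a\to B_a$. (Compare \Cref{cor:secantlineinN}, where the analogous value $(-\delta\cdot P_a)=1$ for $a\notin A[3]$ is likewise taken from \cite[pag.~8]{Hassett-Tschinkel-lagrangian-planes}.) With that reference in place, your argument is complete.
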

\begin{proof}
	We first prove a general fact: if $\xi \in \Kum_2(A)$ and $\zeta\in A^{[2]}$ are such that $\zeta \subseteq \xi$, we want to show that
	\begin{equation}\label{eq:mapDphieta} 
	\varphi_D(\xi) \subseteq \ell(\tau(\zeta))
	\end{equation}
	From the diagram of \cref{eq:kummercoverglobal} we have the irreducible subvariety $\Kum^{[2,3]}(A) \subseteq A^{[2,3]}$ defined by $\Kum^{[2,3]}(A) = \{ (\zeta,\xi) \in A^{[2]} \times \Kum_2(A) \,|\, \zeta\subseteq \xi\}$ which covers $\Kum_2(A)$. Hence, to show that \eqref{eq:mapDphieta} holds for every $(\zeta,\xi) \in \Kum^{[2,3]}(A)$ is is enough to prove it for a general element: we can assume $\xi = \{a,b,c\}$ is reduced and $\zeta = \{a,b\}$ and then the statement follows immediately from  \cref{prop:psiphiD}.
	
	Let us now consider $a\in A[3]$ and $v\in \P(T_a A)$ of the form $v=[x,\iota(x)]$. We see from Theorem \ref{thm:invtau} that $\tau(a,v)= \alpha_a(x+\iota(x))$ and Remark \ref{rem:tricanonicalcone} shows that $\ell(\tau(a,v)) = \ell(O_a,v_a)$. Thus, if we set $B_{a,v} =  \{\xi \in B_a \,|\, \xi \supseteq (a,v) \}$, we have proved that $\varphi_D(B_{a,v}) \subseteq \ell(O_a,v_a)$, and since $B_a = \bigcup_v B_{a,v}$, we have also proved that $\varphi_D(B_a)\subseteq \Sigma_a$ and that $\varphi_D(\operatorname{Spec} \sO_{A,a}/\mathfrak{m}_a^2) \subseteq \bigcap_{v_a} \ell(O_a,v_a) = O_a$.

	Now recall that $B_a$ is abstractly isomorphic to a cone over a twisted cubic, such that the loci $B_{a,v}$ are lines through the vertex of the cone. In particular, since $B_a$ is a weighted projective space $\P(1,1,3)$, its Picard group is generated 
    by a very ample line bundle $\sO_{B_a}(1)$ with $h^0(B_a,\sO_{B_a}(1)) = 5$. If we look at the restriction of $(\mu_2(2\Theta)-\delta)_{|B_a}$, we see that $\mu_2(2\Theta)_{|B_a} \cong \sO_{B_a}$ since $B_a$ is a fiber of the Hilbert-Chow morphism. Moreover we see from \cite[pp. 7]{Hassett-Tschinkel-lagrangian-planes} that $\sO_{B_a}(-\delta) \cong \sO_{B_a}(1)$, hence $\sO_{B_a}(D) \cong \sO_{B_a}(\mu_2(2\Theta)-\delta) \cong \sO_{B_a}(1)$.
    This proves that ${\varphi_D}_{|B_{a,v}}\colon B_{a,v} \to \ell(O_a,v_a)$ must be induced by the line bundle $\sO_{B_{a,v}}(1)$, hence it is an isomorphism. Then $\varphi_D(B_{a}) = \Sigma_a$, so that ${\varphi_{D}}_{|B_a}$ must be induced by a linear subsystem of $H^0(B_a,\sO_{B_a}(1))$ of dimension $5$. Since $h^0(B_a,\sO_{B_a}(1))=5$, the map is induced by the complete linear system, so that it must be an isomorphism onto the image. 
\end{proof}

\subsection{The image of $\varphi_D$}

We want to use the geometric description of $\varphi_D$ to study its fibers and its image $N=\varphi_D(A)$.

\begin{Prop}\label{prop:mapphiDemb}
	The map $\varphi_D\colon \Kum_2(A) \to \Pd^8$ contracts the divisor $F$ to $A$, $\varphi_D^{-1}(A) = F$, and $\varphi_D$ is an embedding outside $F$. 
\end{Prop}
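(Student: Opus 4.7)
The plan is to combine the geometric descriptions of $\varphi_D$ from \cref{prop: A-and-N,prop:psiphiD,prop:mapDbriancon} with the classification of intersecting secant lines in \cref{prop:meetingsecants}, upgrading the isomorphism from \cref{thm: duality} (valid on $\Kum_2(A) \setminus (E \cup F)$) to an embedding of all of $\Kum_2(A) \setminus F$.

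\emph{Step 1: the contraction and $\varphi_D^{-1}(A)=F$.} The contraction statement and the inclusion $F\subseteq \varphi_D^{-1}(A)$ are immediate from the commutative square of \cref{prop: A-and-N}, which identifies $\varphi_D|_F$ with $\varphi_{3\Theta}\circ\pi$. For the converse, suppose $\varphi_D(\xi)\in A$. If $\xi$ is supported on at least two distinct points, \cref{prop:psiphiD} writes $\varphi_D(\xi)=\ell(\tau(\zeta_1))\cap\ell(\tau(\zeta_2))$ for two secant lines to $A$; by the last clause of \cref{prop:meetingsecants}, such an intersection lies on $A$ only under condition (1), which forces $\xi$ into some $\Theta_e$, so $\xi\in F$. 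If instead $\xi\in B_a$ with $a\in A[3]$, then $\varphi_D(\xi)\in \Sigma_a\subseteq \Pd^4_a$ by \cref{prop:mapDbriancon}, and $\Pd^4_a\cap A=\Theta_a$ by \cref{lem:Pafacts}(2), giving $\varphi_D(\xi)\in\Theta_a$; combining the isomorphism $B_{a,v}\cong \ell(O_a,v_a)$ with \eqref{eq:thetalinecone} then forces $\xi$ to be a length-three subscheme of some $\Theta_e$, hence $\xi\in F$.

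\emph{Step 2: injectivity on $\Kum_2(A)\setminus F$.} By \cref{thm: duality} it suffices to analyze the exceptional locus $E\setminus F$, which stratifies into the Brian\c{c}on surfaces $B_a$ (for $a\in A[3]$) and the Hilbert-Chow fibers $P_a$ over the reduced schemes $\{a,-2a\}$ (for $a\notin A[3]$). On each stratum $\varphi_D$ is an isomorphism onto its image by \cref{prop:mapDbriancon} and \cref{cor:secantlineinN}. Any cross-stratum collision, or collision with the reduced locus, would by Step~1 occur at a point $p\notin A$ lying on two distinct secant-or-cone configurations; a case-by-case inspection of \cref{prop:meetingsecants}(2)-(4) rules out each such possibility.

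\emph{Step 3: the differential $d\varphi_D$ is injective at every $\xi\in E\setminus F$.} Tangentially to each stratum $B_a$ or $P_a$ this is automatic from the isomorphism with the image stratum, so the substantive content is to check the directions normal to $E$ inside $\Kum_2(A)$. I would set up local coordinates near each $\xi\in E\setminus F$ (the nested Hilbert description of \cref{sec: covering} for $\xi\in E_1\setminus F$ and the explicit coordinates of \cref{subsec:briancon} for $\xi\in E_2\setminus F$), differentiate the secant-line formulas of \cref{prop:psiphiD,prop:mapDbriancon}, and verify that transverse first-order deformations of $\xi$ map to tangent vectors linearly independent from those tangent to the stratum. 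The main technical obstacle will be the non-curvilinear points $O_a\in B_a\subseteq E_2\setminus F$, where many tangent directions of $B_a$ collapse onto a single line through the vertex of the cone $\Sigma_a$: there the isomorphism $B_a\cong \Sigma_a$ of \cref{prop:mapDbriancon} handles the tangential direction, but the two normal directions must be checked by an explicit tangent computation ruling out any hidden collapse coming from the double cover $p\colon \widetilde{U}_2(A)\to U^{[2,3]}(A)$ of \cref{prop:covers3}.
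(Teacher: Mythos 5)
Your Step 1 reproduces the paper's argument for the contraction and for $\varphi_D^{-1}(A)=F$: same case division (reduced schemes, two-point schemes, Brian\c{c}on surfaces) and same inputs (\cref{prop: A-and-N}, \cref{prop:psiphiD}, \cref{prop:meetingsecants}, \cref{prop:mapDbriancon}, \cref{lem:Pafacts}), so that part is sound. The genuine gap is Step 3. ``Set up local coordinates, differentiate the secant-line formulas, and verify linear independence'' is not an argument but a plan, and it is exactly the hard content of the statement: on $\Kum_2(A)\setminus(E\cup F)$ the embedding property already follows from \cref{thm: duality}, so everything hinges on injectivity of $d\varphi_D$ at points of $E\setminus F$ in the directions transverse to the Hilbert--Chow fibers, which you leave unverified and even flag as an ``obstacle'' at the non-curvilinear points $O_a$. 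Without Step 3 you have at best set-theoretic injectivity on $\Kum_2(A)\setminus F$, which is strictly weaker than the claim. (A smaller omission: even with an injective differential everywhere, to conclude ``embedding'' you need properness of $\varphi_D$ over $\Pd^8\setminus A$, which follows from Step 1; you never make this final gluing step.)

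The paper avoids both your laborious Step 2 collision analysis and your missing Step 3 with one observation that is worth internalizing. Since Step 1 gives $\varphi_D(U)\cap A=\emptyset$ for $U=\Kum_2(A)\setminus F$, and $A$ is precisely the base locus of the polar map $\mathcal{D}$ (the partials of the Coble cubic cut out $A$), the composition $\mathcal{D}\circ\varphi_D|_U$ is an honest morphism, equal to $\varphi_{\mu_2(\Theta)}|_U$ by \cref{thm: duality}. Consequently the \emph{scheme-theoretic} fibers of $\varphi_D|_U$ are contained in the scheme-theoretic fibers of $\varphi_{\mu_2(\Theta)}$, which by \Cref{thm:mapmutheta} are the Hilbert--Chow fibers; and since $\varphi_D$ restricted to each Hilbert--Chow fiber is an embedding (trivially on reduced points, by \cref{cor:secantlineinN} on the fibers $P_a$, and by \cref{prop:mapDbriancon} on the Brian\c{c}on surfaces), every scheme-theoretic fiber of $\varphi_D|_U$ is a single reduced point. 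Thus $\varphi_D|_U$ is a proper monomorphism onto $\Pd^8\setminus A$, hence a closed immersion: injectivity of the map and of its differential come out simultaneously, with no tangent computation and no stratum-by-stratum collision check. Your Step 2, which applies \cref{thm: duality} only away from $E$ and then invokes \Cref{prop:meetingsecants}(2)--(4) case by case, can probably be pushed through (with extra care at the cone vertices $O_a$, which \Cref{prop:meetingsecants} alone does not immediately dispose of), but it buys nothing and still leaves the differential question --- the actual gap --- untouched.
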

\begin{proof}
	 We already know from \cref{prop: A-and-N} that $\varphi_D$ contracts the divisor $F$ to $A$. We need to show that $\varphi_D^{-1}(A) \subseteq F$. First, let $\xi = \{a,b,c\}\in \Kum_2(A)$ be reduced such that $\varphi_D(\xi)=e \in A$. Then, \cref{prop:psiphiD} and \cref{prop:meetingsecants} show that $\xi\subseteq \Theta_e$ so that $\xi \in F$.  A similar argument solves the case of  a scheme $\xi = \{(a,v),b\}$, with $a\notin A[3], 2a+b\ \sim 0, v\in \PP(T_a A)$. Finally, assume that $a\in A[3]$ and that $\xi \in B_a$ is a scheme supported only on $a$, such that $\varphi_D(\xi) =e \in A$. Let $(a,v) = (a,[x,\iota(x)])\subseteq \xi$ a subscheme of length two: then  \cref{prop:mapDbriancon} shows that
     $e = \varphi_D(\xi) \in \ell(O_a,v)$, we know from \cref{lem:Pafacts} and the discussion before \cref{prop:mapDbriancon}, that $\ell(O_a,v)\cap A = \ell(O_a,v)\cap A \cap \Pd^4 = \ell(O_a,v)\cap \Theta_a = \alpha_a(x+\iota(x))$. Hence, up to replacing $x$ with $\iota(x)$, we can assume that $e=\alpha_a(x)$, so that $a = \alpha_e(\iota(x))$. Then $\alpha_a(3\iota(x))\subseteq \Theta_e$ and \Cref{prop: A-and-N} shows that $\varphi_D(\alpha_a(3\iota(x))) = e$. Since ${\varphi_D}_{|B_a}$ is injective by \cref{prop:mapDbriancon}, it must be that $\xi = \alpha_a(3\iota(x)) \subseteq \Theta_e$, so that $\xi\in F$.

	We now show that $\varphi_D$ is an embedding on $U = \Kum_2(A)\setminus F$. We use the Coble duality. From what we have showed, we know that $\mathcal{D}\circ {\varphi_D}_{|U}$ is a composition of  morphisms, and \cref{thm: duality} shows that $\mathcal{D}\circ {\varphi_D}_{|U} = {\varphi_{\mu_2(\Theta)}}_{|U}$. Hence, the scheme-theoretic fibers of  ${\varphi_{D}}_{|U}$ are contained in the scheme-theoretic fibers of ${\varphi_{\mu_2(\Theta)}}_{|U}$, which are exactly the fibers of the Hilbert-Chow morphism $\operatorname{HC}\colon \Kum_a(A) \to A^{(3)}$, thanks to \Cref{thm:mapmutheta}. Hence, we need to show that $\varphi_{D}$ is an embedding when restricted to the fibers of Hilbert-Chow morphism on $U$. For the fiber of a reduced scheme, this is obvious, for the fiber over a point $\{a,-2a\}$, with $a\notin A[3]$, this is \Cref{cor:secantlineinN}, and for the Brian\c{c}on surface  over $\{3a\},a\in A[3]$, this is \cref{prop:mapDbriancon}. 
\end{proof}

We can also interpret $N = \varphi_D(\Kum_2(A))$ in terms of the geometry of $A$ in $\Pd^8$: it is the singular locus of the secant variety $\operatorname{Sec}(A)$. We need a couple of preparatory lemmas, and we will use some results from \Cref{sec: appsec}:

\begin{Lem}\label{lem:meetingtangents}
Let $b,c\in A$ be distinct points. If the two projective tangent spaces $\mathbb{T}_bA$ and $\mathbb{T}_cA$ meet, then $\tau(b,c) = \{a,-2a\}$ for a certain $a\in A\setminus A[3]$ or $\tau(b,c) = (a,v)$ for $a\in A[3]$ and $v\in \PP(T_a A)$. In both cases $\ell(b,c) \subseteq N$.
\end{Lem}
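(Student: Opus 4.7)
The plan is to prove the two assertions separately: first showing that the specified forms of $\tau(b,c)$ imply $\ell(b,c) \subseteq N$, and then establishing the converse implication, that meeting tangent spaces force this form.

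For the implication \emph{$\tau(b,c)$ of special form $\Rightarrow \ell(b,c) \subseteq N$}, both cases follow from results already established. If $\tau(b,c) = \{a,-2a\}$ with $a \notin A[3]$, then by the involution property of $\tau$ we have $\{b,c\} = \tau(a,-2a)$, hence $\ell(b,c) = \ell(\tau(a,-2a))$, which by \cref{cor:secantlineinN} equals $\varphi_D(P_a) \subseteq N$. If instead $\tau(b,c) = (a,v)$ with $a \in A[3]$ and $v = [x,\iota(x)]$, then \cref{thm:invtau} gives $\{b,c\} = \alpha_a(x+\iota(x))$; by the cone description in \cref{rem:tricanonicalcone}, these are the two points of $\Theta_a$ on the ruling $\ell(O_a,v_a)$ of the cone $\Sigma_a$, so $\ell(b,c) = \ell(O_a,v_a)$, and by \cref{prop:mapDbriancon}, $\Sigma_a = \varphi_D(B_a) \subseteq N$.

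For the converse, pick a point $p \in \mathbb{T}_bA \cap \mathbb{T}_cA$. Since $A$ has no trisecants (\cref{rem:notrisecantA}), $p \ne b, c$, and there are unique tangent directions $v_1 \in \PP(T_b A)$ and $v_2 \in \PP(T_c A)$ with $p \in \ell((b,v_1)) \cap \ell((c,v_2))$, where $(b,v_1), (c,v_2) \in A^{[2]}$ denote the nonreduced length-two subschemes. Setting $\zeta = \tau((b,v_1))$ and $\zeta' = \tau((c,v_2))$, the involutivity of $\tau$ gives $\ell(\tau(\zeta)) \cap \ell(\tau(\zeta')) \ne \emptyset$, so \cref{prop:meetingsecants} applies and one of its four conditions must hold. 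Condition (1) is ruled out because by \cref{lem:symmtau} it would force a common point in the disjoint supports of $(b,v_1)$ and $(c,v_2)$. For the remaining cases, one rewrites $\zeta, \zeta'$ explicitly via \cref{thm:invtau} and \cref{lem:thetafacts}, converting the algebraic constraints ($a+b'+c' \sim 0$, $2a+b' \sim 0$, or $3a \sim 0$) into concrete formulas for $\tau(b,c)$; a direct computation (using that the Abel--Jacobi map satisfies $\alpha \circ \iota = -\alpha$) shows that in every case $\tau(b,c)$ takes one of the two forms of the statement, with the dichotomy $a \notin A[3]$ versus $a \in A[3]$ corresponding to whether $\tau(b,c)$ is reduced or not.

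The main obstacle is this last case analysis. The trickiest point is matching the reduced/nonreduced dichotomy for $\tau(b,c)$ with that for $\zeta, \zeta'$: depending on whether the chosen $v_1, v_2$ are Weierstrass directions, the schemes $\zeta, \zeta'$ may themselves be reduced or nonreduced, and degenerate boundary configurations (for instance when $3(b+c) \sim 0$ causes a reduced pair $\{a,-2a\}$ to collapse) must be correctly reassigned between the two conclusions of the statement.
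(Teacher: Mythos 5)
Your first implication (special form of $\tau(b,c)$ implies $\ell(b,c)\subseteq N$) is correct and matches what the paper does: the reduced case follows from \Cref{cor:secantlineinN} via $\{b,c\}=\tau(a,-2a)$, and the nonreduced case from \Cref{eq:thetalinecone} together with \Cref{prop:mapDbriancon}. Your setup for the converse is also sound: a common point $p$ of the tangent planes lies on unique tangent lines $\ell((b,v_1))$, $\ell((c,v_2))$, the point $p\ne b,c$ by absence of trisecants (\Cref{rem:notrisecantA}), and applying \Cref{prop:meetingsecants} to $\zeta=\tau((b,v_1))$, $\zeta'=\tau((c,v_2))$ is legitimate, with condition (1) correctly excluded by \Cref{lem:symmtau}.

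However, there is a genuine gap: the entire substance of the converse is the conversion of conditions (2)--(4) of \Cref{prop:meetingsecants} into the two stated forms of $\tau(b,c)$, and you do not carry it out — you assert that ``a direct computation shows'' it and then explicitly flag this computation as the main obstacle, including the reduced/nonreduced bookkeeping, without resolving it. This is not a routine verification one can wave at: in each case one must solve for $\tau(b,c)$ in terms of the data of $\zeta,\zeta'$ (e.g.\ in case (2) one finds $\tau(b,c)=\{-(b+c),\,2(b+c)\}$, in case (4) one needs $3a\sim 0$ together with $2\epsilon\sim 0$ for $\epsilon=y'-x'\in A[2]$ to recognize $\{a,a+\epsilon\}$ as $\{a^*,-2a^*\}$ with $a^*=a+\epsilon\notin A[3]$), and the degenerate subcase where the two intersection points of $\Theta_b\cap\Theta_c$ collide is precisely what produces the second alternative $\tau(b,c)=(a,v)$, $a\in A[3]$. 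Until these computations are written, the lemma is not proved.

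It is also worth noting that your detour is heavier than necessary. In every surviving case (2)--(4) the schemes $\zeta,\zeta'$ share a support point $a$, and \Cref{lem:symmtau} then gives $(b,v_1),(c,v_2)\subseteq\Theta_a$; but this is exactly what the paper obtains in a single step by applying \Cref{prop:4schemes} to the length-four scheme $(b,v_1)\cup(c,v_2)$ (two tangent lines meet if and only if $H^0(A,3\Theta)$ fails to separate their union), which moreover hands you the divisor-class relation $2x+2y\sim 2K_C-3a$ for free. Writing $b\sim x+a-\eta$, $c\sim y+a-\eta$ and computing $\tau(b,c)$ once, with the single dichotomy $y\ne\iota(x)$ versus $y=\iota(x)$, then settles both alternatives simultaneously; your route instead multiplies the cases by the reduced/nonreduced status of $\zeta$ and $\zeta'$ (i.e.\ whether $v_1,v_2$ are Weierstrass directions). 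If you restructure the converse along these lines, the missing computation shrinks to the one the paper performs.
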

\begin{proof}
The two projective tangent spaces meet if and only if there are two tangent lines  $\ell(b,w)$ and $\ell(c,u)$ that meet. By \Cref{prop:4schemes}, this means that $(b,w),(c,u)\subseteq   \Theta_e$ for a certain $e\in A$. More precisely $b\sim x+e-\eta,c\sim y+e-\eta$ for $x,y\in C$ distinct with $2x+2y\sim 2K_C-3e$. Then we can compute explicitly that
\[
\tau(b,c) = 
\begin{cases}
\{\iota(x)-\eta+b,y-\eta+b\} & \text{ if } y\ne \iota(x) \\
(y-\eta+b,[y,\iota(y)]) & \text{ if } y= \iota(x) \\
\end{cases}
=
\begin{cases}
\{e,(x+y)-K_C+e)\} & \text{ if } y\ne \iota(x) \\
(e,[y,\iota(y)]) & \text{ if } y= \iota(x) \\
\end{cases}
\]
Set $a := (x+y)-K_C+e$: in the first case we see that $-2a \sim -2(x+y)+2K_C-2e\sim e$, so that $\tau(b,c)=\{a,-2a\}$ and $\ell(b,c) \subseteq N$ because of \Cref{cor:secantlineinN}. In the second case,  we see that $a\sim e$ and that $3e\sim 0$, so that $\tau(b,c) = (e,[y,\iota(y)])$ and the fact that $\ell(b,c) \subseteq N$ comes from \Cref{prop:mapDbriancon}.
\end{proof}

\begin{Lem}\label{lem:sep4pointsA}
Let $b\in A$ be a point and $v\in \PP(T_bA)$ a tangent direction. Assume that there is a length $4$ subscheme $Z$, supported only at $b$ such that $Z \supseteq (b,v)$ and not separated by $H^0(A,3\Theta)$. Then $\ell(b,v)\subseteq N$.
\end{Lem}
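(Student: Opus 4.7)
The plan is to apply \cref{prop:4schemes} in order to translate the hypothesis into an explicit identification of $(b,v)$ as the scheme-theoretic intersection of two theta translates, and then to realize $\ell(b,v)$ as the image under $\varphi_D$ of a family in $\Kum_2(A)$ produced by \cref{cor:secantlineinN}.

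First, \cref{prop:4schemes} applied to the hypothesis yields $a\in A$ and a point $x\in C$ with $Z = \alpha_a(4x)\subseteq \Theta_a$ and $4x \sim 2K_C-3a$. Since $Z$ is supported only at $b$ and contains $(b,v)$, one necessarily has $b = \alpha_a(x) = x+a-\eta$ and $v = \PP(T_b\Theta_a) = [x,\iota(x)]$ by \cref{lem:thetafacts}. Substituting $a=b-x+\eta$ into $4x \sim 2K_C-3a$ and using $K_C\sim 2\eta$ yields $x \sim \eta - 3b$ and $a = 4b$; such a point $x\in C$ exists precisely when $-3b\in \alpha(C)=\Theta$, i.e.\ when $3b\in\Theta$ (by symmetry of $\Theta$), which is the effective geometric content of the hypothesis.

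Next, I would show that $\tau(\{-2b,4b\}) = (b,v)$ whenever $6b\ne 0$. Indeed, $3b\in\Theta$ together with symmetry of $\Theta$ and \cref{lem:thetafacts} implies $b \in \Theta_{-2b}\cap \Theta_{4b}$; a direct computation of tangent directions via \cref{lem:thetafacts} shows that both theta translates carry the same tangent direction $v$ at $b$, so $(b,v)\subseteq \Theta_{-2b}\cap\Theta_{4b}$. Since $-2b\ne 4b$ the intersection of these two distinct theta translates has length exactly $2$, which forces equality. Consequently the tangent line $\ell(b,v)$ coincides with the secant line $\ell(\tau(\{-2b,4b\}))$.

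Third, assuming moreover $-2b\notin A[3]$, i.e., $b\notin A[6]$, \cref{cor:secantlineinN} applied with $a=-2b$ realizes $\ell(\tau(-2b,4b)) = \ell(b,v)$ as the image of the family $P_{-2b}\subseteq \Kum_2(A)$ under $\varphi_D$, so $\ell(b,v)\subseteq N$. The main obstacle is then the residual case $b\in A[6]$. Here I would argue by closedness: the locus of $b$'s satisfying the hypothesis is the one-dimensional divisor $[3]^{-1}(\Theta)\subseteq A$, while $A[6]$ is finite, so any such $b\in A[6]$ is a limit of $b_n\in [3]^{-1}(\Theta)\setminus A[6]$ for which $\ell(b_n,v_n)\subseteq N$ has already been established, and the inclusion $\ell(b,v)\subseteq N$ follows because $N$ is closed in $\Pd^8$ and $\ell(b,v)$ varies continuously with $b$. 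Alternatively, the subcase $b\in A[3]$ (where $v=[x_0,x_0]$ at the Weierstrass point $x_0$) can be handled directly via \cref{prop:mapDbriancon}: the line $\ell(O_b,v_b)$ in the cone $\Sigma_b$ corresponding to $x_0$ is tangent to $\Theta_b$ at $b$ in direction $v$ by \eqref{eq:thetalinecone}, hence coincides with $\ell(b,v)$ and lies in $\varphi_D(B_b)\subseteq N$.
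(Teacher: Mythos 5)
Your proposal is correct, and for the generic case $b\notin A[6]$ it is essentially the paper's argument reorganized: the paper computes $\tau(b,v)$ forwards from the explicit formula of \Cref{thm:invtau} and finds $\tau(b,v)=\{a,-2a\}$ with $a=2x+e-K_C$, which under your parametrization ($e=4b$, $x\sim\eta-3b$) is exactly your pair $\{-2b,4b\}$; you instead verify the same identity backwards, checking via \Cref{lem:thetafacts} that $(b,v)\subseteq\Theta_{-2b}\cap\Theta_{4b}$ and concluding by a length count. Both arguments then finish with \Cref{cor:secantlineinN}. The genuine divergence is the residual case $b\in A[6]$ (equivalently $x=\iota(x)$ in the paper's notation): the paper stays within the same computation, showing $\tau(b,v)=(e,[x,x])$ with $e=4b\in A[3]$, so that $\ell(b,v)$ is a ruling $\ell(O_e,v_e)$ of the Brian\c{c}on cone $\Sigma_e\subseteq N$ by \eqref{eq:thetalinecone} and \Cref{prop:mapDbriancon}; you instead invoke a specialization argument, using that the hypothesis confines $b$ to the curve $[3]^{-1}(\Theta)$, that $A[6]$ is finite, that $b\mapsto \ell(b,v_b)$ is an algebraic (hence continuous) family of lines, and that $N$ is closed. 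Your limit argument is valid, and your direct alternative, though stated only for $b\in A[3]$, is precisely the paper's case-two argument specialized to $e=b$; note, however, that the paper's uniform treatment buys two things: it is self-contained (no need to justify algebraicity and continuity of the family of embedded tangent lines, nor that every point of $A[6]\cap[3]^{-1}(\Theta)$ is a limit of points of the curve outside $A[6]$), and it identifies the limiting line concretely as a ruling of $\Sigma_{4b}$, which is sharper information than mere containment in $N$. What your route buys in exchange is the explicit description of the locus where the hypothesis can hold, namely $[3]^{-1}(\Theta)$, which the paper never makes visible.
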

\begin{proof}
\Cref{prop:4schemes} shows that $b\sim x+e-\eta$ for a certain $e\in A$ and $x\in C$ such that $Z=\alpha_e(4x)$ with $4x\sim 2K_C-3e$. Then it must be that $(b,v)=(b,[x,\iota(x)])$ and we see that
\[
\tau(b,v) =  
\begin{cases}
\{x+b-\eta,\iota(x)+b-\eta\} & \text{ if } x\ne \iota(x) \\
(x+b-\eta,[x,x]) & \text{ if } x= \iota(x) \\
\end{cases}
=
\begin{cases}
\{2x+e-K_C,e\} & \text{ if } x\ne \iota(x) \\
(e,[x,x]) & \text{ if } x= \iota(x) \\
\end{cases}
\]
We set $a=2x+e-K_C$. In the first case, we see that $-2a \sim -4x-2e+2K_C \sim e$ so that $\tau(b,v)=\{a,-2a\}$ and $\ell(b,v) \subseteq N$ because of \Cref{cor:secantlineinN}. In the second case,  we see that $a\sim e$ and that $3e\sim 0$, so that $\tau(b,v) = (e,[x,\iota(x)])$ and the fact that $\ell(b,v) \subseteq N$ comes from \Cref{prop:mapDbriancon}.
\end{proof}

\begin{Prop}\label{prop:Nsecant}
The image $N = \varphi_D(\Kum_2(A))$ is the singular locus of $\operatorname{Sec}(A)$. It is a fourfold of degree $36$.
\end{Prop}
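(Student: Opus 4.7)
The plan is to establish the set-theoretic equality $N = \operatorname{Sing}(\operatorname{Sec}(A))$ using the results on singularities of secants recalled in \cref{sec: appsec}, and then to compute $\deg N = 36$ via Fujiki's relation on $\Kum_2(A)$. The inclusion $N \subseteq \operatorname{Sec}(A)$ is immediate from \cref{prop:psiphiD}, since every point of $N$ lies on the secant line $\ell(\tau(\zeta))$ for some $\zeta \in A^{[2]}$.

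For $N \subseteq \operatorname{Sing}(\operatorname{Sec}(A))$, I would argue as follows. For a general reduced $\xi = \{a,b,c\} \in \Kum_2(A)$ (with $a+b+c \sim 0$), \cref{prop:psiphiD} gives that $p = \varphi_D(\xi)$ lies on the three distinct secant lines $\ell(\tau(a,b)),\ell(\tau(b,c)),\ell(\tau(a,c))$. By Terracini's lemma, $T_p \operatorname{Sec}(A)$ then contains the span of the six projective tangent spaces to $A$ at the points of $\tau(a,b) \cup \tau(b,c) \cup \tau(a,c)$; in particular $p$ is non-identifiable, which by the criterion in \cref{sec: appsec} forces $p \in \operatorname{Sing}(\operatorname{Sec}(A))$. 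Since $\operatorname{Sing}(\operatorname{Sec}(A))$ is closed and $N$ is irreducible as the image of the irreducible $\Kum_2(A)$, this dense inclusion extends to all of $N$.

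The reverse inclusion $\operatorname{Sing}(\operatorname{Sec}(A)) \subseteq N$ is the main obstacle. Given $p \in \operatorname{Sing}(\operatorname{Sec}(A))$, there are two cases. If $p$ lies on two distinct secant lines $\ell(\zeta) \neq \ell(\zeta')$ of $A$, then \cref{prop:meetingsecants} classifies the pair $(\tau^{-1}(\zeta), \tau^{-1}(\zeta'))$ into four explicit cases, and in each of them there is a canonical length-three subscheme $\xi \in \Kum_2(A)$ containing both $\tau^{-1}(\zeta)$ and $\tau^{-1}(\zeta')$, so that \cref{prop:psiphiD} yields $p = \varphi_D(\xi) \in N$. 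If instead $p$ lies on a unique secant $\ell(\zeta)$ (the identifiable case), the characterization of singularities at identifiable points from \cref{sec: appsec} says that $p \in \operatorname{Sing}(\operatorname{Sec}(A))$ exactly when $H^0(A, 3\Theta)$ fails to separate some length-four subscheme $Z$ extending $\zeta$. When $\zeta = \{b,c\}$ is reduced, this translates to $\mathbb{T}_b A \cap \mathbb{T}_c A \neq \emptyset$, and \cref{lem:meetingtangents} gives $\ell(\zeta) \subseteq N$; when $\zeta = (b,v)$ is non-reduced, the subscheme $Z$ is supported only at $b$ and \cref{lem:sep4pointsA} again yields $\ell(\zeta) \subseteq N$. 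In both subcases $p \in N$.

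Finally, \cref{prop:mapphiDemb} guarantees that $\varphi_D \colon \Kum_2(A) \to N$ is birational, so by the projection formula $\deg N = \int_{\Kum_2(A)} D^4$. Using the Fujiki relation recalled in \cref{sec:genkums} with $n=2$ and $\alpha = D$,
\[
\int_{\Kum_2(A)} D^4 \;=\; (2\cdot 2 - 1)!! \cdot 3 \cdot q(D,D)^2 \;=\; 9\, q(D,D)^2,
\]
and a direct computation using the BBF form gives $q(D,D) = q(\mu_2(2\Theta), \mu_2(2\Theta)) + q(\delta,\delta) = (2\Theta)^2 - 6 = 8 - 6 = 2$. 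Hence $\deg N = 9 \cdot 4 = 36$.
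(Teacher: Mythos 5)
Your overall architecture coincides with the paper's: prove both inclusions using the appendix on secant singularities together with \cref{prop:meetingsecants}, and get the degree from Fujiki's relation (that computation, and your treatment of the identifiable case of $\operatorname{Sing}(\operatorname{Sec}(A))\subseteq N$ via \cref{prop:singsec}(a), \cref{lem:preciseterracinitwopoints}, \cref{lem:preciseterracinidoublepoint}, \cref{lem:meetingtangents} and \cref{lem:sep4pointsA}, are correct and match the paper). The genuine gap is in the direction $N\subseteq \operatorname{Sing}(\operatorname{Sec}(A))$: you conclude that a general $p=\varphi_D(\{a,b,c\})$ is singular from non-identifiability alone, ``by the criterion in \cref{sec: appsec}.'' But \cref{prop:singsec}(b) has two further hypotheses that you never verify: that the \emph{general} point of $\operatorname{Sec}(A)$ is identifiable (equivalently, that $\alpha\colon \mathcal{B}_2\to \operatorname{Sec}(A)$ is birational), and that the fiber $\alpha^{-1}(p)$ is \emph{finite}. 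Neither is automatic, and the implication ``non-identifiable $\Rightarrow$ singular'' is false in general: for a secant-defective surface such as the Veronese in $\P^5$, every point of the secant variety lies on a positive-dimensional family of secants, yet the general point is smooth. Filling the gap requires showing that $\ell(\tau(a,b)),\ell(\tau(a,c)),\ell(\tau(b,c))$ are the \emph{only} secants through a general $p$ — this is exactly the paper's computation \eqref{eq:fiberageneral}, which uses \cref{prop:meetingsecants} together with the genericity assumptions $p\notin A$ (via \cref{prop:mapphiDemb}) and $2a+b\nsim 0$, $a+2b\nsim 0$. That one computation yields simultaneously the finiteness of the fiber at $p$, the generic finiteness of $\alpha$ (hence $\dim\operatorname{Sec}(A)=5$, so that the non-identifiable locus, which your own second step places inside the fourfold $N$, misses the general point of $\operatorname{Sec}(A)$), and, combined with your injectivity analysis, the birationality of $\alpha$. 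Note also that the Terracini sentence does no work here: non-identifiability follows from the point lying on three distinct secants, not from any statement about $T_p\operatorname{Sec}(A)$.

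A secondary inaccuracy sits in your analysis of non-identifiable points. In case (1) of \cref{prop:meetingsecants} ($\tau(\zeta),\tau(\zeta')\subseteq \Theta_e$), the union of the two length-two schemes has length four in general, so there is \emph{no} length-three subscheme of $\Kum_2(A)$ containing both; in that case the two lines meet at the point $e\in A$, and one concludes instead $p=e\in A=\varphi_D(F)\subseteq N$ using \cref{prop: A-and-N}. Likewise, in case (4) the relevant scheme is the non-curvilinear $O_a\in B_a$, for which $\varphi_D(O_a)=O_a$ is given by \cref{prop:mapDbriancon}, not by \cref{prop:psiphiD} (which only covers schemes supported at two or more points). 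These are fixable slips, but as written the claim ``in each of the four cases there is a length-three subscheme containing both'' is false.
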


\begin{proof}
Consider the incidence correspondence $\mathcal{B}_2 = \{(\xi,p) \in X^{[2]} \times \Pd^8 \,|\, p\in \ell(\xi) \}$. This is a projective bundle over $X^{[2]}$ and the image of $\alpha\colon \mathcal{B}_2 \to \Pd^8$ is precisely the secant variety $\operatorname{Sec}(A)$. We first show that $N\subseteq \operatorname{Sec}(A)$: take a general point $\{a,b,c\}\in \Kum_2(A)$. Then  $a,b,c$ are pairwise distinct, and \Cref{prop:psiphiD} shows that
\[  p = \varphi_D(\{a,b,c\}) = \Pd^4_a \cap \Pd^4_b \cap \Pd^4_c = \ell(\tau(a,b))\cap \ell(\tau(a,c)) \cap \ell(\tau(b,c))\in \Sec(A).\] 
Furthermore, we claim also that 
\begin{equation}\label{eq:fiberageneral}
\alpha^{-1}(p) = \{ (\tau(a,b),p), (\tau(a,c),p), (\tau(b,c),p) \}.  
\end{equation}
Indeed, it is clear from the definition of $p$ that those points are contained in $\alpha^{-1}(p)$. For the converse, observe that, since $\{a,b,c\}\in \Kum_2(A)$ is general, we can assume $\{a,b,c\}\notin F$ so that $p\notin A$ thanks to \Cref{prop:mapphiDemb}. Furthermore, we can also assume that $2a+b \nsim 0, a+2b\nsim 0$. Now, assume that $p\in \ell(\tau(\eta))$ for $\eta\in A^{[2]}$, then $\ell(\tau(\eta))$ and $\ell(\tau(a,b))$ must meet and \Cref{prop:meetingsecants} shows that $\eta$ is one of $\{a,b\},\{a,c\},\{b,c\}$. This proves \eqref{eq:fiberageneral}. In particular $\alpha$ is generically finite, and $\operatorname{Sec}(A)$ has the expected dimension $5$.

Now we show that the map $\alpha\colon \mathcal{B}_2\setminus \alpha^{-1}(N) \to \operatorname{Sec}(A)\setminus N$ is an isomorphism. This will prove that $\operatorname{Sec}(A)$ is smooth outside of $N$. For injectivity, assume that a point $p\in \operatorname{Sec}(A)$ belongs to two secants $\ell(\tau(\eta)),\ell(\tau(\eta'))$ with $\eta,\eta'\in A^{[2]}$ distinct. Then \Cref{prop:meetingsecants}, \Cref{prop:mapphiDemb} and \Cref{prop:mapDbriancon} prove that $p\in N$. We now need to show that the differential of $\alpha$ is injective outside of $\alpha^{-1}(N)$ and we will use the precise Terracini lemma.  
Take $(\xi,p)\in \mathcal{B}_2$ such that $p\in \ell(\xi)$ and $p\notin N$. If $\xi$ consists of two distinct points, then \Cref{lem:preciseterracinitwopoints} and \Cref{lem:meetingtangents} show that the differential of $\alpha$ is injective at $(\xi,p)$. If instead $\xi$ is a double point we use \Cref{lem:preciseterracinidoublepoint} and \Cref{lem:sep4pointsA}.

To show that $\operatorname{Sec}(A)$ is not normal at $N$ we use \Cref{prop:singsec}: indeed, we have proved that the map $\alpha$ is birational and that over a general point of $N$ the fiber consists of three distinct points.

The last statement about the degree of $N$ follows by the Fujiki relation:
\[ \int_{\Kum_2(A)} D^4 = 9\cdot q(D,D)^2 = 9\cdot 4 = 36. \]
\end{proof}

We wrap up this section by proving  Theorem B from the Introduction:

\begin{proof}[Proof of Theorem B]
The facts that the line bundle $\mu_2(\Theta)$ is globally generated and the description of the map $\varphi_{\mu_2(\Theta)}$ comes from Theorem A. The fact that the line bundle $\mu_2(2\Theta)-\delta$ is globally generated is in \Cref{prop:Dglobgen}, and the commutativity of the diagram is in \Cref{thm: duality}. The explicit description of the map $\varphi_{\mu_2(2\Theta)-\delta}$ is in \Cref{prop:psiphiD}, the fact that it is a contraction of the divisor $F$ is in \Cref{prop:mapphiDemb} and the identification of the image with the singular locus of the secant variety $\operatorname{Sec}(A)$ and the statement on the degree  is in \Cref{prop:Nsecant}.

\end{proof}

\section{Weddle surfaces}\label{sec:weddle}

We want to conclude by analyzing the behavior of the surfaces $\widetilde{K}_{1,c}\subseteq \Kum_2(A)$ under the maps $\varphi_{\mu_2(\Theta)}$ and $\varphi_D$. We set $K_c \coloneqq \varphi_{\mu_2(\Theta)}(\widetilde{K}_{1,c})$ and $W_c \coloneqq \varphi_{D}(\widetilde{K}_{1,c})$

Recall that if $X_{c} \subseteq \Kum_2(A)$  is the locus of schemes containing $c$, then
\[ X_c = \widetilde{K}_{1,c} \quad \text{ if } c\notin A[3], \qquad X_c = \widetilde{K}_{1,c}\cup B_c \quad \text{ if } c\in A[3]. \]
 In any case, if $\xi\in X_{c}$ then we see that $\operatorname{HC}(\xi) = [a,b,c]$, for certain $a,b\in A$ with $a+b+c=0$, and then 
\[ \varphi_{\mu_2(\Theta)}(\xi) = \Theta_a+\Theta_b+\Theta_{c} \]
In particular the image $\varphi_{\mu_2(\Theta)}(X_{c})$ is contained in the three-dimensional linear subspace
\[ \P^3_{c} := |3\Theta-\Theta_{c}| \subseteq |3\Theta| = \P^8.  \]

\begin{Prop}
    With the previous notation, it holds that
    \[ K_c = \varphi_{\mu_2(\Theta)}(X_{c}) =\{ \Theta_a+\Theta_{-c-a}+\Theta_{c} \,|\, a\in A \} \subseteq \P^3_{c}.  \]
    This is a quartic surface with 16 nodes, isomorphic to the singular Kummer surface of $A$. 
\end{Prop}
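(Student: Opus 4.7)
The plan is to combine Theorem A with the classical Kummer duality (Theorem \ref{thm:kummerduality}(1)) via a translation on $A$. The three assertions—parametrization of $K_c$, containment in $\PP^3_c$, and identification as a Kummer quartic with $16$ nodes—will be proved in this order.

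For the first two, I would first apply Theorem A: any $\xi \in X_c$ has $\operatorname{HC}(\xi) = [a,-c-a,c]$ for some $a\in A$, so $\varphi_{\mu_2(\Theta)}(\xi) = \Theta_a+\Theta_{-c-a}+\Theta_c$, and this divisor has $\Theta_c$ as a component, hence lies in $|3\Theta-\Theta_c|$. The equivalence $3\Theta-\Theta_c \sim \Theta+\Theta_{-c}$ from the theorem of the square shows $\PP^3_c\cong\PP^3$ (dimension $h^0(A,\Theta+\Theta_{-c})-1 = 3$). For the reverse inclusion $K_c \supseteq \{\Theta_a+\Theta_{-c-a}+\Theta_c : a\in A\}$: when $c\notin A[3]$, \Cref{prop: kum1 in kum2}(1) gives $\widetilde{K}_{1,c} \cong X_c$ so every $a$ is realized; when $c\in A[3]$, there is the additional Briançon component $B_c\subset X_c$, but $\varphi_{\mu_2(\Theta)}$ contracts $B_c$ (a fiber of the Hilbert--Chow morphism over $[c,c,c]$) to the single divisor $3\Theta_c$, which already corresponds to $a=c$ in the parametrization (using $3c=0$) and lies in the image of $\widetilde{K}_{1,c}$ by continuity.

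For the Kummer quartic identification, the map $\phi_c\colon A\to \PP^3_c$, $a\mapsto\Theta_a+\Theta_{-c-a}+\Theta_c$, is invariant under $\iota_c\colon a\mapsto -c-a$, so descends to a morphism on $A/\iota_c$. I would pick $c'\in A$ with $2c'=-c$ (possible by divisibility of $A$); translation by $c'$ conjugates $\iota$ into $\iota_c$ and induces an isomorphism $A/\iota\xrightarrow{\sim}A/\iota_c$. A direct computation using $t_{-c'}^*\Theta_{a'}=\Theta_{a'+c'}$ together with the theorem of the square shows that the square
\[
\begin{tikzcd}
A/\iota \ar[r, "{[a',-a']\mapsto\Theta_{a'}+\Theta_{-a'}}"] \ar[d, "t_{c'}"'] & |2\Theta| \ar[d, "t_{-c'}^*"] \\
A/\iota_c \ar[r, "{[a,-c-a]\mapsto\Theta_a+\Theta_{-c-a}}"'] & |\Theta+\Theta_{-c}|
\end{tikzcd}
\]
commutes, with the right vertical arrow a linear isomorphism of $\PP^3$'s. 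Composing the bottom row with the linear embedding $|\Theta+\Theta_{-c}| \hookrightarrow |3\Theta-\Theta_c| = \PP^3_c$ given by adding the fixed divisor $\Theta_c$ then recovers $\phi_c$. Since the top row is the classical Kummer embedding of \Cref{thm:kummerduality}(1), realizing $\Sigma_1(A)$ as a quartic surface with $16$ nodes, the same holds for $K_c$, and $K_c\cong\Sigma_1(A)$.

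I expect the main obstacle to be verifying the commutativity of that diagram, specifically the identity $t_{-c'}^*\Theta_{a'}=\Theta_{a'+c'}$ and the fact that $t_{-c'}^*$ is a \emph{linear} isomorphism $|2\Theta| \xrightarrow{\sim} |\Theta+\Theta_{-c}|$, which reduces to $t_{-c'}^*(2\Theta) \sim \Theta+\Theta_{-c}$ in $\Pic(A)$ and ultimately to $2\phi_\Theta(-c')=\phi_\Theta(-c)$. Signs and which translation goes where are easy to confuse, but once settled the rest is routine. A secondary technical point is the treatment of $c\in A[3]$ in the parametrization step, requiring knowledge of how $\varphi_{\mu_2(\Theta)}$ behaves on the Briançon component $B_c$.
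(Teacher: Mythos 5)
Your proof is correct and follows essentially the same route as the paper: both derive the equalities from Theorem A and then reduce to the classical Kummer quartic by translating with a point $c'$ (the paper calls it $d$) satisfying $2c'=-c$, which conjugates $\iota$ into $\iota\circ t_c$. The only difference is cosmetic: the paper asserts the quartic-with-$16$-nodes property for the translated involution ``as in the classical case'' and then identifies the abstract quotients $A/(\iota\circ t_c)\cong A/\iota$, whereas your commutative diagram with the linear isomorphism $t_{-c'}^*\colon |2\Theta|\to|\Theta+\Theta_{-c}|$ makes that reduction explicit at the level of the projective embeddings, which is a slightly more self-contained way of importing \Cref{thm:kummerduality}(1).
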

\begin{proof}
The first two equalities come from \Cref{thm:mapmutheta}. We then see that the image is isomorphic to the surface
\[ \{ \Theta_a + \Theta_{-c-a} \,|\, a\in A \} \subseteq |\Theta_{-c}+\Theta| \cong |3\Theta-\Theta_{c}|. \]
As in the case with the usual singular Kummer surface, the map \[ A\longrightarrow |\Theta_{-c}+\Theta|; \quad a \mapsto \Theta_a+\Theta_{-c-a}\]
induces an isomorphism of $A/(\iota\circ t_c)$ with the image, which is a quartic surface with $16$ nodes, corresponding to the points in $[2]^{-1}(-c)$. Furthermore, if we fix a point $d\in [2]^{-1}(-c)$, we see that $t_{d}\circ \iota \circ t_{-d} \cong \iota\circ t_c$, hence the translation $t_d\colon A\to A$ induces an isomorphism of $A/(\iota\circ t_c)$ and $A/\iota$. The latter is the usual singular Kummer surface of $A$.
\end{proof}

If we consider the image  $W_c \coloneqq \varphi_D(\widetilde{K}_{1,c})$ we see that we have a commutative diagram
\[
\begin{tikzcd}
		& \widetilde{K}_{1,c}\ar[ld, "\varphi_{D}"']\ar[rd, "\varphi_{\mu_2(\Theta)}"]\\
		W_c \ar[rr, dashed, "\mathcal{D}"]&& K_c
	\end{tikzcd}
\]
\begin{Rem}
The map $\varphi_{\mu_2(\Theta)}\colon \widetilde{K}_{1,c} \to K_c$ is an isomorphism outside the divisor $\widetilde{K}_{1,c}\cap E \subseteq \widetilde{K}_{1,c}$. Hence, all maps in the above diagram are birational.
\end{Rem}

 We first assume that $c\in A[3]$. The involution $(\iota\circ t_c)$ on $A$ restricts to the hyperelliptic involution on $\Theta_{c}$, hence it acts on the space $\Pd^4_{c}$ where it has two invariant subspaces: the point $O_{c} \in \Pd^4_{c}$ and a three-dimensional subspace, $\Pd^3_{c}$: see \Cref{rem:tricanonicalcone}.

\begin{Prop}
If $c\in A[3]$, the image $W_c \subseteq \Pd^3_{c}$ is a singular quartic surface with 6 nodes, containing 15 lines and 1 rational normal cubic curve, which passes through the nodes. 
\end{Prop}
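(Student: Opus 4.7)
The plan is to prove the four claims -- containment in $\Pd^3_c$, degree $4$, the distinguished $15$ lines and the rational normal cubic, and the six nodes -- by tracking how $\varphi_D$ behaves on the natural rational curves sitting inside the K3 surface $\widetilde{K}_{1,c}$.

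\medskip

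\textbf{Containment in $\Pd^3_c$.} First I would show $W_c\subseteq\Pd^3_c$ via the symmetry provided by $\iota\circ t_c$. For $c\in A[3]$ this is an involution on $A$ (one checks $(\iota\circ t_c)^2=\mathrm{id}$), it preserves $\Theta_c$ because $(\iota\circ t_c)(\Theta_c)=\iota(\Theta_{2c})=\Theta_{-2c}=\Theta_c$, and it restricts on $\Theta_c$ to the hyperelliptic involution; its action on $\Pd^4_c$ is then exactly the one discussed in \Cref{rem:tricanonicalcone}, with fixed locus $\Pd^3_c\sqcup\{O_c\}$. The same involution acts on $\Kum_2(A)$ and preserves $\widetilde{K}_{1,c}$ pointwise: on a reduced $\{a,b,c\}$ with $a+b+c=0$ one has $(\iota\circ t_c)(\{a,b,c\})=\{-a-c,-b-c,-2c\}=\{b,a,c\}=\{a,b,c\}$, using $-2c=c$. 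Since $\varphi_D$ is equivariant for the induced action on $\Pd^8$ (the line bundle $D$ being preserved), $W_c$ lies in the fixed locus of $\Pd^4_c$, and being a surface it is forced into $\Pd^3_c$.

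\medskip

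\textbf{The $15$ lines and the rational normal cubic.} The $16$ exceptional $(-2)$-curves $E_\epsilon\subseteq\widetilde{K}_{1,c}\cong K_{1,c}$ indexed by $\epsilon\in[2]^{-1}(-c)$ account for these. For $\epsilon\ne c$, the image $v(E_\epsilon)$ is exactly the $\P^1$ denoted $P_\epsilon$ in \Cref{cor:secantlineinN} (since $-2\epsilon=c$), and $\varphi_D$ sends it isomorphically onto the secant line $\ell(\tau(\epsilon,c))$; by the previous paragraph these $15$ lines already lie in $\Pd^3_c$. For $\epsilon=c$, $v(E_c)$ lies in the Briançon surface $B_c$, and \Cref{prop:mapDbriancon} identifies its image with the base of the cone $\Sigma_c$, namely the rational normal cubic $R_c\subseteq\Pd^3_c$.

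\medskip

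\textbf{The $6$ nodes and the degree.} For each Weierstrass point $p\in\Theta_c$, i.e.\ each of the $6$ points of $\Theta_c\cap[2]^{-1}(-c)$, the condition $2p=-c$ forces $2\eta-c-2p\sim K_C$, so the pairs $\{\alpha_p(t),\alpha_p(\iota(t))\}$ for $t\in C$ form a $\P^1$-family in $K_{1,c}$: this is the classical trope $F_p\subseteq K_{1,c}$, a smooth $(-2)$-curve. The associated length-$3$ scheme $\{\alpha_p(t),\alpha_p(\iota(t)),c\}$ is contained in $\Theta_p$, hence lies in $F$, and by \Cref{prop: A-and-N} it is sent by $\varphi_D$ to the single point $p\in\Pd^3_c$. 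Thus each of the $6$ tropes is contracted to one point, and since a $(-2)$-curve on a K3 contracts to an $A_1$ singularity, this gives $6$ nodes. Outside the tropes, $\varphi_D$ restricted to $\widetilde{K}_{1,c}$ is an embedding (using \Cref{prop:mapphiDemb}), so the induced $\widetilde{K}_{1,c}\to W_c$ is birational and $\deg W_c=(v^*D)^2=4$, a Riemann-Roch style computation from $v^*D=2\mu_1(\Theta)-v^*\delta$ and \Cref{restr in cohom}. Finally each node $p$ is a point of $R_c$ (Weierstrass of the hyperelliptic cover $\Theta_c\to\P^1_{K_C}\cong R_c$) and lies on exactly the $5$ lines $\ell(\tau(\epsilon,c))$ with $p\in\tau(\epsilon,c)=\Theta_\epsilon\cap\Theta_c$ -- one for each of the other $5$ Weierstrass points of $\Theta_c$ -- giving the classical Weddle incidence ($\binom{6}{2}=15$ chords and the twisted cubic through all $6$ nodes).

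\medskip

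\textbf{Main obstacle.} The delicate step is pinning down $\widetilde{K}_{1,c}\cap F$ precisely, so as to guarantee that the $6$ tropes $F_p$ are the \emph{only} contracted curves. This uses that for a generic $e\in\Theta_c$ the intersection $\Theta_e\cap\Theta_{-c-e}$ is non-reduced at $c$ (see \Cref{lem:thetafacts}.7), so the would-be families in $F$ escape into $B_c$ rather than contributing extra contracted divisors; the jump to genuine reduced $\P^1$-families happens exactly at the $6$ Weierstrass points of $\Theta_c$, accounting for the $6$ nodes and no more.
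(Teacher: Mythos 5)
Your proposal reaches all the claims of the Proposition and is essentially sound, but it follows a genuinely different route from the paper at the decisive step. The paper reduces to $c=0$ by translation and then argues almost entirely with divisor classes: it computes $v^*(F)\sim \mu_1(3\Theta)-2E_0-\sum_{\epsilon}E_\epsilon$ via \Cref{lem:FP1undleAndClass} and \Cref{restr in cohom}, invokes Keum's classical relation \eqref{eq:reldivkum1} to recognize this class as $\sum_{w\in\mathcal{W}}F_{w-\eta}$, and concludes $v^*(F)=\sum_w F_{w-\eta}$ \emph{as divisors} because an effective divisor linearly equivalent to a sum of curves it contains must equal it; the lines and the cubic are then read off from the intersection numbers $(v^*(D)\cdot E_\epsilon)=1$ and $(v^*(D)\cdot E_0)=3$. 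You instead identify the contracted locus geometrically: you fiber $F\cap X_c$ over $\Theta_c$ and observe that over a generic $e=\alpha_c(x)\in\Theta_c$ the unique scheme of $\pi^{-1}(e)$ through $c$ is $\alpha_e(3\iota(x))$, which is supported at $c$ and hence lives in $B_c$ (equivalently, $|3\eta-3e-\iota(x)|=|2\iota(x)|$ is zero-dimensional unless $x$ is Weierstrass), so that genuine $\P^1$-families in $\widetilde{K}_{1,c}\cap F$ arise only over the six Weierstrass images -- exactly your ``escape into $B_c$'' mechanism, which is correct and completable with \Cref{lem:symmtau} and \Cref{lem:thetafacts}. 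Your identification of the cubic as $v(E_c)\subseteq B_c$ mapping into $\Sigma_c\cap\Pd^3_c=R_c$ (via \Cref{prop:mapDbriancon} plus the containment $W_c\subseteq\Pd^3_c$) is arguably cleaner than the paper's degree count, and your trope computation ($2p=-c$ exactly at the six points of $\Theta_c\cap[2]^{-1}(-c)$, tropes pairwise disjoint) matches the paper's conclusion. What the paper's approach buys is brevity and automatic rigor on multiplicities; what yours buys is independence from Keum's relation and a transparent picture of \emph{why} only six curves are contracted.

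Three points need tightening before your argument is complete. First, your equivariance argument presupposes $W_c\subseteq\Pd^4_c$ (you speak of ``the fixed locus of $\Pd^4_c$'' without establishing the containment); this is a one-line consequence of \Cref{prop:psiphiD} (or \eqref{eq:mapDphieta}), as in the paper. Second, the inference ``contracted $(-2)$-curve $\Rightarrow$ node'' requires knowing that $\widetilde{K}_{1,c}\to\Pd^3_c$ is given by the \emph{complete} linear system $|v^*D|$ (so the image is normal with ADE singularities); this follows since $h^0(\widetilde{K}_{1,c},v^*D)=4$ by Riemann--Roch and the image, being a birational image of a K3, cannot span only a plane -- but you should say it. Third, the ``main obstacle'' paragraph is a sketch, not a proof: the clean way to finish it is to note that for $\zeta\in K_{1,c}$ not supported at $c$ one has $v(\zeta)\in F$ iff $\tau(\zeta)\cap\Theta_c\neq\emptyset$, that $d\in\Theta_c\Leftrightarrow -c-d\in\Theta_c$ forces either $\tau(\zeta)\subseteq\Theta_c$ (whence $c\in\zeta$ by \Cref{lem:symmtau}, a contradiction) or $\tau(\zeta)$ non-reduced supported at one of the six points of $\Theta_c\cap[2]^{-1}(-c)$, whence $\zeta\in\tau(E_d)=F_d$ is in a trope.
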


\begin{proof}
Up to translating by an element of $A[3]$, we can assume that $c=0$. We see from \Cref{prop:psiphiD} and \Cref{prop:mapDbriancon} that $\varphi_{D}(\widetilde{K}_{1,0}) \subseteq \Pd^4_{0}$. Furthermore, it is straightforward to see that $\widetilde{K}_{1,0}$ is fixed pointwise by the involution $\iota$ on $\Kum_2(A)$, and since $O_a \notin \varphi_D(\widetilde{K}_{1,0})$, it must be that $\varphi_D(\widetilde{K}_{1,0})\subseteq \Pd^3_0$. 
Now recall from \Cref{sec: covering}, that we have an isomorphism
\begin{equation}\
v\colon \Kum_1(A) \overset{\sim}{\longrightarrow} \widetilde{K}_{1,0}. 
\end{equation}
Consider now the set $\mathcal{W}=\{w\in C \,|\, w=\iota(w)\}$ of Weierstrass points. For any $w\in \mathcal{W}$ we have the corresponding torsion element $w-\eta \in A[2]$: recall that the trope $F_{w-\eta} \subseteq \Kum_1(A)$ is the divisor consisting of schemes $\zeta\in \Kum_1(A)$ such that $\zeta\subseteq \Theta_{w-\eta}$. It is straightforward to check that $F_{w-\eta} \subseteq v^*(F)$. On the other hand, we also have that
\[ v^*(F) \sim v^*(\mu_2(3\Theta)-2\delta) \sim \mu_1(3\Theta) - 2E_0 -  \sum_{\epsilon\in A[2]} E_{\epsilon} \sim \sum_{w \in \mathcal{W}} F_{w-\eta}  \]
where the first equivalence is from \Cref{lem:FP1undleAndClass}, the second is from \Cref{restr in cohom} and the last one is from \cite[(1.8)]{Keum}. This proves that $v^*(F) = \sum_{w\in \mathcal{W}} F_{w-\eta}$ as divisors. 
We also see from \Cref{restr in cohom} that
\[ 
\quad v^*(D) \sim \mu_1(2\Theta) -  E_{0} - \frac{1}{2}\sum_{\epsilon \in A[2]} E_{\epsilon} 
\]
In particular, $v^*(D)$ is base-point-free with $v^*(D)=4$, and since we know that the composition $\Kum_1(A) \to \widetilde{K}_{1,0} \to W_0 \subseteq \Pd^3_0$ is birational, $W_0$ must be a quartic surface, and since $h^0(\Kum_1(A),v^*D)=4$, the composition is induced by the complete linear system $H^0(\Kum_1(A),v^*D)$. We also know that the map $\Kum_1(A) \to W_0$ is an isomorphism outside of the tropes $F_{w-\eta}$, and it contracts each of these curves to points. Since all tropes are $(-2)$-curves, the $F_{w-\eta}$ are contracted to $6$ nodes on $W_0$, which are the only singularities. Finally, it is easy to see that for any $\epsilon \in A[2]$:
\[ (v^*(D)\cdot E_{\epsilon})=1 \quad\text{ if } \epsilon\ne 0, \qquad (v^*(D)\cdot E_{0}) = 3. \] 
The image of the curve $E_{\epsilon}$ for $\epsilon\ne 0$ is a line: we see from \Cref{cor:secantlineinN}, that it is the line $\ell(\tau(\varepsilon,0))$. If instead we consider $E_0$, then one can see that $E_0$ intersects all the tropes $F_{w-\epsilon}$, so that the image of $E_0$ is the unique rational normal cubic curve passing through all the nodes of $W_0$. This is the twisted cubic $R_0$ of \Cref{rem:tricanonicalcone}.

\end{proof}

\begin{Rem}
 We have seen in the previous proof that the surface $W_0$ is the image of $\Kum_1(A)$ under the map induced by the complete linear system $H^0(\Kum_1(A),v^*D)$. We can also compute that
\[ 
\quad v^*(D) \sim \mu_1(2\Theta) -  E_{0} - \frac{1}{2}\sum_{\epsilon \in A[2]} E_{\epsilon} \sim \frac{3}{2}\tau^*\mu_1(\Theta) - \frac{1}{2}\sum_{\epsilon\in A[2]\setminus (\mathcal{W}-\eta)} F_{\epsilon} 
\]
where the first equivalence is  from the previous proof, and the second one is from putting together \Cref{prop: restrizione a switch}, and the one in the previous proof. At this point \cite[Remark 4.2.5]{bolognesi}, we see that the surface  $W_0 \subseteq \Pd^3_0$ is the so-called classical Weddle surface associated to the Jacobian $A$ and odd theta characteristic $\eta$. We refer to \cite{bolognesi} for background and results on these surfaces.
\end{Rem}

Assume now that $c\notin A[3]$.

\begin{Prop}
If $c\notin A[3]$, the image $W_c\subseteq \Pd^4_c$ is a nondegenerate surface of degree $7$, containing the curve $\Theta_c$.
\begin{enumerate}
    \item If $3c\sim \iota(x)-x$ for a certain $x\in C$, then $\varphi_D\colon \widetilde{K}_{1,c} \to W_c$ is the contraction of a rational curve.
    \item Otherwise $\varphi_D\colon \widetilde{K}_{1,c} \to W_c$ is an isomorphism.
\end{enumerate}
\end{Prop}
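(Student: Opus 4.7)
The plan is to verify each assertion in sequence, using the isomorphism $v\colon \widetilde{K}_{1,c}\xrightarrow{\sim} X_c\subseteq \Kum_2(A)$ from \Cref{prop: kum1 in kum2} (valid since $c\notin A[3]$) together with the structural description from \Cref{prop:mapphiDemb}: $\varphi_D$ is an embedding off $F$ and contracts $F$ onto $A$ via $\varphi_{3\Theta}\circ \pi$. First, for a general reduced $\{c,b,-c-b\}\in \widetilde{K}_{1,c}$, the geometric formula of \Cref{prop:psiphiD} yields $\varphi_D(\{c,b,-c-b\})\in \Pd^4_c\cap \Pd^4_b\cap \Pd^4_{-c-b}\subseteq \Pd^4_c$, so by continuity $W_c\subseteq \Pd^4_c$. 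The divisor $v^*F$ consists of schemes $\xi\ni c$ lying in some $\Theta_a$, equivalently $a\in \Theta_c$, and $\varphi_D|_{v^*F} = \varphi_{3\Theta}\circ \pi|_{v^*F}$ surjects onto $\varphi_{3\Theta}(\Theta_c)$; hence $\Theta_c\subseteq W_c$, forcing nondegeneracy since $\Theta_c$ spans $\Pd^4_c$. Using $v^*D = 2\mu_1(\Theta) - \tfrac{1}{2}\sum_{\epsilon\in [2]^{-1}(-c)} E_\epsilon - E_c$ from \Cref{restr in cohom}, together with $\mu_1(\Theta)^2=4$, $E_\epsilon^2=-2$, $E_c^2=-1$ and the mutual orthogonality of the relevant classes, one computes $(v^*D)^2 = 16 - 8 - 1 = 7$, giving the degree.

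Next I would analyse the fibers of $\pi|_{v^*F}\colon v^*F\to \Theta_c$. For $a = \alpha_c(u)\in \Theta_c$, setting $x = \alpha_a^{-1}(c) = \iota(u)$, a point of $v^*F$ above $a$ corresponds to a divisor $D\in |3\eta-3a|$ containing $x$, or equivalently to $D':=D-x\in |3\eta-3a-x|$, a degree-two linear system on $C$. Riemann–Roch gives $h^0 = 1$ unless $3\eta - 3a - x\sim K_C$; substituting $a = c+\iota(x)-\eta$, this exceptional condition simplifies to $3c\sim x-\iota(x)$, equivalently the case~(1) condition $3c\sim \iota(x)-x$ after swapping $x\leftrightarrow \iota(x)$. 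Hence in case (2) the map $\pi|_{v^*F}$ is a bijective morphism of smooth irreducible curves and so an isomorphism, while in case (1) there is an additional $\P^1$-component $R$ of $v^*F$ parametrized by $|K_C|$ lying over the special $a$.

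In case (1), $\pi(R) = \{a\}$, so $\varphi_D|_R = \varphi_{3\Theta}(a)$ is constant and $R$ is contracted to a point. In case (2), I would first establish bijectivity of $\varphi_D|_{\widetilde{K}_{1,c}}$: it is injective off $v^*F$ by \Cref{prop:mapphiDemb}, maps $v^*F$ isomorphically onto $\Theta_c$ via $\varphi_{3\Theta}$, and has disjoint images because $\varphi_D^{-1}(A)=F$. To upgrade bijectivity to an isomorphism I would check injectivity of $d\varphi_D|_\xi$ on $T_\xi\widetilde{K}_{1,c}$ for each $\xi\in v^*F$: since $\varphi_D$ embeds $\Kum_2(A)\setminus F$ and $\varphi_D|_F = \varphi_{3\Theta}\circ \pi$ with $\varphi_{3\Theta}$ an embedding, the kernel of $d\varphi_D|_\xi$ equals the one-dimensional tangent $T_\xi F_a$ to the fiber of $\pi$; then $T_\xi\widetilde{K}_{1,c}\cap T_\xi F_a = T_\xi v^*F\cap T_\xi F_a = 0$, since $\widetilde{K}_{1,c}$ and $F$ meet transversally along the smooth curve $v^*F$ and $d\pi|_{T_\xi v^*F}$ is an isomorphism onto $T_a\Theta_c$ in case (2). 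Hence $\varphi_D|_{\widetilde{K}_{1,c}}$ is a closed immersion onto $W_c$.

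The main technical point will be the identification $\ker(d\varphi_D|_\xi) = T_\xi F_a$ at every $\xi\in v^*F$: at a generic point of $F$ this follows from semi-continuity of the rank of $d\varphi_D$ together with the fact that $\varphi_D$ is birational onto the fourfold $N$, and the extension to all of $v^*F$ requires a local analysis of the divisorial contraction of the $\P^1$-bundle $F\cong \PP(\mathcal{E})$ onto $A$, carried out using the explicit description from \Cref{lem:FP1undleAndClass}.
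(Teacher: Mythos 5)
Your proposal is correct and takes essentially the same route as the paper: the containment in $\Pd^4_c$ and of $\Theta_c$ in $W_c$, the fiber analysis through $\varphi_D^{-1}(A)=F$ (\Cref{prop:mapphiDemb}), the Riemann--Roch computation on $C$ identifying the exceptional condition $3c\sim \iota(x)-x$ (equivalently $3c \sim x - \iota(x)$ after relabelling), and the degree computation $(v^*D)^2=16-8-1=7$ via \Cref{restr in cohom} are precisely the paper's ingredients. The only divergence is your final paragraph on the injectivity of $d\varphi_D$ along $v^*F$: the paper does not address this point at all (it deduces the isomorphism in case (2) directly from all fibers being single points), so your unexecuted sketch of the local analysis of the contraction of $F\cong\PP(\mathcal{E})$ is an added refinement, held to a stricter standard than the paper's own argument, rather than a missing step of it.
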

\begin{proof}
In this case we know that $\widetilde{K}_{1,c} = X_c$ is the locus of schemes $\xi\in \Kum_2(A)$ containing $c$ in their support. We look at the fibers of $\varphi_D\colon \widetilde{K}_{1,c} \to W_c$: those that possibly contain more than one point are of the form $\varphi_D^{-1}(e) \cap \widetilde{K}_{1,c}$ for $e\in A$, and we know that $\xi \in \varphi_D^{-1}(e)$ if and only if $\xi\subseteq \Theta_e$. Assume then that $c\in \xi \subseteq \Theta_e$: then $c \sim x+e-\eta$ and $e\sim \iota(x)+c-\eta$ for a unique  $x\in C$. Furthermore $\xi = \alpha_e(x+y+z)$ for $x+y+z\sim 3\eta-3e$. This means that $y+z\sim K_C-2e-c$, so that $y+z$ is uniquely determined if $2e+c\nsim 0$ and $y+z=y+\iota(y)$ for any $y\in C$ if $2e+c\sim 0$. We notice that $2e+c\sim 0$ if and only if $2\iota(x)+3c-K_C\sim 0$, which means $3c\sim \iota(x)-x$.  
\begin{enumerate}
\item If $2e+c\sim 0$,then the rational curve  $F_e = \{\alpha_e(x+y+\iota(y)) \in \widetilde{K}_{1,c} \,|\, y\in C\}$ is contracted by $\varphi_D$, and all other fibers consist of a single point.
\item If $2e+c\sim 0$, then $\varphi_D$ restricted to $K_{1,c}$ is an isomorphism.
\end{enumerate}
In both cases, we see that the fiber over any point $e\in \Theta_c$ is nonempty. This proves that $W_c\supseteq \Theta_c$, and since $\Theta_c\subseteq \Pd^4$ is nondegenerate, $W_c$ is nondegenerate as well. To compute the degree of $W_c$, we use the isomorphism $v\colon \operatorname{Bl}_{\{c,-2c\}}K_{1,c} \to \widetilde{K}_{1,c}$ and we compute with \Cref{restr in cohom}
\[ v^*(D) \sim 2\mu_1(\Theta) - \frac{1}{2}\sum_{\epsilon \in [2]^{-1}(-c)} E_{\epsilon} - E_c\]
so that $((v^*D)^2) = 16-8-1 = 7$. 
\end{proof}

\begin{Rem}
Observe that the complete linear system $M\sim 2\mu_1(\Theta)-\frac{1}{2}\sum_{\epsilon \in [2]^{-1}(-c)} E_{\epsilon}$ on $K_{1,c}$ is very ample \cite[Lemma 3.1]{GarbagnatiSartiEvenSet} and $(M^2)=8$, so that it induces an embedding $\varphi_M\colon K_{1,c} \hookrightarrow \P^5$ into a $5$ dimensional space. The variety $W_c \subseteq \Pd^{4}_c$ is obtained via the linear projection $\P^5 \dashrightarrow \Pd^4$ from the point $\{c,-2c\}\in K_{1,c}$. 
\end{Rem}

Finally, we show that the intersection of $N=\varphi_D(\Kum_2(A))$ with the spaces $\Pd_4^c$ is given precisely by the $W_c = \varphi_D(\widetilde{K}_{1,c})$:

\begin{Prop}
    If $c\in A$ it holds that 
    \[ 
    N\cap \Pd^4_c = \begin{cases}
    W_c\cup \Sigma_c, & \text{ if } c\in A[3],\\
    W_c, & \text{ if } c\notin A[3].
    \end{cases} 
    \]
\end{Prop}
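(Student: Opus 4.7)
The plan is to establish the two set-theoretic inclusions separately, using the explicit geometric descriptions of $\varphi_D$ from \Cref{prop:psiphiD} and \Cref{prop:mapDbriancon}, the characterization $\Pd^4_a \cap \Pd^4_b = \ell(\tau(a,b))$ from \Cref{lem:Pafacts}, and the classification in \Cref{prop:meetingsecants} of when two secant lines of $A \subseteq \Pd^8$ meet.

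For the forward inclusion $W_c \cup \Sigma_c \subseteq N \cap \Pd^4_c$, membership in $N = \varphi_D(\Kum_2(A))$ is automatic since $W_c = \varphi_D(\widetilde{K}_{1,c})$ and $\Sigma_c = \varphi_D(B_c)$ (the latter by \Cref{prop:mapDbriancon} when $c \in A[3]$). To see $\varphi_D(X_c) \subseteq \Pd^4_c$, I would go through the types of $\xi \in X_c$: when $\xi = \{a,b,c\}$ is reduced, $\varphi_D(\xi) = \Pd^4_a \cap \Pd^4_b \cap \Pd^4_c \subseteq \Pd^4_c$; when $\xi = \{(c,v),b\}$ with $2c+b = 0$, both $\ell(\tau(c,b))$ and $\ell(\tau(c,v))$ lie in $\Pd^4_c$ since $\tau(c,b) = \Theta_c \cap \Theta_b \subseteq \Theta_c$ and $\tau(c,v) \subseteq \Theta_c$ by \Cref{thm:invtau}; when $\xi = \{(a,v),c\}$ with $2a+c=0$, the intersection point lies on $\ell(\tau(a,c)) = \Pd^4_a \cap \Pd^4_c \subseteq \Pd^4_c$. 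Finally \Cref{prop:mapDbriancon} directly gives $\varphi_D(B_c) = \Sigma_c \subseteq \Pd^4_c$ when $c \in A[3]$.

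For the reverse inclusion, take $p \in N \cap \Pd^4_c$ and write $p = \varphi_D(\xi)$. If $\xi \in X_c$ we are done; otherwise $c \notin \operatorname{Supp}(\xi)$, and I split on whether $\xi \in F$. If $\xi \in F$, then $p \in A$ by \Cref{prop:mapphiDemb}, and by \Cref{lem:Pafacts}(2) it follows that $p = a \in A \cap \Pd^4_c = \Theta_c$. When $c \in A[3]$, the containment $\Theta_c \subseteq \Sigma_c$ (from \Cref{rem:tricanonicalcone}) concludes. When $c \notin A[3]$, I would write $c = \alpha_a(\iota(y))$ for the unique $y \in C$ with $a = \alpha_c(y)$, choose an effective degree-two divisor $D' \sim 2\eta - 2a - c$ (such exists on the genus-two curve $C$ by Riemann--Roch, since every degree-two line bundle has a section), and set $\xi' = \alpha_a(\iota(y) + D')$; this is a length-three subscheme of $\Theta_a$ summing to zero, so $\xi' \in F \cap X_c$ and $\varphi_D(\xi') = a = p$ by \Cref{prop: A-and-N}, giving $p \in W_c$.

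Finally, if $\xi \notin F$ then $p \notin A$ by \Cref{prop:mapphiDemb}, and I would derive a contradiction in each of the three types of $\xi$. For reduced $\xi = \{a,b,c'\}$ with $c \notin \{a,b,c'\}$, the containment $p \in \Pd^4_a \cap \Pd^4_b \cap \Pd^4_c$ combined with \Cref{cor:intersectionthreespaces} forces $a+b+c = 0$, hence $c = c'$, a contradiction. For $\xi = \{(a,v),b\}$ with $2a+b=0$ and $c \neq a,b$, the point $p$ lies on the two distinct secant lines $\ell(\tau(a,v))$ and $\ell(\tau(a,c))$, which meet off $A$; running through the list in \Cref{prop:meetingsecants}, only condition (3) can possibly apply, and it forces $2a+c = 0$, i.e.\ $c = b$, a contradiction. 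For $\xi \in B_a$ with $a \in A[3]$ and $c \neq a$, the point $p$ lies in $\ell(\tau(a,c))$; if $p = O_a$ then $\ell(\tau(a,c))$ would pass through the cone vertex and hence be a ruling line of $\Sigma_a$, which by \Cref{rem:tricanonicalcone} means $\tau(a,c)$ is a hyperelliptic pair on $\Theta_a \cong C$, and a direct computation using \Cref{lem:thetafacts}(7) rules this out when $a \neq c$; if $p \neq O_a$ then $p$ lies on a unique ruling line $\ell(\tau(a,v))$ by \Cref{prop:mapDbriancon}, and the previous secant argument applies verbatim. The main obstacle will be the Brian\c{c}on subcase at the cone vertex $O_a$, which requires going back to the explicit hyperelliptic structure of $\Theta_a$ to verify that $\Theta_a \cap \Theta_c$ is never an $\iota$-orbit when $a \neq c$.
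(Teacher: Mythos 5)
Your proof is correct, and its organization is genuinely different from the paper's --- in one respect, more careful. The paper splits according to the size of the support of $\xi$: when $\operatorname{Supp}(\xi)$ has at least two points it argues that $c\in\operatorname{Supp}(\xi)$ must hold, quoting \Cref{cor:intersectionthreespaces}, and only in the Brian\c{c}on case does it carry out the two-step analysis (image point lying in $A$ versus the line $\ell(\tau(a,c))$ lying on the cone $\Sigma_a$). You instead split first on whether $c\in\operatorname{Supp}(\xi)$ and then on whether $\xi\in F$, i.e.\ on whether $p=\varphi_D(\xi)$ lands in $A$. This buys two things. First, once $p\notin A$, condition (1) of \Cref{prop:meetingsecants} is excluded for free (two distinct secant lines meet in at most one point, and meeting at a point of $A$ is exactly condition (1)), so all three of your contradiction arguments run cleanly. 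Second, your $\xi\in F$ branch --- where $p=a\in\Theta_c$ and you produce $\xi'=\alpha_a(\iota(y)+D')$ with $D'\sim 2\eta-2a-c$ effective by Riemann--Roch --- uniformly covers a configuration that the paper's first case does not: if $\xi=\{a,b,d\}\subseteq\Theta_e$ is reduced and $c\in\Theta_e\setminus\operatorname{Supp}(\xi)$, then $\varphi_D(\xi)=e\in\Theta_c\subseteq\Pd^4_c$ even though $c\notin\operatorname{Supp}(\xi)$ and $a+b+c\nsim 0$. This is exactly condition (1) of \Cref{prop:meetingsecants}, which the statement of \Cref{cor:intersectionthreespaces} does not account for; the proposition still holds there, but only via a replacement scheme $\xi'$, which the paper constructs only inside its Brian\c{c}on case and you construct in general. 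For the same reason, in your Type 1 you should invoke \Cref{prop:meetingsecants} together with $p\notin A$ rather than \Cref{cor:intersectionthreespaces}: in your setting the deduction $a+b+c\sim 0$ is legitimate precisely because the meeting point is off $A$.

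Two small repairs are needed in your Type 3. When $p=O_a$: a line through the vertex of a cone need not lie on the cone, so ``passes through the vertex, hence is a ruling line'' is not by itself a valid inference; the correct reason is that $\ell(\tau(a,c))$ contains both $O_a$ and the scheme $\tau(a,c)\subseteq\Theta_a\subseteq\Sigma_a$, and the line joining the vertex to a point of the cone is the ruling line through that point. Your subsequent computation does work: by \Cref{lem:thetafacts}(7), $\tau(a,c)=\alpha_a(x+\iota(y))$ with $x-y\sim c-a$, and equality with a hyperelliptic pair $\alpha_a(z+\iota(z))$ forces $x=y$, hence $a=c$ (the nonreduced case is identical); alternatively, the paper's one-line argument is available: applying the involution $\tau$, the image of a hyperelliptic pair is the nonreduced scheme $(a,[x,\iota(x)])$ by \Cref{thm:invtau}, whereas $\tau(\tau(a,c))=\{a,c\}$ is reduced. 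Finally, ``verbatim'' is slightly off in the case $p\neq O_a$: condition (3) of \Cref{prop:meetingsecants} gives $2a+c\sim 0$, and the contradiction is $c\sim a$ (using $3a\sim 0$), not $c=b$; the structure of the argument is, however, the same.
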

\begin{proof}
By construction $W_c \subseteq N\cap \Pd^4_c$ for all $c$ and $\Sigma_c\subseteq N$ if $c\in A[3]$. Conversely, let $\xi\in \Kum_2(A)$ such that $\varphi_D(\xi)\in \Pd^4_c$. We need to show that there is $\xi'\in \Kum_2(A)$ such that $c\in \xi'$ and $\varphi_D(\xi)=\varphi_D(\xi')$. Assume first that  $\xi$ contains at least two distinct points $a,b\in \xi$. We want to show that $c\in \xi$ as well: if $c$ is one of $a,b$, we are done. Otherwise, by \Cref{eq:mapDphieta} we know that $\varphi_D \in \ell(\tau(a,b))=\Pd^4_a \cap \Pd^4_b$ then, by assumption, $\varphi_D(\xi) \in \Pd^4_a \cap \Pd^4_b\cap \Pd^4_c$. At this point  \Cref{cor:intersectionthreespaces} proves that $a+b+c\sim 0$. Since $\xi\in \Kum_2(A)$, this means that $\xi = \{a,b,c\}$.

Assume now that $\xi$ is supported at a unique point $a\in A$: this means that $\xi\in B_a$ for a certain $a\in A[3]$, so that $\varphi_D(\xi)\in \Sigma_a$. If $a=c$ we are done, otherwise we know that $\varphi_D(\xi) \in \Sigma_a \cap \Pd^4_c = \Sigma_a\cap \Pd^4_a\cap \Pd^4_c = \Sigma_a \cap \ell(\tau(a,c))$. We observe that $\Theta_a\subseteq \Sigma_a$ so that $\Sigma_a\cap \ell(\tau(a,c)) \supseteq \tau(a,c)=\Theta_a\cap \Theta_c$. Furthermore, if the intersection $\Sigma_a\cap \ell(\tau(a,c))$ consists of more than three points, then it must be that $\ell(\tau(a,c)) \subseteq \Sigma_a$, since $\Sigma_a$ is cut out by quadrics. We discuss these two cases: if $\Sigma_a\cap \ell(\tau(a,c)) = \tau(a,c)$ then $\varphi_D(\xi)=e\in \tau(a,c)$. Then we know from \Cref{lem:symmtau} that $c\in \Theta_e$, and it is easy to see that we can find $c \in \xi' \subseteq \Theta_e$ with $\xi'\in \Kum_2(A)$: then $\varphi_D(\xi')=e$. If instead $\ell(\tau(a,c))\subseteq \Sigma_a$, then $\ell(\tau(a,c))$ must go through the vertex $O_a$ of $\Sigma_a$. Using \Cref{eq:thetalinecone} we see that
\[ \tau(a,c) \subseteq \ell(\tau(a,c))\cap \Theta_a = \alpha_a(x+\iota(x)) \] 
for a certain $x\in C$. Then $\tau(a,c)=\alpha_a(x+\iota(x))$, and $\{a,c\} = \tau(\alpha_a(x+\iota(x)))$. But this is impossible because $\tau(\alpha_a(x+\iota(x)))$ is nonreduced, by definition of $\tau$ ( see \cref{thm:invtau}).
\end{proof}

\section{Further directions}\label{sec:openquestions}


A first natural question would be to extend the results in Theorem A or Theorem B to a complete family of varieties of Kummer type. For example, it is natural to ask whether the Coble cubic or the Coble sextic deform along a complete four-dimensional family. However, even staying in the setting of Kummer arising from actual abelian varieties there are various interesting questions to consider:
\vspace{10pt}

For example, in this work we have always worked on the Jacobian of a smooth genus two curves. It would be interesting to study the analogue of our statements for a product $A = E_1\times E_2$ of two elliptic curves. In this case, there is still a unique Coble cubic hypersurface $\mathscr{C}_3$ singular along $A\subseteq \Pd^8$, but the singular locus of $\mathscr{C}_3$ is larger than $A$: it turns out to be isomorphic to a Segre embedding of $\P^2\times \P^2$ inside $\Pd^8$ \cite{Barth}.
\vspace{10pt}

We also expect the map $u\colon \Sigma_{n}(A) \to |(n+1)A|$ of Theorem A,  to be an embedding for any $n$, and not only $n=1,2$. Since the map is always injective, this amounts to studying its differential.  Furthermore, the same construction works for generalized Kummer varieties of an arbitrary abelian variety $A$, not necessarily a Jacobian or a surface. Define
\[ \Sigma_n(A) = \{[a_0,\dots,a_n] \in A^{(n+1)} \,|\, a_i\in A, a_0+\dots+a_n \sim 0\}.\]
If $L$ is a nontrivial line bundle on an abelian variety $A$, we also have the line bundle $\iota^*(L)$, and the corresponding bundle $\iota^*(L)^{(n+1)}$ on $A^{(n+1)}$ and $\Sigma_n(A)$. If $D\in |L|$ is an effective divisor, then the theorem of the square gives a map 
\[ u\colon \Sigma_n(A) \longrightarrow |(n+1)L|, \quad [a_0,\dots,a_n] \mapsto t_{a_0}(D)+\dots+t_{a_n}(D) . \]
Observe that $a\in t_b(D)$ if and only if $b\in t_a(\iota(D))$.Then one can see as in the proof of \Cref{thm:mapmutheta}, that $u^*\sO(1) \sim \iota^*(L)^{(n+1)}$. It would be interesting to determine precisely when the map is an embedding, and what is the relationship with the map induced by the complete linear system $H^0(\Kum_n(A),\mu_n(\Theta)$.
\vspace{10pt}

Finally, it would also be interesting to recast the results of Theorem A and Theorem B through the classical language of theta functions. For example, in the case of Theorem A for $n=1$, this is given by Riemann's addition formula. This could also give a way to produce explicit equations for the images $\varphi_{\mu_2(\Theta)}(\Kum_2(A)) \subseteq \P^8$ and $\varphi_{\mu_2(2\Theta)-\delta}(\Kum_2(A)) \subseteq \Pd^8$. Another way to obtain explicit equation might also be  through the point of view of \cite{BMT2021}.

\appendix

\section{Singularities of secant varieties}\label{sec: appsec}

\subsection{Precise Terracini}\label{sec:appendix precise terracini}
Here we make an excursus into singularities of secant varieties, following \cite{CPLS25}. We keep working over $\CC$, but it can be replaced throughout by an algebraically closed field of characteristic zero.
Let $X \subseteq \PP(V^{\vee})$ be an irreducible variety of dimension $n$, embedded by a linear system $V\subseteq H^0(X,\sO_X(1))$. If $\xi \in X^{[k]}$ is a finite subscheme of $X$ of length $k$, we say that \emph{$V$ separates $\xi$} if there is an exact sequence of vector spaces 
\begin{equation}\label{eq:evalxi}
0 \longrightarrow V\cap \sI_{\xi} \longrightarrow V \longrightarrow \sO_{\xi} \longrightarrow 0 
\end{equation}
where $V\cap \sI_{\xi} := V \cap H^0(X,\sI_{\xi}(1))$ and the last nontrivial map is the evaluation $V\to H^0(\sO_{\xi}(1)) \cong \sO_{\xi}$.
Equivalently, this means that the scheme $\xi$ spans a linear space 
\[ \ell( \xi ) = \PP(\sO_{\xi}^{\vee})\subseteq \PP(V^{\vee}),\qquad \sO_{\xi}^{\vee} := \Hom_{\CC}(\sO_{\xi},\CC)\] 
of expected dimension $\dim \ell( \xi ) = k-1$.
Assume now that $V$ is $(k-1)$-very ample, meaning that it separates all $\xi \in X^{[k]}$. One can then construct the incidence correspondence
\[ \mathcal{B}_k = \{(\xi,p) \in X^{[k]}\times \PP(V^{\vee}) \,|\, p\in \PP(\sO_{\xi}^{\vee})\}, \quad \pi\colon \mathcal{B}_k \to X^{[k]}, \quad \alpha\colon \mathcal{B}_k \to \PP(V^{\vee}). \]
When $V=H^0(X,\sO_X(1)))$, then $\mathcal{B}_k = \mathcal{B}_{k}(\sO_X(1))$, in the notation of \cite{CPLS25}.

The classical Terracini's lemma describes the differential of $\alpha$ when the scheme $\xi$ consists of mutually distinct points. The precise Terracini Lemma of \cite[Section 3.3]{CPLS25} generalizes it to other schemes. In order to state it, consider first the evaluation map
\begin{equation}\label{eq:evprime}
\operatorname{ev}'\colon V\cap \mathscr{I}_{\xi} \longrightarrow \sI_{\xi}/\sI_{\xi}^2 \otimes \sO_{X}(1) \cong \sI_{\xi}/\sI_{\xi}^2, \quad \overline{V \cap \sI_{\xi}} \coloneqq \operatorname{ev}'(V\cap \sI_{\xi}) 
\end{equation}
Now let $(\xi,p)\in \mathcal{B}_k$, so that $p$ is represented by a linear map $\lambda\colon \sO_{\xi} \to \CC$. There is a natural map of $\CC$-vector spaces
\begin{equation}\label{eq:gammaprime} 
\gamma'\colon \sI_{\xi}/\sI_{\xi}^2 \longrightarrow  \Hom_{\CC}(\Hom_{\sO_{\xi}}(\sI_{\xi}/\sI_{\xi}^2,\sO_{\xi}),\CC), \qquad \overline{f} \mapsto \gamma'(\overline{f})\colon \phi \mapsto \lambda(\phi(\overline{f}))
\end{equation}


The precise Terracini lemma of \cite[Section 3.3]{CPLS25} can be stated as follows:

\begin{Thm}[Precise Terracini - I, \cite{CPLS25}]\label{thm:terracini1}
    The differential of $\alpha\colon \mathcal{B}_k \to \PP(V^{\vee})$ is injective at $(\xi,p) \in \mathcal{B}_k$ if and only if the composition
    \[ \gamma'\circ \operatorname{ev}' \colon V\cap \sI_{\xi} \longrightarrow \Hom_{\CC}(\Hom_{\sO_{\xi}}(\sI_{\xi}/\sI_{\xi}^2,\sO_{\xi}),\CC) \]
    is surjective. For this to be true, the following conditions are sufficient:
    \begin{enumerate}
      \item The subscheme $\xi$ is Gorenstein and the point $p$ is not contained in the linear span of any proper subscheme of $\xi$.
    \item The linear system $V$ separates the scheme $\xi^2 := \operatorname{Spec} \sO_{X}/\sI_{\xi}^2$.
    \end{enumerate}
    Furthermore, if $\xi$ is a locally complete intersection scheme, these conditions are necessary as well.
\end{Thm}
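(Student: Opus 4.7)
My plan is to compute the differential $d\alpha$ at $(\xi, p)$ via deformation theory and identify its horizontal part with the transpose of $\gamma' \circ \ev'$. Since $V$ separates all length-$k$ subschemes, the projection $\pi\colon \mathcal{B}_k \to X^{[k]}$ is a $\PP^{k-1}$-bundle with fibre $\ell(\xi) = \PP(\sO_\xi^\vee)$, giving a natural exact sequence
\[
0 \longrightarrow T_p\ell(\xi) \longrightarrow T_{(\xi,p)}\mathcal{B}_k \longrightarrow T_\xi X^{[k]} \longrightarrow 0,
\]
with $T_\xi X^{[k]} = \Hom_{\sO_\xi}(\sI_\xi/\sI_\xi^2, \sO_\xi)$ in the LCI case. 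The fibre-direction restriction of $d\alpha$ is the inclusion of the tangent line to $\ell(\xi)$ inside $T_p\PP(V^\vee)$, which is automatically injective, so the only obstruction to injectivity of $d\alpha$ is horizontal.

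Dualising the separation sequence $0 \to V \cap \sI_\xi \to V \to \sO_\xi \to 0$ gives a canonical identification $T_p\PP(V^\vee)/T_p\ell(\xi) \cong (V \cap \sI_\xi)^\vee$. For a horizontal tangent vector $\phi \in \Hom_{\sO_\xi}(\sI_\xi/\sI_\xi^2, \sO_\xi)$, a first-order Taylor expansion of the associated deformation $\xi_\epsilon$ shows that for $f \in V \cap \sI_\xi$ one has $f|_{\xi_\epsilon} \equiv \epsilon\,\phi(\ev'(f)) \pmod{\epsilon^2}$; pairing with any lift of $\lambda$ then produces $\epsilon\,\lambda(\phi(\ev'(f))) = \epsilon\,\gamma'(\ev'(f))(\phi)$. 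Hence the horizontal part of $d\alpha$ is the transpose of $\gamma' \circ \ev'$, and $d\alpha$ is injective iff $\gamma' \circ \ev'$ is surjective. In the LCI case, this computation reverses: a nonzero class in the cokernel of $\gamma' \circ \ev'$ gives a nonzero horizontal tangent vector killed by $d\alpha$, yielding necessity.

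For the sufficient conditions, I would argue via commutative algebra. Condition (2), that $V$ separates $\xi^2$, combined with the $V$-separation of $\xi$ built into $\mathcal{B}_k$, forces $\ev'\colon V\cap \sI_\xi \to \sI_\xi/\sI_\xi^2$ to be surjective via a snake-lemma argument on the pair of exact sequences associated with $\xi$ and $\xi^2$. Condition (1), that $\xi$ is Gorenstein and $p$ is not contained in the span of any proper subscheme, uses Matlis duality: $\sO_\xi^\vee \cong \sO_\xi$ as $\sO_\xi$-modules, and the non-degeneracy of $p$ translates exactly to $\lambda$ being a free generator of $\sO_\xi^\vee$; this upgrades $\gamma'$ to a $\CC$-linear isomorphism between $\sI_\xi/\sI_\xi^2$ and the dual of $\Hom_{\sO_\xi}(\sI_\xi/\sI_\xi^2, \sO_\xi)$. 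Combining the two kinds of argument in the appropriate way yields surjectivity of $\gamma' \circ \ev'$.

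The hardest step will be the Matlis-duality argument for condition (1), requiring a careful translation of the geometric non-degeneracy of $p$ into the algebraic statement that $\lambda$ generates $\sO_\xi^\vee$ cyclically, relying on the order-reversing bijection between subschemes of $\xi$ and quotient modules of $\sO_\xi$. A secondary subtle point is the justification of the tangent-space sequence for $\mathcal{B}_k$ when $\xi$ is a singular point of the Hilbert scheme, where the Zariski tangent space to $X^{[k]}$ may properly contain $\Hom_{\sO_\xi}(\sI_\xi/\sI_\xi^2, \sO_\xi)$; this is exactly why the iff-statement is restricted to the LCI regime.
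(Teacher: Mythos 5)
Your deformation-theoretic computation of $d\alpha$ and your Matlis-duality argument for condition (1) are reasonable routes to the first assertion and to sufficiency (the paper instead quotes both from \cite{CPLS25}: the equivalence of injectivity of $d\alpha$ with surjectivity of $\gamma'\circ\operatorname{ev}'$ is \cite[Lemma 3.8, Theorem 3.9]{CPLS25}, and surjectivity of $\gamma'$ under condition (1) is \cite[Lemma 3.6]{CPLS25}). However, there is a genuine gap: you never prove the actual necessity statement of the theorem, namely that if $\xi$ is a locally complete intersection and $\gamma'\circ\operatorname{ev}'$ is surjective, then conditions (1) and (2) hold. Your sentence ``in the LCI case, this computation reverses: a nonzero class in the cokernel of $\gamma'\circ\operatorname{ev}'$ gives a nonzero horizontal tangent vector killed by $d\alpha$'' addresses a different implication --- one direction of the equivalence between injectivity of $d\alpha$ and surjectivity of $\gamma'\circ\operatorname{ev}'$, which is pure linear algebra once the horizontal part of $d\alpha$ is identified with the transpose of $\gamma'\circ\operatorname{ev}'$, and which holds with \emph{no} LCI hypothesis (the theorem states it with none). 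The missing argument is the one the paper gives: LCI forces $\sI_{\xi}/\sI_{\xi}^2\cong\sO_{\xi}^{\oplus n}$ to be free, so $\gamma'$ becomes the $n$-fold direct sum of the map $\sO_{\xi}\to\sO_{\xi}^{\vee}$, $a\mapsto\lambda(a\cdot-)$, a map between spaces of equal finite dimension; hence $\gamma'$ is surjective if and only if it is an isomorphism, and this in turn is equivalent to condition (1) by \cite[Lemma 3.2]{CPLS25}. Then surjectivity of the composition forces $\gamma'$ surjective, hence an isomorphism, hence condition (1); and then $\operatorname{ev}'=(\gamma')^{-1}\circ(\gamma'\circ\operatorname{ev}')$ is surjective, which (given that $V$ separates $\xi$) is exactly condition (2). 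Without freeness of $\sI_{\xi}/\sI_{\xi}^2$ this breaks down: surjectivity of $\gamma'$ need not imply injectivity, so one cannot extract (2), and surjectivity of $\gamma'$ need not characterize (1).

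A secondary but real error is your stated reason for the LCI restriction. The Zariski tangent space of $X^{[k]}$ at $[\xi]$ is \emph{always} $\Hom_{\sO_X}(\sI_{\xi},\sO_{\xi})=\Hom_{\sO_{\xi}}(\sI_{\xi}/\sI_{\xi}^2,\sO_{\xi})$ (any $\sO_X$-linear map $\sI_{\xi}\to\sO_{\xi}$ kills $\sI_{\xi}^2$), for arbitrary finite subschemes, smoothable or not, whether or not $X^{[k]}$ is smooth at $[\xi]$; it never properly contains this space. So the LCI hypothesis plays no role in your tangent-space sequence or in the first iff of the theorem; it enters only in the necessity-of-(1)-and-(2) argument described above, which is the part your proposal omits.
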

\begin{proof}  
We spell out some details from \cite{CPLS25}. The fact that the differential of $\alpha$ is injective if and only if $\gamma'\circ \operatorname{ev}'$ is surjective is contained in \cite[Lemma 3.8 and proof of Theorem 3.9]{CPLS25}. If condition (1) holds, \cite[Lemma 3.6]{CPLS25} shows that the map $\gamma'$ is surjective. On the other hand, if $V$ separates $\xi$, it is straightforward to see that $V$ separates $\xi^2$ if and only if the map $\operatorname{ev}'$ is surjective. Hence, if conditions (1) and (2) hold, then the composition $\gamma'\circ \operatorname{ev}'$ is surjective.

Assume now that $\xi$ is a locally complete intersection scheme: then $\sI_{\xi}/\sI_{\xi}^2 \cong \sO_{\xi}^{\oplus n}$ as $\sO_{\xi}$-modules, so that the map $\gamma'$ becomes  the map
\[ \gamma'\colon (\sO_{\xi})^{\oplus n} \longrightarrow (\sO_{\xi}^{\vee})^{\oplus n} \quad \text{ induced from } \quad \sO_{\xi} \to \sO_{\xi}^{\vee}, \quad a\mapsto \lambda(a\cdot -)\colon b\mapsto \lambda(ab) \]
Hence we see that $\gamma'$ is surjective if and only if  $\gamma'$ is an isomorphism, and this happens if and only if the second map above is an isomorphism. By \cite[Lemma 3.2]{CPLS25}, this means precisely that condition (1) holds.  Assume also that $\gamma'\circ \operatorname{ev}'$ is surjective: then $\gamma'$ must be surjective, and the previous discussion shows that condition (1) holds and that $\gamma'$ must be an isomorphism. Then $\operatorname{ev}'$ must be surjective, and this means that condition (2) holds as well.
\end{proof}


What we want to show here is an equivalent statement for condition (2) in \Cref{thm:terracini1}.

\begin{Lem}\label{lem:terracinigor}
    Assume $\sO_{\xi}$ is Gorenstein of length $k$ and that $V$ separates $\xi$. Then $V$ separates $\xi^2 = \operatorname{Spec} \sO_{X}/\sI^2_{\xi}$ if and only if $V$ separates any subscheme $\xi \subseteq \zeta \subseteq \xi^2$  of length  $\dim_{\CC} \sO_{\zeta} \leq 2k$.
\end{Lem}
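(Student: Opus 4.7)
The plan is to reformulate both separation conditions as linear-algebraic statements about a single vector subspace and a module structure, and then exploit a natural duality on $N := \mathcal{I}_\xi/\mathcal{I}_\xi^2$. The ``only if'' direction is immediate: separating $\xi^2$ implies, by restricting via $\mathcal{O}_{\xi^2} \twoheadrightarrow \mathcal{O}_\zeta$, that $V$ separates any intermediate $\zeta$. The content is entirely in the converse.

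First I would set up notation as in Theorem~\ref{thm:terracini1}: let $M := \overline{V \cap \mathcal{I}_\xi}$ be the image of the evaluation map $\mathrm{ev}' \colon V \cap \mathcal{I}_\xi \to N$. Intermediate subschemes $\xi \subseteq \zeta \subseteq \xi^2$ are in bijection with $\mathcal{O}_\xi$-submodules $N' \subseteq N$ via $N' = \mathcal{I}_\zeta/\mathcal{I}_\xi^2$, and $\dim_\mathbb{C}\mathcal{O}_\zeta = k + \dim_\mathbb{C}(N/N')$. Chasing the evaluation sequence \eqref{eq:evalxi} for $\zeta$ shows that $V$ separates $\zeta$ iff $M + N' = N$, while $V$ separates $\xi^2$ iff $M = N$. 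So the converse becomes the purely linear-algebraic claim: \emph{if $M + N' = N$ for every $\mathcal{O}_\xi$-submodule $N' \subseteq N$ with $\dim_\mathbb{C}(N/N') \leq k$, then $M = N$.}

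The key step is to work in the $\mathbb{C}$-linear dual $N^\vee$, equipped with its natural transpose $\mathcal{O}_\xi$-action $(a \cdot \psi)(n) := \psi(an)$. A direct verification shows that taking perps $N' \mapsto (N')^\perp \subseteq N^\vee$ is an inclusion-reversing bijection between $\mathcal{O}_\xi$-submodules of $N$ and of $N^\vee$, and of course $\dim_\mathbb{C}(N')^\perp = \dim_\mathbb{C}(N/N')$. In these terms the hypothesis reads: $M^\perp \cap W = 0$ for every $\mathcal{O}_\xi$-submodule $W \subseteq N^\vee$ with $\dim_\mathbb{C} W \leq k$. The trick is now that for any $\psi \in M^\perp$, the cyclic submodule $W := \mathcal{O}_\xi \cdot \psi$ is a quotient of $\mathcal{O}_\xi$, hence $\dim_\mathbb{C} W \leq \dim_\mathbb{C} \mathcal{O}_\xi = k$, so the hypothesis applies. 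Since $\psi \in M^\perp \cap W$, we conclude $\psi = 0$, whence $M^\perp = 0$ and $M = N$.

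The only point that requires care is checking that the $\perp$-correspondence does preserve the $\mathcal{O}_\xi$-submodule property, but this is a one-line computation from the transpose action. I expect no serious obstacle: the argument is essentially the observation that the length-$k$ bound in ``$\dim_\mathbb{C} \mathcal{O}_\zeta \leq 2k$'' is exactly what is needed to accommodate all cyclic $\mathcal{O}_\xi$-submodules of $N^\vee$, which form a generating family for detecting vanishing in the dual. Interestingly, the Gorenstein hypothesis does not appear to be required in the proof itself — only the length of $\mathcal{O}_\xi$ enters — though it is the natural setting in view of the rest of Theorem~\ref{thm:terracini1}.
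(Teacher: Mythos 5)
Your proposal is correct, and your reduction is exactly the one the paper performs: after localizing at $\operatorname{Supp}(\xi)$, the paper also replaces $V$ by its image in $A/I^2$, observes that separation of an intermediate $\zeta$ amounts to surjectivity of $V\cap(I/I^2)$ onto $(I/I^2)/(J/I^2)$, and notes that the admissible quotients are precisely the $\sO_\xi$-module quotients of $I/I^2$ of length at most $k$. Where you genuinely diverge is in the proof of the resulting linear-algebra statement. The paper isolates this as \Cref{lemma:MGorenstein} and proves it using the Gorenstein hypothesis in an essential way: Gorenstein-ness supplies a functional $\lambda$ with $\Hom_R(M,R)\isomor\Hom_\CC(M,\CC)$, $\varphi\mapsto\lambda\circ\varphi$, so that every $\CC$-functional factors through an $R$-linear map whose image in $R$ has dimension at most $k$, and the hypothesis then forces the functional to vanish on all of $M$ if it vanishes on $W$. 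Your argument replaces this self-duality of $R$ by plain finite-dimensional duality on $N=\sI_\xi/\sI_\xi^2$: the transpose action makes $N^{\vee}$ an $\sO_\xi$-module, perps exchange submodules, and the decisive dimension bound comes from the fact that any cyclic submodule $\sO_\xi\cdot\psi\subseteq N^{\vee}$ is a quotient of $\sO_\xi$, hence has dimension at most $k$. The two arguments are dual to one another (your $\psi$ corresponds to the paper's $\lambda\circ\varphi$, and $(\sO_\xi\cdot\psi)^{\perp}=\operatorname{Ker}\varphi$), but yours never needs the surjectivity of $\Hom_R(N,R)\to\Hom_\CC(N,\CC)$, which is exactly where Gorenstein enters the paper's proof. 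Consequently your closing observation is right: your version of \Cref{lemma:MGorenstein} holds for an arbitrary artinian $\CC$-algebra, so the Gorenstein assumption in \Cref{lem:terracinigor} is superfluous for this particular equivalence (it remains needed elsewhere in \Cref{thm:terracini1}, namely for condition (1) via the results of \cite{CPLS25}). This is a mild but genuine strengthening of the paper's lemma, obtained by a more elementary route.
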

\begin{proof}
 If $V$ separates $\xi^2$ then it separates all its subschemes as well. For the converse,
 we reduce to commutative algebra: let $A$ be the ring of functions that are regular at the points of $\operatorname{Supp}(\xi)$ and let $I\subseteq A$ be the ideal corresponding to $\sI_{\xi}$, so that $\sO_X/\sI_{\xi} \cong A/I$ and $\sO_X/\sI_{\xi}^2 \cong A/I^2$.  We can also replace $V$ with its image in $A/I^2$ under the evaluation map. Our assumption is that, the map $V\to A/I$ is surjective and that $V\to A/J$ is surjective for all ideals $I^2\subseteq J \subseteq I$ with $\dim_{\CC}(A/J)\leq 2k$. We need to show that $V=A/I^2$.

 Since the map $V\to A/I$ is surjective by assumption, we see that $V=A/I^2$ if and only if  $V\cap (I/I^2) = (I/I^2)$. Let $J$ be an ideal as before. Since $V\to A/J$ is surjective, it must be that $(V\cap I/I^2)\to I/J = (I/I^2)/(J/I^2)$ is surjective as well. Notice that $\dim_{\CC} I/J = \dim_{\CC} (A/J) - \dim_{\CC} (A/I)\leq 2k-k\leq k$, and also that any quotient of $I/I^2$ as $A/I$-modules of lenght at most $k$ has this form. Hence, the proof is concluded by the next Lemma \ref{lemma:MGorenstein}.
\end{proof}

\begin{Lem}\label{lemma:MGorenstein}
 Let $R$ be an artinian Gorenstein algebra of length $\dim_{\CC}(R)=k$, let $M$ be a finitely generated $R$-module and let $W\subseteq M$ be a $\CC$-linear subspace such that $W \to M/N$ is surjective for any quotient of $R$-modules of length $\dim_{\CC}(M/N)\leq k$. Then $W=M$.
\end{Lem}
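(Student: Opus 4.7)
\medskip

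\noindent\textbf{Proof plan.} The plan is to argue by contradiction: assume $W \subsetneq M$, and then construct an $R$-submodule $N \subseteq M$ with $\dim_{\CC}(M/N) \leq k$ such that the induced map $W \to M/N$ fails to be surjective. This will contradict the hypothesis. The key idea is that any $\CC$-linear functional on $M$ factors, in a controlled way, through a quotient of $M$ whose $\CC$-dimension is bounded by $k = \dim_\CC R$, because such quotients embed into the Matlis-type dual $\Hom_\CC(R,\CC)$.

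\smallskip

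\noindent More precisely, I would proceed as follows. If $W \subsetneq M$, then by elementary linear algebra there exists a $\CC$-linear functional $\phi \colon M \to \CC$ that vanishes on $W$ and is not identically zero on $M$. To $\phi$ I associate the $R$-linear map
\[
\Psi_\phi \colon M \longrightarrow \Hom_\CC(R,\CC), \qquad m \longmapsto \bigl(r \mapsto \phi(rm)\bigr),
\]
where $\Hom_\CC(R,\CC)$ is endowed with the $R$-module structure $(s \cdot f)(r) := f(sr)$; one checks directly that $\Psi_\phi(sm) = s \cdot \Psi_\phi(m)$. Set $N := \ker(\Psi_\phi)$, which is an $R$-submodule of $M$. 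Then the induced injection $M/N \hookrightarrow \Hom_\CC(R,\CC)$ yields
\[
\dim_\CC(M/N) \leq \dim_\CC \Hom_\CC(R,\CC) = \dim_\CC R = k.
\]

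\smallskip

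\noindent Now, evaluating at $r=1 \in R$ shows that $N \subseteq \ker\phi$, so $\phi$ factors through the quotient $M \to M/N$. By the hypothesis applied to this $N$, the map $W \to M/N$ is surjective, hence $W + N = M$. But then
\[
\phi(M) \;=\; \phi(W+N) \;\subseteq\; \phi(W) + \phi(N) \;=\; 0 + 0 \;=\; 0,
\]
contradicting the choice $\phi \neq 0$. Therefore $W = M$.

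\smallskip

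\noindent The main (only) conceptual step is the construction of $\Psi_\phi$ and the observation that $\dim_\CC \Hom_\CC(R,\CC) = k$; note that the Gorenstein hypothesis is actually not used in this argument, only $\dim_\CC R = k < \infty$, and finite generation of $M$ is not used either. This matches the way the lemma is invoked in the proof of \Cref{lem:terracinigor}, where the relevant ring $A/I$ is Artinian of length $k$ and the module $I/I^2$ is its natural target.
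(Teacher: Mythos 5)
Your proof is correct, but it takes a genuinely different route from the paper's. The paper exploits the Gorenstein hypothesis directly: it picks $\lambda\in \Hom_{\CC}(R,\CC)$ so that $\Hom_R(M,R)\to \Hom_{\CC}(M,\CC)$, $\varphi\mapsto \lambda\circ\varphi$, is an isomorphism (this is Gorenstein self-duality $R\cong \Hom_{\CC}(R,\CC)$ combined with tensor-hom adjunction), and then shows the restriction map $\Hom_{\CC}(M,\CC)\to\Hom_{\CC}(W,\CC)$ is injective by noting that any $\varphi\in\Hom_R(M,R)$ has image of $\CC$-dimension at most $k$, so the hypothesis gives $\varphi(W)=\varphi(M)$. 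Your argument replaces the Gorenstein duality step by the adjunction $\Hom_R\bigl(M,\Hom_{\CC}(R,\CC)\bigr)\cong \Hom_{\CC}(M,\CC)$ used directly: the map $\Psi_\phi$ is exactly the adjoint of $\phi$, and the quotient $M/\ker\Psi_\phi$ embeds into $\Hom_{\CC}(R,\CC)$, which has dimension $k$ regardless of whether $R$ is Gorenstein. The two proofs have the same skeleton (a functional killing $W$ is shown to factor through an $R$-module quotient of dimension at most $k$, hence to vanish), but what your version buys is generality: the Gorenstein hypothesis and the finite generation of $M$ are indeed never used, so your argument proves the lemma for an arbitrary Artinian $\CC$-algebra $R$ of dimension $k$ and an arbitrary $R$-module $M$. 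What the paper's version buys is that the dualizing functional $\lambda$ is the same object that appears in its criterion for smoothability/identifiability (cf.\ the use of $\lambda$ in the proof of \Cref{thm:terracini1}), so the Gorenstein formulation sits more naturally in that surrounding discussion; but as a standalone statement your proof is both valid and strictly stronger.
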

\begin{proof}
    We will prove that the map $\Hom_{\CC}(M,\CC) \to \Hom_{\CC}(W,\CC), \mu\mapsto \mu_{|W}$ is injective. Since $R$ is Gorenstein, there exists $\lambda\in \Hom_{\CC}(R,\CC)$  such that the map
    \[ \Hom_{R}(M,R) \longrightarrow \Hom_{\CC}(M,\CC); \quad \varphi \mapsto \lambda\circ\varphi \]
    is an isomorphism 
    Assume now that $\varphi\in \Hom_{R}(M,R)$ is such that $(\lambda\circ \varphi)_{|W} = 0$. Since $\varphi(M)\subseteq R$ it must be that $\dim_{\CC} \varphi(M)\leq k$. By hypothesis, we see that $\varphi(M)=\varphi(W)$, so that if $\lambda\circ \varphi_{|W} = 0$, then $(\lambda\circ \varphi) = 0$. This is what we wanted to prove. 
\end{proof}

Thus we have another version of the precise Terracini lemma as

\begin{Cor}[Precise Terracini - II]\label{cor:preciseterracini2}
    The differential of $\alpha\colon \mathcal{B}_k \to \PP(V^{\vee})$ is injective at a point $(\xi,p) \in B$ if the following two sufficient conditions hold:
    \begin{enumerate}
      \item The subscheme $\xi$ is Gorenstein and the point $p$ is not contained in the linear span of any proper subscheme of $\xi$.
      \item The linear system $V$ separates  any subscheme $\xi\subseteq \zeta \subseteq \xi^2$ of length $\ell(\sO_\zeta)\leq 2k$.
    \end{enumerate}
    Furthermore, if $\xi$ is a locally complete intersection scheme, these conditions are necessary as well.
\end{Cor}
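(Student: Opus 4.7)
The plan is to deduce the corollary directly from \Cref{thm:terracini1} by replacing its condition (2) with the equivalent condition stated here, using \Cref{lem:terracinigor}. So the entire content of the argument is to verify the equivalence of the two versions of condition (2) under the Gorenstein hypothesis, which is essentially already done in \Cref{lem:terracinigor}.

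First I would note that condition (1) is identical in both statements, so there is nothing to check there. For condition (2), I would observe that \Cref{lem:terracinigor} gives exactly what is needed: assuming $\sO_\xi$ is Gorenstein of length $k$ and that $V$ separates $\xi$ (which is automatic since we work inside $\mathcal{B}_k$, where separation of $\xi$ is built into the definition of the linear span $\ell(\xi)$), we have that $V$ separates $\xi^2$ if and only if $V$ separates every intermediate subscheme $\xi \subseteq \zeta \subseteq \xi^2$ of length at most $2k$. Under condition (1) the Gorenstein hypothesis holds, and membership in $\mathcal{B}_k$ ensures that $V$ separates $\xi$, so \Cref{lem:terracinigor} applies.

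With this equivalence in hand, the sufficient direction is immediate: if (1) and the new (2) hold, then \Cref{lem:terracinigor} converts the new (2) into the version of (2) appearing in \Cref{thm:terracini1}, and that theorem gives injectivity of the differential of $\alpha$ at $(\xi,p)$.

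For the converse, assume that $\xi$ is a locally complete intersection scheme and that $d\alpha$ is injective at $(\xi,p)$. The last clause of \Cref{thm:terracini1} tells us that conditions (1) and the original (2), namely that $V$ separates $\xi^2$, both hold. Since a locally complete intersection scheme is Gorenstein, condition (1) activates the hypothesis of \Cref{lem:terracinigor}, and we conclude that separation of $\xi^2$ forces separation of every intermediate $\zeta$ of length at most $2k$, which is the new condition (2). This finishes the argument.

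The main (and really only) content to check is the book-keeping in \Cref{lem:terracinigor}; the rest is a direct transcription of \Cref{thm:terracini1}, so there is no substantive obstacle here beyond confirming that ``$V$ separates $\xi$'' is indeed an implicit assumption throughout, as guaranteed by working in $\mathcal{B}_k$.
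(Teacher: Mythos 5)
Your proposal is correct and is exactly the argument the paper intends: the corollary is stated there without a written proof, as an immediate consequence of \Cref{thm:terracini1} combined with \Cref{lem:terracinigor}, which is precisely the substitution you carry out (including the two points that matter: condition (1) supplies the Gorenstein hypothesis needed for the lemma, and membership in $\mathcal{B}_k$ guarantees $V$ separates $\xi$, while lci $\Rightarrow$ Gorenstein handles the necessity direction).
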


\begin{Rem}
In particular, this holds whenever the linear system $V$ is $(2k-1)$-very ample.
\end{Rem}


\subsection{Singularities of secant varieties}
Now we want to apply this to study singularities of the secant variety. Thus, let $X\subseteq \PP(V^{\vee})$ be a smooth and irreducible nondegenerate projective variety, embedded by a very ample linear system $V \subseteq H^0(X,\sO_X(1))$. The \emph{secant variety} of $X$ is the image
\[ \alpha\colon \mathcal{B}_2 \twoheadrightarrow \operatorname{Sec}(X) \subseteq \PP(V^{\vee})\]
In other words, it is the union of all secant lines of $X$, where we consider tangent lines as special cases of secant lines.

We want to  look at singularities of the secant variety at \emph{identifiable} points: a point $p\in \Sec(X)$ is identifiable if the fiber $\alpha^{-1}(p)\subseteq \mathcal{B}_2$ consists of an unique point. In other words, this means that $p$ is contained in a unique secant or tangent line, and that this line arises from a single $\xi\in X^{[2]}$.

\begin{Prop}\label{prop:singsec}
   Let $X\subseteq \PP(V^{\vee})$ be a smooth and irreducible nondegenerate projective variety as above and let $p\in \operatorname{Sec}(X)\setminus X$ be a point. 
   \begin{itemize}
     \item[(a)] If $p$ is identifiable with $\alpha^{-1}(p)=(\xi,p)$ and if the differential of $\alpha$ is injective at $(\xi,p)$, then $\operatorname{Sec}(X)$ is smooth at $p$.
   \end{itemize}
   Assume now that the general point of $\operatorname{Sec}(X)$ is identifiable.
   \begin{itemize} 
     \item[(b)] If $p$ is not identifiable but the fiber $\alpha^{-1}(p)$ is finite, or if $p$ is identifiable with $\alpha^{-1}(p)=(\xi,p)$ and the differential of $\alpha$ at $(\xi,p)$ is not injective, then $\operatorname{Sec}(X)$ is not normal at $p$.
   \end{itemize}
\end{Prop}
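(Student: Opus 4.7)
The plan is to treat the two parts separately: part (a) will follow from a direct analytic immersion argument, while part (b) will follow from Zariski's Main Theorem in the form ``a finite birational morphism onto a normal variety is an isomorphism''. A preliminary observation used throughout is that $\mathcal{B}_2$ is smooth, being a $\PP^1$-bundle over the Hilbert square $X^{[2]}$, and the latter is smooth whenever $X$ is.

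For part (a), I would exploit that $\alpha$ is a holomorphic map between complex manifolds near $(\xi,p)$ with injective differential. By the holomorphic immersion theorem this means that $\alpha$ restricts to a closed analytic embedding on some analytic open neighborhood $V$ of $(\xi,p)$. Properness of $\alpha$ combined with the identifiability $\alpha^{-1}(p)=\{(\xi,p)\}$ then produces an open neighborhood $U$ of $p$ in $\PP(V^{\vee})$ with $\alpha^{-1}(U)\subseteq V$; hence $\Sec(X)\cap U = \alpha(\alpha^{-1}(U))$ is a complex submanifold, and so $\Sec(X)$ is smooth at $p$.

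For part (b), I will assume for contradiction that $\Sec(X)$ is normal at $p$. Under the standing assumption that a general point of $\Sec(X)$ is identifiable, the morphism $\alpha$ is birational. Since $\alpha^{-1}(p)$ is finite in both subcases, upper semicontinuity of fiber dimension applied to the proper morphism $\alpha$ shows that $\alpha$ is finite over a suitable neighborhood $U$ of $p$, which we may shrink so that $U$ is normal. A finite birational morphism from a smooth source onto a normal target is an isomorphism, so $\alpha^{-1}(U)\to U$ is in fact an isomorphism. In the non-identifiable subcase this contradicts $|\alpha^{-1}(p)|\geq 2$; in the identifiable subcase, being an isomorphism forces $d\alpha_{(\xi,p)}$ to be an isomorphism, contradicting the hypothesis that it fails to be injective.

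The main technical point will be the passage from ``$\alpha^{-1}(p)$ is finite'' to ``$\alpha$ is finite on a neighborhood of $p$''; this combines properness with the fact that proper plus quasi-finite equals finite, invoked on a Zariski open over which upper semicontinuity of fiber dimension forces quasi-finiteness. Beyond this and the holomorphic immersion theorem, no further ingredients are needed.
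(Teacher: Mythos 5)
Your proposal is correct and follows essentially the same route as the paper: part (a) amounts to showing that identifiability plus injective differential makes $\alpha$ an isomorphism onto $\operatorname{Sec}(X)$ near $p$ (the paper phrases this scheme-theoretically via the fiber being a single reduced point, you phrase it analytically via the immersion theorem, but the content is identical), and part (b) is exactly the paper's argument that normality at $p$ would force the finite birational map $\alpha^{-1}(U)\to U$ to be an isomorphism, contradicting either the fiber size or the failure of injectivity of $d\alpha$. The only difference is that you spell out the passage from finiteness of $\alpha^{-1}(p)$ to finiteness over a neighborhood (properness plus quasi-finiteness), a step the paper leaves implicit.
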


\begin{proof}
     \begin{itemize}
\item[(a)] Since the Hilbert scheme $X^{[2]}$ is smooth and since $\pi\colon \mathcal{B}_2 \to X^{[2]}$ is a projective bundle, then $\mathcal{B}_2$ is smooth as well. By assumption, the scheme-theoretic fiber of $\alpha$ at $p$ is a single point, and the same must be true in a neighborhood of $p$. Thus, $\alpha$ is an isomorphism in a neighborhood of $(\xi,p)$ and we conclude.
\item[(b)] Consider the map $\alpha\colon \mathcal{B}_2 \to \operatorname{Sec}(X)$. Since a general point of $\operatorname{Sec}(X)$ is identifiable, this map is birational. Assume that $\operatorname{Sec}(X)$ is normal at $p$. Then we can find an open neighborhood $U\subseteq \operatorname{Sec}(X)$ of $p$ that is normal and such that $\alpha\colon \alpha^{-1}(U) \to U$ is finite and birational. Since $U$ is normal, this map must be an isomorphism.
     \end{itemize}
 \end{proof}

To check that the differential of $\alpha$ is injective, we use  precise Terracini. For elements of $X^{[2]}$, we can make it a bit more concrete:

\begin{Lem}\label{lem:preciseterracinitwopoints}
    Let $\xi=\{x,y\} \in X^{[2]}$ consist of two distinct points and let $p\in \langle x,y \rangle$. The differential of $\alpha\colon \mathcal{B}_2\to \PP(V^{\vee})$ is injective at $(\xi,p)$ if and only if $p\notin \{x,y\}$ and the two projective tangent spaces $\mathbb{T}_x X$ and $\mathbb{T}_y X$ are disjoint.
\end{Lem}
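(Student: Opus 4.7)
The plan is to apply the Precise Terracini Lemma (\Cref{cor:preciseterracini2}) to $\xi=\{x,y\}\in X^{[k]}$ with $k=2$. Since $x\ne y$ and $X$ is smooth, $\xi$ is a reduced zero-dimensional scheme in a smooth ambient, hence it is Gorenstein and a local complete intersection. Therefore the two sufficient conditions of the corollary are in fact necessary and sufficient, and we only need to translate each of them into the geometric language stated in the lemma.

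First I would unwind condition (1). The only proper subschemes of $\xi$ are $\{x\}$ and $\{y\}$, whose linear spans in $\PP(V^{\vee})$ are the points $x$ and $y$ themselves. Thus condition (1) holds if and only if $p\notin\{x,y\}$.

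Next I would translate condition (2). Locally at $x$ the scheme $\xi^2=\operatorname{Spec}(\sO_X/\sI_\xi^2)$ agrees with $\operatorname{Spec}(\sO_{X,x}/\mathfrak m_x^2)$, the first infinitesimal neighbourhood; this subscheme spans exactly the projective tangent space $\mathbb{T}_xX$, and similarly at $y$. The total length of $\xi^2$ equals $2(n+1)$, where $n=\dim X$. By definition, $V$ separates $\xi^2$ iff the evaluation map $V\to \sO_{\xi^2}$ is surjective, which is equivalent to $\dim\langle\xi^2\rangle=2(n+1)-1=2n+1$ in $\PP(V^\vee)$. Since $\langle\xi^2\rangle=\langle \mathbb{T}_xX,\mathbb{T}_yX\rangle$ and the dimension of a join in $\PP(V^\vee)$ satisfies
\[
\dim\langle \mathbb{T}_xX,\mathbb{T}_yX\rangle=2n-\dim(\mathbb{T}_xX\cap\mathbb{T}_yX)
\]
(with $\dim\emptyset=-1$), condition (2) is equivalent to $\mathbb{T}_xX\cap\mathbb{T}_yX=\emptyset$. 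By \Cref{lem:terracinigor}, verifying the separation on $\xi^2$ is the same as verifying it on all intermediate $\zeta$ with $\xi\subseteq\zeta\subseteq\xi^2$ of length $\le 2k=4$, so this matches the formulation in \Cref{cor:preciseterracini2}.

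Combining the two translations with \Cref{cor:preciseterracini2} yields the lemma. The only step requiring a bit of care is the identification $\langle\xi^2\rangle=\langle \mathbb{T}_xX,\mathbb{T}_yX\rangle$ together with the dimension count for the join, but this is standard; no further obstacle is expected.
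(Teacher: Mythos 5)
Your proof is correct and takes essentially the same approach as the paper: both apply the precise Terracini lemma, translate condition (1) into $p\notin\{x,y\}$, and translate the separation of $\xi^2$ into disjointness of $\mathbb{T}_xX$ and $\mathbb{T}_yX$ via the expected-dimension count for the span. The only (harmless) difference is that the paper invokes \Cref{thm:terracini1} directly, whereas you go through \Cref{cor:preciseterracini2} and then use \Cref{lem:terracinigor} to reduce back to separating $\xi^2$ itself — a detour that merely undoes how the corollary was derived, and is valid since a reduced $\xi$ is a local complete intersection and Gorenstein.
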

\begin{proof}
We use \Cref{thm:terracini1}, with the observation that a reduced scheme $\xi=\{x,y\}$ is a locally complete intersection. The fact that $p\in \{x,y\}$ means precisely that $p$ is not contained in the linear span of subschemes of $\xi$. Then the linear system $V$ separates $\xi^2 = \operatorname{Spec} \sO_{X}/\mathfrak{m}_x^2 \sqcup \operatorname{Spec} \sO_{Y}/\mathfrak{m}_y^2$ if and only if the span of the two projective tangent spaces $\mathbb{T}_x X$ and $\mathbb{T}_y X$ has the expected dimension. This means that the two tangent spaces are disjoint.
\end{proof}

For the next statement, recall that a finite subscheme $\zeta\subseteq X$ supported at a point $x\in X$ is \emph{curvilinear} if $\dim T_x\zeta = 1$ and it is \emph{planar} if $\dim T_x\zeta=2$. This means that $\zeta$ is contained in a smooth curve (resp. surface) inside $X$.

\begin{Lem}\label{lem:preciseterracinidoublepoint}
    Let $\xi=(x,v) \in X^{[2]}$ consist of a point and a tangent direction $v\in \PP(T_x X)$. The differential of $\alpha\colon \mathcal{B}_2\to \PP(V^{\vee})$ is injective at $(\xi,p)$ if and only if $p\ne x$ and if $V$ separates all curvilinear and planar subschemes $\xi\subseteq \zeta\subseteq \xi^2$ with $\dim_{\CC} \sO_{\zeta} \leq 4$.
\end{Lem}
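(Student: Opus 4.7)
The plan is to deduce this from the refined precise Terracini lemma, \Cref{cor:preciseterracini2}, and then to upgrade its separation condition to the statement of the lemma. Since $\xi=(x,v)$ is a length-$2$ curvilinear subscheme---hence Gorenstein and a locally complete intersection---the corollary applies with $k=2$ and says that the differential of $\alpha$ at $(\xi,p)$ is injective if and only if (i) $p$ does not lie in the linear span of any proper subscheme of $\xi$, which reduces to $p\neq x$ since the only proper subscheme of $\xi$ is the reduced point $\{x\}$, and (ii) $V$ separates every subscheme $\xi\subseteq\zeta\subseteq\xi^2$ with $\dim_{\mathbb{C}}\mathcal{O}_\zeta\leq 4$. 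The trivial direction of the lemma is immediate, as separating all such $\zeta$ certainly separates the curvilinear and planar ones.

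For the converse direction, I would classify the non-planar $\zeta$'s appearing in (ii). Since $\dim T_x\zeta\leq \dim_{\mathbb{C}}\mathcal{O}_\zeta-1$, a non-planar $\zeta$ of length $\leq 4$ must have $\dim T_x\zeta=3$ and length exactly $4$ (which forces $\dim X\geq 3$; for $\dim X\leq 2$ every $\zeta$ is automatically planar and the lemma is tautological). A short local computation will show that every such $\zeta$ with $\xi\subseteq\zeta\subseteq\xi^2$ is a first-order thickening $\operatorname{Spec}(\mathcal{O}_X/(\mathfrak{m}_x^2+W^{\perp}))$ of $x$ in a smooth $3$-dimensional subvariety whose tangent space $W\subseteq T_xX$ contains $v$; the inclusion $\zeta\subseteq\xi^2$ is automatic once $\xi\subseteq\zeta$ holds, since $\mathcal{I}_\xi^2\subseteq\mathfrak{m}_x^2$.

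The decisive step is then to show that separation of the planar length-$3$ subschemes $\zeta''$ with $\xi\subseteq\zeta''\subseteq\zeta$ already forces $V$ to separate $\zeta$. In local coordinates $(x_1,\dots,x_n)$ at $x$ with $v=\partial_{x_1}$ and $W=\langle\partial_{x_1},\partial_{x_2},\partial_{x_3}\rangle$, one has $\mathcal{I}_\zeta=(x_4,\dots,x_n)+\mathfrak{m}_x^2$, so $\mathcal{O}_\zeta$ has basis $\{1,x_1,x_2,x_3\}$ and $\mathcal{I}_\xi/\mathcal{I}_\zeta=\langle x_2,x_3\rangle$; the planar length-$3$ subschemes $\zeta_{[a:b]}$ of $\zeta$ containing $\xi$ form a $\PP^1$-family indexed by the kernels $\langle ax_2+bx_3\rangle\subseteq\mathcal{O}_\zeta$. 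Because $V$ is $1$-very ample and so separates $\xi$, the cokernel $C=\operatorname{coker}(V\to\mathcal{O}_\zeta)$ is a quotient of $\langle x_2,x_3\rangle$; separation of $\zeta_{[a:b]}$ translates into the line $\langle ax_2+bx_3\rangle$ surjecting onto $C$, and imposing this for every $[a:b]\in\PP^1$ together with $\bigcap_{[a:b]}\langle ax_2+bx_3\rangle=0$ forces $C=0$. I expect the main obstacle to be the careful local classification of the length-$4$ non-planar subschemes $\zeta$ and the identification of the right $\PP^1$-family of planar refinements; the final surjectivity argument is then elementary linear algebra.
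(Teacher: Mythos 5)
Your proposal is correct, and it shares the paper's skeleton up to the final step. Like the paper, you reduce to \Cref{cor:preciseterracini2} (using that $\xi=(x,v)$ is a locally complete intersection, so the two conditions there are necessary as well as sufficient), you identify condition (1) with $p\neq x$, and you rest on the same classification of the problematic subschemes: if $\xi\subseteq\zeta\subseteq\xi^2$ has length $\leq 4$ and $\dim T_x\zeta\geq 3$, then equality in $\dim T_x\zeta\leq\dim_{\CC}\sO_\zeta-1$ forces $\mathfrak{m}_x^2\subseteq\sI_\zeta$, so that $\sI_\zeta=\mathfrak{m}_x^2+W^{\perp}$ for a $3$-dimensional $W\subseteq T_xX$ containing $v$, exactly as you assert. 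You leave this as ``a short local computation,'' but it is genuinely short (it is the displayed dimension count in the paper's proof), so this is not a gap. Where you diverge is in how the non-planar $\zeta$ are then disposed of. The paper observes that such $\zeta$ is a subscheme of $\operatorname{Spec}\sO_X/\mathfrak{m}_x^2$, which is separated by $V$ simply because $V$ is very ample; hence these schemes impose no condition at all, and the hypothesis of the lemma is never invoked for them. You instead bootstrap from the hypothesis itself: the $\PP^1$-pencil of planar length-$3$ subschemes $\xi\subseteq\zeta_{[a:b]}\subseteq\zeta$ with ideals $\langle ax_2+bx_3\rangle$ forces the cokernel $C$ of $V\to\sO_\zeta$ (a quotient of the two-dimensional space $\sI_\xi/\sI_\zeta$, by $1$-very ampleness) to vanish, since a nonzero quotient of a two-dimensional space always admits a line failing to surject onto it. Your pencil argument is valid and has the merit of showing that the planar hypothesis alone formally implies separation of the non-planar schemes; the paper's route is shorter and shows those schemes are separated unconditionally. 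Note that neither route weakens the hypotheses: you use very ampleness to separate $\xi$ (so that $C$ is a quotient of $\sI_\xi/\sI_\zeta$), the paper uses it to separate first infinitesimal neighborhoods.
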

\begin{proof}
We use \Cref{cor:preciseterracini2}, with the observation that the scheme $\xi=(x,v)$ is also a locally complete intersection. The fact that $p\ne x$ means precisely that $p$ is not contained in the linear span of a subscheme of $\xi$. We now have to show that under our hypotheses $V$ separates all subschemes $\xi\subseteq \zeta \subseteq \xi^2$ with $\dim \sO_{\eta} \leq 4$ and $\dim T_x\zeta\geq 3$. We will show that with these hypotheses $\eta$ is a subscheme of $\operatorname{Spec} \sO_{X}/\mathfrak{m}_x^2$, which is itself separated by $V$ since it is very ample. Choose local coordinates $(z_1,\dots,z_n)$ on $X$ around $x$ such that $\mathfrak{m}_x = (z_1,\dots,z_n)$ and $\sI_{\xi} = (z_1^2,z_2,\dots,z_n)$. In particular $\sI_{\xi} \supseteq \mathfrak{m}_x^2$ and by hypothesis $\sI_{\xi} \supseteq \sI_{\zeta}$. Then
\begin{align*} 
3\leq \dim_{\CC} T_x\zeta &= \dim_{\CC} \mathfrak{m}_x/(\mathfrak{m}_x^2+\sI_{\zeta}) = \dim_{\CC} \mathfrak{m}_x/\sI_{\xi} + \dim_{\CC} \sI_{x}/(\mathfrak{m}_x^2+\sI_{\zeta}) \\
&= \dim_{\CC} T_x\xi + \dim_{\CC} \sI_x/\sI_{\zeta} - \dim_{\CC} (\sI_{\zeta}+\mathfrak{m}_x^2)/\mathfrak{m}_x^2\\
&= 1 + \dim_{\CC} \sO_{\zeta}- \dim_{\CC} \sO_{\xi} - \dim_{\CC} (\sI_{\zeta}+\mathfrak{m}_x^2)/\mathfrak{m}_x^2\\
&=\dim_{\CC}\sO_{\zeta}-1-\dim_{\CC} (\sI_{\zeta}+\mathfrak{m}_x^2)/\mathfrak{m}_x^2
\end{align*}
Hence $\dim_{\CC} (\sI_{\zeta}+\mathfrak{m}_x^2)/\mathfrak{m}_x^2 \leq \dim_{\CC}\sO_{\zeta}-4 \leq 0$ and this means that $(\sI_{\zeta}+\mathfrak{m}_x^2) = \mathfrak{m}_x^2$, i.e. $\mathfrak{m}_x^2\supseteq \sI_{\zeta}$. This is what we wanted to show.
\end{proof}

\begin{Rem}
The last lemma suggests that what is relevant are the smooth surfaces in $X$ through $p$. However it is not clear to us how to interpret this geometrically. In the case where $X$ is an actual smooth surface, we can make everything more explicit with local computations: assume that $(z_1,z_2)$ are local coordinates around $x$ such that $\sI_{\xi} = (z_1^2,z_2)$. Then one can check explicitly that a very ample linear system $V$ separates $\xi^2$ if and only if it separates the schemes given by the ideals
\[ (z_1^4,z_2 - \alpha z_1^2 - \beta z_1^3) \text{ for all } \alpha,\beta\in \CC \quad  \text{ and  } \quad (z_1^2,z_2^2). \]
So it is enough to check the curvilinear schemes of length four and one single planar scheme. This particular planar scheme was already singled out by \cite[Remark 3.6]{BFS} in their study of $k$-very ampleness.
\end{Rem}

\bibliography{references.bib}{}
\bibliographystyle{alpha}
\end{document}